\documentclass[11pt,reqno]{amsart}

\usepackage{amsmath,amssymb,amscd,color}
\usepackage{dutchcal}
\usepackage{graphicx}
\usepackage{epsfig}

\usepackage{pdfpages}
\usepackage{kotex}
\allowdisplaybreaks

\usepackage{times}
\usepackage{tikz}

\numberwithin{equation}{section}
\newtheorem{theo}{Theorem}[section]
\newtheorem{defin}[theo]{Definition}
\newtheorem{prop}[theo]{Proposition}
\newtheorem{coro}[theo]{Corollary}
\newtheorem{lemm}[theo]{Lemma}
\newtheorem{rem}[theo]{Remark}

\newcommand{\Om}{\Omega}

\newcommand{\pa}{\partial}

\begin{document}

\title[Navier-Slip Fluid-Structure Interaction]
{Regularity and Decay for Navier-Slip Fluid-Structure Interaction in Exterior Domains
 }

\author[B. Jin]{Bum Ja Jin}
\address{Department of Mathematics Education, Mokpo National University
\newline\indent
Muan 534-729, South Korea }
\email{bumjajin@mnu.ac.kr}

\keywords{Keywords:  Fluid-rigid body interaction, Navier slip boundary condition,  Analytic semigroup, Maximal $L^q$-regularity,  $ L^q$-$L^r$  estimates}

\subjclass[2020]{76D05, 35Q35, 76D07}

\thanks{This paper is  supported by the Basic Research Program
through the National Research Foundation of Korea
funded by the Ministry of Education and Technology(RS-2023-00280597).}

\begin{abstract}
{

We study the linearized motion of a spherical 
 rigid body immersed in an incompressible viscous fluid occupying the whole space, under Navier slip boundary conditions with friction. While semigroup regularity and decay estimates are well understood for the no-slip case, such results have been limited to bounded domains in the slip setting. By exploiting the geometric regularity of the sphere, we establish  strong $L^q$-regularity, bounded analyticity, and sharp $ L^q-L^r$  estimates for the corresponding fluid-structure operator.
These results would  provide a foundation for constructing strong  $W^{2,1}_q$ solutions and Kato-type solutions to the corresponding nonlinear system.
}
\end{abstract}

\maketitle

\section{\bf Introduction}
\label{introduction}

We consider a homogeneous rigid body immersed in a viscous, incompressible fluid occupying the entire space $\mathbb{R}^3$. Let ${\bf v}$ and $\pi$ denote the velocity and pressure of the fluid, respectively. Let $B \subset \mathbb{R}^3$ represent the initial region occupied by the rigid body, and define the fluid domain as $\Omega := \mathbb{R}^3 \setminus \overline{B}$. At any time $t > 0$, the body occupies the region $B(t)$, and its center of mass is denoted by ${\bf h}(t)$. The translational velocity is $\boldsymbol{\ell}(t) = \dot{\bf h}(t)$ and the angular velocity is $\boldsymbol{\omega}(t)$.
 (Here and in what follows, the dot denotes the derivative with respect to time $t$,, i.e., $\dot{y} = \frac{dy}{dt}$.)
 Consequently, the time-dependent fluid domain is given by
\[
\Omega(t) = \mathbb{R}^3 \setminus \overline{B(t)}.
\]
For the sake of simplicity, we assume that the densities of both the fluid and the rigid body are equal to $1$ throughout this paper (i.e., $\rho_F = 1$ and $\rho_B = 1$). In particular, we consider the case where the rigid body is a ball centered at the origin. However, we note that the general case where the densities may differ is discussed in Remark \ref{rem:general_density}.

Under these assumptions, the motion of the fluid and the rigid body is governed by the following system of equations in $\Omega(t)$:
\begin{equation}
\label{n1}
\begin{aligned}
\frac{\partial {\bf v}}{\partial t}+{\bf v}\cdot \nabla {\bf v}
-\mu\Delta {\bf v}+\nabla{\pi}&={\bf f}_0\\
\mbox{\rm div}{\bf v}&=0
\end{aligned}\qquad
\mbox{ in } \Omega(t)\mbox{ for }t>0,
\end{equation}
coupled with an ODE
\begin{equation}
\label{n3}
\begin{aligned}
m_0 \frac{d{\boldsymbol{\ell}}}{dt}(t)&=-\int_{\partial \Omega(t)}{\mathbb S}({\bf v},\pi){\bf n}{dS}+m_0{\bf f}_1
\\ 
\frac{d({\mathbb J}\boldsymbol{\omega})}{dt}(t)&=-\int_{\partial \Omega(t)}(x-{\bf h}(t))\times {\mathbb S}({\bf v},\pi)
{\bf n} { dS_x}+{\mathbb J}{\bf f}_2
\end{aligned}\qquad
 \mbox{ for }t>0.
\end{equation}
\noindent Here, $m_0$ and $\mathbb{J}$ denote the total mass and the inertia tensor of the rigid body, respectively, and ${\bf n}$ denotes the unit outward normal vector on $\partial \Omega(t)$. The constant $\mu > 0$ represents the fluid viscosity, and the shear stress tensor is defined as
\[
    \mathbb{S}({\bf v}, \pi) = 2\mu \mathbb{D}({\bf v}) - \pi \mathbb{I}_3,
\]
\noindent where $\mathbb{I}_3$ is the $3 \times 3$ identity matrix. The rate-of-strain tensor $\mathbb{D}({\bf v}) = (D_{ij}({\bf v}))_{i,j=1}^3$ is given by
\[
    D_{ij}({\bf v}) = \frac{1}{2} \left( \frac{\partial v_j}{\partial x_i} + \frac{\partial v_i}{\partial x_j} \right).
\]
For the sake of simplicity, we assume that the rigid body is sufficiently smooth, with the boundary $\partial \Omega$ being of class $C^\infty$.

We prescribe the initial condition:
\begin{align}
\label{n5}
{\bf v}|_{t=0} = {\bf v}_0, \quad \boldsymbol{\ell}|_{t=0} = \boldsymbol{\ell}_0, \quad \boldsymbol{\omega}|_{t=0} = \boldsymbol{\omega}_0.
\end{align}
For $t > 0$, on $\partial \Omega(t)$, we impose the Navier-slip boundary condition:
\begin{equation} \label{n7} 
\begin{cases} 
({\bf v} - \boldsymbol{\ell}- \boldsymbol{\omega} \times (x - {\bf h}(t))) \cdot {\bf n} = 0, \\
 2\mu {\bf n} \times \mathbb{D}({\bf v}){\bf n} + \alpha {\bf n} \times ({\bf v} - \boldsymbol{\ell} - \boldsymbol{\omega} \times (x - {\bf h}(t))) ={\bf  0} 
\end{cases} \text{on } \partial \Omega(t).
\end{equation}
\noindent In these boundary conditions, $\alpha > 0$ is the friction coefficient. The first equation in \eqref{n7} represents the kinematic condition (no-penetration), while the second is the Navier-slip condition, implying that the tangential stress is proportional to the relative tangential velocity between the fluid and the rigid body. For any vector-valued function ${\bf a}$ on $\partial \Omega$, let $[{\bf a}]_\tau := {\bf a} - ({\bf a} \cdot {\bf n}){\bf n}$ denote its tangential component. It is worth noting that the slip condition in $\eqref{n7}_2$ can be equivalently expressed as
\begin{equation} \label{slip_tau_equiv}
2\mu [\mathbb{D}({\bf v}){\bf n}]_\tau + \alpha [{\bf v} - \boldsymbol{\ell} - \boldsymbol{\omega} \times x]_\tau ={\bf  0}.
\end{equation}
Finally, we impose the far-field condition:
\begin{align}
\label{n8}
{\bf v}(x,t) \to {\bf  0} \quad \text{as } |x| \to \infty \quad \text{for } t > 0.
\end{align}

In general, by the product rule and the kinematic identity $\dot{\mathbb{J}}(t)\boldsymbol{\omega} = \boldsymbol{\omega} \times (\mathbb{J}\boldsymbol{\omega})$, the time derivative of the angular momentum expands to $\frac{d}{dt}(\mathbb{J}\boldsymbol{\omega}) = \mathbb{J}\dot{\boldsymbol{\omega}} + \boldsymbol{\omega} \times (\mathbb{J}\boldsymbol{\omega})$. However, we assume from the outset that the initial rigid body $B$ is a homogeneous sphere centered at the origin. We denote by $\mathbb{J}_0$ the constant inertia tensor of the body $B$ at the rest state, defined by
\[
    \mathbb{J}_0 = \left( \int_{B} \rho_B (\delta_{ij} |x|^2 - x_i x_j) \, dx \right)_{ i,j =1}^3,
\]
\noindent where $\rho_B$ is the constant density of the rigid body. Due to the spherical symmetry, we simply have $\mathbb{J}_0 = c \mathbb{I}_3$ for some constant $c > 0$. Furthermore, because a sphere is rotationally invariant, its time-dependent inertia tensor $\mathbb{J}(t)$ does not change over time, meaning $\mathbb{J}(t) = \mathbb{J}_0 = c \mathbb{I}_3$. Because of this property, the gyroscopic torque vanishes identically, i.e., $\boldsymbol{\omega} \times (\mathbb{J}\boldsymbol{\omega}) = c(\boldsymbol{\omega} \times \boldsymbol{\omega}) = {\bf 0}$. Therefore, the angular momentum balance equation  $\eqref{n3}_2$ simplifies to a linear form:
\begin{equation} \label{n8_2_equiv}
    \mathbb{J}_0\dot{\boldsymbol{\omega}}(t) = -\int_{\partial \Omega(t)}( x-{\bf h}(t)) \times \mathbb{S}({\bf v},\pi) {\bf n}\, dS_x + \mathbb{J}_0 {\bf f}_2.
\end{equation}
A number of studies have investigated the fluid-rigid body interaction problem under Navier-slip boundary conditions in bounded domains. In \cite{gerard}, the existence of weak solutions up to collision has been demonstrated. For the uniqueness and continuity of the 2D weak solution in a bounded domain, we refer to \cite{bravin}. In \cite{wang}, both the existence and uniqueness of the 2D strong solution up to collision are established. Furthermore, \cite{necasova.amrouche} shows the global-in-time solvability of 3D strong solutions in the $L^q_t L^p_x$ framework when the initial data is sufficiently small. It is worth noting that in all these works, the fluid region is contained within a bounded container.

To study the problem in an exterior domain, a fundamental first step is to analyze the corresponding linearized system in a fixed spatial domain. Following the approach in \cite{takahashi,wang-xin,ervedoza2}, we use the change of variables $x \mapsto x + {\bf h}(t)$ to rewrite the system \eqref{n1}--\eqref{n8} in the fixed domain $\Omega := \mathbb{R}^3 \setminus \overline{B}$. We then focus on the linearized system around the rest state, which is the main object of our study:
%
\begin{align}
\label{e01}
&\begin{cases}
\begin{aligned}
 \frac{\partial {\bf v}}{\partial t}-\mu\Delta {\bf v}+\nabla{{\pi}}&={\bf f}_0\\
\qquad\mbox{\rm div}{\bf  v}&=0
\end{aligned}\qquad
\mbox{ in } \Omega\mbox{ for }t>0,\\
\begin{aligned}
&m_0\dot{ \boldsymbol{\ell}} (t)=-\int_{\partial \Omega}{\mathbb S}({\bf v},\pi){\bf n} \, { dS_x}+m_0 {\bf f}_1\\
&{\mathbb J}_0\dot{ \boldsymbol{\omega}}(t)=-\int_{\partial \Omega}x\times
 {\mathbb S}({\bf v},\pi){\bf n} \, { dS_x}+{\mathbb J}_0{\bf f}_2
 \end{aligned}
\qquad \mbox{ for }t>0,
 \end{cases}
 \\
&\tag{C1} {\bf v}|_{t=0} = {\bf v}_0, \quad \boldsymbol{\ell}|_{t=0} = \boldsymbol{\ell}_0, \quad \boldsymbol{\omega}|_{t=0} = \boldsymbol{\omega}_0, \\
&\tag{C2} 
\begin{cases}
({\bf v} - \boldsymbol{\ell} - \boldsymbol{\omega} \times x) \cdot {\bf n} =0 \\
2\mu [\mathbb{D}({\bf v}){\bf n}]_\tau + \alpha[{\bf v} -\boldsymbol{\ell} - \boldsymbol{\omega}\times x]_\tau={\bf 0}
\end{cases} \quad \text{on } \partial \Omega \text{ for } t > 0, \\
&\tag{C3} {\bf v}(x,t) \to {\bf 0} \quad \text{as } |x| \to \infty \quad \text{for } t > 0.
\end{align}

The semigroup associated with the linear system \eqref{e01}, subject to the conditions \textup{(C1)}–\textup{(C3)}, is referred to as the fluid-structure semigroup under Navier-slip boundary conditions, and its generator is the corresponding fluid-structure operator.

%

Previous studies have extensively investigated the operator under no-slip boundary conditions (see \cite{ervedoza2, maity-tucsnak, takahashi, wang-xin, Hishida2024}).
Under Navier-slip conditions, however, the operator has been studied primarily in bounded domains (see, for instance, \cite{necasova.amrouche}, and also \cite{Djebour2024, Eiter2022, EiterShibata2025, Kajiwara2022} for related settings).

Among these works, the results in \cite{necasova.amrouche} are of particular importance to our present study. Let $\mathcal{O}$ be a bounded domain in $\mathbb{R}^3$ with $\mathcal{B} \subset \mathcal{O}$. For the fluid-structure operator on the bounded fluid region $\mathcal{O} \setminus \mathcal{B}$, they proved that the operator generates an analytic semigroup with exponential stability (see \cite[Theorem 4.7]{necasova.amrouche}), and the system enjoys the maximal regularity property (see \cite[Theorem 4.9]{necasova.amrouche}). Moreover, there exists a unique strong solution ${\bf u} \in {\bf L}^q_t({\bf W}^{2,p}_x) \cap {\bf W}^{1,q}_t({\bf L}^p_x)$ to the nonlinear problem (see \cite[Theorem 2.1]{necasova.amrouche}).
Despite these operator-theoretic advancements, the analysis of the fluid-structure operator in an exterior domain ($\Omega = \mathbb{R}^3 \setminus \overline{B}$) remains largely open.

In this paper, we study the linear system \eqref{e01} in the exterior domain $\Omega$, where the body $B$ is contained in a ball $B_{R_0}$ for some $R_0>0$. We establish strong $L^q$ regularity and prove that the associated fluid-structure operator generates a bounded analytic semigroup. Furthermore, we derive the $L^r$-$L^q$ decay estimates for the semigroup, which is a crucial step toward understanding the long-time behavior of the original nonlinear problem.

The rest of this paper is organized as follows. 
In Section~\ref{section.mains}, we introduce the general notation and state the main results. Section~\ref{section.fluidstructure} describes the fluid-structure operator, followed by basic computations in Section~\ref{preliminary}.

In Section~\ref{section.maximal.L2}, we establish the foundational $L^2$-theory. By studying the fluid-structure operator on the Hilbert space $\mathbb{X}^2$, we prove its self-adjointness and the resulting maximal $L^2$-regularity. This $L^2$-framework lays the groundwork for our subsequent analysis, as the self-adjointness directly yields sectoriality (i.e., the generation of a bounded analytic semigroup in $L^2$).
In Section~\ref{section.maximal.thm}, we shift our focus to the general $L^q$ framework and establish the maximal $L^q$-regularity on a finite time interval for $1 < q < \infty$. This result is achieved through a localization technique combining whole-space and bounded-domain results, along with a standard density argument.

In Section~\ref{section.analyticity.thm}, we establish the bounded analyticity (sectoriality) of the fluid-structure semigroup for $1 < q < \infty$. We begin with the $L^2$-sectoriality via self-adjointness, then establish the resolvent estimates necessary for $L^q$-sectoriality in the range $1 < q < 3/2$ via a contradiction argument, and finally extend this property to the full range through interpolation and duality.

Transitioning to the asymptotic behavior of the solution, Section~\ref{section.largetime.thm} is devoted to deriving the $L^r$-$L^q$ decay estimates for $1 < r \leq q < \infty$. Our proof splits into short-time and large-time estimates. The large-time behavior is analyzed via global-in-time energy inequalities, Korn-type inequalities under Navier-slip conditions, and the classical Solonnikov--Maremonti approach \cite{maremonti-solonnikov}.
Finally, in Section~\ref{section.maximal.global.thm}, by utilizing these decay estimates and related tools, we conclude our main analysis by establishing the {uniform-in-time maximal $L^q$-regularity} for the range $1 < q < 3/2$. The endpoint case $q = \infty$ is left for future study.

{
{
\begin{rem}
The linear system \eqref{e01} and the conditions \textup{(C1)}--\textup{(C3)} are derived from the fluid-rigid body interaction problem with a spherical obstacle. However, the same linear system can be obtained from the linearization around the rest state for a rigid body of an arbitrary shape (see, for instance, the appendix of \cite{necasova.amrouche} for the detailed linearization process).

Mathematically, this linearized system is simply defined on a fixed domain $\Omega$ exterior to a bounded region. Since the main goal of this paper is to study the properties of the linear system \eqref{e01} itself, the specific geometry of this inner bounded region does not matter. Consequently, the results presented in the subsequent sections, including Theorems \ref{maximal.L2}--\ref{largetime.thm}, hold true whether the excluded bounded domain is a sphere or any other arbitrary shape.
%
%
\end{rem}
}
}

\section{\bf Notation and Main Theorems} \label{section.mains}

In this section, we briefly introduce the notation and function spaces required for the statement of the main theorems. To maintain focus, we reserve the operator-specific notation for the fluid-structure interaction for Section~\ref{section.fluidstructure}.

Throughout this paper, $c$ denotes a generic positive constant whose value may change from line to line. Its dependence on specific parameters is indicated by $c(\cdot, \dots, \cdot)$. For simplicity, we may omit parameters that remain fixed throughout the context.

Functions are distinguished according to their algebraic nature: scalar-valued functions are written in lightface (e.g., $u$), vector-valued functions in boldface (e.g., $\mathbf{u}$), and tensor-valued functions in blackboard bold (e.g., $\mathbb{T}$). To ensure a clear distinction, we apply the same typographic convention to their respective function spaces. For instance, $L^q(D)$, $\mathbf{L}^q(D)$, and $\mathbb{L}^q(D)$ represent the Lebesgue spaces of scalar, vector, and tensor-valued functions, respectively. 
Additionally, blackboard bold symbols are also employed to denote composite product spaces representing the coupled fluid-structure system (e.g., $\mathbb{X}^q$).
Under this convention, the second-order spatial derivative $D^2 \mathbf{u}$ is naturally understood as an element of a tensor-valued space such as $\mathbb{L}^q(D)$ or $\mathbb{W}^{s,p}(D)$. 

Regardless of these typographical distinctions, the associated norms are always written in the standard form $\|\cdot\|_{X}$ without boldface or other decorations. For instance, we simply write $\|{\bf u}\|_{L^q(D)}$ for ${\bf u} \in \mathbf{L}^q(D)$. In particular, when the domain is the entire space $\mathbb{R}^3$, we often omit the domain and write $\|u\|_{L^r}$ for the $L^r(\mathbb{R}^3)$-norm. For any vector $x \in \mathbb{R}^3$, its Euclidean norm is denoted by $|x|$.

For a domain $D \subset \mathbb{R}^3$, we denote by $C_0^k(D)$ the space of scalar-valued $C^k$ functions with compact support in $D$ ($0 \le k \le \infty$). For $1 \le p \le \infty$, $L^p(D)$ is the usual Lebesgue space. For $s \ge 0$, $W^{s,p}(D)$ denotes the Sobolev space of order $s$, which refers to the classical Sobolev space when $s \in \mathbb{N}_0$, and the Sobolev-Slobodeckij space when $s \notin \mathbb{N}_0$. In the latter case, we have the identification $W^{s,p}(D) = B^s_{p,p}(D)$.

Following the common convention in fluid mechanics, we denote by $\mathbf{C}_{0, \sigma}^\infty(D)$ the space of smooth, solenoidal (divergence-free) functions compactly supported in $D$, and by $\mathbf{L}^q_\sigma(D)$ the space of divergence-free $\mathbf{L}^q$ functions in $D$. For our specific exterior domain $\Omega$, we define
\[
    \mathbf{L}^q_{\sigma,\tau}(\Omega) := \{ \mathbf{v} \in \mathbf{L}^q_\sigma(\Omega) : \mathbf{v} \cdot \mathbf{n} = 0 \text{ on } \partial \Omega \}.
\]
It is well known that $\mathbf{L}^q_{\sigma,\tau}(\Omega)$ coincides with the closure of $\mathbf{C}_{0, \sigma}^\infty(\Omega)$ with respect to the $\mathbf{L}^q(\Omega)$-norm (see, e.g., \cite[Section~III.2]{galdi} and \cite[Chapter~II]{Sohr2001}). This characterization implies that the boundary condition $\mathbf{v} \cdot \mathbf{n} = 0$ is naturally satisfied in the sense of traces for any $\mathbf{v} \in \mathbf{L}^q_{\sigma,\tau}(\Omega)$.

The Stokes operator $A_{\alpha,q} : \mathcal{D}(A_{\alpha,q}) \subset \mathbf{L}^q_{\sigma,\tau}(\Omega) \to \mathbf{L}^q_{\sigma,\tau}(\Omega)$ is defined by $A_{\alpha,q} \mathbf{v} := - P \Delta \mathbf{v}$, where $P$ is the Helmholtz projection. Its domain is given by
\[
\mathcal{D}(A_{\alpha,q}) := \left\{ \mathbf{v} \in \mathbf{L}^q_{\sigma,\tau}(\Omega) \cap \mathbf{W}^{2,q}(\Omega) : 2\mu [ \mathbb{D}(\mathbf{v})\mathbf{n}]_\tau + \alpha [ \mathbf{v}]_\tau = {\bf 0} \text{ on } \partial \Omega \right\}.
\]

For any $T > 0$, we define the space-time domain $Q_T := \Omega \times (0, T)$. We denote by $(X_1, X_2)_{a,q}$ the real interpolation space between two Banach spaces $X_1$ and $X_2$. 

For later use, we say that a triple $({\bf v}_0, \boldsymbol{\ell}_0, \boldsymbol{\omega}_0) \in \mathbf{W}^{2-\frac{2}{q},q}(\Omega) \times \mathbb{R}^3 \times \mathbb{R}^3$ satisfies the compatibility condition if 
\begin{equation} \label{n66}
\begin{aligned}
&{\bf v}_0 - \boldsymbol{\varphi}_0 \in ({\bf L}^q_{\sigma,\tau}(\Omega), \mathcal{D}(A_{\alpha,q}))_{1-\frac{1}{q},\,q} \\
&\text{for some } \boldsymbol{\varphi}_0 \in \mathbf{C}^2_0(\overline{\Omega}) \text{ such that } \\
&\begin{cases}
\operatorname{div} \boldsymbol{\varphi}_0 = 0 \qquad \text{in } \Omega,\\[2mm]
(\boldsymbol{\varphi}_0 -\boldsymbol{\ell}_0 - \boldsymbol{\omega}_0 \times x)\cdot {\bf n}=0 \quad \text{on } \partial \Omega,\\[2mm]
2\mu [\mathbb{D}(\boldsymbol{\varphi}_0){\bf n} ]_\tau+ \alpha[\boldsymbol{\varphi}_0 - \boldsymbol{\ell}_0 - \boldsymbol{\omega}_0 \times x ]_\tau={\bf 0}\quad \text{on } \partial \Omega,\\[2mm]
\displaystyle \lim_{|x|\to\infty}\boldsymbol{\varphi}_0 = {\bf 0}.
\end{cases}
\end{aligned}
\end{equation}

For initial data with lower regularity, specifically for ${\bf v}_0 \in \mathbf{L}^q(\Omega)$, the compatibility condition is understood in a reduced sense: we require ${\bf v}_0 - \boldsymbol{\varphi}_0 \in \mathbf{L}^q_{\sigma,\tau}(\Omega)$ for a smooth extension $\boldsymbol{\varphi}_0$ that satisfies the divergence-free, no-penetration, and far-field conditions given in \eqref{n66}.

This definition of the compatibility condition is motivated by the functional framework established for bounded domains in \cite{necasova.amrouche}. Since we intend to adapt and employ several fundamental estimates from the bounded domain case to our exterior domain setting, we adopt a consistent notion of compatibility. This ensures a natural transition of results between different domain geometries and provides a unified basis for our regularity analysis.

To establish strong solutions, we next recall the notion of maximal $L^q$-regularity,
which will play a central role in the analysis of the fluid–structure operator.
\begin{defin}
Let $X$ be a Banach space.
Let $1<q<\infty$ and $0<T\leq \infty$. 
Consider the Cauchy problem 
\[ \frac{dy}{dt} + Ay = f, \quad t \in (0,T), \quad y(0)=0 \]
for a given $f \in L^q(0,T; X)$.
We say $A$ has maximal $L^q$ regularity in $X$ on $(0,T)$ if for every such $f$, there exists a unique solution $y$ satisfying $\frac{dy}{dt}, Ay \in L^q(0,T; X)$, and
\[
\left\| \frac{dy}{dt} \right\|_{L^q(0,T;X)} + \| Ay \|_{L^q(0,T;X)} \leq c\|f\|_{L^q(0,T;X)}
\]
for some constant $c>0$ independent of $f$.
\end{defin}

{
{
Our first main result establishes the foundational $L^2$-theory. By exploiting the self-adjointness of the fluid-structure operator on the Hilbert space ${\mathbb X}^2$ (whose precise definition is deferred to Section \ref{section.fluidstructure}), we obtain the maximal $L^2$-regularity for the linear system \eqref{e01}.

\begin{theo}[Self-adjointness and Maximal $L^2$-regularity]
\label{maximal.L2}
Let $\Omega$ be the exterior domain introduced in Section 1, and let $0<T<\infty$.
Suppose that the initial data $({\bf v}_0,\boldsymbol{\ell}_0,\boldsymbol{\omega}_0)\in {\bf W}^{1,2}(\Omega)\times {\mathbb R}^3\times {\mathbb R}^3$ satisfies the compatibility condition \eqref{n66}, and that the forcing terms $({\bf f}_0,{\bf f}_1,{\bf f}_2)\in {\bf L}^2(Q_T)\times {\bf L}^2(0,T)\times {\bf L}^2(0,T)$.  

Then, the associated fluid-structure operator is self-adjoint and positive definite, and there exists a unique strong solution $({\bf v},\pi,\boldsymbol{\ell},\boldsymbol{\omega})$ to the system \eqref{e01} on $(0,T)$, subject to the conditions \textup{(C1)}–\textup{(C3)}, satisfying the following regularity estimate:
\begin{align*}
&\|{\bf v}\|_{L^2(0,T;W^{2,2}(\Omega))}+\|\partial_t {\bf v}\|_{L^2(Q_T)}+ \|{\bf v}\|_{L^\infty(0,T;W^{1,2}(\Omega))}\\
&\hspace{20mm}
 +\|\nabla \pi\|_{L^2(Q_T)}+\|\boldsymbol{\ell}\|_{W^{1,2}(0,T)}+\|\boldsymbol{\omega}\|_{W^{1,2}(0,T)}\\
&\leq c(T)\big( \|{\bf v}_0\|_{W^{1,2}(\Omega)}{+}|\boldsymbol{\ell}_0|{+}|\boldsymbol{\omega}_0|
{+}\|{\bf f}_0\|_{L^2(Q_T)}+\|{\bf f}_1\|_{L^2(0,T)}
{+}\|{\bf f}_2\|_{L^2(0,T)}\big).
\end{align*}
\end{theo}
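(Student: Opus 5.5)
I would work in the Hilbert space $\mathbb{X}^2$ of triples $({\bf v},\boldsymbol{\ell},\boldsymbol{\omega})$, identifying each triple with the global velocity field $\tilde{\bf v}$ on $\mathbb{R}^3$ that equals ${\bf v}$ in $\Omega$ and $\boldsymbol{\ell}+\boldsymbol{\omega}\times x$ in $B$. The inner product is the kinetic energy
\[
\langle ({\bf v},\boldsymbol{\ell},\boldsymbol{\omega}),({\bf w},{\bf k},\boldsymbol{\eta})\rangle=\int_\Omega {\bf v}\cdot{\bf w}\,dx+m_0\boldsymbol{\ell}\cdot{\bf k}+\mathbb{J}_0\boldsymbol{\omega}\cdot\boldsymbol{\eta},
\]
and the constraint is that ${\bf v}$ is divergence free with $({\bf v}-\boldsymbol{\ell}-\boldsymbol{\omega}\times x)\cdot{\bf n}=0$ on $\partial\Omega$. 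The fluid-structure operator $\mathcal{A}$ is obtained by projecting the Stokes part and the rigid-body equations with the projection of $\mathbf{L}^2(\mathbb{R}^3)$ onto this space. Its domain consists of triples with ${\bf v}\in{\bf W}^{2,2}(\Omega)$ satisfying (C2).

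The first step is Green's identity. For two elements $U=({\bf v},\boldsymbol{\ell},\boldsymbol{\omega})$, $W=({\bf w},{\bf k},\boldsymbol{\eta})$ of the domain, I integrate $-\mu\Delta{\bf v}+\nabla\pi=-\operatorname{div}\mathbb{S}({\bf v},\pi)$ against ${\bf w}$. I split ${\bf w}$ on $\partial\Omega$ into its normal part, which equals the rigid normal part, and its tangential part. The rigid pieces cancel exactly against the force and torque terms $-\int_{\partial\Omega}\mathbb{S}{\bf n}\cdot({\bf k}+\boldsymbol{\eta}\times x)$, and the pressure drops out by the divergence and no-penetration conditions. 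Using the slip condition for the tangential part, this gives
\[
\langle \mathcal{A}U,W\rangle=2\mu\int_\Omega \mathbb{D}({\bf v}):\mathbb{D}({\bf w})\,dx+\alpha\int_{\partial\Omega}[{\bf v}-\boldsymbol{\ell}-\boldsymbol{\omega}\times x]_\tau\cdot[{\bf w}-{\bf k}-\boldsymbol{\eta}\times x]_\tau\,dS .
\]
This form is symmetric and nonnegative. It is closed on the energy space $\mathbb{V}$ of triples with $\nabla{\bf v}\in\mathbf{L}^2$, so $\mathcal{A}$ is the Friedrichs operator of this form.

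Self-adjointness then reduces to showing that the form operator coincides with $\mathcal{A}$. That is, a weak solution of $(\lambda+\mathcal{A})U=F$ with $F\in\mathbb{X}^2$ has ${\bf v}\in{\bf W}^{2,2}$ and satisfies (C2) classically. This is the step I expect to be the main obstacle. I would first use Lax--Milgram for $\lambda>0$, with $\mathbb{V}$-coercivity coming from a Korn inequality adapted to the rigid part: $\|\nabla{\bf v}\|_{L^2}^2\le c(\|\mathbb{D}({\bf v})\|^2_{L^2}+\|{\bf v}\|^2_{L^2})$ in the exterior domain. I would then localize with a cutoff $\chi$ equal to $1$ near $B$. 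The part $\chi{\bf v}$ solves a Stokes problem with Navier slip in a bounded annular region (outer boundary no-slip or slip), with right-hand side in $L^2$ after a Bogovskii correction of the divergence, so the bounded-domain ${\bf W}^{2,2}$ regularity (as in \cite{necasova.amrouche}) applies. The part $(1-\chi){\bf v}$ is handled by whole-space Stokes regularity. The rigid unknowns enter only as smooth boundary data $\boldsymbol{\ell}+\boldsymbol{\omega}\times x$. I would subtract the lift $\boldsymbol{\varphi}$, the solenoidal compactly supported extension of the rigid motion satisfying the slip condition (as in \eqref{n66}), so that the remaining data are homogeneous.

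Positive definiteness means $\langle\mathcal{A}U,U\rangle>0$ for $U\ne0$, i.e. $\mathcal A$ is injective. If the form vanishes, then $\mathbb{D}({\bf v})=0$, so ${\bf v}$ is a rigid motion in $\Omega$. Decay at infinity forces ${\bf v}=0$. Then $\alpha>0$ gives $[\boldsymbol{\ell}+\boldsymbol{\omega}\times x]_\tau=0$, and the normal condition gives $(\boldsymbol{\ell}+\boldsymbol{\omega}\times x)\cdot{\bf n}=0$ on the sphere, hence $\boldsymbol{\ell}=\boldsymbol{\omega}=0$. Note this only gives positivity, not a spectral gap, in the exterior setting; the finite-$T$ estimate does not need a gap.

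Maximal $L^2$-regularity on $(0,T)$ follows because a nonnegative self-adjoint operator on a Hilbert space generates an analytic semigroup, and on Hilbert spaces this yields maximal regularity (de Simon). I apply this to $\mathcal{A}+1$ and absorb the shift with the factor $e^{T}$ in $c(T)$. The initial data are reduced to zero by noting that ${\bf v}_0-\boldsymbol{\varphi}_0$ lies in $(\mathbb{X}^2,\mathcal{D}(\mathcal{A}))_{1/2,2}=\mathcal{D}(\mathcal{A}^{1/2})=\mathbb{V}$, which is exactly the trace space needed. The $L^\infty(0,T;W^{1,2})$ bound comes from the trace embedding, or from testing with $\partial_t{\bf v}$. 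The pressure gradient $\nabla\pi$ is recovered from the equation, and the bounds on $\boldsymbol{\ell}$ and $\boldsymbol{\omega}$ in $W^{1,2}(0,T)$ come from the component bounds of $\partial_tU$. Uniqueness follows from the energy identity above.
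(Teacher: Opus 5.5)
Your proposal is correct and is essentially the paper's argument. Both use the Green identity of Lemma~\ref{prop.pre2}, then Lax--Milgram with Korn-type coercivity, then recovery of the pressure and the slip condition followed by $W^{2,2}$ regularity through localization (which is exactly how Lemma~\ref{prop.ogawa} is proved in Appendix~\ref{appendix.prop.ogawa}), then self-adjointness from symmetry plus the range condition, and finally abstract Hilbert-space maximal regularity. Your form-method packaging (Kato's representation theorem) adds one small benefit: it identifies $\mathcal{D}(\mathbb{A}_{\alpha,2}^{1/2})$ with the energy space, which directly justifies that $W^{1,2}$ initial data satisfying the normal compatibility condition lie in the trace space $\mathbb{B}^{1,2}_\alpha$, a point the paper only asserts.
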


Second, we establish the maximal $L^q$-regularity in the space ${\mathbb X}^q$ for $1 < q < \infty$ using a localization technique and a density argument. The precise result is stated in the following theorem.
\begin{theo}[Maximal $L^q$-regularity]
\label{maximal.thm}

Consider the same geometric setting as in Theorem \ref{maximal.L2}. Let $1 < q < \infty$ and $0 < T < \infty$. 
Suppose that the initial data satisfies $({\bf v}_0, \boldsymbol{\ell}_0, \boldsymbol{\omega}_0) \in {\bf W}^{2-\frac{2}{q},q}(\Omega) \times \mathbb{R}^3 \times \mathbb{R}^3$ along with the appropriate compatibility conditions. Assume further that the external forces satisfy $({\bf f}_0, {\bf f}_1, {\bf f}_2) \in {\bf L}^q(Q_T) \times {\bf L}^q(0, T) \times {\bf L}^q(0, T)$.

Then, there exists a unique strong solution $({\bf v},\pi,\boldsymbol{\ell},\boldsymbol{\omega})$ to the system \eqref{e01}, subject to the conditions \textup{(C1)}–\textup{(C3)}, which satisfies the following estimate:
\begin{align*}
&\|\nabla^2{\bf v}\|_{L^q(Q_T)} + \|\partial_t {\bf v}\|_{L^q(Q_T)} 
+ \|{\bf v}\|_{L^\infty(0,T;W^{2-\frac{2}{q},q}(\Omega))} \\
&\hspace{20mm}+\|\nabla \pi\|_{L^q(Q_T)} + \|\boldsymbol{\ell}\|_{W^{1,q}(0,T)} + \|\boldsymbol{\omega}\|_{W^{1,q}(0,T)} 
\\
&\leq c(T) \big( \|{\bf v}_0\|_{W^{2-\frac{2}{q},q}(\Omega)}
{+}|\boldsymbol{\ell}_0| {+} |\boldsymbol{\omega}_0| {+} \|{\bf f}_0\|_{L^q(Q_T)}
 {+} \|{\bf f}_1\|_{L^q(0,T)} {+} \|{\bf f}_2\|_{L^q(0,T)} \big).
\end{align*}
Here, the positive constant $c(T)$ generally increases with $T$. However, for $1 < q < \frac{3}{2}$, the constant can be chosen independently of $T$, yielding a uniform-in-time estimate.
\end{theo}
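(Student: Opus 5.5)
We reduce to homogeneous data, then localize to a whole-space problem and a bounded fluid--structure problem, and close the estimate by a contraction in $T$ followed by a density argument.

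\textbf{Reduction.} First we subtract a lifting of the initial data. Using the compatibility condition \eqref{n66}, we write ${\bf v}_0=\boldsymbol{\varphi}_0+{\bf w}_0$ with ${\bf w}_0\in({\bf L}^q_{\sigma,\tau}(\Omega),\mathcal D(A_{\alpha,q}))_{1-1/q,q}$. The rigid part $\boldsymbol{\ell}_0,\boldsymbol{\omega}_0$ enters only through $\boldsymbol{\varphi}_0$, which is smooth and compactly supported. By the standard trace characterization of real interpolation spaces, this reduces the problem to zero initial data and forcing in ${\bf L}^q$, up to lower-order terms controlled by $\|{\bf v}_0\|_{W^{2-2/q,q}}+|\boldsymbol{\ell}_0|+|\boldsymbol{\omega}_0|$. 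The $L^\infty_t W^{2-2/q,q}_x$ bound then follows from the maximal-regularity norms by the usual embedding $W^{1,q}(0,T;L^q)\cap L^q(0,T;W^{2,q})\hookrightarrow C([0,T];W^{2-2/q,q})$.

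\textbf{Localization.} Fix $R>R_0$ and a cut-off $\zeta\in C_0^\infty(B_{R+1})$ with $\zeta=1$ on $B_R$. For the zero-data problem we build an approximate solution operator from two pieces:
\begin{itemize}
\item the whole-space Stokes problem for $(1-\zeta){\bf f}_0$, which has maximal $L^q$-regularity by classical results;
\item the bounded fluid--structure problem on $B_{R+2}\setminus\overline B$ with Navier-slip conditions on $\partial B$ and, say, slip or no-slip on $\partial B_{R+2}$, for $\zeta{\bf f}_0$, ${\bf f}_1$, ${\bf f}_2$. For this we invoke the maximal regularity of \cite[Theorem~4.9]{necasova.amrouche}, with the rigid ODEs attached to the inner boundary.
\end{itemize}
We glue the two solutions with $\zeta$. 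The gluing destroys the divergence-free condition on the annulus $B_{R+1}\setminus B_R$, so we correct it with the Bogovski\u{\i} operator. The resulting error terms are of lower order: commutators $\nabla\zeta\cdot\nabla{\bf v}$, $(\Delta\zeta){\bf v}$, $\pi\nabla\zeta$, and the time derivative of the Bogovski\u{\i} correction. They are supported in a compact annulus. Since they involve at most first derivatives of ${\bf v}$, the pressure itself (normalized by its mean on the annulus), and negative-norm control of $\partial_t$ via the Bogovski\u{\i} operator in $W^{-1,q}_0$, they are bounded by $T^{\theta}$ times the maximal-regularity norm for some $\theta>0$. For this we use interpolation on $(0,T)$ with zero initial data, which keeps the constants uniform in small $T$.

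\textbf{Closing the argument.} For small $T_0$ the resulting map is a contraction, which gives existence on $(0,T_0)$. The estimate on $(0,T)$ then follows by iterating over intervals of length $T_0$. This iteration is why $c(T)$ grows with $T$. Uniqueness follows from the $L^2$ theory of Theorem~\ref{maximal.L2}: by the same localization, an $L^q$ solution with zero data is also locally $L^2$, hence vanishes via the energy identity. Alternatively, uniqueness follows by duality against the adjoint problem, which has the same structure. A density argument then passes from smooth, compactly supported data to general data.

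\textbf{Main obstacle.} I expect the hardest step to be the pressure term $\pi\nabla\zeta$ in the error estimate. In an exterior domain only $\nabla\pi\in L^q$ is controlled, and the rigid-body equations couple the boundary integral of $\mathbb S({\bf v},\pi){\bf n}$ to the pressure trace. I would handle it by normalizing $\pi$ on the annulus, using the Poincar\'e inequality, and estimating the pressure through a weak Neumann problem. That gives $\|\pi\|_{L^q(\text{annulus})}\le c\|{\bf v}\|_{W^{1+s,q}}$, with lower order in time, plus $\|\partial_t{\bf v}\|_{W^{-1,q}}$.

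The uniform-in-$T$ statement for $1<q<3/2$ is not obtained here. It is deferred: it is to be deduced later from the bounded analyticity and the $L^r$--$L^q$ decay estimates, via the $L^q$-sectoriality and a Solonnikov--Maremonti-type argument. For $q<3/2$ the decay of the lower-order localization terms is integrable in time.
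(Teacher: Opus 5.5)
Your route differs from the paper's. You build a parametrix and close it by a contraction for small $T$, whereas the paper proves an a priori estimate for a given solution (Lemma~\ref{prop.theorem.strong.Lp}) and closes it by Gronwall. The reduction to zero initial data and the trace embedding are fine.

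\textbf{The gap: the gluing residuals are not $O(T^\theta)$.} In your parametrix each piece carries its own pressure, and part of that pressure is fixed instantaneously by the piece's forcing.
For the whole-space piece, $\nabla\pi_1=(I-P_{\mathbb R^3})\big((1-\zeta){\bf f}_0\big)$. On the gluing annulus, $\pi_1$ (modulo spatial constants) is therefore in general comparable to $\|{\bf f}_0\|_{L^q(Q_T)}$, with no power of $T$. Normalizing and applying Poincar\'e only returns $\|\nabla\pi_1\|_{L^q(Q_T)}$, which is a top-order part of the maximal-regularity norm. The bounded fluid--structure piece behaves the same way.

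The Bogovski\u{\i} correction has the same defect. You bound $\|\partial_t{\bf w}\|_{L^q}$ by $\|\nabla\zeta\cdot\partial_t{\bf v}\|_{W^{-1,q}}$. Testing against $\phi\nabla\zeta$, which is not solenoidal, and using the equation brings back $\|\pi\|_{L^q(\mathrm{ann})}+\|{\bf f}_0\|_{L^q}$. Nor is there a gain in time: for ${\bf v}\in{}_0W^{1,q}(0,T;L^q)\cap L^q(0,T;W^{2,q})$ one cannot have $\|\partial_t{\bf v}\|_{L^q(0,T;W^{-1,q})}\le cT^\theta\|{\bf v}\|_{\mathrm{MR}}$. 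To see this, take ${\bf v}=(t/T)\Phi(x)$.

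Your proposed pressure bound $\|\pi\|\le c\|{\bf v}\|_{W^{1+s,q}}+\|\partial_t{\bf v}\|_{W^{-1,q}}$ has two problems. It omits the forcing contributions to the Neumann problem (${\rm div}\,{\bf f}_0$ in the bulk, ${\bf f}_0\cdot{\bf n}$ on the boundary). Its last term is not small.
So the residual map has norm $O(1)$ as $T\to0$, and the contraction does not close. To repair this you would have to eliminate the pressure first, via a reduced (weak-Neumann) formulation with projected data. There $\pi$ and the rigid accelerations become lower-order functionals of ${\bf v}$ through the added-mass structure. That is a substantial extra argument you have not supplied.

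\textbf{How the paper proceeds, and why it matters for $q<3/2$.} The paper uses the same two localizations, with \cite[Theorem~4.9]{necasova.amrouche} for the bounded piece, but applies them to a given solution. In that setting, contributions of size $\|{\bf f}\|$ are harmless because they already sit on the right-hand side.
Lemma~\ref{lem:pressure_accel_decomp} bounds $\|\pi\|_{L^q(\Omega_{2R})}$ by $\|\nabla{\bf v}\|_{W^{s,q}(\Omega_{2R})}$ plus data. It treats $\partial_t{\bf v}\cdot{\bf n}=(\dot{\boldsymbol{\ell}}+\dot{\boldsymbol{\omega}}\times x)\cdot{\bf n}$ through Kirchhoff potentials and the added mass. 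After interpolation, only $\int_0^T\|{\bf v}\|^q_{L^q(\Omega_{2R})}\,dt$ remains, with a constant independent of $T$.

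For finite $T$, this term is closed by Gronwall via ${\bf v}(t)={\bf v}_0+\int_0^t\partial_s{\bf v}\,ds$. Existence comes from the $L^2$ theory for smooth data plus density.
The same $T$-independent localized estimate is exactly what the uniform claim needs. In Section~\ref{section.maximal.global.thm} the local term is bounded through semigroup decay, and $q<3/2$ is what makes the Duhamel kernel $t^{-\frac32(\frac1q-\frac1m)}$ integrable on $(1,\infty)$. Your iteration over intervals of length $T_0$ only produces a growing $c(T)$, not such an estimate. Your deferral of the $q<3/2$ case therefore has nothing to feed into.

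Finally, your first uniqueness argument (local $L^2$ plus the energy identity) fails for $q>2$, where solutions need not be globally $L^2$. Use the duality argument instead.
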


\begin{rem}
Although Theorem \ref{maximal.L2} ($q=2$) is a special case of our general $L^q$-result (Theorem \ref{maximal.thm}), it is stated separately to emphasize the self-adjointness of the operator in the Hilbert space setting. This self-adjointness directly implies $L^2$-sectoriality, which serves as a crucial building block for our analysis. 
\end{rem}

The following theorem establishes that the fluid-structure operator is sectorial, and consequently generates a bounded analytic semigroup on the underlying space $\mathbb{X}^q$, which will be precisely defined in Section \ref{section.fluidstructure}.

\begin{theo}[Sectoriality and Analytic Semigroup]
\label{analyticity.thm}
Under the same geometric setting for $\Omega$ as above, let $1 < q < \infty$. 
The fluid-structure operator associated with the system \eqref{e01}, subject to
\textup{(C2)}–\textup{(C3)}, is sectorial with angle $\omega=0$, and consequently generates a bounded analytic semigroup on $\mathbb{X}^q$.

Furthermore, assume that the external forces vanish, i.e., $({\bf f}_0, {\bf f}_1, {\bf f}_2) = ({\bf 0}, {\bf 0}, {\bf 0})$. For any initial data $({\bf v}_0, \boldsymbol{\ell}_0, \boldsymbol{\omega}_0) \in {\bf L}^q_\sigma(\Omega) \times \mathbb{R}^3 \times \mathbb{R}^3$ satisfying the compatibility condition \eqref{n66}, the unique strong solution $({\bf v}, \pi, \boldsymbol{\ell}, \boldsymbol{\omega})$ to the system \eqref{e01} satisfies the following estimates for all $t > 0$:
\[
\|{\bf v}(t)\|_{L^q(\Omega)} + |\boldsymbol{\ell}(t)| + |\boldsymbol{\omega}(t)|
\le c\big(\|{\bf v}_0\|_{L^q(\Omega)} + |\boldsymbol{\ell}_0| + |\boldsymbol{\omega}_0|\big),
\]
\[
\|\partial_t {\bf v}(t)\|_{L^q(\Omega)}
+ |\dot{\boldsymbol{\ell}}(t)| + |\dot{\boldsymbol{\omega}}(t)|
\le ct^{-1}\big(\|{\bf v}_0\|_{L^q(\Omega)} + |\boldsymbol{\ell}_0| + |\boldsymbol{\omega}_0|\big).
\]
\end{theo}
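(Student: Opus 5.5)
The plan follows the roadmap announced in the introduction: obtain $L^2$-sectoriality from self-adjointness, prove uniform resolvent estimates in $\mathbb{X}^q$ for $1<q<3/2$ by contradiction, and then cover all $1<q<\infty$ by interpolation and duality. We write the fluid-structure operator as $\mathcal{A}_q$ on $\mathbb{X}^q$, realized in the usual way as a monolithic operator on an extended velocity field that equals $\boldsymbol{\ell}+\boldsymbol{\omega}\times x$ in $B$.

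\textbf{The $L^2$ case.} By Theorem \ref{maximal.L2}, $\mathcal{A}_2$ is self-adjoint and positive definite on the Hilbert space $\mathbb{X}^2$. The spectral theorem then gives $\sigma(\mathcal{A}_2)\subset[0,\infty)$ and
\[
\|(\lambda+\mathcal{A}_2)^{-1}\|\le \frac{1}{\operatorname{dist}(-\lambda,[0,\infty))}\le \frac{c_\varepsilon}{|\lambda|}, \qquad \lambda\in\Sigma_{\pi-\varepsilon},
\]
so $\mathcal{A}_2$ is sectorial with angle $0$.

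\textbf{Resolvent estimate for $1<q<3/2$.} We consider the resolvent problem $\lambda \mathbf{v}+\mathcal{A}_q\mathbf{v}=\mathbf{F}$, $\lambda\in\Sigma_{\pi-\varepsilon}$. The first step is existence together with the estimate
\[
|\lambda|\,\|\mathbf{v}\|_{\mathbb{X}^q}+\|\nabla^2\mathbf{v}\|_{L^q}\le c\|\mathbf{F}\|_{\mathbb{X}^q}
\]
for $|\lambda|\ge\lambda_0$. This follows from the localization used for Theorem \ref{maximal.thm}, equivalently from the Laplace transform of maximal regularity: glue the whole-space Stokes resolvent and the bounded-domain fluid-structure resolvent of \cite{necasova.amrouche} with cut-offs, and absorb the commutator terms for large $|\lambda|$.

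The second step extends the estimate down to $\lambda\to0$, and this is where the contradiction argument enters. Suppose there are $\lambda_k\in\Sigma_{\pi-\varepsilon}$ with $|\lambda_k|\le\lambda_0$ and data $\mathbf{F}_k\to0$ while $\|\mathbf{v}_k\|$ is normalized. Local compactness then yields a limit solving $\lambda\mathbf{v}+\mathcal{A}\mathbf{v}=0$, and we must show it vanishes.
\begin{itemize}
\item For $\lambda\neq0$: uniqueness comes from showing $\mathbf{v}\in\mathbb{X}^2$ via local regularity and decay (the decay is the Stokes resolvent kernel), after which the $L^2$ theory applies.
\item For $\lambda=0$: the estimate must be normalized in the homogeneous norm $\|\nabla^2\mathbf{v}\|_{L^q}$. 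The restriction $q<3/2$ is exactly what gives $\nabla\mathbf{v}\in L^{3q/(3-q)}$ and $\mathbf{v}\in L^{3q/(3-2q)}$ by Sobolev embedding, with $3q/(3-2q)<\infty$, and it excludes constant or rigid-motion limits at infinity.
\item For the stationary problem the energy identity gives
\[
2\mu\|\mathbb{D}(\mathbf{v})\|_2^2+\alpha\|[\mathbf{v}-\boldsymbol{\ell}-\boldsymbol{\omega}\times x]_\tau\|_{L^2(\partial\Omega)}^2=0.
\]
Hence $\mathbf{v}$ is a rigid motion that vanishes at infinity, so $\mathbf{v}=0$; the slip term then forces $\boldsymbol{\ell}=\boldsymbol{\omega}=0$.
\end{itemize}
To rule out loss of mass at infinity, we use the uniform estimates of the whole-space Stokes resolvent on the exterior part of the cut-off decomposition. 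This upgrades the weak convergence to a contradiction with the normalization.

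\textbf{Full range and the semigroup estimates.} By duality, $\mathcal{A}_q^*=\mathcal{A}_{q'}$, since the operator is formally symmetric with the same bilinear form as in $L^2$. This gives sectoriality for $q'>3$. Riesz–Thorin interpolation between $\mathcal{A}_2$, $\mathcal{A}_q$ with $q<3/2$, and their duals then fills the ranges $[3/2,2]$ and $[2,3]$; the resolvents are consistent on the intersections, which interpolation requires. Sectoriality of angle $0$ yields a bounded analytic semigroup, and the two stated inequalities are $\sup_t\|T(t)\|\le c$ and $\sup_t\|t\,\mathcal{A}T(t)\|\le c$. The bounds on $\dot{\boldsymbol{\ell}}$ and $\dot{\boldsymbol{\omega}}$ follow because they are components of $\mathcal{A}T(t)$. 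For data satisfying only the reduced compatibility condition, we subtract $\boldsymbol{\varphi}_0$, equivalently identify the data with an element of $\mathbb{X}^q$.

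I expect the main obstacle to be the contradiction argument near $\lambda=0$, namely obtaining compactness in the exterior domain together with the uniqueness of decaying stationary solutions.
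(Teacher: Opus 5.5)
Your plan is correct and follows the paper's argument. Both use $L^2$-sectoriality from self-adjointness, a large-$|\lambda|$ (shifted) bound from the localization/maximal-regularity theory, a compactness--uniqueness contradiction for small $|\lambda|$ that exploits $W^{2,q}\hookrightarrow L^{3q/(3-2q)}$ for $q<3/2$, and then interpolation and duality to reach all $1<q<\infty$.

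The differences are minor. You derive the ``full norm $\le$ local norm $+$ data'' bound by localizing the resolvent problem directly, whereas the paper applies the time cut-off $\phi(t)e^{\lambda t}$ to the $T$-independent estimate of Lemma~\ref{prop.theorem.strong.Lp}. You also treat a limit $\lambda_0=0$ explicitly, via the homogeneous normalization and the stationary energy identity; the paper leaves this case implicit, since its uniqueness Lemma~\ref{lemma.uniqueness} is stated only for $\lambda\neq 0$.
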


\begin{rem}
In Theorem \ref{maximal.thm}, we established the maximal $L^q$-regularity, which corresponds to the case of equal temporal and spatial exponents (i.e., $p = q$). However, in Theorem \ref{analyticity.thm}, we proved that the fluid-structure operator is sectorial with angle $\omega=0$. This property perfectly matches the strict framework of abstract operator theory without requiring any exponential shift arguments. It is a well-known fact that if such a sectorial operator exhibits maximal regularity for a single temporal exponent, the property automatically extends to the maximal $L^p$-$L^q$ regularity for all $1 < p < \infty$. This extension relies on real interpolation techniques for singular integrals, as established by Cannarsa and Vespri \cite{cannarsa1986} (see also Amann \cite{amann1995} and Benedek et al. \cite{benedek1961}). Therefore, our results naturally cover the full maximal $L^p$-$L^q$ regularity framework even when $p \neq q$.
\end{rem}

}
}

Next, we establish the $L^r$-$L^q$ decay estimates for the fluid-structure semigroup.

\begin{theo}[$L^r$-$L^q$ Decay Estimates]
\label{largetime.thm}
Assume the same geometric setting as in Theorem \ref{maximal.L2}. Let $1 < r \le q < \infty$. Assume further that the external forces vanish and that the initial data $({\bf v}_0, \boldsymbol{\ell}_0, \boldsymbol{\omega}_0) \in \mathbf{L}^r_\sigma(\Omega) \times \mathbb{R}^3 \times \mathbb{R}^3$ satisfies the compatibility condition \eqref{n66}.

For brevity, we denote the norm of the initial data in $\mathbb{X}^r$ by
\[
\|({\bf v}_0,\boldsymbol{\ell}_0,\boldsymbol{\omega}_0)\|_{\mathbb{X}^r} := \|{\bf v}_0\|_{L^r(\Omega)} + m_0^{\frac{1}{r}} |\boldsymbol{\ell}_0| + |\mathbb{J}_0|^{\frac{1}{r}}|\boldsymbol{\omega}_0|.
\]

Then, the unique strong solution $({\bf v}, \pi, \boldsymbol{\ell}, \boldsymbol{\omega})$ to the system \eqref{e01}, subject to the conditions \textup{(C1)}–\textup{(C3)}, satisfies the following estimates for all $t > 0$:
\begin{align*}
\|{\bf v}(t)\|_{L^q(\Omega)} + t\|\partial_t {\bf v}(t)\|_{L^q(\Omega)} 
&\leq c t^{- \frac{3}{2} \left( \frac{1}{r} - \frac{1}{q} \right)}\|({\bf v}_0,\boldsymbol{\ell}_0,\boldsymbol{\omega}_0)\|_{\mathbb{X}^r}, \\
|\boldsymbol{\ell}(t)| + |\boldsymbol{\omega}(t)| + t |\dot{\boldsymbol{\ell}}(t)| + t |\dot{\boldsymbol{\omega}}(t)| 
&\leq c(m) (1+t)^{-\frac{1}{2}- \frac{3}{2} \left( \frac{1}{r} - \frac{1}{m} \right)}\|({\bf v}_0,\boldsymbol{\ell}_0,\boldsymbol{\omega}_0)\|_{\mathbb{X}^r}.
\end{align*}

Moreover, for the short-time scale ($t \le 1$), we have
\begin{align*}
t\|D^2_x {\bf v}(t)\|_{L^q(\Omega)} + t\|\nabla \pi(t)\|_{L^q(\Omega)} + t^{\frac{1}{2}}\|\nabla{\bf v}(t)\|_{L^q(\Omega)}
&\leq c t^{-\frac{3}{2}\left( \frac{1}{r}-\frac{1}{q} \right)}\|({\bf v}_0,\boldsymbol{\ell}_0,\boldsymbol{\omega}_0)\|_{\mathbb{X}^r},
\end{align*}
and for the large-time scale ($t \ge 1$), the following refined estimates hold:
\begin{align*}
\|\nabla{\bf v}(t)\|_{L^q(\Omega)} 
&\leq c(m)\left( t^{-\frac{1}{2} - \frac{3}{2}\left(\frac{1}{r} - \frac{1}{q}\right)} + t^{-\frac{3}{2r}+\frac{1}{m}} \right)\|({\bf v}_0,\boldsymbol{\ell}_0,\boldsymbol{\omega}_0)\|_{\mathbb{X}^r}, \\
\|D^2_x {\bf v}(t)\|_{L^q(\Omega)} + \|\nabla \pi(t)\|_{L^q(\Omega)} 
&\leq c(m)\left( t^{-1 - \frac{3}{2}\left(\frac{1}{r} - \frac{1}{q}\right)} + t^{-\frac{3}{2r}+\frac{1}{m}} \right) \|({\bf v}_0,\boldsymbol{\ell}_0,\boldsymbol{\omega}_0)\|_{\mathbb{X}^r}.
\end{align*}
Here, the constant $c(m)$ depends on $m$, which can be chosen as an arbitrarily large integer.
\end{theo}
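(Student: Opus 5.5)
We split the proof into short-time ($t\le 1$) and large-time ($t\ge 1$) regimes, as announced in the introduction. For $t\le 1$, everything follows from Theorem \ref{analyticity.thm} and a local smoothing estimate. The bounded analytic semigroup gives $\|\partial_t{\bf v}(t)\|_{L^q}\le ct^{-1}\|\cdot\|_{\mathbb X^q}$. The $L^r$-$L^q$ gain $t^{-\frac32(\frac1r-\frac1q)}$ for $t\le1$ we obtain by a bootstrap. Using the resolvent/graph-norm equivalence (the $W^{2,q}$ estimate of the stationary fluid-structure problem, which comes from the localization in the proof of Theorem \ref{maximal.thm}), we have $\|{\bf v}\|_{W^{2,q}}+\|\nabla\pi\|_{L^q}\le c(\|\mathcal A U\|_{\mathbb X^q}+\|U\|_{\mathbb X^q})$. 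Gagliardo--Nirenberg/Sobolev embedding $W^{2,r}\subset L^q$ with fractional powers then gives $\|U(t)\|_{\mathbb X^q}\le ct^{-\frac32(\frac1r-\frac1q)}\|U_0\|_{\mathbb X^r}$, iterating finitely many steps when $\frac1r-\frac1q>\frac23$. The estimates for $D^2_x{\bf v}$, $\nabla\pi$ and $\nabla{\bf v}$ (the last by interpolation) then follow from the graph norm and $t\|\mathcal A e^{-t\mathcal A}\|\le c$.

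For $t\ge1$ we follow the Maremonti--Solonnikov approach. First, the energy identity in $\mathbb X^2$ (self-adjointness, Theorem \ref{maximal.L2}) gives
\[
\frac12\frac{d}{dt}\big(\|{\bf v}\|_{L^2}^2+m_0|\boldsymbol\ell|^2+\mathbb J_0|\boldsymbol\omega|^2\big)+2\mu\|\mathbb D({\bf v})\|_{L^2}^2+\alpha\|[{\bf v}-\boldsymbol\ell-\boldsymbol\omega\times x]_\tau\|_{L^2(\partial\Omega)}^2=0 .
\]
A Korn inequality under the slip condition controls $\|\nabla{\bf v}\|_{L^2}$ and $|\boldsymbol\ell|+|\boldsymbol\omega|$ (via the trace term and $\alpha>0$) by the dissipation. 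Combining this with an $L^r$ bound and a Fourier splitting or duality argument yields $L^2$ decay $t^{-\frac32(\frac1r-\frac12)}$; the rigid velocities are then bounded through the boundary trace of ${\bf v}$. Next, we localize. Write ${\bf v}=\zeta{\bf v}+(1-\zeta){\bf v}$ with a cutoff $\zeta$ near $B_{R_0}$. The far part solves a whole-space Stokes problem with compactly supported commutator forcing, handled by the heat/Stokes kernel $L^r$-$L^q$ bounds. The near part lives in a bounded domain and is handled by the exponential stability from \cite{necasova.amrouche}. Local energy decay, $\|{\bf v}(t)\|_{W^{1,q}(\Omega_R)}+|\boldsymbol\ell|+|\boldsymbol\omega|\le c(1+t)^{-\frac12-\frac32(\frac1r-\frac1m)}$ with large $m$, is obtained by Duhamel iteration starting from the $L^2$ and $L^m$ decay. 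Feeding this back through Duhamel gives the global $L^q$ decay of ${\bf v}$, with $t\partial_t{\bf v}$ handled via analyticity by applying the estimate on $[t/2,t]$.

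For the gradient and second-derivative bounds with $t\ge1$, we use the stationary elliptic estimate for the resolvent problem: $\nabla^2{\bf v}$ and $\nabla\pi$ are controlled by $\|\partial_t{\bf v}\|_{L^q}$, $|\dot{\boldsymbol\ell}|+|\dot{\boldsymbol\omega}|$ and lower-order local terms. The whole-space part contributes $t^{-1-\frac32(\frac1r-\frac1q)}$ (resp. $t^{-\frac12-\ldots}$ for $\nabla{\bf v}$). The boundary-localized contributions, including the rigid-body coupling, decay at the local rate, which produces the second term $t^{-\frac{3}{2r}+\frac1m}$. We expect the main obstacle to be this local energy decay step. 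The coupling to $(\boldsymbol\ell,\boldsymbol\omega)$ only decays like $t^{-\frac12-\frac{3}{2r}+\ldots}$, not exponentially, and in exterior domains gradients cannot decay faster than the boundary layer allows. We would first try the Duhamel iteration in the local $L^m$ norm with $m$ large, accepting the $\frac1m$ loss, rather than a sharp resolvent expansion near $\lambda=0$.
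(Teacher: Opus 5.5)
Your short-time argument and your large-time elliptic and far-field gradient steps are essentially the paper's. Your energy--Korn step is the paper's Lemma~\ref{lemm.LqL2}, which is done there by a Nash-type differential inequality (Gagliardo--Nirenberg plus the uniform $L^r$ bound of the semigroup) rather than Fourier splitting.

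\textbf{The gap: the local energy decay step.} You claim $\|{\bf v}(t)\|_{W^{1,q}(\Omega_R)}+|\boldsymbol{\ell}(t)|+|\boldsymbol{\omega}(t)|\le c(1+t)^{-\frac12-\frac32(\frac1r-\frac1m)}$, and you rest two things on it: the global $L^q$ decay of ${\bf v}$, and the extra factor $t^{-1/2}$ in the rigid-body estimate. Nothing in a Duhamel iteration of the localized problems can produce that extra half power.
\begin{itemize}
\item Exponential stability of the bounded-domain problem only passes to the near piece the decay of its commutator forcing. That forcing is supported in the annulus and controlled by ${\bf v},\nabla{\bf v},\pi,\partial_t{\bf v}$ there, i.e.\ by the far piece, so the near/far coupling is circular.
\item The far piece contains the free term $e^{\mu t\Delta}\tilde{\bf w}_0$. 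Its restriction to a ball decays no faster than $t^{-\frac{3}{2r}}$ for general $L^r$ data; this is the sharp $L^r$--$L^\infty$ rate, attained up to logarithms by data with a tail $\sim|x|^{-3/r}$.
\item So the scheme gives at best $t^{-\frac{3}{2r}}$ locally, in practice $t^{-\frac32(\frac1r-\frac1m)}$. Both are slower than your target once $m>3$.
\end{itemize}
This is probably not only a weakness of the method. The body velocity is tied to the fluid velocity on $\partial\Omega$. For an impulsively started body, the initial fluid field is a potential dipole $\sim|x|^{-3}$, so the data lie in $\mathbb{X}^r$ for every $r>1$. The classical Basset long-time tail then predicts $|\boldsymbol{\ell}(t)|\sim c\,t^{-3/2}$. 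That is slower than $t^{-\frac12-\frac32(\frac1r-\frac1m)}$ whenever $\frac1r-\frac1m>\frac23$. Improved local decay is available only for compactly supported or weighted data.

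\textbf{How the paper avoids this.} The paper never uses local energy decay.
\begin{itemize}
\item After the $L^r$--$L^2$ bound, it gets every pair $1<r\le q<\infty$ from the semigroup property and self-adjointness, $\langle e^{-t\mathbb{A}_\alpha}{\bf u}_0,\boldsymbol{\phi}_0\rangle=\langle e^{-\frac t2\mathbb{A}_\alpha}{\bf u}_0,e^{-\frac t2\mathbb{A}_\alpha}\boldsymbol{\phi}_0\rangle$, together with duality and interpolation.
\item It bounds the rigid velocities simply by $|\boldsymbol{\ell}|+|\boldsymbol{\omega}|\le c\|{\bf u}(t)\|_{L^m(\mathbb{R}^3)}$. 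This gives only $c(m)(1+t)^{-\frac32(\frac1r-\frac1m)}$ (Remark after Corollary~\ref{coro.LqLr.larget}), i.e.\ without the $-\frac12$ that appears in the statement.
\item The same $L^r$--$L^m$ rate, applied to the local term $\|{\bf v}\|_{L^m(\Omega_{2R})}$, is exactly what produces the second term $t^{-\frac{3}{2r}+\frac1m}$ (after renaming $m$) in the $D^2_x{\bf v}$, $\nabla\pi$ and $\nabla{\bf v}$ bounds.
\end{itemize}
Replace your localization/Duhamel route to the global decay with this duality argument. What you can then prove for $(\boldsymbol{\ell},\boldsymbol{\omega})$ is the rate without the extra $t^{-1/2}$. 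The paper's argument does not supply that extra factor either.
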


\section{\bf Fluid-structure operator}
\label{section.fluidstructure}

In this section, we introduce the fluid-structure operator associated with the motion of a rigid body immersed in a viscous fluid. Unless stated otherwise, we work throughout the paper in an exterior domain $\Omega$, where the fluid satisfies the Navier-slip boundary condition at the interface $\partial \Omega$.

For $1<q<\infty$, we define the fluid-structure composite space
\[
\mathbb{X}^q := \left\{ ({\bf v},\boldsymbol{\ell},\boldsymbol{\omega}) \in \mathbf{L}^q_\sigma(\Omega)\times\mathbb{R}^3\times\mathbb{R}^3 : {\bf v}\cdot {\bf n} = (\boldsymbol{\ell} + \boldsymbol{\omega}\times x)\cdot {\bf n} \text{ on }\partial \Omega \right\}.
\]
This is a Banach space equipped with the norm
\[
\|({\bf v},\boldsymbol{\ell},\boldsymbol{\omega})\|_{\mathbb{X}^q} := \|{\bf v}\|_{L^q(\Omega)} + m_0^{\frac{1}{q}}|\boldsymbol{\ell}| + |{\mathbb J}_0|^{\frac{1}{q}}|\boldsymbol{\omega}|.
\]
Its dual space is identified with $\mathbb{X}^{q'}$ (where $q' := \frac{q}{q-1}$), and the duality pairing is given by
\[
\langle ({\bf v},\boldsymbol{\ell},\boldsymbol{\omega}), (\boldsymbol{\varphi},{\bf g},\boldsymbol{\theta}) \rangle := \int_{\Omega} {\bf v}\cdot \boldsymbol{\varphi} \, dx + m_0\boldsymbol{\ell}\cdot{\bf g} + ({\mathbb J}_0\boldsymbol{\omega})\cdot \boldsymbol{\theta}
\]
for any $({\bf v},\boldsymbol{\ell},\boldsymbol{\omega}) \in \mathbb{X}^q$ and $(\boldsymbol{\varphi},{\bf g},\boldsymbol{\theta}) \in \mathbb{X}^{q'}$.

To utilize whole-space techniques, we also define the extended space
\[
\tilde{\mathbb X}^q := \left\{ {\bf u} \in \mathbf{L}^q_\sigma(\mathbb{R}^3) : \mathbb{D}({\bf u}) = 0 \text{ in } B \right\},
\]
equipped with the standard norm $\|{\bf u}\|_{\tilde{\mathbb X}^q} := \|{\bf u}\|_{L^q(\mathbb{R}^3)}$. It is clear that the space of smooth, divergence-free functions that are rigid in $B$ is dense in $\tilde{\mathbb X}^q$.

For a given state ${\bf u} \in \tilde{\mathbb X}^q$, its fluid, translational, and angular velocity components are extracted as follows:
\[
{\bf v}_{\bf u} := {\bf u}|_\Omega, \quad \boldsymbol{\ell}_{\bf u} := m_0^{-1} \int_B {\bf u}\, dx, \quad \boldsymbol{\omega}_{\bf u} := \mathbb{J}_0^{-1} \int_B x \times {\bf u}\, dx.
\]
It follows that ${\bf u} \in \tilde{\mathbb X}^q$ if and only if $({\bf v}_{\bf u}, \boldsymbol{\ell}_{\bf u}, \boldsymbol{\omega}_{\bf u}) \in \mathbb{X}^q$. Furthermore, their norms are equivalent, i.e., $\|{\bf u}\|_{\tilde{\mathbb X}^q} \approx \|({\bf v}_{\bf u}, \boldsymbol{\ell}_{\bf u}, \boldsymbol{\omega}_{\bf u})\|_{\mathbb{X}^q}$ (see, e.g., \cite[Lemma 1.1]{temam1}). From now on, we simply write ${\bf u} \simeq ({\bf v}, \boldsymbol{\ell}, \boldsymbol{\omega})$ whenever $({\bf v}_{\bf u}, \boldsymbol{\ell}_{\bf u}, \boldsymbol{\omega}_{\bf u}) = ({\bf v}, \boldsymbol{\ell}, \boldsymbol{\omega})$.

The whole-space Lebesgue space $\mathbf{L}^q(\mathbb{R}^3)$ admits a Helmholtz-Weyl type decomposition adapted to the fluid-structure interaction: $\mathbf{L}^q(\mathbb{R}^3) = \tilde{\mathbb{X}}^q \oplus G^q_1 \oplus G^q_2,$ where
\[
G^q_1 := \left\{ {\bf u} \in \mathbf{L}^q(\mathbb{R}^3) \;\middle|\; {\bf u} = \nabla q_1 \text{ for some } q_1 \in L^1_{\mathrm{loc}}(\mathbb{R}^3) \right\},
\]
and
\begin{align*}
G^q_2 &:= \Big\{ {\bf u} \in \mathbf{L}^q_\sigma(\mathbb{R}^3) \;\Big|\; {\bf u} = \nabla q_2 \text{ in } \Omega \text{ for some } q_2 \in L^1_{\mathrm{loc}}(\mathbb{R}^3), \\
&\qquad\qquad {\bf u} = \boldsymbol{\varphi} \text{ in } B \text{ for some } \boldsymbol{\varphi} \in L^q(B) \text{ such that } \\
&\qquad\qquad \int_B \boldsymbol{\varphi}\, dx = -\int_{\partial \Omega} q_2 {\bf n}\, dS_x, \quad \int_B \boldsymbol{\varphi} \times x\, dx = -\int_{\partial \Omega} q_2 {\bf n} \times x\, dS_x \Big\}.
\end{align*}
This fundamental decomposition and the boundedness of the associated projection were established in \cite{wang-xin} (see also \cite{baejin2023, dashti_robinson, ervedoza2}). We state this result formally below.

\begin{prop}[Theorem 2.2 in \cite{wang-xin}]
\label{helmholtz}
Let $1 < q < \infty$. For any ${\bf u} \in \mathbf{L}^q(\mathbb{R}^3)$, there exists a unique triple $({\bf v}, \nabla q_1, {\bf w}) \in \tilde{\mathbb{X}}^q \times G^q_1 \times G^q_2$ such that ${\bf u} = {\bf v} + \nabla q_1 + {\bf w}$. Furthermore, there exists a constant $c > 0$ such that
\[
\|{\bf v}\|_{L^q(\mathbb{R}^3)} + \|\nabla q_1\|_{L^q(\mathbb{R}^3)} + \|{\bf w}\|_{L^q(\mathbb{R}^3)} \le c \|{\bf u}\|_{L^q(\mathbb{R}^3)}.
\]
Consequently, the associated Helmholtz projection $\mathbb{P}_q : \mathbf{L}^q(\mathbb{R}^3) \to \tilde{\mathbb{X}}^q$ defined by $\mathbb{P}_q {\bf u} := {\bf v}$ is a bounded linear operator, and its adjoint is given by $(\mathbb{P}_q)^* = \mathbb{P}_{q'}$.
\end{prop}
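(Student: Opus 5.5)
The plan is to prove Proposition~\ref{helmholtz} by splitting ${\bf u}\in{\bf L}^q(\mathbb{R}^3)$ in two stages: first the classical whole-space Helmholtz decomposition, then a decomposition of the divergence-free remainder into its "rigid in $B$" part and a part lying in $G^q_2$.

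\textbf{Step 1 (classical Helmholtz).} Let $\nabla q_1$ be the whole-space gradient part of ${\bf u}$, defined by solving $\Delta q_1=\operatorname{div}{\bf u}$ in $\mathbb{R}^3$. By Calder\'on--Zygmund theory,
\[
\|\nabla q_1\|_{L^q(\mathbb{R}^3)}\le c\|{\bf u}\|_{L^q(\mathbb{R}^3)},
\]
and the remainder ${\bf u}_1:={\bf u}-\nabla q_1$ lies in ${\bf L}^q_\sigma(\mathbb{R}^3)$.

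\textbf{Step 2 (decomposing ${\bf u}_1$).} The aim is to write ${\bf u}_1={\bf v}+{\bf w}$ with ${\bf v}\in\tilde{\mathbb X}^q$ and ${\bf w}\in G^q_2$. The rigid velocity in $B$ is prescribed by the momenta of ${\bf u}_1$: set
\[
\boldsymbol{\ell}=m_0^{-1}\int_B{\bf u}_1\,dx,\qquad \boldsymbol{\omega}=\mathbb{J}_0^{-1}\int_B x\times{\bf u}_1\,dx .
\]
In $\Omega$ we take ${\bf v}={\bf u}_1-\nabla q_2$, where $q_2$ solves the exterior Neumann problem
\[
\Delta q_2=0 \ \text{ in }\Omega,\qquad \partial_{\bf n}q_2=({\bf u}_1-\boldsymbol{\ell}-\boldsymbol{\omega}\times x)\cdot{\bf n}\ \text{ on }\partial\Omega .
\]
The normal trace of ${\bf u}_1$ makes sense in $W^{-1/q,q}(\partial\Omega)$ because $\operatorname{div}{\bf u}_1=0$. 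Solvability in $\dot W^{1,q}(\Omega)$ is standard for exterior domains and yields $\|\nabla q_2\|_{L^q(\Omega)}\le c\|{\bf u}_1\|_{L^q}$. We then define ${\bf v}=\boldsymbol{\ell}+\boldsymbol{\omega}\times x$ in $B$, so that ${\bf v}$ is divergence-free across $\partial\Omega$ and rigid in $B$, and set ${\bf w}={\bf u}_1-{\bf v}$, with $\boldsymbol{\varphi}={\bf u}_1-\boldsymbol{\ell}-\boldsymbol{\omega}\times x$ in $B$.

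\textbf{Main obstacle: the moment conditions.} The step I expect to be hardest is verifying the two integral constraints in the definition of $G^q_2$. Because $\boldsymbol{\ell},\boldsymbol{\omega}$ were chosen from the moments of ${\bf u}_1$, the body moments of $\boldsymbol{\varphi}$ vanish, which would force $\int_{\partial\Omega}q_2{\bf n}=0$ and $\int_{\partial\Omega}q_2\,{\bf n}\times x=0$. These do not hold in general. So instead of fixing $\boldsymbol{\ell},\boldsymbol{\omega}$ a priori, I would treat them as unknowns: write $q_2=q_2^0+\boldsymbol{\ell}\cdot\Phi+\boldsymbol{\omega}\cdot\Psi$, where $\Phi,\Psi$ are the Kirchhoff potentials solving the Neumann problem with data ${\bf n}$ and $x\times{\bf n}$. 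The constraints then become a $6\times6$ linear system for $(\boldsymbol{\ell},\boldsymbol{\omega})$. Its matrix is the mass/inertia tensor plus the added-mass matrix $\big(\int_\Omega\nabla\Phi_i\cdot\nabla\Phi_j\big)$ and its rotational analogues, which is symmetric positive definite. Hence the system is uniquely solvable, with $|\boldsymbol{\ell}|+|\boldsymbol{\omega}|\le c\|{\bf u}\|_{L^q}$.

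\textbf{Uniqueness.} Uniqueness follows from the orthogonality
\[
\langle\tilde{\mathbb X}^q,\,G^{q'}_1\oplus G^{q'}_2\rangle=0 .
\]
This is checked by integrating by parts over $\Omega$ and $B$ and using the moment conditions together with rigidity in $B$, and the same identity yields $\mathbb{P}_q^*=\mathbb{P}_{q'}$.

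This is essentially \cite[Theorem 2.2]{wang-xin}, so the above only reconstructs that argument.
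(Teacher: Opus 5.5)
Your proposal is correct. The paper gives no argument of its own for this proposition beyond citing \cite[Theorem 2.2]{wang-xin}, and your reconstruction is the standard route to that result: whole-space Helmholtz projection, then the exterior Neumann problem for $q_2$, with $(\boldsymbol{\ell},\boldsymbol{\omega})$ fixed by the invertible mass-plus-added-mass $6\times 6$ system built from the Kirchhoff potentials (the same structure the paper uses in Lemma~\ref{lem:pressure_accel_decomp}). The one thing to state explicitly is that the annihilation argument for uniqueness and for $(\mathbb{P}_q)^*=\mathbb{P}_{q'}$ needs both $\langle \tilde{\mathbb{X}}^q, G^{q'}_1\oplus G^{q'}_2\rangle=0$ and $\langle G^q_1\oplus G^q_2,\tilde{\mathbb{X}}^{q'}\rangle=0$, together with existence at the dual exponent $q'$; your construction supplies the latter because it works for every $q\in(1,\infty)$.
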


%
%
%
With this whole-space framework and the Helmholtz projection $\mathbb{P}_q$ at our disposal, we are now in a position to define the fluid-structure operator. The core idea is to regard the rigid body velocity inside $B$ as a natural continuation of the fluid velocity in $\Omega$. This allows us to construct a global operator on $\mathbb{R}^3$ and fully utilize the Helmholtz-Weyl decomposition.

Let $\mathcal{A}$ be the linear operator formally defined for sufficiently smooth vector fields $\boldsymbol{\phi}$ on $\mathbb{R}^3$ by
\begin{equation}
\label{linearA}
\mathcal{A}\boldsymbol{\phi} :=
\begin{cases}
-\mu \Delta \boldsymbol{\phi}, & \text{in } \Omega,\\[4pt]
\displaystyle 2\mu m_0^{-1}\int_{\partial \Omega}\mathbb{D}(\boldsymbol{\phi}){\bf n}\, dS_y
+ 2\mu\left(\mathbb{J}_0^{-1}\!\int_{\partial \Omega} y\times \mathbb{D}(\boldsymbol{\phi}){\bf n}\, dS_y\right)\times x,
& \text{in } B.
\end{cases}
\end{equation}

Using the Helmholtz projection $\mathbb{P}_q$ on $\mathbf{L}^q(\mathbb{R}^3)$, we rigorously define the fluid-structure operator as
\[
\mathbb{A}_{\alpha,q} := \mathbb{P}_q \mathcal{A},
\]
with its domain given by
\[
\begin{aligned}
\mathcal{D}(\mathbb{A}_{\alpha,q})
:= \Big\{ {\bf u}\in \tilde{\mathbb{X}}^q \;\Big|\;&
{\bf v}_{\bf u} \in \mathbf{W}^{2,q}(\Omega),\\
&2\mu[\mathbb{D}({\bf v}_{\bf u}){\bf n}]_\tau
+\alpha[{\bf v}_{\bf u} - \boldsymbol{\ell}_{\bf u} - \boldsymbol{\omega}_{\bf u}\times x]_\tau={\bf 0}
\text{ on }\partial \Omega
\Big\}.
\end{aligned}
\]

\begin{rem} \label{rem:general_density}
For simplicity, we have assumed the fluid and rigid body densities to be $1$. To treat the general case with distinct constant densities $\rho_F$ and $\rho_B$, we introduce the global piecewise constant density $\rho(x) := \rho_F \chi_{\Omega} + \rho_B \chi_{B}$. The duality pairing on $\mathbf{L}^q(\mathbb{R}^3)$ is then endowed with the weight $\rho$:
\[
\langle {\bf f}, {\bf g} \rangle_{\rho} := \int_{\mathbb{R}^3} \rho(x) {\bf f} \cdot {\bf g} \, dx.
\]
For ${\bf f} \in \mathbb{X}^q$ and ${\bf g} \in \mathbb{X}^{q'}$, this naturally reduces to the physical representation $\rho_F \int_{\Omega} {\bf v}_{\bf f} \cdot {\bf v}_{\bf g} \, dx + m \boldsymbol{\ell}_{\bf f} \cdot \boldsymbol{\ell}_{\bf g} + (\mathbb{J} \boldsymbol{\omega}_{\bf f}) \cdot \boldsymbol{\omega}_{\bf g}$, where $m$ and $\mathbb{J}$ are the actual mass and inertia tensor of the rigid body.

Accordingly, the linear operator $\mathcal{A}$ in \eqref{linearA} is modified by scaling the fluid part by $\rho_F^{-1}$. Furthermore, the Helmholtz-Weyl decomposition remains valid under this weighted setting, where the component spaces are adjusted as follows:
\begin{itemize}
    \item $G^q_1 := \left\{ {\bf u} \in \mathbf{L}^q(\mathbb{R}^3) \mid \rho {\bf u} = \nabla q_1 \text{ for some } q_1 \in L^1_{\mathrm{loc}}(\mathbb{R}^3 \right\}$,
    \item $G^q_2$ is generalized by incorporating $\rho_B$ into the rigid body velocity and weighting the boundary integrals of the pressure $q_2$, such that the integral conditions over $B$ involve $\rho_B \boldsymbol{\varphi}$ paired with the corresponding boundary forces on $\partial \Omega$.
\end{itemize}
Consequently, the associated projection $\mathbb{P}_q$ remains bounded and satisfies the adjoint property $\langle \mathbb{P}_q {\bf u}, \boldsymbol{\phi} \rangle_\rho = \langle {\bf u}, \mathbb{P}_{q'} \boldsymbol{\phi} \rangle_\rho$.
\end{rem}

Before introducing the mild formulation, we recall that Theorem~\ref{analyticity.thm} asserts that the operator $-\mathbb{A}_{\alpha,q}$ generates a bounded analytic (and hence strongly continuous) semigroup on $\mathbb{X}^q$. We denote this semigroup by $e^{-\mathbb{A}_{\alpha,q} t}$ for $t \ge 0$.
The mild solution is then defined by
\[
{\bf u}(t) := e^{-\mathbb{A}_{\alpha,q} t} {\bf u}_0 
        + \int_0^t e^{-\mathbb{A}_{\alpha,q} (t-s)} 
          \mathbb{P}_q {\bf f}(s)\, ds,
\]
where $\mathbf{u}_0 \simeq (\mathbf{v}_0, \boldsymbol{\ell}_0, \boldsymbol{\omega}_0)$ and $\mathbf{f} \simeq (\mathbf{f}_0, \mathbf{f}_1, \mathbf{f}_2)$, that is, $\mathbf{u}_0 = \mathbf{v}_0\chi_\Omega + (\boldsymbol{\ell}_0 + \boldsymbol{\omega}_0 \times x)\chi_B$ and $\mathbf{f} = \mathbf{f}_0\chi_\Omega + (\mathbf{f}_1 + \mathbf{f}_2 \times x)\chi_B$.

We introduce the space
\[
\widehat{W}^{1,q}(\Omega) := \left\{ f \in L^q_{\text{loc}}(\overline{\Omega}) : \nabla f \in L^q(\Omega) \right\}.
\]
For each $t \ge 0$, we write ${\bf u}(t) \simeq ({\bf v}(t), \boldsymbol{\ell}(t), \boldsymbol{\omega}(t))$. We will show that there exists a pressure function $\pi \in C([0, \infty); \widehat{W}^{1,q}(\Omega))$ such that the quadruple $({\bf v}, \pi, \boldsymbol{\ell}, \boldsymbol{\omega})$ satisfies the system of equations \eqref{e01} along with the conditions \textup{(C1)}–\textup{(C3)} (see \cite[Proposition~3.2]{maity-tucsnak} for further details on this methodology).

To facilitate our subsequent analysis, we define the real interpolation space
\[
\mathbb{B}^{2-\frac{2}{q}, q}_{\alpha} := \left( \tilde{\mathbb{X}}^q, \mathcal{D}(\mathbb{A}_{\alpha,q}) \right)_{1 - \frac{1}{q}, q}.
\]
For any initial data $({\bf v}_0, \boldsymbol{\ell}_0, \boldsymbol{\omega}_0) \in \mathbf{W}^{2-\frac{2}{q},q}(\Omega) \times \mathbb{R}^3 \times \mathbb{R}^3$ satisfying the compatibility condition \eqref{n66}, the corresponding vector ${\bf u}_0 \simeq ({\bf v}_0, \boldsymbol{\ell}_0, \boldsymbol{\omega}_0)$ is naturally contained in $\mathbb{B}^{2-\frac{2}{q}, q}_{\alpha}$.

\begin{rem}
The characterization of the interpolation space $\mathbb{B}^{2-\frac{2}{q}, q}_{\alpha}$ and the compatibility condition \eqref{n66} relies on the foundational operator-theoretic framework for fluid-structure interaction (cf. Maity and Tucsnak \cite[Section 3.1]{maity-tucsnak} for the Dirichlet case). While our system incorporates the Navier-slip boundary condition ($\alpha > 0$), the underlying interpolation properties and regularity classes are fundamentally tied to the standard slip Stokes operators in fluid-only exterior domains (cf. Shibata \cite{shibata-slip}). Consequently, the functional-analytic properties and trace theorems established for the fluid-only slip Stokes framework naturally transfer to our coupled fluid-structure system via standard rigid motion lifting.
\end{rem}
%

\section{\bf Auxiliary Estimates 
}
\label{preliminary}



In this section, we introduce several key estimates that will be used throughout the paper.  
\begin{lemm}
\label{prop.pre2}
Let \( {\bf u} \in \mathcal{D}({\mathbb A}_{\alpha,q}) \) and \( \boldsymbol{\phi} \in \mathcal{D}({\mathbb A}_{\alpha,q'}) \), with
\[ {\bf u}\simeq ({\bf v},\boldsymbol{\ell},\boldsymbol{\omega})\mbox{ and } \boldsymbol{\phi}\simeq (\boldsymbol{\varphi}, {\bf g},\boldsymbol{\theta}).\]
Then,
\begin{align*}
\langle {\mathbb A}_{\alpha,q} {\bf u}, \boldsymbol{\phi} \rangle &= 2\mu \int_{\Omega} {\mathbb D}({\bf v}) : {\mathbb D}(\boldsymbol{\varphi}) \, dx \\
&\quad + \alpha \int_{\partial \Omega} [{\bf v} - \boldsymbol{\ell} -\boldsymbol{ \omega} \times x]_\tau \cdot   [\boldsymbol{\varphi} -{\bf  g} - \boldsymbol{\theta} \times x]_\tau \, dS_x.
\end{align*}
Hence,
\begin{align*}
\langle {\mathbb A}_{\alpha,q} {\bf u}, \boldsymbol{\phi} \rangle& = \langle {\bf u}, {\mathbb A}_{\alpha,q'} \boldsymbol{\phi } \rangle.
\end{align*}
In particular, for $ {\bf u}
\in D({\mathbb A}_{\alpha,2}) $ 
\begin{align*}\langle {\mathbb A}_{\alpha,2} {\bf u}, {\bf u }\rangle& = 2\mu \int_{\Omega} |{\mathbb D}({\bf v})|^2 \, dx  
+ \alpha \int_{\partial \Omega}|[{\bf v} -\boldsymbol{ \ell} -\boldsymbol{ \omega }\times x]_\tau |^2 \, dS_x.
\end{align*}

\end{lemm}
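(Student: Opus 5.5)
The plan is first to remove the projection $\mathbb{P}_q$ from the pairing, then to integrate by parts in $\Omega$, and finally to identify the boundary terms with the rigid-body part of $\mathcal{A}$.

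\textbf{Removing $\mathbb{P}_q$.} Since $\boldsymbol{\phi}\in\tilde{\mathbb X}^{q'}$, Proposition \ref{helmholtz} gives $\mathbb{P}_{q'}\boldsymbol{\phi}=\boldsymbol{\phi}$ and $(\mathbb{P}_q)^*=\mathbb{P}_{q'}$. Hence
\[
\langle \mathbb{A}_{\alpha,q}{\bf u},\boldsymbol{\phi}\rangle=\int_{\mathbb{R}^3}\mathcal{A}{\bf u}\cdot\boldsymbol{\phi}\,dx .
\]
Splitting this integral into $\Omega$ and $B$ and using $\boldsymbol{\phi}={\bf g}+\boldsymbol{\theta}\times x$ in $B$, together with the definitions of $m_0$ and $\mathbb{J}_0$, the $B$-part equals
\[
2\mu\,{\bf g}\cdot\int_{\partial\Omega}\mathbb{D}({\bf v}){\bf n}\,dS+2\mu\,\boldsymbol{\theta}\cdot\int_{\partial\Omega}y\times\mathbb{D}({\bf v}){\bf n}\,dS .
\]
The rotational term requires a small identity: $\int_B(\mathbf{a}\times x)\cdot(\boldsymbol{\theta}\times x)\,dx=(\mathbb{J}_0\mathbf{a})\cdot\boldsymbol{\theta}$, applied with $\mathbf{a}=\mathbb{J}_0^{-1}\int_{\partial\Omega} y\times\mathbb{D}({\bf v}){\bf n}\,dS$. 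We also use $\int_B x\,dx=0$ for the ball, which makes the cross terms vanish. These combine to $2\mu\int_{\partial\Omega}\mathbb{D}({\bf v}){\bf n}\cdot({\bf g}+\boldsymbol{\theta}\times y)\,dS$.

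\textbf{Integration by parts in $\Omega$.} Since $\operatorname{div}{\bf v}=0$, we have $-\mu\Delta{\bf v}=-2\mu\operatorname{div}\mathbb{D}({\bf v})$. Integrating by parts in $\Omega$, with ${\bf n}$ outward from $\Omega$ (into $B$), gives
\[
\int_\Omega -\mu\Delta{\bf v}\cdot\boldsymbol{\varphi}=2\mu\int_\Omega\mathbb{D}({\bf v}):\mathbb{D}(\boldsymbol{\varphi})-2\mu\int_{\partial\Omega}\mathbb{D}({\bf v}){\bf n}\cdot\boldsymbol{\varphi}\,dS .
\]
The sign of the boundary term must be fixed consistently with the sign convention in \eqref{linearA}. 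Adding the $B$-part, the boundary contributions become
\[
-2\mu\int_{\partial\Omega}\mathbb{D}({\bf v}){\bf n}\cdot(\boldsymbol{\varphi}-{\bf g}-\boldsymbol{\theta}\times x)\,dS .
\]
The normal component of $\boldsymbol{\varphi}-{\bf g}-\boldsymbol{\theta}\times x$ vanishes by the definition of $\mathbb{X}^{q'}$, so only the tangential part of $\mathbb{D}({\bf v}){\bf n}$ survives. Substituting the slip condition $2\mu[\mathbb{D}({\bf v}){\bf n}]_\tau=-\alpha[{\bf v}-\boldsymbol{\ell}-\boldsymbol{\omega}\times x]_\tau$ produces the $\alpha$-term with a positive sign.

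\textbf{Consequences and main obstacle.} The resulting formula is symmetric in $({\bf u},\boldsymbol{\phi})$. Running the same computation with roles swapped gives $\langle{\bf u},\mathbb{A}_{\alpha,q'}\boldsymbol{\phi}\rangle$, so the two pairings agree. Taking $q=2$ and $\boldsymbol{\phi}={\bf u}$ gives the energy identity. The main obstacle is rigor in the exterior domain: the integration by parts must be justified with ${\bf v}\in W^{2,q}$ and $\boldsymbol{\varphi}\in W^{2,q'}$ on an unbounded $\Omega$. I would multiply by a cutoff $\chi_R$ equal to $1$ on $B_R$ and supported in $B_{2R}$. The error term is bounded by
\[
R^{-1}\|\nabla{\bf v}\|_{L^q(R<|x|<2R)}\,\|\boldsymbol{\varphi}\|_{L^{q'}(R<|x|<2R)},
\]
which tends to $0$ as $R\to\infty$ since both factors are tails of integrable functions. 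Equivalently, one can approximate by smooth compactly supported elements of the domain.
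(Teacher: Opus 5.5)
Your proposal is correct and follows essentially the same route as the paper: you drop $\mathbb{P}_q$ via $(\mathbb{P}_q)^*=\mathbb{P}_{q'}$, integrate $-\mu\Delta{\bf v}\cdot\boldsymbol{\varphi}$ by parts, and write $\boldsymbol{\varphi}$ on $\partial\Omega$ as the tangential relative velocity plus the rigid motion ${\bf g}+\boldsymbol{\theta}\times x$, so that the slip condition produces the $\alpha$-term and the rigid part cancels the $B$-component of $\mathcal{A}{\bf u}$. Your explicit evaluation of the $B$-integral (via $\int_B x\,dx=0$ and the inertia-tensor identity) and the cutoff justification of the integration by parts on the unbounded domain make rigorous two steps that the paper leaves implicit.
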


\begin{proof}
Since the dual of \({\mathbb P}_q\) is \({\mathbb P}_{q'}\), the orthogonal decomposition in Proposition \ref{helmholtz} implies
\begin{align*}
&\langle{\mathbb A}_q{\bf u}, \boldsymbol{\phi}\rangle =\langle{\mathcal A} {\bf u}, {\mathbb P}_{q'}\boldsymbol{\phi}\rangle 
=\langle{\mathcal A} {\bf u}, \boldsymbol{\phi}\rangle \\
&=-\mu\int_{\Omega} \Delta{\bf v}\cdot{\boldsymbol{\varphi}} \, dx +  g\cdot \Big(2\mu  \int_{\partial \Omega}{\mathbb D}({\bf v}){\bf n} \, dS_x\Big) +  \theta\cdot \Big(2\mu\int_{\partial \Omega}y\times {\mathbb D}({\bf v}){\bf n}\, dS_x\Big).
\end{align*}
By integration by parts, we have
\begin{align*}
-\mu\int_{\Omega} \Delta{\bf v}\cdot{\boldsymbol{\varphi}} \, dx &= 2\mu \int_{\Omega} {\mathbb D}({\bf v}):{\mathbb D}({\boldsymbol{\varphi}}) \, dx - 2\mu \int_{\partial \Omega} {\boldsymbol{\varphi}}\cdot {\mathbb D}({\bf v}){\bf n} \, dS_x.
\end{align*}
%

By the definition of $\mathcal{D}({\mathbb A}_{\alpha,q})$, for 
${\bf u}\simeq ({\bf v},\boldsymbol{\ell},\boldsymbol{\omega})\in \mathcal{D}({\mathbb A}_{\alpha,q})$ we have
\[
2\mu[ {\mathbb D}({\bf v}){\bf n}]_\tau
+\alpha[{\bf v} -\boldsymbol{ \ell} -\boldsymbol{ \omega}\times x]_\tau={\bf 0}
\quad\text{on }\partial \Omega,
\]
and hence
\begin{align*}
&-2\mu \int_{\partial \Omega}\boldsymbol{\varphi}\cdot {\mathbb D}({\bf v}){\bf n} \, dS_x\\
&=\alpha \int_{\partial \Omega}[{\boldsymbol{\varphi}}-{\bf g}-\boldsymbol{\theta}\times x]_\tau
 \cdot [{\bf v} -\boldsymbol{ \ell} -\boldsymbol{ \omega}\times x]_\tau \, dS_x\\
&\qquad + g\cdot \Big( -2\mu \int_{\partial \Omega} {\mathbb D}({\bf v}){\bf n} \, dS_x\Big) 
+\theta\cdot \Big(-2\mu \int_{\partial \Omega} {x}\times {\mathbb D}({\bf v}){\bf n} \, dS\Big).
\end{align*}
This leads to the identity in our Lemma.
\end{proof}

\begin{lemm}
\label{korn}
Let \( \mathbcal{ u} \in {{\bf W}^{1,2}({\Omega}) }\)
with \( \mbox{\rm div} \,\mathbcal{ u} = 0 \) in \( \Omega \)
 and 
 \( \mathbcal{ u} \cdot {\bf n} = 0 \) on \( \partial \Omega \).
Then,
\[
\int_\Omega |\nabla \mathbcal{ u} |^2 \, dx = \int_{\partial \Omega} \kappa(x) |[\mathbcal{ u} ]_\tau|^2 \, dS_x+ 2 \int_\Omega |{\mathbb D}(\mathbcal{ u} )|^2 \, dx,
\]
where 
$\kappa(x)$ denotes the curvature of $\partial \Omega$, representing the scalar curvature for $n=2$ and the Weingarten endomorphism (shape operator) for $n=3$. 
\footnote{For the geometric definition of the Weingarten endomorphism and its properties, we refer to do Carmo \cite[Section 3-2]{doCarmo}, and for its specific formulation in Navier-slip fluid boundaries, see Bresch et al. \cite[Section 2.1]{bresch}.}
%
\end{lemm}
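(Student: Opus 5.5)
We prove the identity by expanding $|\nabla\mathbcal{u}|^2$ against $2|\mathbb D(\mathbcal{u})|^2$ and showing that the difference is a pure boundary term. By density it suffices to treat $\mathbcal{u}$ smooth up to the boundary with compact support in $\overline\Omega$. The identity then extends to $\mathbf W^{1,2}(\Omega)$ because both sides are continuous in the $\mathbf W^{1,2}$ norm; the boundary term is controlled by the trace theorem and the boundedness of the curvature of the smooth compact surface $\partial\Omega$. The approximation must preserve both $\operatorname{div}\mathbcal{u}=0$ and $\mathbcal{u}\cdot{\bf n}=0$. We obtain this by cutting off at infinity (with a Bogovskii correction on an annulus) and mollifying in a way adapted to the boundary, or equivalently by the density of smooth solenoidal fields with vanishing normal trace in $\{\mathbcal{u}\in\mathbf W^{1,2}:\operatorname{div}\mathbcal{u}=0,\ \mathbcal{u}\cdot{\bf n}=0\}$, which is standard for smooth exterior domains.

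\emph{Pointwise algebra.} From $2|\mathbb D(\mathbcal{u})|^2=|\nabla\mathbcal{u}|^2+\partial_iu_j\partial_ju_i$ we get
\[
|\nabla\mathbcal{u}|^2-2|\mathbb D(\mathbcal{u})|^2=-\partial_iu_j\,\partial_ju_i .
\]
Since $\operatorname{div}\mathbcal{u}=0$,
\[
\partial_iu_j\partial_ju_i=\partial_i(u_j\partial_ju_i)-u_j\partial_j(\operatorname{div}\mathbcal{u})=\partial_i(u_j\partial_ju_i).
\]
\emph{Integration.} Integrating over $\Omega$ with compact support (${\bf n}$ being the outward normal of $\Omega$) gives
\[
\int_\Omega |\nabla\mathbcal{u}|^2-2\int_\Omega|\mathbb D(\mathbcal{u})|^2=-\int_{\partial\Omega}(\mathbcal{u}\cdot\nabla\mathbcal{u})\cdot{\bf n}\,dS .
\]

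\emph{Geometric step.} This is the main point: evaluate the boundary term using $\mathbcal{u}\cdot{\bf n}=0$. On $\partial\Omega$ we have $\mathbcal{u}=[\mathbcal{u}]_\tau$, so $\mathbcal{u}\cdot\nabla$ is a tangential derivative. Extend ${\bf n}$ smoothly to a neighborhood of $\partial\Omega$ and differentiate the identity $\mathbcal{u}\cdot{\bf n}=0$ along the tangent direction $\mathbcal{u}$. This yields
\[
(\mathbcal{u}\cdot\nabla\mathbcal{u})\cdot{\bf n}=-\mathbcal{u}\cdot\big((\mathbcal{u}\cdot\nabla){\bf n}\big)=-\mathbcal{u}\cdot W\mathbcal{u},
\]
where $W=\nabla_\tau{\bf n}$ is the Weingarten map of $\partial\Omega$ with respect to ${\bf n}$. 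Hence
\[
-\int_{\partial\Omega}(\mathbcal{u}\cdot\nabla\mathbcal{u})\cdot{\bf n}\,dS=\int_{\partial\Omega}\kappa(x)[\mathbcal{u}]_\tau\cdot[\mathbcal{u}]_\tau\,dS,
\]
with $\kappa|[\mathbcal{u}]_\tau|^2$ read as the quadratic form $W[\mathbcal{u}]_\tau\cdot[\mathbcal{u}]_\tau$ (scalar curvature times $|[\mathbcal{u}]_\tau|^2$ in 2D). This gives the stated identity.

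The step needing the most care is the sign convention of the shape operator. Since ${\bf n}$ points out of $\Omega$, i.e. into $B$, for the sphere of radius $R$ the outward normal of $\Omega$ is ${\bf n}=-x/R$. Then $\nabla{\bf n}=-(I-{\bf n}\otimes{\bf n})/R$, and $\kappa=-1/R$ on tangent vectors. I will fix $W:=\nabla_\tau{\bf n}$ with this ${\bf n}$, so the formula holds verbatim with the paper's "$\kappa(x)$", and remark that the sign is negative for the ball. A second point to check is that the density argument genuinely preserves the two constraints.
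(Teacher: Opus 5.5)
Your proof is correct, but it reaches the identity by a different route from the paper's. The paper imports from Bresch et al.\ the boundary formula $[\mathbb D(u){\bf n}]_\tau=\tfrac12[\partial_{\bf n}u]_\tau-\tfrac12\kappa[u]_\tau$ for fields with $u\cdot{\bf n}=0$. It then subtracts two integration-by-parts expressions for $-\int_\Omega\Delta u\cdot u$. One produces $\int_\Omega|\nabla u|^2-\int_{\partial\Omega}u\cdot\partial_{\bf n}u$. The other, which uses $\operatorname{div}u=0$, produces $2\int_\Omega|\mathbb D(u)|^2-2\int_{\partial\Omega}u\cdot\mathbb D(u){\bf n}$. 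After substituting the formula, the normal-derivative terms cancel.

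You instead write $|\nabla u|^2-2|\mathbb D(u)|^2=-\partial_i(u_j\partial_ju_i)$ pointwise. You then evaluate the single flux $(u\cdot\nabla u)\cdot{\bf n}$ by differentiating $u\cdot{\bf n}=0$ along the tangent field $u$. The geometric content is the same, since Bresch's formula is itself obtained by differentiating $u\cdot{\bf n}=0$ tangentially. Your version, however, is self-contained and never involves $\Delta u$ or $\partial_{\bf n}u$. It also makes the sign explicit: with ${\bf n}$ pointing into $B$, the curvature term equals $-R^{-1}\|[u]_\tau\|^2_{L^2(\partial\Omega)}$ for a ball of radius $R$, which the paper leaves implicit. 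The paper's version, in turn, ties the identity directly to the tangential stress $[\mathbb D(u){\bf n}]_\tau$ that appears in the slip condition.

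One simplification for your density step: the approximation does not need to preserve $\operatorname{div}u=0$. Use $\partial_iu_j\partial_ju_i-(\operatorname{div}u)^2=\partial_i(u_j\partial_ju_i)-\partial_j(u_j\operatorname{div}u)$; the extra flux $(u\cdot{\bf n})\operatorname{div}u$ vanishes on $\partial\Omega$. Your computation then gives $\int_\Omega|\nabla u|^2=2\int_\Omega|\mathbb D(u)|^2-\int_\Omega(\operatorname{div}u)^2+\int_{\partial\Omega}\kappa|[u]_\tau|^2\,dS$ for every $u\in{\bf W}^{1,2}(\Omega)$ with $u\cdot{\bf n}=0$. So only the normal-trace constraint has to survive the approximation, and that is elementary to arrange.
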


\begin{proof}
In \cite{bresch}, it is shown that for \( \mathbcal{ u}  \in {\bf C}^\infty_{0,\sigma}(\bar{\Omega}) \) with \( \mathbcal{ u} \cdot {\bf n} = 0 \) on \( \partial \Omega \), the following holds:
\[
[{\mathbb D}(\mathbcal{ u} ){\bf n}]_\tau = \frac{1}{2} [\frac{\partial \mathbcal{ u} }{\partial \bf n} ]_\tau - \frac{1}{2} \kappa(x) [{\bf u}]_\tau.
\]
This estimate is obtained for smooth functions $\mathbcal{u} \in {\bf C}^\infty_{0,\sigma}(\bar{\Omega})$ satisfying $\mathbcal{u} \cdot {\bf n} = 0$ on $\partial\Omega$. Since this space is dense in $\{ \mathbcal{v} \in W^{1,2}_{0,\sigma}(\Omega) \mid \mathbcal{v} \cdot {\bf n} = 0 \text{ on } \partial \Omega \}$ (see, for example, \cite{amrouche.seloula,solonnikov.scadilov}), the same estimate naturally extends to all $\mathbcal{u} \in W^{1,2}$ by the standard density argument.
Observe that
\[
-\int_\Omega \Delta\mathbcal{ u}  \cdot\mathbcal{ u} \, dx = -\int_{\partial \Omega}\mathbcal{ u}  \cdot \frac{\partial \mathbcal{ u} }{\partial {\bf n}} \, dS_x+ \int_\Omega |\nabla \mathbcal{ u} |^2 \, dx.
\]

On the other hand, since \( \mathcal{ u} _i \Delta \mathcal{ u} _i = \mathcal{ u} _i \partial_{x_j} \left( \partial_{x_j} \mathcal{ u} _i + \partial_{x_i} \mathcal{ u} _j \right) \),
\[
-\int_\Omega \Delta \mathbcal{ u}  \cdot \mathbcal{ u} \, dx = -2 \int_{\partial \Omega} \mathbcal{ u} \cdot {\mathbb D}(\mathbcal{ u} ){\bf n} \, dS_x+ 2 \int_\Omega |{\mathbb D}(\mathbcal{ u} )|^2 \, dx.
\]
These lead to the identity
\begin{align*}
\int_\Omega |\nabla\mathbcal{ u} |^2 \, dx &= \int_{\partial \Omega}\mathbcal{ u}  \cdot \left( -2 {\mathbb D}(\mathbcal{ u} ){\bf n} + \frac{\partial \mathbcal{ u} }{\partial {\bf n}} \right) \, dS_x+ 2 \int_\Omega |{\mathbb D}(\mathbcal{ u} )|^2 \, dx \\
&= \int_{\partial \Omega} \kappa(x) |[\mathbcal{ u} ]_\tau|^2 \, dS_x+ 2 \int_\Omega |{\mathbb D}(\mathbcal{ u} )|^2 \, dx.
\end{align*}
\end{proof}

While Lemma \ref{korn} establishes an exact geometric identity for vector fields with strict slip boundary conditions, we often need a more flexible estimate when dealing with exterior domains and rigid body motions. The following lemma provides a generalized Korn-type inequality that bounds the full gradient by the symmetric gradient and a relaxed boundary slip condition.

\begin{lemm}[Korn's type inequality]
\label{korn_exterior}
Let $\Omega$ be an exterior domain.
Assume that ${\bf v}\in W^{1,2}(\Omega),\ \boldsymbol{\ell}, \boldsymbol{\omega}\in {\mathbb R}^3$ satisfying
 that ${\bf v}\cdot {\bf n}=(\boldsymbol{\ell}+\boldsymbol{\omega}\times x )\cdot {\bf n},$ and
\([ {\bf v}-\boldsymbol{\ell}-\boldsymbol{\omega}\times x  ]_\tau \in L^2(\partial \Omega)\).
 
Then, the following inequality holds:
\[
\|\nabla {\bf v}\|_{L^2(\Omega)}+|\boldsymbol{\ell}|+|\boldsymbol{\omega}|\leq c\Big(
\|{\mathbb D}({\bf v})\|_{L^2(\Omega)}+\| [ {\bf v}-\boldsymbol{\ell}-\boldsymbol{\omega}\times x ]_\tau\|_{L^2(\partial \Omega)}
\Big).
\]
\end{lemm}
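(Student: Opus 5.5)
The plan is to reduce the inequality to Lemma \ref{korn} plus a compactness argument for the rigid part. First write ${\bf w}:={\bf v}-\zeta(\boldsymbol{\ell}+\boldsymbol{\omega}\times x)$, where $\zeta\in C_0^\infty(\mathbb{R}^3)$ is a cutoff equal to $1$ near $\overline{B}$ and supported in $B_{R_0+1}$. Since $\mathbb{D}(\boldsymbol{\ell}+\boldsymbol{\omega}\times x)=0$, we have ${\bf w}\cdot{\bf n}=0$ and $[{\bf w}]_\tau=[{\bf v}-\boldsymbol{\ell}-\boldsymbol{\omega}\times x]_\tau$ on $\partial\Omega$. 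Moreover,
\[
\mathbb{D}({\bf w})=\mathbb{D}({\bf v})-\operatorname{sym}\big(\nabla\zeta\otimes(\boldsymbol{\ell}+\boldsymbol{\omega}\times x)\big).
\]
The term $\operatorname{div}{\bf w}=-\nabla\zeta\cdot(\boldsymbol{\ell}+\boldsymbol{\omega}\times x)$ is supported in a compact annulus and is bounded by $c(|\boldsymbol{\ell}|+|\boldsymbol{\omega}|)$. If it is needed, I will remove it with a Bogovskii correction on the annulus, using $\int \nabla\zeta\cdot(\boldsymbol{\ell}+\boldsymbol{\omega}\times x)=0$. The Korn identity of Lemma \ref{korn} (its proof only uses integration by parts and the boundary relation for $[\mathbb{D}{\bf u}\,{\bf n}]_\tau$; divergence-freeness can be handled by the Bogovskii correction) then gives
\[
\|\nabla{\bf w}\|_{L^2}^2\le 2\|\mathbb{D}({\bf w})\|_{L^2}^2+\|\kappa\|_\infty\|[{\bf w}]_\tau\|_{L^2(\partial\Omega)}^2 .
\]
Consequently $\|\nabla{\bf v}\|_{L^2}\le c(\|\mathbb{D}({\bf v})\|+\|[\cdot]_\tau\|+|\boldsymbol{\ell}|+|\boldsymbol{\omega}|)$. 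It therefore remains to control $|\boldsymbol{\ell}|+|\boldsymbol{\omega}|$ by the right-hand side.

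For that step I will argue by contradiction. Suppose there are sequences with $|\boldsymbol{\ell}_k|+|\boldsymbol{\omega}_k|=1$, $\|\nabla{\bf v}_k\|$ bounded (by the previous step), and $\|\mathbb{D}({\bf v}_k)\|_{L^2}+\|[{\bf v}_k-\boldsymbol{\ell}_k-\boldsymbol{\omega}_k\times x]_\tau\|_{L^2(\partial\Omega)}\to0$. On $\Omega_R:=\Omega\cap B_R$, the classical Korn inequality together with Rellich compactness gives a subsequence with ${\bf v}_k\to{\bf v}$ in $L^2(\Omega_R)$ modulo the rigid motions allowed. In the limit $\mathbb{D}({\bf v})=0$ in $\Omega$, so ${\bf v}={\bf a}+{\bf b}\times x$ on the connected set $\Omega$. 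The condition $\nabla{\bf v}\in L^2(\Omega)$ on the exterior domain forces ${\bf b}=0$, and finite energy, or the decay/normalisation implicit in $W^{1,2}(\Omega)$, forces ${\bf a}=0$, so ${\bf v}=0$. The traces converge on $\partial\Omega$, so the boundary conditions give $(\boldsymbol{\ell}+\boldsymbol{\omega}\times x)\cdot{\bf n}=0$ and $[\boldsymbol{\ell}+\boldsymbol{\omega}\times x]_\tau=0$. Hence $\boldsymbol{\ell}+\boldsymbol{\omega}\times x=0$ on $\partial\Omega$, and since $\partial\Omega$ is a two-dimensional surface (not contained in a line), $\boldsymbol{\ell}=\boldsymbol{\omega}=0$. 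This contradicts $|\boldsymbol{\ell}|+|\boldsymbol{\omega}|=1$.

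The main obstacle is making this compactness step rigorous on the unbounded domain, where no Poincaré inequality holds and ${\bf v}$ is only known in $W^{1,2}$. The plan is to work locally on $\Omega_R$: rigid motions are subtracted there via Korn's second inequality, and the local rigid motions ${\bf a}_k+{\bf b}_k\times x$ must be shown to vanish in the limit. To do this I will use that $\|{\bf v}_k\|_{L^2(\Omega)}$ is finite: a nonzero rigid motion on ever larger annuli would give $L^2$ mass growing like $R^3$ while $\mathbb{D}\to0$. If that route is awkward, I will instead estimate through $L^6$ via the homogeneous Sobolev inequality, which is valid because ${\bf v}_k\in W^{1,2}$ decays. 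With the limit identified as zero, the boundary trace argument above closes the proof.
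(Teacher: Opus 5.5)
Your proof is correct once you commit to the $L^6$ route in the compactness step, but it takes a different route from the paper's.

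The paper argues directly and without compactness. Positive definiteness of $(\boldsymbol{\ell},\boldsymbol{\omega})\mapsto\int_{\partial\Omega}|\boldsymbol{\ell}+\boldsymbol{\omega}\times x|^2\,dS$, together with the no-penetration condition, gives $|\boldsymbol{\ell}|+|\boldsymbol{\omega}|\le c(\|[{\bf v}-\boldsymbol{\ell}-\boldsymbol{\omega}\times x]_\tau\|_{L^2(\partial\Omega)}+\|{\bf v}\|_{L^2(\partial\Omega)})$. The trace theorem and the exterior bound $\|{\bf v}\|_{L^2(\Omega_R)}\le c\|\nabla{\bf v}\|_{L^2(\Omega)}$ reduce the last term to $\|\nabla{\bf v}\|_{L^2(\Omega)}$. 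The global exterior Korn inequality $\|\nabla{\bf v}\|_{L^2(\Omega)}\le c\|\mathbb{D}({\bf v})\|_{L^2(\Omega)}$ of Shibata--Soga then finishes it, so the slip term is never needed to control $\nabla{\bf v}$.

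You avoid that imported Korn inequality. You control $\nabla{\bf v}$ through the curvature identity of Lemma~\ref{korn}, which leaves $|\boldsymbol{\ell}|+|\boldsymbol{\omega}|$ on the right-hand side, and you break the resulting circularity by contradiction. In effect, your compactness step re-proves in this coupled setting what the paper takes from Shibata--Soga. You pay with a non-explicit constant, and you gain a proof that rests only on Lemma~\ref{korn}, the Bogovskii operator and the Sobolev inequality.

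\textbf{The limit argument.} The ``finite energy'' argument for ${\bf a}={\bf 0}$ does not work as written, because $\|{\bf v}_k\|_{L^2(\Omega)}$ is not uniformly bounded. Convergence only modulo local rigid motions would also leave those motions uncontrolled in the trace step. Use $\|{\bf v}_k\|_{L^6(\Omega)}\le c\|\nabla{\bf v}_k\|_{L^2(\Omega)}$ (valid because ${\bf v}_k\in W^{1,2}(\Omega)$) from the start. Then ${\bf v}_k$ is bounded in $W^{1,2}(\Omega_R)$, and the limit lies in $L^6(\Omega)$, which forces ${\bf a}={\bf b}={\bf 0}$ at once. The traces converge strongly in $L^2(\partial\Omega)$ by compactness of the trace map on $W^{1,2}(\Omega_R)$. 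This is the same ingredient the paper uses.

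\textbf{The divergence of ${\bf w}$.} Your formula for $\operatorname{div}{\bf w}$ presumes $\operatorname{div}{\bf v}=0$, which the statement does not assume. This is harmless. Redoing the integration by parts of Lemma~\ref{korn} for a non-solenoidal tangential field only adds $-\|\operatorname{div}{\bf w}\|_{L^2(\Omega)}^2$ to the right-hand side. Hence $\|\nabla{\bf w}\|_{L^2}^2\le 2\|\mathbb{D}({\bf w})\|_{L^2}^2+\|\kappa\|_\infty\|[{\bf w}]_\tau\|_{L^2(\partial\Omega)}^2$ holds without any Bogovskii correction.
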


\begin{proof}


Since the mapping $(\boldsymbol{\ell}, \boldsymbol{\omega}) \mapsto \int_{\partial \Omega} |\boldsymbol{\ell} + \boldsymbol{\omega} \times x|^2 dS$ defines a strictly positive definite quadratic form on $\mathbb{R}^6$, we readily observe the following equivalence for any $\boldsymbol{\ell}, \boldsymbol{\omega} \in \mathbb{R}^3$:
\begin{equation*}
\int_{\partial \Omega} |\boldsymbol{\ell} + \boldsymbol{\omega} \times x|^2 dS \simeq |\boldsymbol{\ell}|^2 + |\boldsymbol{\omega}|^2,
\end{equation*}
where the implicit constants depend only on the geometry of $\partial \Omega$.

Since 
\(
|\boldsymbol{\ell}+\boldsymbol{\omega}\times x |\leq |{\bf v}-\boldsymbol{\ell}-\boldsymbol{\omega}\times x|+|{\bf v}|
\), 
\[
\int_{\partial \Omega}|\boldsymbol{\ell}+\boldsymbol{\omega}\times x |^2dS\leq c\int_{\partial \Omega}|{\bf v}-\boldsymbol{\ell}-\boldsymbol{\omega}\times x|^2 dS+c\int_{\partial \Omega}|{\bf v}|^2 dS.
\]
By the condition $({\bf v}-\boldsymbol{\ell}-\boldsymbol{\omega}\times x )\cdot {\bf n}={\bf 0},$ 
\[
  \int_{\partial \Omega}|{\bf v}-\boldsymbol{\ell}-\boldsymbol{\omega}\times x|^2 dS=\int_{\partial \Omega}|[{\bf v}-\boldsymbol{\ell}-\boldsymbol{\omega}\times x]_\tau|^2 dS.
\]

By the trace theorem and Gagliardo-Nirenberg inequality,
\[
\int_{\partial \Omega}|{\bf v}|^2 dS\leq c\|{\bf v}\|_{W^{\frac{1}{2}}_2(\Omega_R)}^2
\leq c\|\nabla {\bf v}\|_{L^2(\Omega_R)}^2+c\|{\bf v}\|_{L^2(\Omega_R)}^2.
\]
Shibata and Soga \cite{shibata-soga} showed that
\[
\| {\bf v}\|_{L^2(\Omega_R)}\leq c
\|\nabla {\bf v}\|_{L^2(\Omega)}.
\]

Combining all the above estimates, we obtain 
\[
|\boldsymbol{\ell}|^2+|\boldsymbol{\omega}|^2\leq \int_{\partial \Omega}|[{\bf v}-\boldsymbol{\ell}-\boldsymbol{\omega}\times x]_\tau|^2 dS+c\|\nabla {\bf v}\|_{L^2(\Omega)}^2.
\]
Taking the square root on both sides and using the fact that Shibata and Soga also showed
\[
\|\nabla {\bf v}\|_{L^2(\Omega)}\leq c
\|{\mathbb D}({\bf v})\|_{L^2(\Omega)},
\]
we immediately arrive at the desired linear bound. This completes the proof of our Lemma.
\end{proof}

\begin{lemm}
\label{strong.reg1}
Let \( {\bf u} \in \mathcal{D}( {\mathbb A}_{\alpha,2}) \), and let ${\bf v}={\bf u}|_\Omega$. Then, 
\[
\|\nabla {\bf  v}\|_{L^2(\Omega)}^2 \leq c \langle {\mathbb A}_{\alpha,2} {\bf u}, {\bf u} \rangle + c \|{\bf u}\|^2_{\tilde{\mathbb X}^2}.
\]
\end{lemm}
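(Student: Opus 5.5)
The plan is to combine the energy identity of Lemma \ref{prop.pre2} with a Korn-type identity that does not lose control of the rigid part, namely Lemma \ref{korn} applied to the relative velocity. For ${\bf u}\simeq({\bf v},\boldsymbol{\ell},\boldsymbol{\omega})\in\mathcal{D}(\mathbb{A}_{\alpha,2})$, Lemma \ref{prop.pre2} gives
\[
\langle \mathbb{A}_{\alpha,2}{\bf u},{\bf u}\rangle = 2\mu\|\mathbb{D}({\bf v})\|_{L^2(\Omega)}^2+\alpha\|[{\bf v}-\boldsymbol{\ell}-\boldsymbol{\omega}\times x]_\tau\|_{L^2(\partial\Omega)}^2 .
\]
So it suffices to bound $\|\nabla{\bf v}\|_{L^2(\Omega)}^2$ by $\|\mathbb{D}({\bf v})\|^2_{L^2(\Omega)}$, the boundary slip term, and the lower-order quantity $\|{\bf u}\|_{\tilde{\mathbb X}^2}^2$. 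The most direct route is Lemma \ref{korn_exterior}, which already gives
\[
\|\nabla{\bf v}\|_{L^2(\Omega)}\le c\big(\|\mathbb{D}({\bf v})\|_{L^2(\Omega)}+\|[{\bf v}-\boldsymbol{\ell}-\boldsymbol{\omega}\times x]_\tau\|_{L^2(\partial\Omega)}\big).
\]
Its hypotheses hold on $\mathcal{D}(\mathbb{A}_{\alpha,2})$: ${\bf v}\in W^{2,2}(\Omega)$, the normal condition is built into $\tilde{\mathbb X}^2$, and the tangential trace lies in $L^2(\partial\Omega)$. Squaring and using $\mu,\alpha>0$ then gives the claim even without the $\|{\bf u}\|^2$ term.

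Because Lemma \ref{korn_exterior} depends on the Shibata--Soga inequality, I would also give an independent argument in which the $\|{\bf u}\|^2$ term appears naturally. Take a cutoff $\chi\in C_0^\infty(\mathbb{R}^3)$ with $\chi=1$ near $\overline{B}$ and support in $B_{R_0+1}$. Set ${\bf w}:={\bf v}-\chi(\boldsymbol{\ell}+\boldsymbol{\omega}\times x)$, corrected by a Bogovskii term on the annulus to restore $\operatorname{div}{\bf w}=0$. The correction is needed because $\operatorname{div}(\chi(\boldsymbol{\ell}+\boldsymbol{\omega}\times x))=\nabla\chi\cdot(\boldsymbol{\ell}+\boldsymbol{\omega}\times x)$, which has zero mean, so the Bogovskii problem is solvable. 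Then ${\bf w}\cdot{\bf n}=0$ on $\partial\Omega$ and $[{\bf w}]_\tau=[{\bf v}-\boldsymbol{\ell}-\boldsymbol{\omega}\times x]_\tau$. Lemma \ref{korn} then gives
\[
\|\nabla{\bf w}\|^2 = \int_{\partial\Omega}\kappa|[{\bf w}]_\tau|^2\,dS_x+2\|\mathbb{D}({\bf w})\|^2 .
\]
Since $\kappa$ is bounded, this is at most $c\langle\mathbb{A}_{\alpha,2}{\bf u},{\bf u}\rangle+c(|\boldsymbol{\ell}|+|\boldsymbol{\omega}|)^2$. The reason is that $\mathbb{D}(\chi(\boldsymbol{\ell}+\boldsymbol{\omega}\times x))$ is supported on the annulus, and the Bogovskii term is controlled by $|\boldsymbol{\ell}|+|\boldsymbol{\omega}|$. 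Finally, $|\boldsymbol{\ell}|+|\boldsymbol{\omega}|\le c\|{\bf u}\|_{\tilde{\mathbb X}^2}$ by the definitions of $\boldsymbol{\ell}_{\bf u},\boldsymbol{\omega}_{\bf u}$ as averages over $B$ (Cauchy--Schwarz), and $\nabla{\bf v}=\nabla{\bf w}+O(|\boldsymbol{\ell}|+|\boldsymbol{\omega}|)$ in $L^2$.

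The main obstacle in the second route is checking that Lemma \ref{korn} applies to ${\bf w}$. This needs ${\bf w}\in W^{1,2}$, solenoidal, and tangential at the boundary, together with the density justification cited there. Subordinate to this is making sure the curvature boundary term is absorbed using only the friction term $\alpha\|[\cdot]_\tau\|^2$. Both are routine once ${\bf w}$ is built as above, so I expect the first route to be the one written, with the second as a self-contained backup.
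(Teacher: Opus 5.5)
Your argument is correct, but your primary route differs from the paper's. The paper does not use Lemma~\ref{korn_exterior}. It applies the exact curvature identity of Lemma~\ref{korn} to the relative field ${\bf v}-\tilde{\bf v}_B$, where $\tilde{\bf v}_B=-\frac12\nabla\times[\zeta(x\times\boldsymbol{\ell}+\boldsymbol{\omega}|x|^2)]$ is a compactly supported lift. This lift equals $\boldsymbol{\ell}+\boldsymbol{\omega}\times x$ near $\partial\Omega$ and is solenoidal automatically, being a curl. The curvature boundary term is then bounded by $\|\kappa\|_{L^\infty(\partial\Omega)}\alpha^{-1}$ times the friction term. The lift contributes $\|\nabla\tilde{\bf v}_B\|_{L^2(\Omega)}\le c(|\boldsymbol{\ell}|+|\boldsymbol{\omega}|)\le c\|{\bf u}\|_{\tilde{\mathbb X}^2}$, which is where the lower-order term comes from.

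This is essentially your backup route. The only difference is that the curl construction makes your cutoff-plus-Bogovskii correction, and the accompanying zero-mean check, unnecessary.

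Your first route gives the stronger, homogeneous bound $\|\nabla{\bf v}\|^2_{L^2(\Omega)}\le c\langle\mathbb{A}_{\alpha,2}{\bf u},{\bf u}\rangle$. However, it rests on global exterior-domain facts imported from Shibata--Soga: $\|{\bf v}\|_{L^2(\Omega_R)}\le c\|\nabla{\bf v}\|_{L^2(\Omega)}$ and $\|\nabla{\bf v}\|_{L^2(\Omega)}\le c\|\mathbb{D}({\bf v})\|_{L^2(\Omega)}$. These hold only because nonzero rigid motions fail to be square-integrable on $\Omega$. The paper's route needs only a local identity and a compactly supported lift. Its price is the $\|{\bf u}\|^2_{\tilde{\mathbb X}^2}$ term, which is harmless where the lemma is used: in the coercivity of the Lax--Milgram form in Lemma~\ref{lemma.appendix2}, $\|{\bf u}\|^2_{\tilde{\mathbb X}^2}$ is already part of the form.
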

\begin{proof}
Let ${\bf u}\simeq ({\bf v},\boldsymbol{\ell},\boldsymbol{\omega})$, and let \({\bf v}_B := \boldsymbol{\ell}+ \boldsymbol{\omega}\times x \) and 
\( \tilde{\bf v}_B =-\frac{1}{2} \nabla \times \left[ \zeta(x) \left( x\times \boldsymbol{\ell} + \boldsymbol{\omega}|x|^2 \right) \right] \) 
for a smooth cut-off function \( \zeta \) with \( \zeta = 1 \) near \( \partial \Omega \). 
Then, we have \( \text{div} \, \tilde{\bf v}_B = 0 \) in \( \Omega \) and \( \tilde{\bf v}_B = {\bf v}_B \) on \( \partial \Omega \). 
Hence, by Lemma \ref{korn}, we obtain
\[
\|\nabla ({\bf v}- \tilde{\bf v}_B)\|_{L^2(\Omega)}^2 = 2 \|{\mathbb D}({\bf v}- \tilde{\bf v}_B)\|_{L^2(\Omega)}^2 + \kappa \|[{\bf v}- {\bf v}_B]_\tau\|_{L^2(\partial \Omega)}^2.
\]
Since \( \|\nabla \tilde{\bf v}_B\|_{L^2(\Omega)}^2 \leq c(|\boldsymbol{\ell}|^2 + |\boldsymbol{\omega}|^2) \leq c\|{\bf u}\|^2 \),
we can conclude that
\[
\|\nabla {\bf v}\|_{L^2(\Omega)}^2 \leq 2 \|{\mathbb D}({\bf v})\|_{L^2(\Omega)}^2 + \alpha \|[{\bf v}- \boldsymbol{\ell} -\boldsymbol{ \omega} \times x]_\tau \|_{L^2(\partial \Omega)}^2 + c(|\boldsymbol{\ell}|^2 + |\boldsymbol{\omega}|^2).
\]
Recall that
\[
({\mathbb A}_{\alpha,2} {\bf u}, {\bf u}) = 2\mu \int_\Omega |{\mathbb D}({\bf v})|^2 \, dx + \alpha \int_{\partial \Omega} |[{\bf v}-\boldsymbol{ \ell}- \boldsymbol{\omega} \times x]_\tau |^2 \, dS_x.
\]
This leads us to the inequality stated in Lemma \ref{strong.reg1}.
\end{proof}

In order to establish the following lemma, let us consider the boundary value problem of the Stokes equations in an exterior domain $\mathcal{O}$ under Navier slip conditions:
%
\begin{align}
\label{stokes}
\begin{cases}
-\mu \Delta \mathbcal{ u}+ \nabla{  p} =\mathbcal{f} \qquad \text{in } \mathcal{O}, \\
\mbox{\rm div} \,\mathbcal{ u} = 0 \qquad \text{in } \mathcal{O}, \\
\mathbcal{ u} \cdot {\bf n} = 0, \quad  2\mu[{\mathbb D}(\mathbcal{ u}){\bf n}]_\tau +\alpha [ \mathbcal{ u}]_\tau={\bf 0} \qquad
 \text{on } \partial \mathcal{O}, \\
\lim_{|x| \to \infty} \mathbcal{ u}(x) = {\bf 0}.
\end{cases}
\end{align}

\begin{lemm}
\label{prop.ogawa}
Let $\Omega$ be an exterior domain such that $\Omega^c \subset B_{R_0}$ for some $R_0 > 0$. For any $R > 3R_0$, let $\Omega_R = \Omega \cap B_R$.
Let $\mathbcal{f} \in W^{m,r}(\Omega)$ for $m\in {\mathbb N}\cup \{0\}$.
Let \( (\mathbcal{ u}, p) \) be a solution to \eqref{stokes} such that \( D^{m+2} \mathbcal{ u}, \nabla^{m+1} p \in L^r_{\text{loc}}(\Omega) \).

\begin{itemize}
  \item[(i)] If \( 1 < r < \frac{3}{2} \), then the following estimate holds:
  \[
  \|\nabla^2\mathbcal{ u}\|_{W^{m,r}(\Omega)} + \|\nabla p\|_{W^{m,r}(\Omega)} \leq c \|\mathbcal{f} \|_{W^{m,r}(\Omega)}.
  \]

  \item[(ii)] Let \( 1 < r < \infty \). If \(\mathbcal{ u} \in L^q(\Omega_R) \) for some \( q \in [r, \infty] \), then
  \[
  \|\nabla^2 \mathbcal{ u}\|_{W^{m,r}(\Omega)} + \|\nabla p\|_{W^{m,r}(\Omega)} \leq c \|\mathbcal{f} \|_{W^{m,r}(\Omega)} + c \|\mathbcal{ u}\|_{L^q(\Omega_R)}.
  \]
\end{itemize}
By Lemma \ref{poincare}, the local $L^q$-norm $\|\mathcal{u}\|_{L^q(\Omega_R)}$ in (ii) can be further bounded by the gradient norm $\|\nabla \mathcal{u}\|_{L^q(\Omega_R)}$.
\end{lemm}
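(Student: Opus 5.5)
We prove both parts by the standard cut-off argument that splits the exterior domain into a neighborhood of the boundary and a far field. The far field is handled by whole-space Stokes theory and the near field by the bounded-domain Navier-slip Stokes theory. Fix cut-off functions $\psi\in C_0^\infty(B_{2R_0})$ with $\psi=1$ on $B_{3R_0/2}$, and set $\phi=1-\psi$.

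\emph{Far field.} The function $\phi\mathcal{u}$ is defined on $\mathbb{R}^3$ and solves the Stokes system there:
\[
-\mu\Delta(\phi\mathcal{u})+\nabla(\phi p)=\phi\mathcal{f}-2\mu\nabla\phi\cdot\nabla\mathcal{u}-\mu(\Delta\phi)\mathcal{u}+(\nabla\phi)p,\qquad \operatorname{div}(\phi\mathcal{u})=\nabla\phi\cdot\mathcal{u}.
\]
We first remove the divergence with the Bogovskii operator on the annulus $B_{2R_0}\setminus B_{3R_0/2}$. This is possible because $\int\nabla\phi\cdot\mathcal{u}=\int_{\partial\Omega}\mathcal{u}\cdot{\bf n}=0$. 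Then the whole-space Calder\'on--Zygmund estimate applies. In the near zone, $\psi\mathcal{u}$ solves a Navier-slip Stokes problem in the bounded domain $\Omega_{2R_0}$: the slip condition holds on $\partial\Omega$ and the homogeneous condition holds on $\partial B_{2R_0}$. The same commutator terms appear, and the divergence is corrected by Bogovskii again. We invoke the bounded-domain $W^{m+2,r}$ regularity for the Navier-slip Stokes system (e.g.\ Amrouche--Seloula, or the bounded-domain theory used in \cite{necasova.amrouche}), normalizing $p$ by its mean over the annulus. Adding the two pieces, and differentiating $m$ times for the higher norms, gives
\[
\|\nabla^2\mathcal{u}\|_{W^{m,r}(\Omega)}+\|\nabla p\|_{W^{m,r}(\Omega)}\le c\|\mathcal{f}\|_{W^{m,r}(\Omega)}+c\big(\|\mathcal{u}\|_{W^{1,r}(\Omega_R)}+\|p-\bar p\|_{L^r(\Omega_R)}\big).
\]

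\emph{Part (ii).} The lower-order terms are absorbed by interpolation. We use Ehrling's inequality $\|\mathcal{u}\|_{W^{1,r}(\Omega_R)}\le\varepsilon\|\nabla^2\mathcal{u}\|_{L^r(\Omega_R)}+c_\varepsilon\|\mathcal{u}\|_{L^1(\Omega_R)}$ and $\|\mathcal{u}\|_{L^1(\Omega_R)}\le c\|\mathcal{u}\|_{L^q(\Omega_R)}$, which holds since $q\ge r>1$. For the pressure we use the Ne\v{c}as inequality $\|p-\bar p\|_{L^r(\Omega_R)}\le c\|\nabla p\|_{W^{-1,r}(\Omega_R)}$. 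Substituting $\nabla p=\mathcal{f}+\mu\Delta\mathcal{u}$ gives $\|\nabla p\|_{W^{-1,r}}\le c\|\mathcal{f}\|_{L^r}+c\|\nabla\mathcal{u}\|_{L^r(\Omega_R)}$, and the last term is interpolated again. To absorb the $\varepsilon$-terms into the left side we need them finite a priori. This holds because the hypothesis gives $D^{m+2}\mathcal{u},\nabla^{m+1}p\in L^r_{\rm loc}$, which is all that is used in the bounded region $\Omega_R$.

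\emph{Part (i): removing the local term.} We argue by contradiction. Suppose there are solutions $(\mathcal{u}_k,p_k)$ with $\|\nabla^2\mathcal{u}_k\|_{W^{m,r}}+\|\nabla p_k\|_{W^{m,r}}=1$ and $\|\mathcal{f}_k\|_{W^{m,r}}\to0$. Since $1<r<3/2$, Sobolev embedding gives $\nabla\mathcal{u}_k$ bounded in $L^{r^*}$ with $r^*=3r/(3-r)<3$, and $\mathcal{u}_k$ bounded in $L^{r^{**}}$ with $r^{**}=3r/(3-2r)<\infty$. Here the decay condition $\mathcal{u}\to0$ at infinity fixes the constants. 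By Rellich compactness on $\Omega_R$, a subsequence converges locally to a limit $(\mathcal{u},p)$. The limit solves the homogeneous problem \eqref{stokes} with $\mathcal{u}\in L^{r^{**}}$ and $\nabla\mathcal{u}\in L^{r^*}$. The inequality from (ii) applied to the differences shows strong convergence, so the limit is nontrivial in the normalized norm. The main obstacle is the uniqueness step, namely showing $\mathcal{u}=0$. The plan is to use the decay: from (ii) with higher exponents together with the representation far away, $\mathcal{u}=O(|x|^{-1})$ and $\nabla\mathcal{u}=O(|x|^{-2})$, with $p$ decaying similarly. This puts $\nabla\mathcal{u}$ in $L^2$, so the energy identity of Lemma \ref{prop.pre2} (with no rigid part) is justified by a cut-off at radius $\rho\to\infty$ whose boundary terms vanish. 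It yields $2\mu\|\mathbb{D}(\mathcal{u})\|_2^2+\alpha\|[\mathcal{u}]_\tau\|_{L^2(\partial\Omega)}^2=0$. Hence $\mathcal{u}$ is a rigid motion vanishing at infinity, so $\mathcal{u}=0$; then $\nabla p=0$, which contradicts the normalization. The restriction $r<3/2$ is used exactly to obtain $\mathcal{u}\in L^{r^{**}}$ with $r^{**}<\infty$, which rules out nonzero constants and, by the same reasoning, excludes the exterior-problem solutions with nonzero flux-type behavior at infinity.
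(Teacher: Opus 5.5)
Your proposal is correct and follows essentially the same route as the paper. Both arguments use a cut-off splitting into a near-boundary Navier-slip problem on a bounded domain and a whole-space far field; the paper adds a third, Dirichlet middle piece and lets its model problems carry a nonzero divergence instead of using Bogovskii corrections. Both then use a local pressure bound plus interpolation for (ii). For (i), both use a normalization--compactness contradiction closed by a uniqueness result, which rests on the far-field asymptotics $u=O(|x|^{-1})$ (with $r<3/2$ giving $u\in L^{3r/(3-2r)}$ to remove the constant and linear parts) and the energy identity.

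The only variation is that you apply (ii) to differences to get strong convergence of the normalized quantities. This is a slightly cleaner way to keep the limit nontrivial than the paper's lower bound on $\|u_m\|_{L^p(\Omega_R)}$.
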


\begin{proof}
The base case $m=0$, along with a summary of known results for various domains, is provided in Appendix~\ref{appendix.prop.ogawa}.  
For higher-order regularity ($m \ge 1$), the result follows via a standard inductive argument. Since the Navier-slip boundary condition satisfies the Lopatinskii--Shapiro complementarity condition and the Stokes system is elliptic in the sense of Agmon--Douglas--Nirenberg, higher-order estimates are obtained by differentiating the equations and applying the base estimate iteratively, provided $\partial\Omega$ is sufficiently smooth (of class $C^{m+2}$).
\end{proof}

\begin{rem}
The regularity theory for Stokes equations in bounded domains or with Dirichlet boundary conditions is classical (see, e.g., \cite{galdi, kozono-ogawa, maremonti-solonnikov}). For Navier-slip conditions in exterior domains, prior works have predominantly utilized weighted Sobolev spaces to handle decay properties at infinity (see \cite{amrouche0, anis}). In contrast, our estimate in Lemma~\ref{prop.ogawa} is formulated entirely within standard Sobolev spaces, where the local $L^q$-term naturally emerges to capture the interplay between the slip boundary condition and the unbounded domain geometry.
\end{rem}

\begin{lemm}
\label{strong.reg2}
Let ${\bf u} \in \mathcal{D}({\mathbb A}_{\alpha,q})$, and let ${\bf v}={\bf u}|_\Omega$.
Then, for any $m \in \mathbb{N} $, it holds that 
\begin{align}
\label{qn2}
\|{\bf v}\|_{W^{2m,q}(\Omega)}
&\le c\Big( \|{\mathbb A}_{\alpha,q}^{m} {\bf u}\|_{\tilde{\mathbb X}^q}
          + \|{\bf u}\|_{\tilde{\mathbb X}^q} \Big),
\quad \text{if } {\bf u} \in \mathcal{D}({\mathbb A}^{m}_{\alpha,q}).
\end{align}
\end{lemm}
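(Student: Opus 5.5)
The plan is to prove \eqref{qn2} by induction on $m$, using the elliptic estimate of Lemma \ref{prop.ogawa}(ii) as the engine. The idea is to read the resolvent-free equation ${\mathbb A}_{\alpha,q}{\bf u}={\bf F}$ as a stationary Stokes problem with Navier-slip data in $\Omega$. The rigid part is handled by a lifting, so the boundary data become homogeneous.

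\textbf{Base case $m=1$.} Let ${\bf u}\simeq({\bf v},\boldsymbol{\ell},\boldsymbol{\omega})\in\mathcal{D}({\mathbb A}_{\alpha,q})$ and set ${\bf F}:={\mathbb A}_{\alpha,q}{\bf u}\simeq({\bf F}_0,{\bf F}_1,{\bf F}_2)$.

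By the definition ${\mathbb A}_{\alpha,q}={\mathbb P}_q\mathcal{A}$ and the decomposition of Proposition \ref{helmholtz}, $\mathcal{A}{\bf u}-{\bf F}\in G^q_1\oplus G^q_2$. Restricting to $\Omega$ gives a pressure $\pi$, built from $q_1+q_2$, with $-\mu\Delta{\bf v}+\nabla\pi={\bf F}_0$ in $\Omega$. The integral conditions in $G^q_2$ encode the rigid-body equations, but these are not needed for the spatial estimate.

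Next, subtract the solenoidal lifting $\tilde{\bf v}_B$ of Lemma \ref{strong.reg1}. It is smooth, compactly supported, and equals $\boldsymbol{\ell}+\boldsymbol{\omega}\times x$ near $\partial\Omega$. Then $\mathbcal{u}:={\bf v}-\tilde{\bf v}_B$ solves \eqref{stokes} with right-hand side $\mathbcal{f}={\bf F}_0+\mu\Delta\tilde{\bf v}_B$. The tangential condition holds because ${\mathbb D}(\boldsymbol{\ell}+\boldsymbol{\omega}\times x)=0$ near the boundary, so $2\mu[{\mathbb D}(\mathbcal{u}){\bf n}]_\tau+\alpha[\mathbcal{u}]_\tau={\bf 0}$.

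Applying Lemma \ref{prop.ogawa}(ii) with $r=q$ and the local norm $\|\mathbcal{u}\|_{L^q(\Omega_R)}$ gives
\[
\|\nabla^2{\bf v}\|_{L^q(\Omega)}\le c\big(\|{\bf F}_0\|_{L^q}+|\boldsymbol{\ell}|+|\boldsymbol{\omega}|+\|{\bf v}\|_{L^q(\Omega_R)}\big).
\]
The norm equivalence $\|{\bf u}\|_{\tilde{\mathbb X}^q}\approx\|({\bf v},\boldsymbol{\ell},\boldsymbol{\omega})\|_{\mathbb X^q}$ bounds $|\boldsymbol{\ell}|+|\boldsymbol{\omega}|+\|{\bf v}\|_{L^q}$ by $c\|{\bf u}\|_{\tilde{\mathbb X}^q}$. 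The first derivatives are then handled by interpolation: $\|\nabla{\bf v}\|_{L^q}\le\varepsilon\|\nabla^2{\bf v}\|_{L^q}+c_\varepsilon\|{\bf v}\|_{L^q}$ on the exterior domain. This yields \eqref{qn2} for $m=1$.

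\textbf{Induction step.} Assume \eqref{qn2} holds for $m-1$, and let ${\bf u}\in\mathcal{D}({\mathbb A}^m_{\alpha,q})$. Put ${\bf F}={\mathbb A}_{\alpha,q}{\bf u}\in\mathcal{D}({\mathbb A}^{m-1}_{\alpha,q})$. By the induction hypothesis,
\[
\|{\bf F}_0\|_{W^{2m-2,q}}\le c\big(\|{\mathbb A}^m_{\alpha,q}{\bf u}\|+\|{\mathbb A}_{\alpha,q}{\bf u}\|\big).
\]
Apply Lemma \ref{prop.ogawa}(ii) with order $2m-2$ to the same Stokes problem for $\mathbcal{u}$; the lifting $\tilde{\bf v}_B$ is smooth, so its contribution is again $\le c(|\boldsymbol{\ell}|+|\boldsymbol{\omega}|)$. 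This gives
\[
\|{\bf v}\|_{W^{2m,q}}\le c\big(\|{\mathbb A}^m_{\alpha,q}{\bf u}\|+\|{\mathbb A}_{\alpha,q}{\bf u}\|+\|{\bf u}\|\big).
\]
The middle term is removed by the moment inequality for sectorial operators (Theorem \ref{analyticity.thm}): $\|{\mathbb A}_{\alpha,q}{\bf u}\|\le c\|{\mathbb A}^m_{\alpha,q}{\bf u}\|^{1/m}\|{\bf u}\|^{1-1/m}\le c(\|{\mathbb A}^m_{\alpha,q}{\bf u}\|+\|{\bf u}\|)$. Alternatively, to avoid relying on a later theorem, one can interpolate $\|{\bf v}\|_{W^{2,q}}$ between $W^{2m,q}$ and $L^q$ and absorb the $W^{2m,q}$ part with a small $\varepsilon$.

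\textbf{Main obstacle.} The delicate step is the identification of ${\mathbb A}_{\alpha,q}{\bf u}$ restricted to $\Omega$ with $-\mu\Delta{\bf v}+\nabla\pi$ for a genuine pressure $\pi\in\widehat W^{1,q}(\Omega)$. This requires checking that the $G_1^q\oplus G_2^q$ component restricts to a gradient on $\Omega$, and that the regularity hypothesis $\nabla^{m+1}\pi\in L^q_{\rm loc}$ of Lemma \ref{prop.ogawa} holds. I would establish the latter by local ADN regularity near $\partial\Omega$ before invoking the global estimate.
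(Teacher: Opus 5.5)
Your proposal is correct and follows essentially the same route as the paper: you subtract the lifting $\tilde{\bf v}_B$ and apply Lemma~\ref{prop.ogawa}(ii) for $m=1$, then induct by applying the hypothesis to ${\mathbb A}_{\alpha,q}{\bf u}$ together with the higher-order version of Lemma~\ref{prop.ogawa}. Your second option for removing the intermediate term $\|{\mathbb A}_{\alpha,q}{\bf u}\|_{\tilde{\mathbb X}^q}$ is exactly what the paper does: it bounds this term by $c\|{\bf v}\|_{W^{2,q}(\Omega)}$ and absorbs it via a Gagliardo--Nirenberg interpolation with small $\varepsilon$, so the appeal to Theorem~\ref{analyticity.thm} is unnecessary.
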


\begin{proof}
Let  ${\bf u}\simeq ({\bf v},\boldsymbol{\ell},\boldsymbol{\omega})$, and let ${\bf F}= {\mathbb A}_{\alpha,q} {\bf u}\big|_\Omega$.

Since ${\mathbb A}_{\alpha,q} = \mathbb{P}\mathcal{A}:\mathcal{D}({\mathbb A}_{\alpha,q})\to\tilde{\mathbb X}^q$,
there exists a pressure $p$ such that
\begin{align*}
-\mu \Delta {\bf v} + \nabla p =  {\bf F},
\quad \text{\rm div}\, {\bf v} &= 0 \quad \text{in } \Omega,\\
({\bf v} -\boldsymbol{\ell} - \boldsymbol{\omega} \times x)\cdot {\bf n}&=0 \quad \text{on } \partial \Omega,\\
2\mu[{\mathbb D}({\bf v}){\bf n}]_\tau
+ \alpha[{\bf v} - \boldsymbol{\ell} - \boldsymbol{\omega} \times x]_\tau &= {\bf 0}
\quad \text{on } \partial \Omega.
\end{align*}
Let $\tilde{\bf v}_B$ be the auxiliary field constructed in the proof of
Lemma~\ref{strong.reg1}. Applying Lemma~\ref{prop.ogawa} to
${\bf v}- \tilde{\bf v}_B$ yields
\begin{align*}
\|\nabla^2({\bf v}  - \tilde{\bf v}_B)\|_{L^q(\Omega)}
&\leq c\|{\bf F}\|_{L^q(\Omega)}
     +c\|\Delta\tilde{\bf v}_B\|_{L^q(\Omega_R)}
     +c\|{\bf v} - \tilde{\bf v}_B\|_{L^q(\Omega_R)}.
\end{align*}
Hence
\begin{align*}
\|\nabla^2 {\bf v}\|_{L^q(\Omega)}
&\leq c\|{\bf F}\|_{L^q(\Omega)}
     +c|\boldsymbol{\ell}| + c|\boldsymbol{\omega}| + c\|{\bf v}\|_{L^q(\Omega_R)} \\
&\leq c\|{\mathbb A}_{\alpha,q}{\bf u}\|_{\tilde{\mathbb X}^q}
     +c\|{\bf u}\|_{\tilde{\mathbb X}^q}.
\end{align*}
This proves the estimate for $m=1$.

We now prove the general case by induction on $m$. 
Let ${\bf u}\in \mathcal{D}({\mathbb A}_{\alpha,q}^{m+1})$. Then ${\mathbb A}_{\alpha,q} {\bf u}\in D({\mathbb A}_{\alpha,q}^m)$,
so we may apply \eqref{qn2} with ${\bf u}$ replaced by ${\mathbb A}_{\alpha,q} {\bf u}$:
\begin{equation}
\label{qn3}
\|{\mathbb A}_{\alpha,q} {\bf u}|_{\Omega}\|_{W^{2m,q}(\Omega)}
\leq c\Big(\|{\mathbb A}_{\alpha,q}^{m+1} {\bf u}\|_{\tilde{\mathbb X}^q}
          +\|{\mathbb A}_{\alpha,q}{\bf u}\|_{\tilde{\mathbb X}^q}\Big).
\end{equation}
Also, according to Lemma \ref{prop.ogawa} it holds that
\begin{align}
\label{qn4}
\|{\bf v}\|_{W^{2m+2,q}(\Omega)}\leq c\Big(\|{\mathbb A}_{\alpha,q}{\bf u}|_{\Omega}\|_{W^{2m,q}(\Omega)}+\| {\bf v}  \|_{ L^q(\Omega_R)}  \Big).
\end{align}

Since ${\mathbb A}_{\alpha,q}{\bf u}={\mathbb P}_q{\mathcal A}{\bf u}$, 
\[
\|{\mathbb A}_{\alpha,q}{\bf u}\|_{\tilde{\mathbb X}^q}\leq c\|{\mathcal A}{\bf u}\|_{L^q({\mathbb R}^3)} 
\le c\|  {\bf v}\|_{W^{2,q}(\Omega)}.
\]

Now we use a standard interpolation inequality of Gagliardo–Nirenberg type
to obtain, for any $\varepsilon>0$,
\begin{equation}
\label{eq:interp}
\|{\bf v}\|_{W^{2,q}(\Omega)}\leq \varepsilon\|\nabla^{2m+2} {\bf v}\|_{L^q(\Omega)}
   + C_\varepsilon \|{\bf v}\|_{L^q(\Omega)},
\end{equation}
and choosing $\varepsilon>0$ sufficiently small, we finally arrive at the $m+1$ step:
\[
\|\nabla^{2m+2} {\bf v}\|_{L^q(\Omega)}
\leq c\Big(\|{\mathbb A}_{\alpha,q}^{m+1} {\bf u}\|_{\tilde{\mathbb X}^q}
          +\|{\bf u}\|_{\tilde{\mathbb X}^q}\Big).
\]

Thus the estimate holds for $m+1$, and by induction \eqref{qn2} holds for all
$m\in\mathbb{N}$.

\end{proof}

\begin{rem}
The higher-order regularity estimate established in Lemma~\ref{strong.reg2} is directly utilized in the derivation of the short-time estimates in Section~\ref{smallt}.
\end{rem}

We prepare the following estimate of the local pressure for later use in the proof of Lemma \ref{prop.theorem.strong.Lp} and Section \ref{gradient.decay}.

\begin{lemm}
\label{lem:pressure_accel_decomp}
Let $\Omega^c \subset B_{R/4}$, and let $({\bf v}, \pi, \boldsymbol{\ell}, \boldsymbol{\omega})$ be a sufficiently smooth solution to the system \eqref{e01}. Then, the following estimate holds:
\begin{equation}
\label{est:pressure_accel}
\|\pi\|_{L^q(\Omega_{2R})} 
\leq c\big(\|\nabla {\bf v}\|_{W^{s,q}(\Omega_{2R})} + \|{\bf f}_0\|_{L^q(\Omega)} + |{\bf f}_1| + |{\bf f}_2| \big), \quad \text{for } \frac{1}{q} < s < 1.
\end{equation}
\end{lemm}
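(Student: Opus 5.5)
The plan is to estimate the pressure through a duality argument on $\Omega_{2R}$, using the rigid-body ODEs to pin down the otherwise free additive constant in $\pi$. Since the fluid equation only determines $\nabla\pi$, the normalization comes from the boundary force balance. Because $\mathbb{S}({\bf v},\pi){\bf n}$ appears in the ODE, $\pi$ itself (not merely up to constants) is controlled through $m_0\dot{\boldsymbol{\ell}}$.

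\textbf{Step 1: reduce to a Bogovskii/Poisson duality.} For $\psi\in L^{q'}(\Omega_{2R})$, split $\psi=(\psi-\bar\psi)+\bar\psi$, where $\bar\psi$ is the mean over $\Omega_{2R}$. For the mean-zero part, take ${\bf w}=\mathrm{Bog}(\psi-\bar\psi)\in {\bf W}^{1,q'}_0(\Omega_{2R})$ with $\|{\bf w}\|_{W^{1,q'}}\le c\|\psi\|_{L^{q'}}$. Testing the fluid equation with ${\bf w}$ gives
\[
\int\pi(\psi-\bar\psi)=\int(\partial_t{\bf v}-{\bf f}_0)\cdot{\bf w}+\mu\int\nabla{\bf v}:\nabla{\bf w}.
\]
This is bounded by $(\|\partial_t{\bf v}\|_{L^q(\Omega_{2R})}+\|{\bf f}_0\|_{L^q}+\|\nabla{\bf v}\|_{L^q})\|\psi\|_{L^{q'}}$.

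The term $\partial_t{\bf v}$ must be eliminated, since it does not appear on the right side of \eqref{est:pressure_accel}. To remove it, I would instead choose the test field ${\bf w}$ to be a gradient $\nabla\eta$ with $\Delta\eta=\psi-\bar\psi$ in $\Omega_{2R}$ and $\partial_{\bf n}\eta=0$ on $\partial\Omega_{2R}$. Then $\int\partial_t{\bf v}\cdot\nabla\eta$ reduces, via $\mathrm{div}\,{\bf v}=0$, to boundary terms $\int_{\partial\Omega}\partial_t{\bf v}\cdot{\bf n}\,\eta=\int_{\partial\Omega}(\dot{\boldsymbol\ell}+\dot{\boldsymbol\omega}\times x)\cdot{\bf n}\,\eta$. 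The term $\int\Delta{\bf v}\cdot\nabla\eta$ is rewritten as $-\int\nabla\times\nabla\times{\bf v}\cdot\nabla\eta$, which becomes a boundary integral of $\nabla{\bf v}$. That trace is controlled by $\|\nabla{\bf v}\|_{W^{s,q}(\Omega_{2R})}$ with $s>1/q$; this is exactly where the hypothesis $1/q<s<1$ enters. The companion factor $\eta$ has a trace controlled by $\|\eta\|_{W^{2,q'}}\le c\|\psi\|_{L^{q'}}$.

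\textbf{Step 2: accelerations via the ODEs.} Next $|\dot{\boldsymbol\ell}|+|\dot{\boldsymbol\omega}|$ must be bounded. The ODE gives
\[
m_0\dot{\boldsymbol\ell}=-2\mu\int_{\partial\Omega}\mathbb{D}({\bf v}){\bf n}+\int_{\partial\Omega}\pi{\bf n}+m_0{\bf f}_1,
\]
and similarly for $\dot{\boldsymbol\omega}$. This is circular, because the ODE reintroduces the boundary values of $\pi$. The better approach is to pair with test functions that are rigid in $B$, working in the whole-space formulation of Section~\ref{section.fluidstructure}. Test the full system with $\boldsymbol\Phi\simeq(\nabla\eta,{\bf g},\boldsymbol\theta)$, where ${\bf g},\boldsymbol\theta$ are chosen so that the boundary terms in $\eta$ combine with the ODE exactly. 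The combined weak form then reads
\[
\langle\partial_t{\bf u},\boldsymbol\Phi\rangle+2\mu\int\mathbb{D}({\bf v}):\mathbb{D}(\nabla\eta)+\text{slip terms}-\int\pi\,\Delta\eta=\langle{\bf f},\boldsymbol\Phi\rangle.
\]
I would pick $\eta$ solving the Neumann problem with data $\partial_{\bf n}\eta=({\bf g}+\boldsymbol\theta\times x)\cdot{\bf n}$ on $\partial\Omega$, so that $\boldsymbol\Phi\in\tilde{\mathbb X}^{q'}$ becomes divergence-free apart from $\psi$. Then $\langle\partial_t{\bf u},\boldsymbol\Phi\rangle=\langle\partial_t{\bf u},\mathbb{P}\boldsymbol\Phi\rangle$, and since $\boldsymbol\Phi$ is a gradient plus a $G_2$-type piece, Proposition~\ref{helmholtz} makes this pairing vanish. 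This is exactly what makes the acceleration drop out.

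\textbf{Step 3: the constant mode.} The mean $\bar\psi$ is handled by the same construction: the compatibility condition for the Neumann problem, $\int\psi=\int_{\partial\Omega}({\bf g}+\boldsymbol\theta\times x)\cdot{\bf n}$, is satisfied by a suitable choice of ${\bf g}$. I expect the main obstacle to be this construction of $\boldsymbol\Phi$ with $\|\eta\|_{W^{2,q'}(\Omega_{2R})}+|{\bf g}|+|\boldsymbol\theta|\le c\|\psi\|_{L^{q'}}$ such that its pairing with $\partial_t{\bf u}$ vanishes identically. Once that holds, every remaining term is bounded by $\|\nabla{\bf v}\|_{W^{s,q}(\Omega_{2R})}$, through traces on $\partial\Omega$ and the slip term, and by the forces, which yields \eqref{est:pressure_accel}.
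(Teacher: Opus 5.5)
Your proposal has two genuine gaps, and both sit exactly where the lemma is hard.

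\textbf{The constant mode of the pressure.} Your plan relies on the claim that the rigid-body equations fix the additive constant in $\pi$, and this is false. Since $\int_{\partial\Omega}{\bf n}\,dS_x={\bf 0}$ and $\int_{\partial\Omega}x\times{\bf n}\,dS_x={\bf 0}$, the substitution $\pi\mapsto\pi+c$ leaves both the fluid equation and the Newton--Euler equations unchanged. So $\dot{\boldsymbol\ell},\dot{\boldsymbol\omega}$ carry no information about the constant mode. This is precisely where Step~3 breaks. The compatibility condition $\int\psi\,dx=\int_{\partial\Omega}({\bf g}+\boldsymbol\theta\times x)\cdot{\bf n}\,dS_x$ can never be met when $\bar\psi\neq0$, because the right-hand side equals $-\int_B\operatorname{div}({\bf g}+\boldsymbol\theta\times x)\,dx=0$ for every ${\bf g},\boldsymbol\theta$. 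The estimate only makes sense for a normalized pressure, and the normalization has to be imposed and then used. In the paper it is built into the decomposition $\pi=\pi_1+\pi_2+\pi_3$ of the exterior Neumann problem: $\pi_1,\pi_3$ are harmonic and vanish at infinity, and $\pi_2$ has zero mean on $\Omega_{2R}$.

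\textbf{Eliminating the acceleration.} This cannot be done with test functions living on $\Omega_{2R}$.
\begin{itemize}
\item With $\partial_{\bf n}\eta=0$ on $\partial B_{2R}$, integrating $\int_{\Omega_{2R}}\partial_t{\bf v}\cdot\nabla\eta\,dx$ by parts also produces $\int_{\partial B_{2R}}\eta\,\partial_t{\bf v}\cdot{\bf n}\,dS$. There $\partial_t{\bf v}\cdot{\bf n}$ is not prescribed.
\item Imposing $\eta=0$ on $\partial B_{2R}$ instead produces $\int_{\partial B_{2R}}\pi\,\partial_{\bf n}\eta\,dS$, which is equally uncontrolled.
\item Equivalently, the zero extension of $\nabla\eta$ is not a gradient on $\Omega$, so $\mathbb{P}_{q'}\boldsymbol\Phi\neq0$. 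Nor is $\boldsymbol\Phi$ in $\tilde{\mathbb X}^{q'}$, since its divergence is $\psi$.
\end{itemize}
The pressure of an exterior problem is nonlocal, so you need $\eta$ defined on all of $\Omega$ with decay at infinity. Even then, $\langle\partial_t{\bf u},\boldsymbol\Phi\rangle=0$ requires the moment conditions $m_0{\bf g}=-\int_{\partial\Omega}\eta\,{\bf n}\,dS_x$ and $\mathbb{J}_0\boldsymbol\theta=-\int_{\partial\Omega}\eta\,x\times{\bf n}\,dS_x$. You must impose these together with the Neumann data $\partial_{\bf n}\eta=({\bf g}+\boldsymbol\theta\times x)\cdot{\bf n}$, which you need to cancel the boundary pressure terms from force and torque. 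That is a coupled six-dimensional problem. Write $\eta=\eta_0+\sum_j c_j\Phi_j$ with the Kirchhoff potentials $\Phi_j$ and $c=({\bf g},\boldsymbol\theta)$. It then reduces to $\big(\mathrm{diag}(m_0\mathbf{I},\mathbb{J}_0)+\mathcal{M}_{\text{added}}\big)c=-\big(\int_{\partial\Omega}\eta_0{\bf n}\,dS_x,\int_{\partial\Omega}\eta_0\,x\times{\bf n}\,dS_x\big)$, which is solvable because the total mass matrix is invertible.

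That invertibility is the key idea of the paper's proof. The paper uses it on the primal side: $\pi_3=-\sum_jA_j\Phi_j$, and the added-mass term is moved to the left of the Newton--Euler equations. You flag this construction as the main obstacle but never supply it, so the elimination of $\dot{\boldsymbol\ell},\dot{\boldsymbol\omega}$ is asserted rather than proved. Your treatment of the viscous term through $\nabla\times\nabla\times{\bf v}$ and traces with $s>1/q$ is fine; it parallels the paper's bound for $\pi_1$.
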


\begin{proof}
Observe that $\pi$ satisfies the following Neumann problem in the exterior domain $\Omega$:
\begin{align*}
\Delta \pi &= \text{div } {\bf f}_0 \quad \text{in } \Omega, \\
\frac{\partial \pi}{\partial n} &= \mu \Delta {\bf v} \cdot \boldsymbol{n} + {\bf f}_0 \cdot \boldsymbol{n} - \partial_t {\bf v} \cdot \boldsymbol{n} \quad \text{on } \partial \Omega.
\end{align*}
To rigorously estimate the pressure, we decompose $\pi = \pi_1 + \pi_2 + \pi_3$, where each $\pi_i$ solves the following problems:
\begin{align*}
\Delta \pi_1 &= 0 \quad \text{in } \Omega, \quad
&\frac{\partial \pi_1}{\partial n} &= \mu \Delta {\bf v} \cdot \boldsymbol{n} \quad \text{on } \partial \Omega, \\
\Delta \pi_2 &= \text{div } {\bf f}_0 \quad \text{in } \Omega, \quad
&\frac{\partial \pi_2}{\partial n} &= {\bf f}_0 \cdot \boldsymbol{n} \quad \text{on } \partial \Omega, \\
\Delta \pi_3 &= 0 \quad \text{in } \Omega, \quad
&\frac{\partial \pi_3}{\partial n} &= -(\dot{\boldsymbol{\ell}}+\dot{\boldsymbol{\omega}}\times x)\cdot \boldsymbol{n} \quad \text{on } \partial \Omega,
\end{align*}
where we used the Navier slip condition $({\bf v}-\boldsymbol{\ell}-\boldsymbol{\omega}\times x)\cdot \boldsymbol{n}=0$ on the boundary $\partial \Omega$ to explicitly simplify the Neumann condition for $\pi_3$.

In the proof of Theorem 8.1 in \cite{solonnikov} (see also \cite[Eq. (3.3)]{maremonti-solonnikov0}), if we impose the vanishing condition at infinity, it is established that
\begin{align}
\label{pi1_local}
\|\pi_1\|_{L^q(\Omega_{2R})} &\leq c \|\nabla {\bf v}\|_{W^{s_0,q}(\partial \Omega)}, \quad 0 < s_0 < 1 - \frac{1}{q}, \\
\label{pi1_local1}
\|\nabla \pi_1\|_{L^q(\Omega_{2R})} &\leq c \|\nabla {\bf v}\|_{W^{1-\frac{1}{q},q}(\partial \Omega)}. 
\end{align}
Interpolating \eqref{pi1_local} and \eqref{pi1_local1}, we have
\begin{equation}
\label{pi1_interp}
\|\pi_1\|_{W^{a,q}(\Omega_R)} \leq c \|\nabla {\bf v}\|_{W^{a(1-\frac{1}{q}),q}(\partial \Omega)}, \quad \frac{1}{q}<a<1.
\end{equation}

On the other hand, the standard elliptic $L^q$-theory for the weak Neumann problem in the exterior domain $\Omega$ yields that the global gradient is bounded:
\begin{align}
\label{p2_grad}
\|\nabla \pi_2\|_{L^q(\Omega)} \leq c\|{\bf f}_0\|_{L^q(\Omega)}.
\end{align}
Since the solution to the Neumann problem is unique up to an additive constant, we uniquely fix $\pi_2$ by imposing the zero-mean condition $\int_{\Omega_{2R}} \pi_2 \, dx = 0$. By the Poincar\'e inequality on $\Omega_{2R}$, we obtain bounds for the local bulk and boundary norms of $\pi_2$:
\begin{align}
\label{pi2_local}
\|\pi_2\|_{L^q(\Omega_{2R})} \leq c \|\nabla \pi_2\|_{L^q(\Omega_{2R})} \leq c\|{\bf f}_0\|_{L^q(\Omega)}.
\end{align}

By further imposing that $\pi_3$ vanishes at infinity, we can express it in terms of the rigid body acceleration $A = (\dot{\boldsymbol{\ell}}, \dot{\boldsymbol{\omega}})^T \in \mathbb{R}^6$. To this end, we introduce the classical Kirchhoff potentials $\Phi_i$ ($i=1,\dots,6$), which are harmonic functions in $\Omega$ satisfying $\frac{\partial \Phi_i}{\partial n} = {\bf n}_i$ for $i=1,2,3$ and $\frac{\partial \Phi_i}{\partial n} = ({\bf e}_{i-3} \times x) \cdot {\bf n}$ for $i=4,5,6$ on $\partial \Omega$, along with the decay condition $\Phi_i(x) \to 0$ as $|x| \to \infty$. Using these potentials, we can explicitly represent $\pi_3$ as:
\[
\pi_3(x,t) = - \sum_{j=1}^6 A_j(t) \Phi_j(x).
\]
As is standard in fluid-structure interaction (see, e.g., Galdi \cite[Section 2]{galdi2002}), substituting this representation into the hydrodynamic force and torque integrals yields exactly the added mass term:
\begin{align*}
\begin{pmatrix}
-\int_{\partial \Omega} \pi_3 {\bf n} \, dS_x \\
-\int_{\partial \Omega} x \times \pi_3 {\bf n} \, dS_x
\end{pmatrix}
= - \mathcal{M}_{\text{added}} A,
\end{align*}
where $\mathcal{M}_{\text{added}}$ is the $6 \times 6$ symmetric, strictly positive definite added mass matrix with entries $m_{ij} = \int_{\Omega} \nabla \Phi_i \cdot \nabla \Phi_j \, dx$.
Moving this added mass term to the left-hand side of the Newton-Euler equations, we obtain:
\begin{align*} 
&\left[ \begin{pmatrix}
m_0 \mathbf{I} & 0 \\
0 & \mathbb{J}_0
\end{pmatrix} 
+ \mathcal{M}_{\text{added}} \right] A
\\
&=
\begin{pmatrix}
-\int_{\partial \Omega} (\pi_1 + \pi_2) {\bf n} \, dS_x + 2\mu\int_{\partial \Omega} \mathbb{D}({\bf v}){\bf n} \, dS_x + m_0 {\bf f}_1 \\
-\int_{\partial \Omega} x \times (\pi_1 + \pi_2) {\bf n} \, dS_x + 2\mu\int_{\partial \Omega} x \times \mathbb{D}({\bf v}){\bf n} \, dS_x + \mathbb{J}_0 {\bf f}_2
\end{pmatrix}.
\end{align*}
Since the total mass matrix is invertible, we directly deduce:
\begin{align}
\label{pi3_local}
\|\pi_3\|_{L^q(\Omega_{2R})} + \|\nabla \pi_3\|_{L^q(\Omega_{2R})} 
&\leq c(|\dot{\boldsymbol{\ell}}| + |\dot{\boldsymbol{\omega}}|) \notag \\
&\leq c \left( \|\pi_1\|_{L^q(\partial \Omega)} + \|\pi_2\|_{L^q(\partial \Omega)} + \|\nabla{\bf v}\|_{L^q(\partial \Omega)} \right) + c(|{\bf f}_1| + |{\bf f}_2|).
\end{align}

Finally, we bound the boundary norms of $\pi_2$ and $\pi_1$. Using the zero-mean property of $\pi_2$ on $\Omega_{2R}$, we have:
\begin{align}
\label{p2_trace}
\|\pi_2\|_{L^q(\partial \Omega)} \leq c \|\pi_2\|_{W^{1,q}(\Omega_{2R})} \leq c \|\nabla \pi_2\|_{L^q(\Omega_{2R})} \leq c\|{\bf f}_0\|_{L^q(\Omega)}.
\end{align}
For $\pi_1$, fixing an exponent $a$ such that $\frac{1}{q} < a < 1$, the standard trace theorem and \eqref{pi1_interp} yield
\begin{align}
\label{pi5_local}
\|\pi_1\|_{L^q(\partial \Omega)} 
&\leq c\|\pi_1\|_{W^{a,q}(\Omega_R)} 
\leq c \|\nabla {\bf v}\|_{W^{a(1-\frac{1}{q}),q}(\partial \Omega)} \notag \\
&\leq c \|\nabla {\bf v}\|_{W^{s,q}(\Omega_{2R})}, \quad \text{where } s = a\left(1-\frac{1}{q}\right) + \frac{1}{q}.
\end{align}
Note that since $\frac{1}{q} < a < 1$, it follows immediately that $\frac{1}{q} < s < 1$. 

Combining the local estimates established above yields the desired estimate \eqref{est:pressure_accel}.
\end{proof}

The following lemma, which will be used several times in this paper, relies on the well-known properties of the Bogovskii operator proved in Galdi \cite[Lemma III.3.1]{galdi}:
\begin{lemm}
\label{lem.bogovskii_cutoff}
Let $\Omega$ be an exterior domain such that $\Omega^c \subset B_{R_1/2}$ for some $R_1 > 0$. 
For any $R_2 > R_1$, we denote $\Omega_{R_2} = \Omega \cap B_{R_2}$. 
Let $\zeta \in C^\infty(\mathbb{R}^3)$ be a smooth function such that its gradient $\nabla \zeta$ is compactly supported in $B_{R_2} \setminus B_{R_1}$. 
Assume that ${\bf v}$ is a divergence-free vector field in $\Omega$ (i.e., $\operatorname{div} {\bf v} = 0$) satisfying the compatibility condition:
\[
\int_{B_{R_2} \setminus B_{R_1}} {\bf v} \cdot \nabla \zeta \, dx = 0.
\]
Then, there exists an auxiliary vector field ${\bf w}$ defined on $\mathbb{R}^3$ such that:
\begin{enumerate}
    \item[(i)] $\operatorname{div} {\bf w} = {\bf v} \cdot \nabla \zeta$ in ${\mathbb R}^3$;
    \item[(ii)] ${\bf w}$ is compactly supported in $B_{R_2} \setminus B_{R_1}$, so that ${\bf w} = \mathbf{0}$ in a small neighborhood of $\partial\Omega$ and outside $B_{R_2}$;
    \item[(iii)] For $q \in (1, \infty)$, the following estimates hold:
    \begin{align} 
    \label{grad1}
    \|\nabla {\bf w}\|_{L^q(\Omega)}  &\leq c\|{\bf v}\|_{L^q(\Omega_{R_2})}, \nonumber \\
    \|\Delta {\bf w}\|_{L^q(\Omega)}  &\leq c\|{\bf v}\|_{L^q(\Omega_{R_2})} + c\|\nabla{\bf v}\|_{L^q(\Omega_{R_2})}, \nonumber \\
    \|\partial_t {\bf w}\|_{L^q(\Omega)} &\leq c\|\partial_t {\bf v}\|_{L^q(\Omega_{R_2})}.
    \end{align}
\end{enumerate}
\end{lemm}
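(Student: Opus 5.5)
The plan is to take ${\bf w}$ to be the Bogovskii solution on the annular region $A:=B_{R_2}\setminus \overline{B_{R_1}}$, extended by zero to the rest of $\mathbb{R}^3$. Since $\nabla\zeta$ is compactly supported in $A$, the datum $g:={\bf v}\cdot\nabla\zeta$ belongs to $L^q(A)$ and has compact support in $A$. By hypothesis, $\int_A g\,dx=0$. The set $A$ is a bounded Lipschitz domain, and in fact a finite union of domains star-shaped with respect to balls. Galdi's Lemma III.3.1 therefore provides a linear operator $\mathcal{B}_A$ with ${\bf w}:=\mathcal{B}_A g\in {\bf W}^{1,q}_0(A)$, $\operatorname{div}{\bf w}=g$ in $A$, and
\[
\|\nabla{\bf w}\|_{L^q(A)}\le c\|g\|_{L^q(A)}\le c\|\nabla\zeta\|_{L^\infty}\|{\bf v}\|_{L^q(\Omega_{R_2})} .
\]
Extending ${\bf w}$ by zero gives (i) on all of $\mathbb{R}^3$, because $g=0$ outside $A$ and the trace of ${\bf w}$ vanishes on $\partial A$. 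The first estimate in \eqref{grad1} is exactly the bound above.

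For (ii), I would use the compact support of $\nabla\zeta$: there is a closed set $K\subset A$ containing $\operatorname{supp} g$. Galdi's construction, built from mollifier-type kernels supported in balls, maps functions supported in $K$ to fields supported in a slightly larger compact set $K'\subset A$, provided the balls of the star-shaped decomposition are chosen small relative to $\operatorname{dist}(K,\partial A)$. This yields (ii): ${\bf w}$ vanishes near $\partial B_{R_1}$, hence near $\partial\Omega\subset B_{R_1/2}$, and outside $B_{R_2}$. The time-derivative estimate then follows from linearity. Because $\mathcal{B}_A$ does not depend on $t$, we have $\partial_t{\bf w}=\mathcal{B}_A(\partial_t{\bf v}\cdot\nabla\zeta)$. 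The compatibility condition $\int_A\partial_t{\bf v}\cdot\nabla\zeta\,dx=0$ holds by differentiating the hypothesis in time. Applying the $L^q$ bound together with the Poincaré inequality in ${\bf W}^{1,q}_0(A)$ gives $\|\partial_t{\bf w}\|_{L^q}\le c\|\partial_t{\bf v}\|_{L^q(\Omega_{R_2})}$.

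The main obstacle is the $\Delta{\bf w}$ estimate, which needs second derivatives. For this I would use the higher-order version of Galdi's lemma: if $g\in W^{1,q}_0(A)$, then $\|\nabla{\bf w}\|_{W^{1,q}(A)}\le c\|g\|_{W^{1,q}(A)}$. To apply it, I first check that $g$ lies in $W^{1,q}_0(A)$. It does, since $g$ has compact support in $A$ and satisfies $\nabla g=\nabla{\bf v}\,\nabla\zeta+ (\nabla^2\zeta)^{T}{\bf v}$, which gives
\[
\|g\|_{W^{1,q}(A)}\le c\big(\|{\bf v}\|_{L^q(\Omega_{R_2})}+\|\nabla{\bf v}\|_{L^q(\Omega_{R_2})}\big).
\]
Then $\|\Delta{\bf w}\|_{L^q}\le c\|\nabla^2{\bf w}\|_{L^q(A)}$ yields the second estimate in \eqref{grad1}. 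If $\nabla{\bf v}$ is only defined in a weak sense, I would argue by density, using smooth ${\bf v}$ and the linearity of $\mathcal{B}_A$.
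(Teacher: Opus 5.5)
Your proposal is correct and follows essentially the same route as the paper: Bogovskii operator on the annulus, extension by zero, and linearity in $t$ for $\partial_t{\bf w}$. Your extra steps, namely the Poincar\'e inequality in ${\bf W}^{1,q}_0$ to pass from $\nabla\partial_t{\bf w}$ to $\partial_t{\bf w}$ and the $W^{1,q}_0\to W^{2,q}$ Bogovskii estimate applied to $g={\bf v}\cdot\nabla\zeta$ for $\Delta{\bf w}$, supply details the paper only asserts. For the strict compact support in (ii), you need not tune the balls in Galdi's decomposition, whose correction terms live in overlaps that may reach $\partial A$; simply apply the Bogovskii operator on a slightly smaller annulus $B_{R_2'}\setminus\overline{B_{R_1'}}$ with $R_1<R_1'<R_2'<R_2$ that still contains $\operatorname{supp}\nabla\zeta$.
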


\begin{proof}
Since $\operatorname{div} {\bf v} = 0$ in $\Omega$, integration by parts over the annulus $B_{R_2} \setminus B_{R_1}$ yields the vanishing integral mean condition:
\[
\int_{B_{R_2} \setminus B_{R_1}} {\bf v} \cdot \nabla \zeta \, dx = 0.
\]
Thus, by Lemma III.3.1 of Galdi \cite{galdi}, we can apply the standard Bogovskii operator $\mathbb{B}$ on the annulus $B_{R_2} \setminus B_{R_1}$ to obtain a vector field $\mathbb{B}({\bf v} \cdot \nabla \zeta) \in W^{1,q}_0(B_{R_2} \setminus B_{R_1})$ satisfying $\operatorname{div} (\mathbb{B}({\bf v} \cdot \nabla \zeta)) = {\bf v} \cdot \nabla \zeta$.

We then define ${\bf w}$ as the {zero extension} of $\mathbb{B}({\bf v} \cdot \nabla \zeta)$ to the whole space $\mathbb{R}^3$. Because the original Bogovskii function vanishes identically on the inner and outer boundaries of the annulus ($\partial B_{R_1}$ and $\partial B_{R_2}$), its zero extension across these boundaries is smooth and strictly supported inside $B_{R_2} \setminus B_{R_1}$. Consequently, ${\bf w}$ is compactly supported, vanishes identically in a small neighborhood of $\partial\Omega$, and satisfies the spatial estimates in \eqref{grad1} through the boundedness of the Bogovskii operator. 

Finally, regarding the time derivative estimate, since $\zeta$ is independent of time and $\operatorname{div}(\partial_t {\bf v}) = 0$, we observe that 
\[
\partial_t ({\bf v} \cdot \nabla \zeta) = \partial_t {\bf v} \cdot \nabla \zeta = \operatorname{div}(\partial_t {\bf v} \zeta).
\]
This ensures that $\partial_t {\bf v} \cdot \nabla \zeta$ also satisfies the vanishing integral mean condition. By the linearity of the Bogovskii operator, we have $\partial_t {\bf w} = \mathbb{B}(\partial_t {\bf v} \cdot \nabla \zeta)$, which directly yields the estimate for $\|\partial_t {\bf w}\|_{L^q(\Omega)}$ in \eqref{grad1}.
\end{proof}

\section{\bf Proof of Theorem \ref{maximal.L2}}
\label{section.maximal.L2}

For notational convenience in this section, we set
\[
\tilde{\mathbb X} := \tilde{\mathbb X}^2, \qquad
{\mathbb P} := {\mathbb P}_2, \qquad
{\mathbb A}_\alpha := {\mathbb A}_{\alpha,2}.
\]

The following lemma is of fundamental importance: it establishes the self-adjointness of the fluid-structure operator $\mathbb{A}_\alpha$ and, consequently, plays a key role in proving the strong solvability in $W^{2,1}_2(Q_T)$.
\begin{lemm}
\label{lemma.appendix2}
The operator \({\mathbb A}_\alpha : \mathcal{D}({\mathbb A}_\alpha) \subset \tilde{\mathbb X} \to \tilde{\mathbb X}\) is surjective; that is,
\[
\mathrm{Ran}(I + {\mathbb A}_\alpha) = \tilde{\mathbb X},
\]
and satisfies the estimate
\begin{equation}
\label{surjective}
\|{\bf u}\|_{\mathcal{D}({\mathbb A}_\alpha)} \leq \|(I + {\mathbb A}_\alpha)u\|_{\tilde{\mathbb X}} \quad \text{for all } {\bf u} \in \mathcal{D}({\mathbb A}_\alpha).
\end{equation}

\end{lemm}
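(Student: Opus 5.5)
We prove the range condition variationally, via Lax--Milgram in a suitable energy space, and then recover the regularity needed to place the weak solution in $\mathcal{D}({\mathbb A}_\alpha)$.

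\emph{Energy space and bilinear form.} Let
\[
\mathbb{V} := \{ {\bf u}\in\tilde{\mathbb X} : {\bf v}_{\bf u}\in {\bf W}^{1,2}(\Omega)\},
\]
normed by $\|{\bf u}\|_{\mathbb V}^2:=\|{\bf u}\|_{\tilde{\mathbb X}}^2+\|\nabla{\bf v}_{\bf u}\|_{L^2(\Omega)}^2$. On $\mathbb V$ we define
\[
a({\bf u},\boldsymbol{\phi}) := \langle {\bf u},\boldsymbol{\phi}\rangle + 2\mu\int_\Omega {\mathbb D}({\bf v}):{\mathbb D}(\boldsymbol{\varphi})\,dx+\alpha\int_{\partial\Omega}[{\bf v}-\boldsymbol{\ell}-\boldsymbol{\omega}\times x]_\tau\cdot[\boldsymbol{\varphi}-{\bf g}-\boldsymbol{\theta}\times x]_\tau\,dS_x,
\]
which is the form suggested by Lemma~\ref{prop.pre2}. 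Boundedness follows from the trace theorem on $\Omega_R$. For coercivity, Lemma~\ref{korn_exterior} gives $\|\nabla{\bf v}\|_{L^2}+|\boldsymbol\ell|+|\boldsymbol\omega|\le c(\|{\mathbb D}({\bf v})\|_{L^2}+\|[\cdot]_\tau\|_{L^2(\partial\Omega)})$; alternatively, the argument of Lemma~\ref{strong.reg1} yields $\|\nabla{\bf v}\|^2\le c\,(a({\bf u},{\bf u})-\|{\bf u}\|^2)+c\|{\bf u}\|^2$. Either way $a({\bf u},{\bf u})\ge c\|{\bf u}\|_{\mathbb V}^2$. Since $\mathbb V$ is a closed subspace of a Hilbert space, Lax--Milgram gives, for each ${\bf f}\in\tilde{\mathbb X}$, a unique ${\bf u}\in\mathbb V$ with $a({\bf u},\boldsymbol\phi)=\langle{\bf f},\boldsymbol\phi\rangle$ for all $\boldsymbol\phi\in\mathbb V$.

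\emph{Recovering the strong equations.} Testing with $\boldsymbol\phi$ equal to a solenoidal field compactly supported in $\Omega$ (so that $\boldsymbol\phi$ vanishes in $B$) and applying de Rham's theorem produces a pressure $p$ with $({\bf v}-\mu\Delta{\bf v}+\nabla p)={\bf f}|_\Omega$ in $\mathcal D'(\Omega)$. We then take $\boldsymbol\phi$ whose fluid part has arbitrary tangential trace and which is rigid in $B$ with arbitrary $({\bf g},\boldsymbol\theta)$, using the lifting $\tilde{\bf v}_B$ of Lemma~\ref{strong.reg1}. Integrating by parts, formally at first, this recovers the Navier-slip condition $2\mu[{\mathbb D}({\bf v}){\bf n}]_\tau+\alpha[{\bf v}-\boldsymbol\ell-\boldsymbol\omega\times x]_\tau=0$ together with the force and torque balances, i.e.\ ${\bf u}+\mathbb P\mathcal A{\bf u}={\bf f}$.

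\emph{Regularity, the main obstacle.} To justify these integrations by parts we must show ${\bf v}\in{\bf W}^{2,2}(\Omega)$. First, subtract $\tilde{\bf v}_B$ so that the normal trace vanishes; the slip condition then becomes inhomogeneous with data $\alpha[\tilde{\bf v}_B-{\bf v}_B]_\tau$, smooth in $(\boldsymbol\ell,\boldsymbol\omega)$. Next, localize: away from the boundary, interior Stokes regularity applies to the resolvent equation $-\mu\Delta{\bf v}+\nabla p={\bf f}-{\bf v}\in L^2$, using the cut-off and Bogovskii correction of Lemma~\ref{lem.bogovskii_cutoff}. Near $\partial\Omega$, we use difference quotients in tangential directions on the bounded set $\Omega_R$, in the style of \cite{necasova.amrouche}, and then recover normal derivatives from the equation. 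Combining the two pieces gives $\nabla^2{\bf v}\in L^2$. Lemma~\ref{prop.ogawa}(ii) with $r=q=2$ then yields $\|\nabla^2{\bf v}\|_{L^2}+\|\nabla p\|_{L^2}\le c(\|{\bf f}\|+\|{\bf v}\|_{L^2(\Omega_R)}+|\boldsymbol\ell|+|\boldsymbol\omega|)$. Hence ${\bf u}\in\mathcal D({\mathbb A}_\alpha)$ and $\mathrm{Ran}(I+{\mathbb A}_\alpha)=\tilde{\mathbb X}$.

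\emph{The estimate \eqref{surjective}.} Set ${\bf f}=(I+{\mathbb A}_\alpha){\bf u}$ and pair with ${\bf u}$. Lemma~\ref{prop.pre2} gives $\|{\bf u}\|^2+\langle{\mathbb A}_\alpha{\bf u},{\bf u}\rangle\le\|{\bf f}\|\|{\bf u}\|$, so $\|{\bf u}\|\le\|{\bf f}\|$. Pairing with ${\mathbb A}_\alpha{\bf u}$ and using $\langle{\mathbb A}_\alpha{\bf u},{\bf u}\rangle\ge0$ gives $\|{\mathbb A}_\alpha{\bf u}\|\le\|{\bf f}\|$. With the graph norm $\|{\bf u}\|_{\mathcal D}^2=\|{\bf u}\|^2+\|{\mathbb A}_\alpha{\bf u}\|^2$, expanding $\|{\bf u}+{\mathbb A}_\alpha{\bf u}\|^2=\|{\bf u}\|^2+2\langle{\mathbb A}_\alpha{\bf u},{\bf u}\rangle+\|{\mathbb A}_\alpha{\bf u}\|^2\ge\|{\bf u}\|_{\mathcal D}^2$ gives \eqref{surjective} with constant one. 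Combining this with Lemma~\ref{strong.reg2}, $m=1$, yields the full $W^{2,2}$ control of ${\bf v}$.
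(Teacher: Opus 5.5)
Your proof of $\mathrm{Ran}(I+{\mathbb A}_\alpha)=\tilde{\mathbb X}$ follows the paper's route. You use the same space (your $\mathbb V$ is the paper's $\tilde{\mathbb Y}$), the same bilinear form, Lax--Milgram, and a de Rham pressure. You also recover the force/torque balances and the slip condition with test fields built from the lifting $\tilde{\bf v}_B$, and you apply Lemma~\ref{prop.ogawa} to ${\bf v}-\tilde{\bf v}_B$. The proposal is correct, and you differ from the paper in two places.

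First, the paper passes directly from Lemma~\ref{prop.ogawa} to ${\bf v}\in{\bf W}^{2,2}(\Omega)$. That lemma, however, rests on Proposition~\ref{appen.lemm2}, which is an a priori estimate for solutions already in ${\bf W}^{2,2}$ up to $\partial\Omega$. Your localization with tangential difference quotients supplies that missing regularity; citing bounded-domain regularity of weak Navier-slip solutions would do the same. It also justifies the integrations by parts used to recover the boundary conditions.

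One small correction in that step. Since $\zeta=1$ near $\partial\Omega$, $\tilde{\bf v}_B$ equals $\boldsymbol\ell+\boldsymbol\omega\times x$ in a neighborhood of $\partial\Omega$. Hence $[\tilde{\bf v}_B-{\bf v}_B]_\tau$ and ${\mathbb D}(\tilde{\bf v}_B)$ both vanish there, and the slip condition for ${\bf v}-\tilde{\bf v}_B$ is homogeneous, not inhomogeneous. This is exactly what Lemma~\ref{prop.ogawa} requires.

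Second, for \eqref{surjective} the paper bounds $\|{\bf v}\|_{W^{2,2}(\Omega)}$ by $c\|{\bf f}\|+c\|{\bf u}\|$, so it ends with a generic constant $c$. You instead use the Hilbert graph norm, the identity $\|{\bf u}+{\mathbb A}_\alpha{\bf u}\|^2=\|{\bf u}\|^2+2\langle{\mathbb A}_\alpha{\bf u},{\bf u}\rangle+\|{\mathbb A}_\alpha{\bf u}\|^2$, and the accretivity from Lemma~\ref{prop.pre2}. This is legitimate because, with unit density and $\int_B x\,dx=0$, the pairing is exactly the $L^2(\mathbb R^3)$ inner product on $\tilde{\mathbb X}$. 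Your argument gives the inequality with constant one, as literally stated, and Lemma~\ref{strong.reg2} ($m=1$) then converts it to $W^{2,2}$ control. The paper's route gives the $W^{2,2}$ bound in one step, but only up to a constant.
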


\begin{proof}


Before going on, we define the function spaces 
\( \tilde{\mathbb Y} \) as follows:
\[
\tilde {\mathbb Y} := \{ {\bf u} \in \tilde {\mathbb X}: \ {\bf u}|_\Omega \in W^{1,2}(\Omega) \},
\]
and define the bilinear form $a : \tilde{ \mathbb Y} \times \tilde{\mathbb Y} \to \mathbb{R}$ by
\begin{align*}
a({\bf u}, \boldsymbol{\phi})
 &=  \int_\Omega {\bf v} \cdot \boldsymbol{\varphi} \,dx + m_0\, \boldsymbol{\ell} \cdot {\bf g} + {\mathbb J}_0\, \boldsymbol{\omega }\cdot \boldsymbol{\theta}+2\mu \int_\Omega {\mathbb D}({\bf v}) : {\mathbb D}(\boldsymbol{\varphi}) \,dx
 \\
&\quad + \alpha \int_{\partial \Omega}[{\bf v} - \boldsymbol{\ell} - \boldsymbol{\omega} \times x]_\tau
 \cdot [\boldsymbol{\varphi} - {\bf g} - \boldsymbol{\theta} \times x]_\tau\,dS_x,
\end{align*}
where ${\bf u} \simeq ({\bf v}, \boldsymbol{\ell},  \boldsymbol{\omega})$ and $\boldsymbol{\phi }\simeq (\boldsymbol{\varphi}, {\bf g}, \boldsymbol{\theta})$.

This bilinear form is bounded on the Hilbert space $\tilde{\mathbb Y}$,
\[
|a({\bf u}, \boldsymbol{\phi})| \leq c \|{\bf u}\|_{\tilde{ \mathbb Y}} \|\boldsymbol{\phi}\|_{\tilde{ \mathbb Y}}.
\]
Moreover, it is coercive:
\[
a({\bf u}, {\bf u})\geq c \|{\bf u}\|_{\tilde{ \mathbb Y }}^2.
\]


Let \({\bf  f } \in \tilde{\mathbb X}\) with ${\bf f}\simeq ({\bf f}_0,{\bf f}_1,{\bf f}_2)$. Then, for any \(\boldsymbol{\phi} \simeq (\boldsymbol{\varphi}, {\bf g}, \boldsymbol{\theta})\),
\[
\langle{\bf  f}, \boldsymbol{\phi} \rangle = \int_\Omega {\bf f}_0(x) \boldsymbol{\varphi}(x) \,dx + m_0 {\bf f}_1 \cdot {\bf g} + \mathbb{J}_0 {\bf f}_2 \cdot \boldsymbol{\theta} 
\leq c \|{\bf f}\|_{\tilde{\mathbb X}} \|\boldsymbol{\phi}\|_{\tilde{\mathbb X}}\leq c \|{\bf f}\|_{\tilde{\mathbb X}} \|\boldsymbol{\phi}\|_{\tilde{\mathbb Y}}.
\]
Hence, \({\bf f} \in \tilde{\mathbb{Y}}'\), where $\tilde{\mathbb{Y}}'$ is the dual space of \(\tilde{\mathbb{Y}}\).

By the Lax–Milgram theorem, there exists a unique \( {\bf u} \in \tilde{\mathbb Y} \) such that
\begin{equation}
\label{lax}
a({\bf u}, \boldsymbol{\phi}) = \langle{\bf  f},\boldsymbol{ \phi} \rangle, \quad \forall \boldsymbol{\phi} \in \tilde{\mathbb{Y}}.
\end{equation}
This variational solution $\mathbf{u}$ is precisely the weak solution to the operator equation $(I + \mathbb{A}_\alpha)\mathbf{u} = \mathbf{f}$. To conclude that $\mathbf{u}$ coincides with the strong definition of $\mathbb{A}_\alpha := \mathbb{P}\mathcal{A}$ introduced in Section \ref{section.fluidstructure}, it suffices to show that $\mathbf{u}$ belongs to the domain $\mathcal{D}(\mathbb{A}_\alpha)$. 

In what follows, we demonstrate that $\mathbf{u}$ possesses the required $W^{2,2}$-regularity and satisfies the prescribed boundary conditions, thereby ensuring that the identity $(I + \mathbb{A}_\alpha){\bf u} = {\bf f}$ holds in $\tilde{\mathbb{X}}$.

Since \( {\bf u} \in \tilde{\mathbb{Y}} \), we have:
 \( \mathrm{div}\, {\bf u} = 0 \) in \( {\mathbb R}^3 \),
 \( \mathbb{D}({\bf u}) = 0 \) in \( B \),
 \( ({\bf u} -\boldsymbol{\ell} - \boldsymbol{\omega} \times x) \cdot {\bf n}=0 \)
 on \( \partial \Omega \).
Moreover, the condition \( \mathbb{D}({\bf u}) = 0 \) in \( B \), together with the definitions
\[
\boldsymbol{\ell }= \frac{1}{m_0} \int_B {\bf u}\,dx, \qquad \boldsymbol{\omega }= \mathbb{J}_0^{-1} \int_B x \times {\bf u}\,dx,
\]
implies that
\[
{\bf u}(x) = \boldsymbol{\ell} + \boldsymbol{\omega} \times x \quad \text{in } B.
\]

Taking $\boldsymbol{\phi} = \boldsymbol{u}$ in the weak formulation \eqref{lax}, we obtain
\[
\|{\bf u}\|_{{\mathbb X}}^2 + 2\mu \int_\Omega |{\mathbb D}({\bf v})|^2\,dx + \alpha  \int_{\partial \Omega} |[{\bf v} -\boldsymbol{ \ell} -\boldsymbol{ \omega }\times x]_\tau|^2\,dS_x
\leq c \|{\bf f}\|_{\tilde{\mathbb X}} \|{\bf u}\|_{\tilde {\mathbb X}}.
\]
This implies the estimate
\[
\|{\bf u}\|_{{\mathbb X}}^2 +  \int_\Omega |{\mathbb D}({\bf v})|^2 dx +  \int_{\partial \Omega} |[{\bf v} -\boldsymbol{ \ell} -\boldsymbol{ \omega }\times x]_\tau|^2\,dS_x
\leq c \|{\bf f}\|_{\tilde{\mathbb X}}^2.
\]
Furthermore, by Lemma \ref{strong.reg1}, we have
\[
\|{\bf u}\|_{\tilde{\mathbb{Y}}}^2\approx \|{\bf u}\|_{\tilde{\mathbb X}}^2 +  \int_\Omega |{\mathbb D}({\bf v})|^2\,dx +  \int_{\partial \Omega} |[{\bf v} -\boldsymbol{ \ell} -\boldsymbol{ \omega }\times x]_\tau|^2\,dS_x.\]
Hence
\[
\|{\bf u}\|_{\tilde{\mathbb{Y}}}  \leq c \|{\bf f}\|_{\tilde{\mathbb X}}.
\]

Let  $\boldsymbol{\varphi} \in {\bf C}^\infty_{0,\sigma}(\Omega)$.
  Define
  $\boldsymbol{\phi }=\begin{cases} \boldsymbol{\varphi}\mbox{ in }\Omega,\\
                       0\mbox{ in }B.
\end{cases}$
Substituting this test function into the weak formulation \eqref{lax} yields the identity  
\[
(-\mu \Delta {\bf v} + {\bf v}- {\bf f}_0, \boldsymbol{\varphi}) = 0 .
\]

By a well-known result due to Temam~\cite{temam}, there exists a function \( \pi \in \left( C^\infty_{0}(\Omega) \right)' \) such that
\[
-\mu \Delta {\bf v} + {\bf v}- {\bf f}_0= -\nabla \pi \quad \text{in } \Omega.
\]
Integrating by parts, the weak formulation \eqref{lax} can be rewritten as
\begin{align}
\label{b1}
\begin{array}{l}
    2\mu\int_{\partial \Omega} \boldsymbol{\varphi} \cdot \mathbb{D}({\bf v}){\bf  n} \, dS_x
    - \int_{\partial \Omega} \pi \boldsymbol{\varphi} \cdot {\bf n} \, dS_x
    \\[3mm]
    + \alpha \int_{\partial \Omega} 
        [{\bf v} -\boldsymbol{ \ell} -\boldsymbol{ \omega }\times x]_\tau
        \cdot [\boldsymbol{\varphi} -{\bf  g }- \boldsymbol{\theta} \times x] _\tau\, dS_x\\[3mm]
    + m_0 \boldsymbol{\ell }\cdot {\bf g }+ \mathbb{J}_0 \boldsymbol{\omega } \cdot \boldsymbol{\theta}
    = m_0{\bf  f}_1 \cdot{\bf  g} + \mathbb{J}_0{\bf f}_2 \cdot \boldsymbol{\theta}
    \end{array}
\end{align}
for any 
\(
\boldsymbol{\phi} \in {\bf C}^\infty_{0,\sigma}({\mathbb R}^3) \cap \tilde{\mathbb{Y}}\mbox{ with }\boldsymbol{\phi}\simeq (\boldsymbol{\varphi},{\bf g},\boldsymbol{\theta}).
\)

In particular, choosing
\(
\boldsymbol{\phi} = -\frac{1}{2} \nabla \times \left( \zeta (x \times {\bf g} + \boldsymbol{\theta} |x|^2) \right),
\)
with \(\zeta \in C^\infty_0(\mathbb{R}^3)\) such that \(\zeta = 1\) on \(\overline{B}\),  
equation \eqref{b1} reduces to the following identity:
\begin{align*}
   & g \cdot \left( 
        \int_{\partial \Omega}2\mu \mathbb{D}({\bf v}) {\bf n} \, dS_x
        + m_0 \boldsymbol{\ell} - m_0 {\bf f}_1 
        - \int_{\partial \Omega} \pi {\bf n}dS_x
    \right) \\
    &+ \theta \cdot \left( 
        2\mu\int_{\partial \Omega} x \times \mathbb{D}({\bf v}){\bf  n} \, dS_x
        - \mathbb{J}_0 \boldsymbol{\omega} - \mathbb{J}_0 {\bf f}_2 
        - \int_{\partial \Omega} x \times \pi  {\bf n} \, dS_x
    \right) = 0.
\end{align*}
From the arbitrariness of \( {\bf g}, \boldsymbol{\theta} \in \mathbb{R}^3 \), we conclude:
\begin{align}
\label{b2}
    \begin{array}{l}
    m_0 \boldsymbol{\ell} + \int_{\partial \Omega} \pi {\bf n} \, dS_x
    = - 2\mu\int_{\partial \Omega} \mathbb{D}({\bf v}) {\bf n} \, dS_x+ m_0 {\bf f}_1, \\[3mm]
    \mathbb{J}_0\boldsymbol{\omega} + \int_{\partial \Omega} x \times \pi {\bf n} \, dS_x
    = -2\mu \int_{\partial \Omega} x \times \mathbb{D}({\bf v}) {\bf n} \, dS_x+ \mathbb{J}_0 {\bf f}_2.
    \end{array}
\end{align}


Again, applying the identity \eqref{b2} to the expression \eqref{b1}, we obtain
\begin{align*}
    \int_{\partial \Omega} 
    [\boldsymbol{\varphi} - {\bf g }- \boldsymbol{\theta} \times x]_\tau
    \cdot 
    \left(  2\mu[ \mathbb{D}({\bf v}){\bf n}]_\tau + \alpha [{\bf v} -\boldsymbol{ \ell} -\boldsymbol{ \omega }\times x]_\tau \right)
    \, dS_x= 0
\end{align*}
for any \( \boldsymbol{\phi} \in {\bf C}^\infty_{0,\sigma}({\mathbb R}^3) \cap \tilde{\mathbb{Y}} \) with \( \boldsymbol{\phi} \simeq (\boldsymbol{\varphi}, {\bf g}, \boldsymbol{\theta}) \).
This leads to the conclusion:
\[
2\mu[  \mathbb{D}({\bf v}){\bf n} ]_\tau+ \alpha [{\bf v} -\boldsymbol{ \ell} -\boldsymbol{ \omega }\times x]_\tau ={\bf  0}\mbox{ on }\partial \Omega.
\]

For the smooth cut-off fucntion  $\zeta $ with $\zeta=1$  near $\partial \Omega$
define $\tilde{\bf v}_B=-\frac{1}{2}\nabla \times \big( \zeta (x\times \boldsymbol{\ell}+\boldsymbol{\omega}|x|^2)\big)$
 and $\tilde{\bf v}={\bf  v}-\tilde{\bf v}_B $.
Applying  Lemma \ref{prop.ogawa} to the  solution of the system
\begin{align*}
&\begin{array}{rl}
- \mu \Delta\tilde{\bf v} + \nabla \pi &= {\bf f}_0-{\bf v}+\mu \Delta \tilde{\bf u}_B, \\
\mathrm{div}\,  \tilde{\bf v} &= 0
\end{array} \qquad \text{in } \Omega, \\
&
\tilde{\bf v} \cdot {\bf n} = 0, \ 
2\mu [ {\mathbb D}(\tilde{\bf v}){\bf n}]_\tau+\alpha[ \tilde{\bf v} ]_\tau={\bf 0}
\qquad \text{on } \partial \Omega,\\
&\lim_{|x|\rightarrow \infty}\tilde{\bf v}(x)={\bf 0},
\end{align*}
we deduce that \( {\bf v} \in {\bf W}^{2,2}(\Omega) \), and
\[
\|{\bf v}\|_{W^{2,2}(\Omega)} \leq c \|{\bf f}\|_{\tilde{\mathbb X}} + c \|{\bf u}\|_{\tilde{\mathbb X}}.
\]
This implies that \( {\bf u} \in \mathcal{D}({\mathbb A}_\alpha) \), and furthermore,
\[
\|{\bf u}\|_{\mathcal{ D}({\mathbb A}_\alpha ) } \leq c \|(I + {\mathbb A}_\alpha){\bf u}\|_{L^2(\mathbb{R}^n)}.
\]
Therefore,
the operator \( I + {\mathbb A}_\alpha  :\mathcal{ D}({\mathbb A}_\alpha ) \to {\mathbb X} \) is surjective,  and  satisfies the estimate \eqref{surjective}.
\end{proof}

%

According to Lemma~\ref{prop.pre2}, the operator ${\mathbb A}_\alpha$ is symmetric and positive definite—that is, $-{\mathbb A}_\alpha$ is accretive.
Furthermore, by Theorem 5.19 in \cite{weidmann}, the operator
\[-{\mathbb A}_\alpha: \mathcal{D}({\mathbb A}_\alpha) \subset \tilde{\mathbb X} \rightarrow \tilde{\mathbb X}\mbox{
is self-adjoint},\]
 owing to its symmetry, positive definiteness, and the fact that
$\mathrm{Ran}(I + {\mathbb A}_\alpha) = \tilde{\mathbb X}.$

%

Now, suppose that 
\[
({\bf f}_0, {\bf  f}_1, {\bf f}_2) \in {\bf L}^2(Q_T) \times {\bf L}^2(0,T) \times {\bf L}^2(0,T), \quad 
({\bf v}_0, \boldsymbol{\ell}_0, \boldsymbol{\omega}_0) \in {\bf W}^{1,2}(\Omega) \times {\mathbb R}^3 \times {\mathbb R}^3
\]
satisfying the compatibility condition. 

Let ${\bf f}={\bf f}_0\chi_\Omega+({\bf f}_1+{\bf f}_2\times x)\chi_B$ and  ${\bf u}_0={\bf v}_0\chi_\Omega+(\boldsymbol{\ell}_0+\boldsymbol{\omega}_0\times x)\chi_B$ so that  
\[
{\bf f} \simeq ({\bf f}_0, {\bf f}_1, {\bf f}_2), \quad {\bf u}_0 \simeq ({\bf v}_0, \boldsymbol{\ell}_0, \boldsymbol{\omega}_0).
\]
Then, we have
\[
{\bf f} \in {\bf L}^2(\mathbb{R}^n\times (0,T)), \quad 
{\bf u}_0 \in \mathbb{B}^{1,2}_\alpha.
\]
%

%

According to   Theorem 3.1 in \cite{bensoussan}, the self-adjointness and accretivity of $-\mathbb{A}_\alpha$ 
ensure that  the equation
\begin{align}
\label{t1}
\frac{d{\bf u}}{dt} + {\mathbb A}_\alpha {\bf u} = {\mathbb P}{\bf f}, \quad {\bf u}(0) ={\bf  u}_0
\end{align}
admits a solution \( 
{\bf u}\in  {\bf L}^2(0,T;\mathcal{D}({\mathbb A}_\alpha))\cap {\bf C}([0,T]; {\mathbb B}_\alpha^{1,2} )\cap {\bf  W}^{1,2}(0,T; \mathbb{X})
\)
with the estimate
\begin{align*}
\|{\bf u}\|_{L^2(0,T; \mathcal{D}({\mathbb A}_\alpha))} 
&+ \|{\bf u}\|_{L^\infty(0,T; {\mathbb B}^{1,2}_\alpha))}
 + \|{\bf u}\|_{W^{1,2}(0,T; \tilde{\mathbb X})}\\
 & \leq C\left( \|{\bf u}_0\|_{{\mathbb B}^{1,2}_\alpha} + \|{\mathbb P}{\bf f}\|_{L^2(0,T; \tilde{\mathbb X})} \right).
\end{align*}

%
%
%
%

Furthermore, combining the previous computations with Lemma~\ref{prop.ogawa}, we obtain the estimate
\begin{align*}
\|{\bf v}\|_{L^2(0,T; W^{2,2}(\Omega))} + \|\nabla \pi\|_{L^2(0,T; L^2(\Omega))} 
\leq C\left( \|{\bf u}_0\|_{\mathbb{B}^{1,2}_\alpha} + \|\mathbb{P}{\bf f}\|_{L^2(0,T; \tilde{\mathbb{X}})} \right).
\end{align*}

Let \( {\bf u}\in \tilde {\mathbb X}  $ with ${\bf u}\simeq ({\bf v}, \boldsymbol{\ell},  \boldsymbol{\omega}) \) and \(\boldsymbol{\phi} \in \tilde {\mathbb X} \) with \( \boldsymbol{\phi} \simeq (\boldsymbol{\varphi}, {\bf g}, \boldsymbol{\theta}) \).
Taking the inner product of \eqref{t1} with \( \boldsymbol{\phi} \in \tilde{\mathbb X} \), we obtain
\begin{align}
\notag
\label{abc} 
0& = \langle \frac{d}{dt}{\bf u} + {\mathbb A} {\bf u} - {\mathbb P}{\bf f}, \boldsymbol{\phi}\rangle= \int_\Omega \left( \partial_t {\bf v} - \mu\Delta {\bf v} - {\bf f} \right) \cdot \boldsymbol{\varphi} \, dx + m_0 \dot{\boldsymbol{\ell}} \cdot {\bf g} + {\mathbb J}_0 \dot{\boldsymbol{\omega}} \cdot \boldsymbol{\boldsymbol{\theta}} dx\\
&+ {\bf g }\cdot \int_{\partial \Omega} 2\mu {\mathbb D}({\bf v}){\bf n} \, dS_x+ \boldsymbol{\theta} \cdot \int_{\partial \Omega} x \times {\mathbb D}({\bf v}) \, {\bf n} \, dS_x.
\end{align}

For \(\boldsymbol{\varphi} \in {\bf C}^\infty_{0,\sigma}(\Omega)\), we define the test function \(\boldsymbol{\phi}\) as its zero extension to \(\mathbb{R}^3\). Then, we obtain
\[
\int_\Omega \left( \partial_t {\bf v} - \mu\Delta {\bf v} - {\bf f}_0 \right) \cdot \boldsymbol{\varphi} \, dx = 0.
\]

According to Proposition 1.2 in \cite{temam}, there exists
 \( \pi \in L^2(0,T; \hat{W}^{1,2}(\Omega)) \) such that
\[
\partial_t {\bf v}(t) - \mu \Delta{\bf  v}(t) + \nabla \pi(t) = {\bf f}_0(t) \quad \text{in } \Omega.
\]

This yields
\[
\int_\Omega \left(\partial_t {\bf v} - \mu \Delta {\bf v} -{\bf  f}_0\right) \cdot \boldsymbol{\varphi} \, dx = -\int_{\partial \Omega} \pi {\bf n} \cdot \boldsymbol{\varphi} \, dS_x. 
\] 
Thus, \eqref{abc} leads to the identity
\begin{align*}
m_0 \dot{\boldsymbol{\ell}}{\cdot} {\bf g }+ \mathbb{J}_0 \dot{ \boldsymbol{\omega}} {\cdot} \boldsymbol{\theta}
 +{\bf  g} {\cdot} \int_{\partial \Omega} \left(2\mu \mathbb{D}({\bf v}) {-} \pi \mathbb{I}_3\right) {\bf n} \, dS_x
  +\boldsymbol{ \theta} {\cdot} \int_{\partial \Omega} x \times
  (2\mu \mathbb{D}({\bf v}) {-}\pi \mathbb{I}_3) {\bf n} \, dS_x= 0.
\end{align*}
From the above identity we deduce that
\begin{align*}
m_0 \dot{\boldsymbol{\ell}}& = -\int_{\partial \Omega} (2\mu \mathbb{D}(\boldsymbol{\bf v}) - \pi \mathbb{I}_3) {\bf n} \, dS_x,\\
\mathbb{J}_0 \dot{\boldsymbol{\omega}}& = -\int_{\partial \Omega} x \times
 (2\mu \mathbb{D}({\bf v}) - \pi \mathbb{I}_3) {\bf n} \, dS_x.
\end{align*}

This  completes the proof of Theorem \ref{maximal.L2}.

\section{Proof of Theorem \ref{maximal.thm}: Maximal regularity on a finite time interval}
\label{section.maximal.thm}

This section is devoted to the first part of Theorem \ref{maximal.thm}. Specifically, we establish the maximal $L^q$-regularity of the solution on a finite time interval $(0,T)$ for the general case $1 < q < \infty$. 

{
{

The following lemma provides an \textit{a priori} estimate essential for the proof of the maximal $L^q$ regularity.

\begin{lemm}[A priori estimate]
\label{prop.theorem.strong.Lp}
Let the data 
$$(\mathbf{f}_0, \mathbf{f}_1, \mathbf{f}_2) \in \mathbf{L}^q(Q_T) \times \mathbf{L}^q(0,T) \times \mathbf{L}^q(0,T),$$
$$(\mathbf{v}_0, \boldsymbol{\ell}_0, \boldsymbol{\omega}_0) \in \mathbf{W}^{2 - \frac{2}{q}, q}(\Omega) \times \mathbb{R}^3 \times \mathbb{R}^3, \quad 1 < q < \infty,$$
be sufficiently smooth and satisfy the compatibility conditions \textup{(C1)}--\textup{(C3)}. 

Let \( ({\bf v}, \pi, \boldsymbol{\ell},  \boldsymbol{\omega}) \) be a correspondingly smooth (or sufficiently regular) solution to the system \eqref{e01}. Then, there exists a constant $c > 0$, independent of the solution, such that the following \textit{a priori} estimate holds:
\begin{align}
\notag
&\int^T_0\big(  \|\partial_t{\bf v}(t) \|_{L^q(\Omega)}^q+\|\nabla^2{\bf v}(t)\|_{L^q(\Omega)}^q
+\|\nabla \pi\|_{L^q(\Omega)}^q\\
&\hspace{20mm}+|\boldsymbol{\ell}| ^q+|\boldsymbol{\omega}|^q+|\dot{\boldsymbol{\ell}}|^q+|\dot{\boldsymbol{\omega}}|^q \big) dt
\notag\\
\label{w8}
&\notag \leq c
\Big(   \int^T_0\|{\bf v}\|_{L^q(\Omega_{2R})}^q+\|{\bf f}_0(t)\|_{L^q(\Omega)}^q+|{\bf f}_1(t)|^q+|{\bf f}_2(t)|^q dt\Big)\\
&\hspace{20mm}
 +c\|{\bf v}_0\|_{W^{2-\frac{2}{q},q}(\Omega)}^q+c|\boldsymbol{\ell}_0|^q+c|\boldsymbol{\omega}_0|^q.
\end{align}
Here, $c$ is independent of $T$.
\end{lemm}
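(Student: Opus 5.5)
We would prove the a priori estimate \eqref{w8} by localization: split the solution into a far-field part, handled by the whole-space Stokes theory, and a near-body part, handled by the bounded-domain fluid-structure theory of \cite{necasova.amrouche}. The error terms created by the cut-offs are lower order and supported in an annulus. They will be absorbed or bounded by the local quantity $\int_0^T\|{\bf v}\|^q_{L^q(\Omega_{2R})}dt$ on the right-hand side of \eqref{w8}.

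Concretely, fix $R>3R_0$ and a cut-off $\zeta\in C^\infty_0(B_{2R})$ with $\zeta=1$ on $B_R$. For the far field, set ${\bf v}^\infty=(1-\zeta){\bf v}+{\bf w}_1$, $\pi^\infty=(1-\zeta)\pi$. Here ${\bf w}_1$ is the Bogovskii corrector of Lemma \ref{lem.bogovskii_cutoff}, chosen with $\operatorname{div}{\bf w}_1={\bf v}\cdot\nabla\zeta$ and supported in $B_{2R}\setminus B_R$; the required vanishing-mean condition holds by the flux identity.

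Then $({\bf v}^\infty,\pi^\infty)$ solves the Stokes system in $\mathbb{R}^3$. Its right-hand side is $(1-\zeta){\bf f}_0$ plus commutator terms $2\mu\nabla\zeta\cdot\nabla{\bf v}+\mu{\bf v}\Delta\zeta-\pi\nabla\zeta+\partial_t{\bf w}_1-\mu\Delta{\bf w}_1$. Whole-space maximal regularity, with constant independent of $T$ in the homogeneous norms, bounds $\partial_t{\bf v}^\infty$, $\nabla^2{\bf v}^\infty$ and $\nabla\pi^\infty$ by these data and the initial value $(1-\zeta){\bf v}_0$.

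For the near field, set ${\bf v}^0=\zeta{\bf v}-{\bf w}_1$, $\pi^0=\zeta\pi$. This together with $(\boldsymbol{\ell},\boldsymbol{\omega})$ solves the coupled fluid-structure system with Navier-slip conditions in the bounded domain $\Omega_{2R}$ with outer no-slip wall, since the cut-off equals $1$ near $\partial\Omega$. Applying \cite[Theorem 4.9]{necasova.amrouche}, whose exponential stability gives a $T$-independent constant, we control $\partial_t{\bf v}^0$, $\nabla^2{\bf v}^0$, $\nabla\pi^0$, $\boldsymbol{\ell}$, $\boldsymbol{\omega}$, $\dot{\boldsymbol{\ell}}$ and $\dot{\boldsymbol{\omega}}$ in $L^q(0,T)$ by the same commutator data restricted to $\Omega_{2R}$, plus ${\bf f}_1,{\bf f}_2$ and the initial data.

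Summing the two estimates, every commutator term is supported in $\Omega_{2R}\setminus B_R$. Lemma \ref{lem.bogovskii_cutoff} bounds $\partial_t{\bf w}_1$ by $\|\partial_t{\bf v}\|_{L^q(\Omega_{2R})}$; that term sits with a small-support factor, and we would handle it by taking the far-field decomposition on a slightly larger annulus so that it is dominated by $\partial_t{\bf v}^0$ and $\partial_t{\bf v}^\infty$ after a standard reiteration. The terms $\nabla{\bf v}$ and $\Delta{\bf w}_1$ are lower order. Gagliardo–Nirenberg interpolation on $\Omega_{2R}$ gives $\|\nabla{\bf v}\|_{L^q(\Omega_{2R})}\le\varepsilon\|\nabla^2{\bf v}\|_{L^q(\Omega)}+C_\varepsilon\|{\bf v}\|_{L^q(\Omega_{2R})}$, and the $\varepsilon$-part is absorbed into the left-hand side.

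The \textbf{main obstacle} is the pressure commutator $\pi\nabla\zeta$, since only $\nabla\pi$ is controlled and $\pi$ itself is only defined up to constants. Here we use Lemma \ref{lem:pressure_accel_decomp}: after normalizing the pressure, it gives
\[
\|\pi\|_{L^q(\Omega_{2R})}\le c\big(\|\nabla{\bf v}\|_{W^{s,q}(\Omega_{2R})}+\|{\bf f}_0\|_{L^q(\Omega)}+|{\bf f}_1|+|{\bf f}_2|\big),\qquad s<1.
\]
Since $s<1$, the fractional norm of $\nabla{\bf v}$ interpolates again as $\varepsilon\|\nabla^2{\bf v}\|_{L^q}+C_\varepsilon\|{\bf v}\|_{L^q(\Omega_{2R})}$. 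Choosing $\varepsilon$ small and absorbing, then integrating in time, yields \eqref{w8}. The constant is independent of $T$ because each ingredient estimate (whole-space, bounded fluid-structure with exponential stability, and the pointwise-in-time interpolation) is.
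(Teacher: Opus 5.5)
Your route is the same as the paper's: a whole-space Stokes estimate for the far field, \cite[Theorem 4.9]{necasova.amrouche} on $\Omega_{2R}$ for the near field, and absorption of the commutators via Lemma~\ref{lem:pressure_accel_decomp} and interpolation on $\Omega_{2R}$. Using one cut-off and one corrector, so that ${\bf v}={\bf v}^\infty+{\bf v}^0$ exactly, is only a cosmetic variant of the paper's two cut-offs $\xi,\zeta$ and two correctors ${\bf w}_1,{\bf w}_2$.

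The step that fails is your treatment of $\partial_t{\bf w}_1$. Lemma~\ref{lem.bogovskii_cutoff} gives only $\|\partial_t{\bf w}_1\|_{L^q}\le c\|\partial_t{\bf v}\|_{L^q(A)}$, where $A$ is the transition annulus, and $c$ is not small. The constant is invariant under dilations of $A$ and grows as $A$ gets thinner, so there is no ``small-support factor''. Since $\partial_t{\bf v}$ is of top order, interpolation cannot make it small either. Moving the annulus and reiterating only yields a circular pair $X_0\le C+cX_\infty$, $X_\infty\le C+cX_0$. Here $X_0,X_\infty$ are the $L^q_tL^q_x$ norms of $\partial_t{\bf v}$ on the near and far regions: the near-field estimate needs $\partial_t{\bf v}$ on its transition annulus, which lies in the far region, and conversely. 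This pair closes only if $c<1$, which nothing guarantees.

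The missing idea is to estimate $\partial_t{\bf w}_1$ at negative order through the momentum equation. Choose $A$ so that $\operatorname{supp}\nabla\zeta$ lies compactly inside it. Then, with summation over repeated indices, on $A$
\[
\partial_t{\bf v}\cdot\nabla\zeta=\operatorname{div}{\bf G}+h,\qquad G_j=\mu\,\partial_jv_i\,\partial_i\zeta-\pi\,\partial_j\zeta,
\]
\[
h={\bf f}_0\cdot\nabla\zeta-\mu\,\partial_jv_i\,\partial_i\partial_j\zeta+\pi\Delta\zeta .
\]
Here ${\bf G}$ is compactly supported in $A$, and $\int_A h\,dx=0$ because $\int_A\partial_t{\bf v}\cdot\nabla\zeta\,dx=0$ by the flux identity. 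Use the negative-order Bogovskii bound $\|\mathbb{B}(\operatorname{div}{\bf G})\|_{L^q(A)}\le c\|{\bf G}\|_{L^q(A)}$ for such ${\bf G}$ (see \cite[Section III.3]{galdi}), and Poincar\'e's inequality for $\mathbb{B}h\in{\bf W}^{1,q}_0(A)$. This gives
\[
\|\partial_t{\bf w}_1\|_{L^q}\le c\big(\|\nabla{\bf v}\|_{L^q(A)}+\|\pi\|_{L^q(A)}+\|{\bf f}_0\|_{L^q(\Omega)}\big)
\]
for every normalization of $\pi$. These are exactly the lower-order terms you already absorb with Lemma~\ref{lem:pressure_accel_decomp} and interpolation, so with this replacement your argument closes.

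The paper's written proof does not handle this point either. \eqref{w100} and \eqref{w200} are stated without the contributions coming from \eqref{w41} and \eqref{w42}, so the same repair is needed there.
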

}
}

\begin{proof}

Let $\Omega^c \subset B_{\frac{R}{4}}$. 
Take a smooth cut-off function $\xi \in C^\infty_0({\mathbb R}^3)$ such that $\xi = 1$ on $B_{\frac{R}{2}}$ and $\xi = 0$ outside $B_R$.

According to Lemma \ref{lem.bogovskii_cutoff}, there exists an auxiliary vector field ${\bf w}_1$ compactly supported in $B_R \setminus B_{R/2}$ such that
\[
\operatorname{div} {\bf w}_1 = {\bf v} \cdot \nabla \xi \quad \text{in } {\mathbb R}^n,
\]
and for any $q \in (1, \infty)$, it satisfies the following estimates:
\begin{align}
\label{w21}
\|\nabla {\bf w}_1\|_{L^q(\Omega)} &\leq c\|{\bf v}\|_{L^q(\Omega_R)}, \\
\label{w31}\|\Delta {\bf w}_1\|_{L^q(\Omega)} &\leq c\|{\bf v}\|_{L^q(\Omega_R)} + c\|\nabla{\bf v}\|_{L^q(\Omega_R)}, \\
\label{w41}\|\partial_t {\bf w}_1\|_{L^q(\Omega)} &\leq c\|\partial_t {\bf v}\|_{L^q(\Omega_R)}.
\end{align}

Now, we set the localized velocity and pressure fields as
\[
{\bf v}'' = {\bf v}(1 - \xi) + {\bf w}_1, \qquad \pi'' = \pi(1 - \xi).
\]
Then, $({\bf v}'', \pi'')$ satisfies the whole-space Stokes system:
\begin{align*}
\begin{aligned}
&\partial_t {\bf v}'' - \mu \Delta {\bf v}'' + \nabla \pi'' = {\bf f}_0'' \quad \text{in } {\mathbb R}^3 \times (0, T), \\
&\operatorname{div} {\bf v}'' = 0 \quad \text{in } {\mathbb R}^3 \times (0, T), \\
&{\bf v}'' \to \mathbf{0} \quad \text{as } |x| \to \infty, \ t > 0, \\
&{\bf v}''|_{t=0} = (1 - \xi) {\bf v}_0 + {\bf w}_1(0) := {\bf v}_0'' \quad \text{in } {\mathbb R}^3,
\end{aligned}
\end{align*}
where the effective right-hand side source term is given by
\begin{align*}
{\bf f}_0'' &= (1 - \xi) {\bf f}_0 + \mu \left( 2 \nabla{\bf v} \cdot \nabla \xi + {\bf v} \Delta \xi \right) - \pi \nabla \xi + \partial_t{\bf w}_1 - \mu \Delta {\bf w}_1.
\end{align*}

Based on the well-known estimate for the Stokes operator defined in the whole space, we obtain the following estimate:
\begin{align}\label{w1}
\notag
\int_0^T \| \partial_t {\bf v}'' \|_{L^q({\mathbb R}^3)}^q
 &+ \|\nabla^2{\bf v}''\|_{L^q({\mathbb R}^3)}^q + \| \nabla \pi''\|_{L^q({\mathbb R}^3)}^q dt \\
&\leq c \int_0^T \| {\bf f}_0''\|_{L^q({\mathbb R}^3)}^q dt + c \| {\bf v}''_0 \|_{W^{2 - \frac{2}{q}}_{q}({\mathbb R}^3)}^q.
\end{align}
%
%

Collecting the estimates \eqref{w21}-\eqref{w1}, we arrive at
\begin{align}
\label{w100}
&\int^T_0 \Big( \|\partial_t {\bf v}(t)\|_{L^q(B_R^c)}^q
 + \|\nabla^2{\bf  v}(t)\|_{L^q(B_R^c)}^q + \|\nabla \pi\|_{L^p(B_R^c)}^q \Big) dt \\
\notag&\leq c\Big( \int^T_0  \|{\bf v}\|_{W^{1,q}(\Omega_{R})}^q
 + \|\pi\|_{L^{q}(\Omega_{R})}^q +\|{\bf f}_0(t)\|_{L^q(\Omega)}^q dt
  + \|{\bf v}_0\|_{W^{2-\frac{2}{q},q}(\Omega)}^q \Big).
\end{align}

{
Now, we take a cut-off function $\zeta \in C^\infty_0({\mathbb R}^3)$ such that $\zeta = 1$ on $B_{R}$ and $\zeta = 0$ on $B_{2R}^c$.
Notice that the support of $\nabla \zeta$ is strictly contained in the annulus $A_R = B_{2R} \setminus \overline{B_{R}}$. 
Since the obstacle is strictly contained in $B_{R/4}$ (i.e., $\Omega^c \subset B_{R/4}$), the annulus $A_R$ is strictly separated from the inner boundary $\partial \Omega$.

Instead of applying the global Bogovskii operator on $\Omega_{2R}$, we construct the operator locally on the annulus $A_R$. Let $\zeta$ be a cut-off function such that $\zeta = 1$ in a neighborhood of $\partial\Omega$ and $\zeta = 0$ near $\partial B_{2R}$. Denoting by ${\bf n}$ the unit outward normal vector to the fluid domain $\Omega$ (pointing into the solid $\Omega^c$), we observe that ${\bf v} \cdot \nabla \zeta = \operatorname{div}(\zeta {\bf v})$ since $\operatorname{div} {\bf v} = 0$. By the divergence theorem on $\Omega_{2R}$, we have
\begin{align*}
\int_{A_R} {\bf v} \cdot \nabla \zeta \, dx &= \int_{\Omega_{2R}} \operatorname{div}(\zeta {\bf v}) \, dx 
= \int_{\partial \Omega} {\bf v} \cdot {\bf n} \, dS_x \\
&= \int_{\partial \Omega} (\boldsymbol{\ell} + \boldsymbol{\omega} \times x) \cdot {\bf n} \, dS_x = - \int_{\Omega^c} \operatorname{div} (\boldsymbol{\ell} + \boldsymbol{\omega} \times x) \, dx = 0.
\end{align*}
Here, the last equality holds because the outward normal to the solid body $\Omega^c$ is $-{\bf n}$, and the rigid body velocity is divergence-free, i.e., $\operatorname{div}(\boldsymbol{\ell} + \boldsymbol{\omega} \times x) = 0$. This vanishing integral mean condition justifies the use of the local Bogovskii operator on $A_R$.

Again, thanks to  Lemma \ref{lem.bogovskii_cutoff}, there exists an auxiliary vector field ${\bf w}_2$ compactly supported in $B_{2R} \setminus B_{R}$ such that
\[
\operatorname{div} {\bf w}_2 = {\bf v} \cdot \nabla \zeta \quad \text{in }{\mathbb R}^3,
\]
and for any $q \in (1, \infty)$, it satisfies the following estimates:
\begin{align}
\label{w22}
\|\nabla {\bf w}_2\|_{L^q(\Omega)} &\leq c\|{\bf v}\|_{L^q(\Omega_{2R})}, \\
\label{w32}\|\Delta {\bf w}_2\|_{L^q(\Omega)} &\leq c\|{\bf v}\|_{L^q(\Omega_R)} + c\|\nabla{\bf v}\|_{L^q(\Omega_{2R})}, \\
\label{w42}\|\partial_t {\bf w}_2\|_{L^q(\Omega)} &\leq c\|\partial_t {\bf v}\|_{L^q(\Omega_{2R})}.
\end{align}

Set \( {\bf v}' ={\bf  v}\zeta - {\bf w}_2 \) and \( \pi' = \pi \zeta \).
Thus, the localized velocity field ${\bf v}'$ strictly satisfies the homogeneous Navier-slip boundary condition on $\partial \Omega$ without any residual stress terms. Then, \( ({\bf v}', \pi') \) satisfies the linearized fluid interaction problem in $\Omega_{2R}$:
\begin{align*}
\begin{cases}
\begin{aligned}
&\partial_t {\bf v}' - \mu \Delta {\bf v}' + \nabla \pi' = {\bf f}'_0\\
&\text{div } {\bf v}' = 0 
\end{aligned}\qquad
\mbox{ in }\Omega_{2R},\\
{\bf v}' = 0\qquad \mbox{ on }\partial B_{2R}, \\
\begin{aligned}
&({\bf v}' -\boldsymbol{\ell} - \boldsymbol{\omega} \times x) \cdot {\bf n}=0\\
&2\mu[\mathbb{D}({\bf v}'){\bf n}]_\tau+\alpha [{\bf v}' - \boldsymbol{\ell} - \boldsymbol{\omega} \times x]_\tau={\bf 0}
\end{aligned}\qquad \text{ on } \partial \Omega, \\
\begin{aligned}
&m_0\dot{\boldsymbol{\ell}} = -\int_{\partial \Omega} \mathbb{S}({\bf v}, \pi){\bf n} dS_x+ m_0 {\bf f}_1\\
&\mathbb{J}_0\dot{\boldsymbol{\omega} }= -\int_{\partial \Omega} x \times \mathbb{S}({\bf v}, \pi){\bf n} dS_x
+ \mathbb{J}_0{\bf  f}_2
\end{aligned}\qquad\mbox{ for }t>0,\\
{\bf v}'|_{t=0} =  {\bf v}_0', \quad \boldsymbol{\ell}|_{t=0} = \boldsymbol{\ell}_0, \quad \boldsymbol{\omega}|_{t=0} = \boldsymbol{\omega}_0\mbox{ on }\Omega_{2R},
\end{cases}
\end{align*}
}

where ${\bf v}_0':=\zeta {\bf v}_0 - {\bf w}_2(0)$, and
\[
{\bf f}'_0 := \zeta{\bf  f}_0 - \mu \left( 2\nabla {\bf v} \cdot \nabla \zeta + {\bf v}\Delta \zeta \right) + \pi \nabla \zeta - \left( \partial_t {\bf w}_2 - \mu \Delta {\bf w}_2 \right).
\]

According to the result in \cite[Theorem 4.9]{necasova.amrouche}, \( ({\bf v}', \pi') \) satisfies  the following  \( L^q_t(L^q_x) \) 
regularity estimates:
\begin{align}
\label{w4}
\notag & \int_0^T 
\|\partial_t {\bf v}'\|_{L^q(\Omega_{2R})}^q + \|D_x^2 {\bf v}'\|_{L^q(\Omega_{2R})}^q + \|\nabla \pi'\|_{L^q(\Omega_{2R})}^q \\
&\qquad\qquad\qquad+ |\boldsymbol{\ell}| ^q+ |\dot{\boldsymbol{\ell}}|^q + |\boldsymbol{\omega}|^q + |\dot{\boldsymbol{\omega}}|^q \, dt \\
\notag& \leq c \int_0^T \|{\bf f}'_0\|_{L^q(\Omega_{2R})}^q + |{\bf f}_1(t)|^q + |{\bf f}_2(t)|^q \, dt + \|{\bf v}_0\|_{W^{2-\frac{2}{q},q}(\Omega_{2R})}^q + |\boldsymbol{\ell}_0|^q + |\boldsymbol{\omega}_0|^q.
\end{align}

In view of the estimates \eqref{w22}-\eqref{w4}, we deduce the following bound:
\begin{align}
\label{w200}
\notag &\int^T_0 \left( 
\|\partial_t {\bf v}\|_{L^q(\Om_{R})}^q + \|\nabla^2{\bf  v}\|_{L^q(\Omega_{R})}^q + \|\nabla \pi\|_{L^q(\Omega_{R})}^q + |\boldsymbol{\ell}| ^q +|\dot{\boldsymbol{\ell}}|^q+ |\boldsymbol{\omega}|^q+|\dot{\boldsymbol{\omega}}|^q \right) dt\\
\notag &\leq c \int^T_0 \left(  \|{\bf v}\|_{W^{1,q}(\Omega_{2R})}^q + \|\pi\|_{L^q(\Omega_{2R})}^q +\|{\bf f}_0\|_{L^q(\Omega)}^q + |{\bf f}_1(t)|^q + |{\bf f}_2(t)|^q \right) dt\\
&\qquad +c \|{\bf v}_0\|_{W^{2-\frac{2}{q},q}(\Omega)}^q +c |\boldsymbol{\ell}_0|^q +c |\boldsymbol{\omega}_0|^q.
\end{align}

Combining \eqref{w100} and \eqref{w200}, we obtain the estimate  
\begin{align}
\label{eq.theore.strong.Lp}
\notag & \int_0^T \Big( \|\partial_t {\bf v}(t)\|_{L^q(\Omega)}^q + \|\nabla^2{\bf v}(t)\|_{L^q(\Omega)}^q + \|\nabla \pi(t)\|_{L^q(\Omega)}^q \\
\notag&\qquad\qquad\qquad
+ |\boldsymbol{\ell}(t)|^q + \boldsymbol{|\omega}(t)|^q + |\dot{\boldsymbol{\ell}}(t)|^q + |\dot{\boldsymbol{\omega}}(t)|^q \Big) dt \\
\notag &\leq c \int_0^T \Big( \|{\bf v}(t)\|_{W^{1,q}(\Omega_{2R})}^q + \|\pi(t)\|_{L^q(\Omega_{2R})}^q 
+ \|{\bf f}_0(t)\|_{L^q(\Omega)}^q + |{\bf f}_1(t)|^q + |{\bf f}_2(t)|^q \Big) dt \\
&\quad \qquad \qquad+ c \|{\bf v}_0\|_{W^{2 - \frac{2}{q}, q}(\Omega)}^q + c |\boldsymbol{\ell}_0|^q + c |\boldsymbol{\omega}_0|^q.
\end{align}

Combining \eqref{w100} and \eqref{w200}, we obtain the estimate
\begin{align}
\label{eq.theore.strong.Lp}
\notag & \int_0^T \Big( \|\partial_t {\bf v}(t)\|_{L^q(\Omega)}^q + \|\nabla^2{\bf v}(t)\|_{L^q(\Omega)}^q + \|\nabla \pi(t)\|_{L^q(\Omega)}^q \\
\notag&\qquad\qquad\qquad
+ |\boldsymbol{\ell}(t)|^q + |\boldsymbol{\omega}(t)|^q + |\dot{\boldsymbol{\ell}}(t)|^q + |\dot{\boldsymbol{\omega}}(t)|^q \Big) dt \\
\notag &\leq c \int_0^T \Big( \|{\bf v}(t)\|_{W^{1,q}(\Omega_{2R})}^q + \|\pi(t)\|_{L^q(\Omega_{2R})}^q 
+ \|{\bf f}_0(t)\|_{L^q(\Omega)}^q + |{\bf f}_1(t)|^q + |{\bf f}_2(t)|^q \Big) dt \\
&\quad \qquad \qquad+ c \|{\bf v}_0\|_{W^{2 - \frac{2}{q}, q}(\Omega)}^q + c |\boldsymbol{\ell}_0|^q + c |\boldsymbol{\omega}_0|^q.
\end{align}

Our next goal is to estimate the local terms $\|{\bf v}\|_{W^{1,q}(\Omega_{2R})}$ and $\|\pi\|_{L^q(\Omega_{2R})}$ on the right-hand side. 
First, applying Lemma \ref{lem:pressure_accel_decomp}, the local pressure satisfies the following estimate: 
\begin{align}
\label{pi_local_t}
\|\pi(t)\|_{L^q(\Omega_{2R})} 
\leq c\big(\|\nabla {\bf v}(t)\|_{W^{s,q}(\Omega_{2R})} + \|{\bf f}_0(t)\|_{L^q(\Omega)} + |{\bf f}_1(t)| + |{\bf f}_2(t)| \big),
\end{align}
where $ \frac{1}{q} < s < 1$.

Furthermore, by the standard interpolation inequality on bounded domains, for any sufficiently small $\epsilon > 0$, we have:
\begin{equation}
\label{pi0_local}
\|{\bf v}(t)\|_{W^{1,q}(\Omega_{2R})} + \|\nabla {\bf v}(t)\|_{W^{s,q}(\Omega_{2R})} \leq \epsilon \|\nabla^2 {\bf v}(t)\|_{L^q(\Omega)} + C_\epsilon \|{\bf v}(t)\|_{L^q(\Omega_{2R})}.
\end{equation}

Substituting these local estimates into \eqref{eq.theore.strong.Lp} and choosing $\epsilon > 0$ sufficiently small, we absorb the higher-order terms to obtain:
\begin{align*}
&\int^T_0 \Big( \|\partial_t{\bf v}(t)\|_{L^q(\Omega)}^q + \|\nabla^2{\bf v}(t)\|_{L^q(\Omega)}^q + \|\nabla \pi(t)\|_{L^q(\Omega)}^q \\
&\qquad\quad + |\boldsymbol{\ell}(t)|^q + |\boldsymbol{\omega}(t)|^q + |\dot{\boldsymbol{\ell}}(t)|^q + |\dot{\boldsymbol{\omega}}(t)|^q \Big) \, dt \\
&\leq c \int^T_0 \Big( \|{\bf v}(t)\|_{L^q(\Omega_{2R})}^q + \|{\bf f}_0(t)\|_{L^q(\Omega)}^q + |{\bf f}_1(t)|^q + |{\bf f}_2(t)|^q \Big) \, dt \\
&\quad + c\|{\bf v}_0\|_{W^{2-\frac{2}{q},q}(\Omega)}^q + c|\boldsymbol{\ell}_0|^q + c|\boldsymbol{\omega}_0|^q.
\end{align*}
This completes the proof of Lemma \ref{prop.theorem.strong.Lp}.

\end{proof}

We now begin the proof of Theorem~\ref{maximal.thm}. Without loss of generality, we may assume that \( {\bf v}_0 \in {\bf C}^\infty_{0,\sigma}(\bar{\Omega}) \), with \( ({\bf v}_0, \boldsymbol{\ell}_0, \boldsymbol{\omega}_0) \) satisfying the compatibility conditions \eqref{n66}.

By utilizing Theorem~\ref{maximal.L2} along with a bootstrap argument based on the higher-order estimates (such as Lemma~\ref{prop.ogawa}), for smooth initial data, there exists a unique solution satisfying
\[
 ({\bf v}, \boldsymbol{\ell},  \boldsymbol{\omega}) \in {\bf C}^\infty(\bar{\Omega} \times (0,T)) \times {\bf C}^\infty(0,T) \times{\bf  C}^\infty(0,T),
\quad \pi \in {\bf C}^\infty(\Omega \times (0,T)).
\]
Let ${\bf u}\simeq ({\bf v}, \boldsymbol{\ell},  \boldsymbol{\omega}) .$

To derive the regularity estimates in the \(L^q\) setting, we begin with the estimate \eqref{w8} from Lemma~\ref{prop.theorem.strong.Lp}.

Since \( {\bf v}(t) = {\bf v}_0 + \int_0^t \partial_s {\bf v}(s) \, ds \), applying Hölder's inequality with respect to the time variable yields
\begin{align*}
\|{\bf v}(t)\|_{L^q(\Omega)} 
&\leq \|{\bf v}_0\|_{L^q(\Omega)} + \int_0^t \|\partial_s {\bf v}(s)\|_{L^q(\Omega)} \, ds \\
&\leq \|{\bf v}_0\|_{L^q(\Omega)} + t^{1-\frac{1}{q}} \left( \int_0^t \|\partial_s {\bf v}(s)\|^q_{L^q(\Omega)} \, ds \right)^{\frac{1}{q}},
\end{align*}
where the coefficient $c(t) = t^{1-\frac{1}{q}}$ is a strictly increasing function of time $t$.

Combining this with \eqref{w8} in Lemma~\ref{prop.theorem.strong.Lp},
 we obtain the following Gronwall inequality
\begin{align*}
\|{\bf v}(t)\|_{L^q(\Omega)}^q
&\leq c(t) \int_0^t \|{\bf v}\|_{L^q(\Omega)}^q dt
 + c(t) \Big( \|{\bf v}_0\|_{B^{2 - \frac{2}{q},q}(\Omega)}^q
  + |\boldsymbol{\ell}_0|^q + |\boldsymbol{\omega}_0|^q \Big) \\
& \hspace{10mm} + c(t) \int_0^t \|{\bf f}_0(t)\|_{L^q(\Omega)}^q  + |{\bf f}_1(t)|^q + |{\bf f}_2(t)|^qdt.
\end{align*}
By solving the Gronwall inequality, we obtain the following estimate 
\begin{align*}
\int_0^T \|{\bf v}(t)\|_{L^q(\Omega)}^q dt
 &\leq c(T) \Big( \int_0^T \|{\bf f}_0(t)\|_{L^q(\Omega)}^q  + |{\bf f}_1(t)|^q + |{\bf f}_2(t)|^q dt \\
&\qquad+\|{\bf v}_0\|_{W^{2 - \frac{2}{q},q}(\Omega)}^q + |\boldsymbol{\ell}_0|^q + |\boldsymbol{\omega}_0|^q \Big).
\end{align*}
Applying the above estimates to \eqref{w8} yields:
\begin{align*}
\notag
&\int_0^T \big( \|\partial_t {\bf v}(t)\|_{L^q(\Omega)}^q + \|\nabla^2{\bf v}(t)\|_{L^q(\Omega)}^q
 + \|\nabla \pi\|_{L^q(\Omega)}^q +|\boldsymbol{\ell}| ^q+|\dot{\boldsymbol{\ell}}|^q+|\boldsymbol{\omega}|^q+|\dot{\boldsymbol{\omega}}|^q \big) dt \\
& \leq c(T) \Big( \int_0^T \|{\bf f}_0(t)\|_{L^q(\Omega)}^q d
t + |{\bf f}_1(t)|^q + |{\bf f}_2(t)|^q + \|{\bf v}_0\|_{W^{2 - \frac{2}{q},q}(\Omega)}^q
 + |\boldsymbol{\ell}_0|^q + |\boldsymbol{\omega}_0|^q \Big).
\end{align*}

\section{\bf Proof of Theorem \ref{analyticity.thm}}
\label{section.analyticity.thm}  

Theorems~\ref{maximal.L2} and \ref{maximal.thm} imply that the fluid-structure operator 
possesses maximal $L^q$-regularity for all $1 < q < \infty$ on a fixed time interval. 
By the foundational results of Dore (see, for instance, \cite{dore1993} or related maximal regularity theory), 
this maximal regularity property guarantees that the operator is sectorial with some $\omega > 0$, and thus 
generates an analytic semigroup in $\tilde{\mathbb{X}}^q$ for all $1 < q < \infty$.

Our main goal in the subsequent analysis, however, is to establish that the semigroup is 
actually a bounded analytic semigroup for all $1 < q < \infty$ by proving sectoriality with $\omega = 0$. 

Denote the sector \(\Sigma_\theta\) by 
\[
\Sigma_\theta = \{\lambda \in \mathbb{C} \setminus \{0\} : |\arg(\lambda)| < \theta\}, \quad \text{for } \theta > 0.
\]

\begin{defin} 
Let $X$ be a complex Banach space, and let $\mathcal{L}(X)$ denote the space of bounded linear operators on $X$. Consider a densely defined, closed, linear operator $A : \mathcal{D}(A) \subset X \rightarrow X$.

We say that $A$ is a sectorial operator at $\omega$ if there exist constants $\omega \in \mathbb{R}$, $\theta \in \left( \frac{\pi}{2}, \pi \right)$, and $M > 0$ such that:
\begin{align*} 
(i) \quad &\rho(A) \supset \omega + \Sigma_\theta, \\
(ii) \quad &\|(\lambda I - A)^{-1}\|_{\mathcal{L}(X)} \leq \frac{M}{|\lambda - \omega|}, \quad \lambda \in \omega + \Sigma_\theta. 
\end{align*}
When $\omega = 0$, the operator is simply referred to as a sectorial operator. 
A semigroup generated by a sectorial operator $A$ is denoted by $e^{At}$. 
\end{defin}

\subsection{Sectoriality in  $\tilde{\mathbb X}^2$ }


According to Theorem 13.31 in \cite{rudin}, the self-adjointness of the operator $-{\mathbb A}_{\alpha,2}$ on $\tilde{\mathbb X}^2$ implies that its spectrum is purely real. Furthermore, its accretivity (or non-negativity) ensures that the spectrum is contained in the nonpositive real axis:
\begin{equation} \label{coro.L2bound}
\sigma(-{\mathbb A}_{\alpha,2}) \subset (-\infty, 0], \quad \text{or equivalently,} \quad \mathbb{C} \setminus (-\infty, 0] \subset \rho(-{\mathbb A}_{\alpha,2}).
\end{equation}
In particular, it is straightforward to verify the following resolvent estimate: \begin{equation} \label{L2bound2} \| \lambda(\lambda I + \mathbb{A}_{\alpha,2})^{-1} \|_{\mathcal{L}(\tilde{\mathbb X}^2)} \leq 1 \quad \text{for all } \lambda \in \mathbb{C} \setminus {0} \text{ with } \operatorname{Re}(\lambda) \geq 0, \end{equation}
where $\mathcal{L}(\tilde{\mathbb{X}}^2)$ denotes the space of bounded linear operators on $\tilde{\mathbb{X}}^2$.
%

According to Proposition 2.1.11 in Lunardi \cite[p. 43]{lunardi}, the spectral properties in \eqref{coro.L2bound} and the resolvent estimate in \eqref{L2bound2} together imply that the operator $-{\mathbb A}_{\alpha,2}$ is sectorial with any angle $\theta \in \left( \frac{\pi}{2}, \pi \right)$.

\subsection{Sectoriality in $\tilde{\mathbb X}^q$  for $1<q<\frac{3}{2}$}

Based on the approach in \cite{Takahashi2003, takahashi}, the abstract resolvent equation 
\[
(\lambda I + \mathbb{A}_{\alpha,q}){\bf U} = \mathbb{P}_{q}{\bf F}
\]
(where ${\bf U} = {\bf V}\chi_\Omega + (\boldsymbol{L} +\boldsymbol{\varOmega}\times x)\chi_B$ and ${\bf F} = {\bf F}_0\chi_\Omega + ({\bf F}_1 + {\bf F}_2\times x)\chi_B$) 
is equivalently rewritten in our Navier-slip setting as the following system for some pressure $\Pi$:
\begin{align}
\label{Resolvent1}
\begin{cases}
\begin{aligned}
 \lambda{\bf V}-\mu\Delta {\bf V}+\nabla\Pi&={\bf F}_0, \\
 \operatorname{div}{\bf V}&=0
\end{aligned} 
& \text{in } \Omega, \\[6pt]
\begin{aligned}
 m_0\lambda\boldsymbol{L}&=-\int_{\partial \Omega}{\mathbb S}({\bf V},\Pi){\bf n} \, dS_x + {\bf F}_1, \\
 {\mathbb J}_0\lambda\boldsymbol{\Omega}&=-\int_{\partial \Omega} x \times {\mathbb S}({\bf V},\Pi){\bf n} \, dS_x + {\bf F}_2
\end{aligned} 
& \\[6pt]
\begin{aligned}
 ({\bf V}-\boldsymbol{L}-\boldsymbol{\varOmega}\times x)\cdot{\bf n}&=0, \\
 2\mu [ {\mathbb D}({\bf V}){\bf n}]_\tau + \alpha[{\bf V}-\boldsymbol{L}-\boldsymbol{\varOmega}\times x]_\tau&={\bf 0}
\end{aligned} 
& \text{on } \partial \Omega, \\[6pt]
 \lim_{|x|\to\infty}{\bf V} = 0. &
\end{cases}
\end{align}

The following uniqueness result will be useful for our proof.

\begin{lemm}
\label{lemma.uniqueness}
Assume that $\mathrm{Re}(\lambda)\geq 0$, $\lambda\neq 0$.
Let $({\bf v}, \pi, \boldsymbol{\ell}, \boldsymbol{\omega})$ be a solution to the homogeneous system \eqref{Resolvent1} (i.e., with ${\bf F}_0 = {\bf 0}$, ${\bf F}_1 = {\bf 0}$, and ${\bf F}_2 = {\bf 0}$).
If ${\bf v} \in {\bf L}^s(\Omega)$ for some $1 \leq s <\infty$, and $\lambda {\bf v}, \nabla^2 {\bf v}, \nabla \pi \in {\bf L}^q(\Omega)$ for some $1 \leq q \leq \infty$, then 
\[
({\bf v}, \boldsymbol{\ell}, \boldsymbol{\omega}) \equiv ({\bf 0}, {\bf 0}, {\bf 0}).
\]
\end{lemm}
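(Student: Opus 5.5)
The natural route is an energy identity: test the homogeneous resolvent system against the complex conjugate of the solution and use the structure of Lemma \ref{prop.pre2}. The difficulty is that ${\bf v}$ is only known to lie in ${\bf L}^s(\Omega)$ with $\lambda{\bf v},\nabla^2{\bf v},\nabla\pi\in{\bf L}^q(\Omega)$, not in ${\bf W}^{1,2}(\Omega)$. So we first upgrade integrability, and then run the energy argument.

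\textbf{Step 1 (improving integrability).} Take the cut-off $\xi$ equal to $1$ near $B_{R/2}$ and supported in $B_R$, as in the proof of Lemma \ref{prop.theorem.strong.Lp}, and the Bogovskii correction ${\bf w}_1$ of Lemma \ref{lem.bogovskii_cutoff}. Then ${\bf v}''=(1-\xi){\bf v}+{\bf w}_1$ solves the whole-space resolvent Stokes system
\[
\lambda{\bf v}''-\mu\Delta{\bf v}''+\nabla\pi''={\bf f}''.
\]
Here ${\bf f}''$ is compactly supported, built from ${\bf v}$, $\nabla{\bf v}$, $\pi$ on the annulus $\Omega_R$, and from ${\bf w}_1$. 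Local elliptic regularity, namely Lemma \ref{prop.ogawa} on bounded pieces together with interior estimates, puts ${\bf f}''$ in every $L^p$ with $1<p<\infty$. Since $\mathrm{Re}\,\lambda\ge0$ and $\lambda\neq0$, the whole-space resolvent multiplier $(\lambda+\mu|\xi|^2)^{-1}$ is bounded on every $L^p$ with $1<p<\infty$. The $L^s$ hypothesis makes the solution unique in tempered distributions, so it coincides with the Fourier solution. This gives ${\bf v}''\in {\bf W}^{2,p}(\mathbb{R}^3)$ for all $1<p<\infty$, and hence ${\bf v}\in{\bf W}^{2,2}(\Omega)$ and $\nabla\pi\in{\bf L}^2(\Omega)$. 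Normalizing $\pi\to0$ at infinity, which is possible because $\nabla\pi\in L^p$ for $p<3$, we also get $\pi\in L^{p}$ for some $p>3/2$, and likewise $\nabla{\bf v}\in L^p$. These bounds make the boundary terms at $|x|=\rho$ vanish as $\rho\to\infty$. I expect this step to be the main obstacle, especially when $q$ is very small or equal to $\infty$, and when $\lambda$ lies on the imaginary axis, where the resolvent has no damping at low frequencies and one must rely on $\lambda\neq0$. The endpoint cases $s=1$ and $q\in\{1,\infty\}$ may require passing through nearby exponents with the same cut-off argument.

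\textbf{Step 2 (energy identity).} With the regularity from Step 1, multiply the momentum equation by $\bar{\bf v}$ and integrate over $\Omega_\rho$. Integrate by parts as in Lemma \ref{prop.pre2}, use the kinematic and Navier-slip conditions on $\partial\Omega$, and replace the boundary force and torque by $-m_0\lambda\boldsymbol{\ell}$ and $-\mathbb{J}_0\lambda\boldsymbol{\omega}$. Letting $\rho\to\infty$ gives
\[
\lambda\Big(\|{\bf v}\|_{L^2}^2+m_0|\boldsymbol{\ell}|^2+\mathbb{J}_0|\boldsymbol{\omega}|^2\Big)+2\mu\|\mathbb{D}({\bf v})\|_{L^2}^2+\alpha\|[{\bf v}-\boldsymbol{\ell}-\boldsymbol{\omega}\times x]_\tau\|_{L^2(\partial\Omega)}^2=0.
\]

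\textbf{Step 3 (conclusion).} If $\mathrm{Re}\,\lambda>0$, the real part of the identity forces every term to vanish. If $\lambda=i\eta$ with $\eta\neq0$, the real part gives $\mathbb{D}({\bf v})=0$ and $[{\bf v}-\boldsymbol{\ell}-\boldsymbol{\omega}\times x]_\tau=0$, and Lemma \ref{korn_exterior} then yields $\nabla{\bf v}=0$ and $\boldsymbol{\ell}=\boldsymbol{\omega}=0$. So ${\bf v}$ is constant, and ${\bf v}\in{\bf L}^s$ forces ${\bf v}=0$. In both cases $({\bf v},\boldsymbol{\ell},\boldsymbol{\omega})\equiv({\bf 0},{\bf 0},{\bf 0})$.
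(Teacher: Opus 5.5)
Your proof is correct. Its second half is the same energy argument the paper uses in Appendix~\ref{appendix.uniqueness}: you test with $\bar{\bf v}$ on $\Omega_\rho$, convert the boundary force and torque via the rigid-body equations, and take real parts.

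The real difference is how you control the solution at infinity. The paper builds a representation formula for $\zeta_R{\bf v}$ from the resolvent Stokes fundamental solution $({\bf G}^i_\lambda,Q^i)$. It shows the far-field terms have vanishing second derivatives as $R\to\infty$ and removes the affine part ${\bf a}+\mathbb{B}x$ using ${\bf v}\in{\bf L}^s$. It then reads off pointwise decay $|\nabla^k{\bf v}^{(1)}|\lesssim|x|^{-3-k}$ and $|\nabla^k(\pi-p_0)|\lesssim|x|^{-1-k}$, so the flux through $\partial B_R$ vanishes for every large $R$.

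You instead cut off away from $\partial\Omega$ and use only whole-space $L^p$ theory. Mikhlin's theorem applies to $(\lambda-\mu\Delta)^{-1}$ precisely because $\lambda\notin(-\infty,0]$. A Liouville argument in $\mathcal{S}'$ then identifies the solution: the difference of two solutions has Fourier transform supported at the origin, so it is a polynomial lying in ${\bf L}^s+{\bf L}^p$, hence zero. This gives integrability rather than decay, which suffices: $|{\bf v}|(|\nabla{\bf v}|+|\pi-c|)\in L^1(B_R^c)$ forces the flux to vanish along some $\rho_k\to\infty$. Your route avoids constructing and estimating the $\lambda$-dependent kernels and justifying Green's formula with them. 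The paper's route yields explicit asymptotics, which are not needed for uniqueness.

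Two minor comments.

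\begin{itemize}
\item The source ${\bf f}''$ involves ${\bf v},\nabla{\bf v},\pi$ only on the support of $\nabla\xi$. That is a compact subset of $\Omega$ where $\pi$ is harmonic and ${\bf v}$ is smooth, so interior regularity alone handles it. The whole-space step says nothing near $\partial\Omega$, however. There the hypotheses give only ${\bf v}\in{\bf W}^{2,q}(\Omega_R)$, using $\lambda{\bf v}\in{\bf L}^q$ with $\lambda\neq 0$. So for $q<6/5$, the finiteness of $\|\mathbb{D}({\bf v})\|_{L^2(\Omega_R)}$ in the energy identity needs a separate boundary bootstrap: lift the rigid motion and apply Lemma~\ref{prop.ogawa1}. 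The paper also leaves this implicit.
\item In Step 3, Lemma~\ref{korn_exterior} is dispensable. $\mathbb{D}({\bf v})=0$ on the connected set $\Omega$ makes ${\bf v}$ a rigid motion, which ${\bf L}^s$ forces to vanish. The normal and tangential boundary conditions then give $\boldsymbol{\ell}+\boldsymbol{\omega}\times x={\bf 0}$ on $\partial\Omega$, hence $\boldsymbol{\ell}=\boldsymbol{\omega}={\bf 0}$, as in the paper.
\end{itemize}
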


\begin{proof}
See Appendix~\ref{appendix.uniqueness}.

\end{proof}

As stated at the beginning of this section, the fluid-structure operator $\mathbb{A}_{\alpha,q}$ is already known to be sectorial with some shift $\omega_0 > 0$ for all $1 < q < \infty$. According to Proposition 2.1.11 in Lunardi \cite[p. 43]{lunardi}, to remove this shift and establish that $-{\mathbb{A}}_{\alpha,q}$ genuinely generates a bounded analytic semigroup, it suffices to show that the resolvent problem satisfies a uniform a priori estimate on a punctured neighborhood of the origin in the closed right half-plane:
\[
\lambda \in \{ z \in \mathbb{C} \mid \operatorname{Re} z \ge 0, 0 < |z| \le \gamma \}.
\]
Once this low-frequency bound is established, it naturally combines with the high-frequency bound to guarantee the existence of an angle $\theta \in \left( \frac{\pi}{2}, \pi \right)$ and a constant $M>0$ such that
\begin{equation} \label{theo.sectorial_estimate}
    \left\| (\lambda I + {\mathbb{A}}_{\alpha,q})^{-1} \right\|_{\mathcal{L}(\tilde{\mathbb{X}}^q)} \leq \frac{M}{|\lambda|} \quad \text{for all } \lambda \in \Sigma_\theta,
\end{equation}
where $\Sigma_{\theta} = \{ \lambda \in \mathbb{C} : \lambda \neq 0, |\arg \lambda| < \theta \}$. 
Consequently, the remainder of this proof is solely devoted to establishing this exact uniform estimate for $\lambda$ near the origin.

Let $\lambda \in \mathbb{C} \setminus \{0\}$ with $0 < |\lambda| \leq \gamma$ and $\operatorname{Re} \lambda \geq 0$. By a standard density argument, we may assume that ${\bf F}_0 \in {\bf C}^\infty_0(\overline{\Omega})$ and ${\bf F}_1, {\bf F}_2 \in \mathbb{C}^3$. Then, there exists a solution $({\bf V}, \Pi, \boldsymbol{L}, \boldsymbol{\varOmega})$ with ${\bf V} \in {\bf C}^\infty(\Omega)$, $\Pi \in C^\infty(\Omega)$, and $\boldsymbol{L}, \boldsymbol{\varOmega} \in \mathbb{C}^3$ satisfying the resolvent problem \eqref{Resolvent1} corresponding to the data $({\bf F}_0, {\bf F}_1, {\bf F}_2)$.

\begin{lemm}
\label{lemma.resolvent}
Let $\lambda \in \mathbb{C}$ satisfy $\operatorname{Re} \lambda \geq 0$ and $0 < |\lambda| \leq \gamma$. Under the same assumptions on the data as above, the solution $({\bf V}, \Pi, \boldsymbol{L}, \boldsymbol{\varOmega})$ satisfies the following uniform a priori estimate:
\begin{equation}
\label{Resolvent2}
\begin{aligned}
    & |\lambda| \Big( \|{\bf V}\|_{L^q(\Omega)} + |\boldsymbol{L}| + |\boldsymbol{\varOmega}| \Big) + \|\nabla^2{\bf V}\|_{L^q(\Omega)} + \|\nabla \Pi\|_{L^q(\Omega)} + |\boldsymbol{L}| + |\boldsymbol{\varOmega}| \\
    &\leq C \Big( \|{\bf V}\|_{L^q(\Omega_{2R})} + \|{\bf F}_0\|_{L^q(\Omega)} + |{\bf F}_1| + |{\bf F}_2| \Big),
\end{aligned}
\end{equation}
where the constant $C > 0$ is independent of $\lambda$.
\end{lemm}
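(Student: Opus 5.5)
The plan mirrors the localization argument of Lemma~\ref{prop.theorem.strong.Lp}, now applied to the resolvent system \eqref{Resolvent1}. The time derivative is replaced by multiplication by $\lambda$, and all constants must be uniform in $\lambda$ with $\operatorname{Re}\lambda\ge 0$, $0<|\lambda|\le\gamma$.

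\emph{Far field.} I would first take the cut-off $\xi$ ($\xi=1$ on $B_{R/2}$, $\xi=0$ outside $B_R$) together with the Bogovskii correction ${\bf w}_1$ from Lemma~\ref{lem.bogovskii_cutoff}, where $\operatorname{div}{\bf w}_1={\bf V}\cdot\nabla\xi$. Set ${\bf V}''=(1-\xi){\bf V}+{\bf w}_1$ and $\Pi''=(1-\xi)\Pi$. This pair solves the whole-space resolvent Stokes problem
\[
\lambda{\bf V}''-\mu\Delta{\bf V}''+\nabla\Pi''={\bf F}_0'',
\]
where ${\bf F}_0''$ contains $(1-\xi){\bf F}_0$, commutator terms in ${\bf V}$, $\nabla{\bf V}$ and $\Pi$ supported in $\Omega_R$, and the terms $\lambda{\bf w}_1-\mu\Delta{\bf w}_1$. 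I would then use the classical Fourier-multiplier estimate in $\mathbb{R}^3$, valid uniformly on $\Sigma_\theta$:
\[
|\lambda|\|{\bf V}''\|_{L^q}+\|\nabla^2{\bf V}''\|_{L^q}+\|\nabla\Pi''\|_{L^q}\le c\|{\bf F}_0''\|_{L^q}.
\]
Since $|\lambda|\le\gamma$, the term $\lambda{\bf w}_1$ is bounded by $c\gamma\|{\bf V}\|_{L^q(\Omega_R)}$. This gives the exterior part of \eqref{Resolvent2} in terms of $\|{\bf V}\|_{W^{1,q}(\Omega_R)}+\|\Pi\|_{L^q(\Omega_R)}+\|{\bf F}_0\|_{L^q}$.

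\emph{Near field.} Next, with $\zeta$ ($\zeta=1$ near $B_R$, $\zeta=0$ outside $B_{2R}$) and ${\bf w}_2$, which is admissible by the zero-flux computation in the proof of Lemma~\ref{prop.theorem.strong.Lp}, I would set ${\bf V}'=\zeta{\bf V}-{\bf w}_2$ and $\Pi'=\zeta\Pi$. These satisfy the resolvent version of the bounded-domain fluid--structure problem in $\Omega_{2R}$, with ${\bf V}'=0$ on $\partial B_{2R}$ and the Navier-slip and Newton--Euler conditions on $\partial\Omega$ unchanged. Here I would invoke the bounded-domain result \cite[Theorem 4.7]{necasova.amrouche}: that operator is sectorial and exponentially stable, so its resolvent is uniformly bounded for $\operatorname{Re}\lambda\ge 0$, including near $\lambda=0$. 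Combined with the elliptic regularity behind \cite[Theorem 4.9]{necasova.amrouche}, this should yield
\[
|\lambda|(\|{\bf V}'\|_{L^q}+|\boldsymbol{L}|+|\boldsymbol{\varOmega}|)+\|\nabla^2{\bf V}'\|_{L^q}+\|\nabla\Pi'\|_{L^q}+|\boldsymbol{L}|+|\boldsymbol{\varOmega}|\le c(\|{\bf F}_0'\|_{L^q(\Omega_{2R})}+|{\bf F}_1|+|{\bf F}_2|).
\]
The key feature is that this bound controls $|\boldsymbol{L}|+|\boldsymbol{\varOmega}|$ themselves, not only $\lambda$ times them; this is exactly the invertibility at $\lambda=0$.

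\emph{Closing the estimate.} Adding the far-field and near-field bounds leaves the lower-order local quantities $\|{\bf V}\|_{W^{1,q}(\Omega_{2R})}$ and $\|\Pi\|_{L^q(\Omega_{2R})}$ on the right. For the pressure I would apply Lemma~\ref{lem:pressure_accel_decomp} in its resolvent form, with $\partial_t$ replaced by $\lambda$, so that the $\pi_3$-part involves $\lambda\boldsymbol{L}$ and $\lambda\boldsymbol{\varOmega}$ and is handled through the added-mass matrix. This gives
\[
\|\Pi\|_{L^q(\Omega_{2R})}\le c\big(\|\nabla{\bf V}\|_{W^{s,q}(\Omega_{2R})}+\|{\bf F}_0\|_{L^q}+|{\bf F}_1|+|{\bf F}_2|\big).
\]
Then the interpolation inequality \eqref{pi0_local} absorbs $\|\nabla{\bf V}\|_{W^{s,q}}$ and $\|{\bf V}\|_{W^{1,q}(\Omega_{2R})}$ into $\epsilon\|\nabla^2{\bf V}\|_{L^q}+C_\epsilon\|{\bf V}\|_{L^q(\Omega_{2R})}$, and choosing $\epsilon$ small closes \eqref{Resolvent2}.

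\emph{Main obstacle.} I expect the hardest step to be the $\lambda$-uniformity of the near-field estimate as $\lambda\to 0$. I would handle it by arguing that $0$ lies in the resolvent set of the bounded-domain fluid--structure operator. The reason is that Lemma~\ref{prop.pre2}, applied in $\Omega_{2R}$, combined with the Korn inequality of Lemma~\ref{korn_exterior}, excludes rigid kernels because of the no-slip wall at $\partial B_{2R}$. Compactness of the resolvent then makes it continuous, hence bounded, on the compact set $\{\operatorname{Re}\lambda\ge 0,\ |\lambda|\le\gamma\}$.
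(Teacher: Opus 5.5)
Your argument is correct, but it takes a different route from the paper. The paper does not redo the localization for the stationary system. It inserts $({\bf v},\pi,\boldsymbol{\ell},\boldsymbol{\omega})(t)=\phi(t)e^{\lambda t}({\bf V},\Pi,\boldsymbol{L},\boldsymbol{\varOmega})$ into the time-dependent a priori estimate of Lemma~\ref{prop.theorem.strong.Lp}, whose constant is claimed to be independent of $T$. Here $\phi$ is a time cut-off vanishing on $[0,1]$ and equal to $1$ on $[2,\infty)$. It then factors out $I_T=\int_0^T\phi^q e^{q\operatorname{Re}\lambda\, t}\,dt\ge T-2$, divides, and lets $T\to\infty$, so the commutator terms carrying $\phi'$ (supported in $[1,2]$) drop out.

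You instead repeat the two cut-off construction at the resolvent level. The whole-space resolvent Stokes estimate on $\Sigma_\theta$ replaces the whole-space maximal regularity. A uniform resolvent bound for the bounded fluid--structure operator on $\Omega_{2R}$ replaces \cite[Theorem 4.9]{necasova.amrouche}. You also use a resolvent version of Lemma~\ref{lem:pressure_accel_decomp}.

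The trade-off is as follows.
\begin{itemize}
\item The paper's transference is shorter and reuses the time-domain lemma wholesale. Its uniformity in $\lambda$, however, is only as good as the $T$-independence of the constant in Lemma~\ref{prop.theorem.strong.Lp}. That in turn rests on the bounded-domain estimate being uniform on $(0,\infty)$, i.e.\ on exponential stability.
\item The transference is also intrinsically restricted to $\operatorname{Re}\lambda\ge 0$: otherwise $I_T$ stays bounded and the $\phi'$ terms do not disappear.
\item Your route makes the mechanism explicit. Uniformity as $\lambda\to 0$ is exactly invertibility at $\lambda=0$ of the bounded operator, which is forced by the no-slip wall on $\partial B_{2R}$.
\item Nothing in your route is tied to $\operatorname{Re}\lambda\ge 0$. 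The same argument gives the estimate on $\Sigma_\theta\cap\{|\lambda|\le\gamma\}$ for some $\theta>\pi/2$, provided $\gamma|\cos\theta|$ stays below the decay rate of the bounded-domain semigroup.
\end{itemize}

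Two small points in your last paragraph.
\begin{itemize}
\item Lemma~\ref{korn_exterior} is an exterior-domain statement and is not what you need in $\Omega_{2R}$. There, $\mathbb{D}({\bf V})=0$ together with ${\bf V}=0$ on $\partial B_{2R}$ gives ${\bf V}\equiv 0$. The kinematic and slip conditions then force $\boldsymbol{L}+\boldsymbol{\varOmega}\times x=0$ on $\partial\Omega$, hence $\boldsymbol{L}=\boldsymbol{\varOmega}=0$.
\item Compactness of the resolvent is what reduces the spectrum to eigenvalues, so that a trivial kernel puts the closed half-plane into the resolvent set. Continuity of the resolvent there is automatic and does not come from compactness.
\end{itemize}
Citing the exponential stability of \cite[Theorem 4.7]{necasova.amrouche} directly, as you also propose, avoids both issues.
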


\begin{proof}
To rigorously utilize Lemma \ref{prop.theorem.strong.Lp} with zero initial conditions, we define a test function 
\[
    {\bf v}(t) = \phi(t) e^{\lambda t} {\bf V}, \quad \pi(t) = \phi(t) e^{\lambda t} \Pi, \quad \boldsymbol{\ell}(t) = \phi(t) e^{\lambda t} \boldsymbol{L}, \quad \boldsymbol{\omega}(t) = \phi(t) e^{\lambda t} \boldsymbol{\Omega}
\]
for $t \geq 0$, where $\phi \in C^\infty([0, \infty))$ is a smooth, fixed cut-off function such that $\phi(t) = 0$ for $t \in [0, 1]$ and $\phi(t) = 1$ for $t \geq 2$. Consequently, we have ${\bf v}(0) = {\bf 0}, \boldsymbol{\ell}(0) = {\bf 0}, \text{ and } \boldsymbol{\omega}(0) = {\bf 0}$.

Direct calculation yields
\begin{align*}
&\begin{cases}
\begin{aligned}
 \frac{\partial {\bf v}}{\partial t}-\mu\Delta {\bf v}+\nabla{{\pi}}&=\phi(t) e^{\lambda t} {\bf F}_0+\phi'(t)e^{\lambda t} {\bf V}\\
\qquad\mbox{\rm div}{\bf  v}&=0
\end{aligned}\qquad
&\mbox{ in } \Omega\mbox{ for }t>0,\\
\begin{aligned}
&m_0\dot{ \boldsymbol{\ell}} (t)=-\int_{\partial \Omega}{\mathbb S}({\bf v},\pi){\bf n} \, { dS_x}+m_0\phi(t) e^{\lambda t} {\bf F}_1+\phi'(t)e^{\lambda t}\boldsymbol{L}\\
&{\mathbb J}_0\dot{ \boldsymbol{\omega}}(t)=-\int_{\partial \Omega}x\times
 {\mathbb S}({\bf v},\pi){\bf n} \, { dS_x}+{\mathbb J}_0\phi(t) e^{\lambda t} {\bf F}_2+\phi'(t)e^{\lambda t}\boldsymbol{\varOmega}
 \end{aligned}
\qquad &\mbox{ for }t>0,
 \end{cases}
 \\
& {\bf v}|_{t=0} = {\bf 0}, \quad \boldsymbol{\ell}|_{t=0} = {\bf 0}, \quad \boldsymbol{\omega}|_{t=0} = {\bf 0}, \\
&
\begin{cases}
({\bf v} - \boldsymbol{\ell} - \boldsymbol{\omega} \times x) \cdot {\bf n} =0 \\
2\mu [\mathbb{D}({\bf v}){\bf n}]_\tau + \alpha[{\bf v} -\boldsymbol{\ell} - \boldsymbol{\omega}\times x]_\tau={\bf 0}
\end{cases} \quad \text{on } \partial \Omega \text{ for } t > 0, \\
& {\bf v}(x,t) \to {\bf 0} \quad \text{as } |x| \to \infty \quad \text{for } t > 0.
\end{align*}

According to Lemma \ref{prop.theorem.strong.Lp}, for any $T > 2$, we have the following a priori estimate on $(0,T)$ with a constant $C > 0$ independent of $T$:
\begin{align}
\notag
\int^T_0 &\Big( \|\partial_t{\bf v}(t) \|_{L^q(\Omega)}^q + \|\nabla^2{\bf v}(t)\|_{L^q(\Omega)}^q + \|\nabla \pi(t)\|_{L^q(\Omega)}^q \\
&\hspace{10mm} + |\boldsymbol{\ell}(t)|^q + |\boldsymbol{\omega}(t)|^q + |\dot{\boldsymbol{\ell}}(t)|^q + |\dot{\boldsymbol{\omega}}(t)|^q \Big) dt \notag \\
\label{w88}
&\leq C \int^T_0 \Big( \|{\bf v}(t)\|_{L^q(\Omega_{2R})}^q + \|\phi(t) e^{\lambda t} {\bf F}_0\|_{L^q(\Omega)}^q + |\phi(t) e^{\lambda t} {\bf F}_1|^q + |\phi(t) e^{\lambda t} {\bf F}_2|^q \notag \\
&\hspace{15mm} + \|\phi'(t)e^{\lambda t} {\bf V}\|_{L^q(\Omega)}^q + |\phi'(t)e^{\lambda t}\boldsymbol{L}|^q + |\phi'(t)e^{\lambda t}\boldsymbol{\varOmega}|^q \Big) dt.
\end{align}

Observe that for the spatial derivatives and rigid body motions, we have
\begin{align*}
\int^T_0 &\Big( \|\nabla^2{\bf v}(t)\|_{L^q(\Omega)}^q + \|\nabla \pi(t)\|_{L^q(\Omega)}^q + |\boldsymbol{\ell}(t)|^q + |\boldsymbol{\omega}(t)|^q \Big) dt \\
&= \Big( \|\nabla^2{\bf V}\|_{L^q(\Omega)}^q + \|\nabla \Pi\|_{L^q(\Omega)}^q + |\boldsymbol{L}|^q + |\boldsymbol{\varOmega}|^q \Big) \int^T_0 |\phi(t)|^q e^{q \operatorname{Re}(\lambda) t} dt.
\end{align*}

Similarly, we can explicitly factor out the temporal integral from the source terms and the local norm:
\begin{align*}
\int^T_0 &\Big( \|{\bf v}(t)\|_{L^q(\Omega_{2R})}^q + \|\phi(t) e^{\lambda t} {\bf F}_0\|_{L^q(\Omega)}^q + |\phi(t) e^{\lambda t} {\bf F}_1|^q + |\phi(t) e^{\lambda t} {\bf F}_2|^q \Big) dt \\
&= \Big( \|{\bf V}\|_{L^q(\Omega_{2R})}^q + \|{\bf F}_0\|_{L^q(\Omega)}^q + |{\bf F}_1|^q + |{\bf F}_2|^q \Big) \int^T_0 |\phi(t)|^q e^{q \operatorname{Re}(\lambda) t} dt.
\end{align*}

Furthermore, since $\partial_t {\bf v} = \lambda \phi e^{\lambda t} {\bf V} + \phi' e^{\lambda t} {\bf V}$ (and similarly for $\dot{\boldsymbol{\ell}}, \dot{\boldsymbol{\omega}}$), an elementary inequality $(a+b)^q \leq 2^{q-1}(a^q + b^q)$ yields the lower bound for the time derivatives:
\begin{align*}
\int^T_0 &\Big( \|\partial_t{\bf v}(t) \|_{L^q(\Omega)}^q + |\dot{\boldsymbol{\ell}}(t)|^q + |\dot{\boldsymbol{\omega}}(t)|^q \Big) dt \\
&\geq c_q |\lambda|^q \Big( \|{\bf V}\|_{L^q(\Omega)}^q + |\boldsymbol{L}|^q + |\boldsymbol{\varOmega}|^q \Big) \int^T_0 |\phi(t)|^q e^{q \operatorname{Re}(\lambda) t} dt \\
&\quad - \int^T_0 \Big( \|\phi'(t)e^{\lambda t} {\bf V}\|_{L^q(\Omega)}^q + |\phi'(t)e^{\lambda t}\boldsymbol{L}|^q + |\phi'(t)e^{\lambda t}\boldsymbol{\varOmega}|^q \Big) dt,
\end{align*}
where $c_q = 2^{1-q}$. Finally, the commutator terms strictly satisfy
\begin{align*}
\int^T_0 &\Big( \|\phi'(t)e^{\lambda t} {\bf V}\|_{L^q(\Omega)}^q + |\phi'(t)e^{\lambda t}\boldsymbol{L}|^q + |\phi'(t)e^{\lambda t}\boldsymbol{\varOmega}|^q \Big) dt \\
&= \Big( \|{\bf V}\|_{L^q(\Omega)}^q + |\boldsymbol{L}|^q + |\boldsymbol{\varOmega}|^q \Big) \int^T_0 |\phi'(t)|^q e^{q \operatorname{Re}(\lambda) t} dt.
\end{align*}

Let $I_T = \int_0^T |\phi(t)|^q e^{q \operatorname{Re}(\lambda) t} \, dt$. 
Since $\operatorname{Re}(\lambda) \geq 0$, we have $e^{q \operatorname{Re}(\lambda) t} \geq 1$. Thus, for any $T > 2$,
\[
    I_T \geq \int_2^T 1 \, dt = T-2,
\]
which clearly implies $I_T \to \infty$ as $T \to \infty$. On the other hand, the integral involving $\phi'$ is evaluated solely on the compact interval $[1, 2]$, meaning it remains a finite constant independent of $T$.

Dividing both sides of the inequality by $I_T$ and passing to the limit as $T \to \infty$, the error terms involving $\phi'$ vanish entirely. This directly gives \eqref{Resolvent2}.
\end{proof}

To conclude the proof, we must show that for the range $1 < q < 3/2$, the local norm on the right-hand side of \eqref{Resolvent2} can be completely absorbed, yielding the exact uniform estimate:
\begin{equation}
\label{Resolvent3}
\begin{aligned}
    & |\lambda| \Big( \|{\bf V}\|_{L^q(\Omega)} + |\boldsymbol{L}| + |\boldsymbol{\varOmega}| \Big) + \|\nabla^2{\bf V}\|_{L^q(\Omega)} + \|\nabla \Pi\|_{L^q(\Omega)} + |\boldsymbol{L}| + |\boldsymbol{\varOmega}| \\
    &\leq C \Big( \|{\bf F}_0\|_{L^q(\Omega)} + |{\bf F}_1| + |{\bf F}_2| \Big).
\end{aligned}
\end{equation}

Assume, for contradiction, that the uniform estimate \eqref{Resolvent3} fails. Then there exist a sequence of parameters $\lambda_m \in \{ z \in \mathbb{C} \mid \operatorname{Re} z \ge 0, 0 < |z| \le \gamma \}$, a sequence of data $({\bf F}_{0,m}, {\bf F}_{1,m}, {\bf F}_{2,m})$ satisfying
\[
 \|{\bf F}_{0,m}\|_{L^q(\Omega)} + |{\bf F}_{1,m}| + |{\bf F}_{2,m}| \to 0 \quad \text{as } m \to \infty,
\]
and corresponding solutions $({\bf V}_m, \Pi_m, \boldsymbol{L}_m, \boldsymbol{\varOmega}_m)$ to the system \eqref{Resolvent1} normalized such that
\begin{align*}
    & |\lambda_m| \Big( \|{\bf V}_m\|_{L^q(\Omega)} + |\boldsymbol{L}_m| + |\boldsymbol{\varOmega}_m| \Big)\\
    & + \|\nabla^2{\bf V}_m\|_{L^q(\Omega)} + \|\nabla \Pi_m\|_{L^q(\Omega)} + |\boldsymbol{L}_m| + |\boldsymbol{\varOmega}_m| = 1.
\end{align*}

By Lemma \ref{lemma.resolvent}, this normalization and the vanishing source terms yield the lower bound for the local norm:
\begin{equation} \label{eq:lower_bound_m}
1 \leq C \|{\bf V}_m\|_{L^q(\Omega_{2R})} + o(1) \quad \text{as } m \to \infty.
\end{equation}

Since the sequence $\lambda_m$ is bounded, we may extract a subsequence such that $\lambda_m \to \lambda_0$ with $\operatorname{Re} \lambda_0 \ge 0$. Using the Banach--Alaoglu theorem and the Rellich--Kondrachov compactness theorem, we can extract a weakly converging subsequence (still denoted by $m$) such that $({\bf V}_m, \Pi_m, \boldsymbol{L}_m, \boldsymbol{\varOmega}_m) \rightharpoonup ({\bf V}, \Pi, \boldsymbol{L}, \boldsymbol{\varOmega})$ weakly in their respective spaces. The strong convergence holds locally in space, i.e., ${\bf V}_m \to {\bf V}$ strongly in $L^q(\Omega_{2R})$. 

The limit satisfies the homogeneous system \eqref{Resolvent1} associated with $\lambda_0$. By weak lower semi-continuity, we have
\[
|\lambda_0| \Big( \|{\bf V}\|_{L^q(\Omega)} + |\boldsymbol{L}| + |\boldsymbol{\varOmega}| \Big) + \|\nabla^2{\bf V}\|_{L^q(\Omega)} + \|\nabla \Pi\|_{L^q(\Omega)} + |\boldsymbol{L}| + |\boldsymbol{\varOmega}| \leq 1.
\]
Furthermore, by the Sobolev embedding $W^{2,q}(\Omega) \hookrightarrow L^{q^*}(\Omega)$ for $q^* = \frac{3q}{3 - 2q}$ (which is strictly valid for $1 < q < 3/2$), it follows that ${\bf V} \in L^{q^*}(\Omega)$.
Consequently, by Lemma \ref{lemma.uniqueness}, the homogeneous system under this integrability condition admits only the trivial solution, meaning $({\bf V}, \boldsymbol{L}, \boldsymbol{\varOmega}) \equiv ({\bf 0}, {\bf 0}, {\bf 0})$.
However, the strong local convergence and \eqref{eq:lower_bound_m} yield
\[
 \|{\bf V}\|_{L^q(\Omega_{2R})} = \lim_{m \to \infty}  \|{\bf V}_m\|_{L^q(\Omega_{2R})} \geq \frac{1}{C} > 0,
\]
which strictly contradicts ${\bf V} \equiv {\bf 0}$.

\subsection{ Sectoriality for general $ q \in (1, \infty)$}

Recall that the operator \( -\mathbb{A}_{\alpha,2} \) is sectorial with angle $\theta$, 
 for any  \( \theta \in \left( \frac{\pi}{2}, \pi \right) \)  satisfying
\[
|\lambda| \left\| (\lambda I + \mathbb{A}_{\alpha,2})^{-1} {\bf f} \right\|_{\tilde{\mathbb{X}}_2} \leq C_2 \|{\bf f}\|_{\tilde{\mathbb{X}}_2} \quad \text{for all } \lambda \in \Sigma_\theta,
\]
 and  the operator \( -{\mathbb A}_{\alpha,q_0} \) is also sectorial with  $\theta$ for some \( \theta_0 \in \left( \frac{\pi}{2}, \pi \right) \) 
satisfying
\[
|\lambda| \left\| (\lambda I + {\mathbb A}_{\alpha,q_0})^{-1} {\bf f} \right\|_{\tilde{\mathbb X}_{q_0}} \leq C_{q_0} \|{\bf f}\|_{\tilde{\mathbb X}_{q_0}} \quad \text{for all } \lambda \in \Sigma_{\theta_0}.
\]



{\bf  Case: $\frac{3}{2}\leq q<2$.}

Choose \( q_0 \in \left(1, \frac{3}{2} \right) \).  
Then, there exists \( s \in (0,1) \) such that
\[
\frac{1}{q} = \frac{1-s}{q_0} + \frac{s}{2}.
\]

%
%

By the interpolation theorem, the operator \( -{\mathbb A}_{\alpha,q} \) is also sectorial with  the angle \(  \theta_0 \), 
and satisfies
\[
|\lambda| \left\| (\lambda I + {\mathbb A}_{\alpha,q})^{-1}{\bf  f} \right\|_{\tilde{\mathbb X}_q} \leq C_{q_0}^{1-s} C_2^s \|{\bf f}\|_{\tilde{\mathbb X}_q}\quad \text{for all } \lambda \in \Sigma_{\theta_0}.
\]

{\bf Case:  \( 2 < q < \infty \).}

 In this case, we have \( 1 < q' < 2 \), where \( q' \) is the Hölder conjugate of \( q \).  
Using the duality argument, we observe that
\[
\langle (\lambda I + {\mathbb A}_{\alpha,q})^{-1}{\bf  f}, \boldsymbol{\phi} \rangle = \langle{\bf  f}, (\lambda I + {\mathbb A}_{\alpha,q'})^{-1} \boldsymbol{\phi} \rangle,
\]
which implies the estimate
\[
|\lambda| \left\| (\lambda I + {\mathbb A}_{\alpha,q})^{-1}{\bf  f} \right\|_{\tilde{\mathbb X}_q} \leq C_q \|{\bf f} \|_{\tilde{\mathbb X}_q}\quad \text{for all } \lambda \in \Sigma_{\theta_0}.
\]

\subsection{Bounded Analyticity}

From the result established in the previous section, the operator \( -\mathbb{A}_{\alpha,q} \) is sectorial with angle \( \omega = 0 \) on \( \tilde{\mathbb{X}}^q \) for \( 1 < q < \infty \).  

By applying Proposition 2.1.1 in \cite{lunardi}, it follows that \( -\mathbb{A}_{\alpha,q} \) generates a   bounded analytic \( C_0 \)-semigroup  $\{e^{-\mathbb{A}_{\alpha,q}t}\}_{t \geq 0}$ on \( \tilde{\mathbb{X}}^q \).  
Moreover, the following estimates hold  for all $\mathbf{f} \in \tilde{\mathbb{X}}^q$ and $t > 0$:
\begin{align*}
\left\| e^{-{\mathbb A}_{\alpha,q}t}{\bf  f} \right\|_{\tilde{\mathbb{X}}_q} 
& \leq c \|{\bf f}\|_{\tilde{\mathbb{X}}_q}, \\
\left\| {\mathbb A}_{\alpha,q} e^{-{\mathbb A}_{\alpha,q}t}{\bf  f} \right\|_{\tilde{\mathbb{X}}_q} 
& \leq c t^{-1} \|{\bf f}\|_{\tilde{\mathbb{X}}_q},
\end{align*}
for all \( {\bf f} \in \tilde{\mathbb{X}}_q \) and \( t > 0 \), where \( c > 0 \) is a constant independent of \( t \).

This concludes the proof of Theorem \ref{analyticity.thm}.

\section{\bf Proof of Theorem \ref{largetime.thm}
}

\label{section.largetime.thm}

Let \( {\bf u}_0 \simeq ({\bf v}_0, \boldsymbol{\ell}_0, \boldsymbol{\omega}_0) \in \tilde{\mathbb{X}}^r \), and define the solution \( {\bf u}(t) := e^{-t \mathbb{A}_{\alpha,q}} {\bf u}_0 \simeq ({\bf v}, \boldsymbol{\ell}, \boldsymbol{\omega}) \). 
Let \( \pi \) be the associated pressure such that the quadruple \( ({\bf v}, \boldsymbol{\ell},  \boldsymbol{\omega}, \pi) \) satisfies the homogeneous system \eqref{e01} (i.e., \( {\bf f}_0, {\bf f}_1, {\bf f}_2 \equiv 0 \)) under the conditions \textup{(C1)}–\textup{(C3)}. 
Recall the norm equivalence:
\[
\|{\bf u}\|_{\tilde{\mathbb{X}}^r} \approx \|{\bf u}\|_{L^r(\mathbb{R}^3)} \approx \|{\bf v}\|_{L^r(\Omega)} + |\boldsymbol{\ell}|  + |\boldsymbol{\omega}|.
\]

\subsection{$L^r$–$L^q$ Estimate for short time $0 < t \leq T$.}
\label{smallt}

%

Let $1 < r \leq q < \infty$ and $0 < t \leq T$. Here and in what follows, $c(T)$ denotes a generic positive constant depending on $T$, which may change from line to line. For any $\alpha \geq 0$, since $1 \leq (1+T)^\alpha (1+t)^{-\alpha}$ on $(0, T]$, we can always include the factor $(1+t)^{-\alpha}$ in our upper bounds by simply absorbing $(1+T)^\alpha$ into $c(T)$. We systematically apply this convention throughout this subsection to unify the notation with the large-time estimates. 

First, taking $\alpha = \frac{3}{2}\big(\frac{1}{r}-\frac{1}{q}\big)$, we obtain
\begin{equation}
\label{loc1}
|\boldsymbol{\ell}(t)| + |\boldsymbol{\omega}(t)| \leq \|{\bf u}(t)\|_{\tilde{\mathbb{X}}^r} 
\leq c(T) \|{\bf u}_0\|_{\tilde{\mathbb{X}}^r} 
\leq c(T)(1 + t)^{-\frac{3}{2}\left(\frac{1}{r} - \frac{1}{q}\right)} \|{\bf u}_0\|_{\tilde{\mathbb{X}}^r}.
\end{equation}

Since $|\dot{\boldsymbol{\ell}}| + |\dot{\boldsymbol{\omega}}| \leq \|\partial_t {\bf u}(t)\|_{\tilde{\mathbb{X}}^r}$ and $ \|\partial_t {\bf u}(t)\|_{\tilde{\mathbb{X}}^r} \leq c t^{-1} \|{\bf u}(t)\|_{\tilde{\mathbb{X}}^r} $, \eqref{loc1} implies that
\begin{align}
\label{loc2}
|\dot{\boldsymbol{\ell}}| + |\dot{\boldsymbol{\omega}}| 
\leq c(T) t^{-1} (1 + t)^{-\frac{3}{2}\left(\frac{1}{r} - \frac{1}{q}\right)} \|{\bf u}_0\|_{\tilde{\mathbb{X}}^r}.
\end{align}

By the Gagliardo--Nirenberg interpolation inequality for exterior domains \cite{maremonti-crispo},
\[
\|{\bf v}(t)\|_{L^q(\Omega)} \leq c \|\nabla^{2m} {\bf v}(t)\|_{L^r(\Omega)}^\theta \|{\bf v}(t)\|_{L^r(\Omega)}^{1 - \theta},
\]
where $\theta$ satisfies
\[
\frac{1}{q} = \theta\left(\frac{1}{r} - \frac{2m}{3}\right) + (1 - \theta)\frac{1}{r}, \quad \text{provided } r > \frac{3}{m}.
\]
This algebraic relation explicitly yields $\theta = \frac{3}{2m}\left(\frac{1}{r} - \frac{1}{q}\right)$. 
From Lemma \ref{strong.reg2}, we have the regularity estimate
\[
\|\nabla^{2m} {\bf v}(t)\|_{L^r(\Omega)} \leq c(t^{-m} + 1)\|{\bf u}_0\|_{\tilde{\mathbb{X}}^r}.
\]
Combining this with \eqref{loc1}, the exponent of $t$ evaluates precisely to $-m\theta = -\frac{3}{2}\left(\frac{1}{r} - \frac{1}{q}\right)$, which leads to
\begin{equation}
\label{loc3}
\|{\bf v}(t)\|_{L^q(\Omega)} \leq c(T)t^{-\frac{3}{2}\left(\frac{1}{r} - \frac{1}{q}\right)}\|{\bf u}_0\|_{\tilde{\mathbb{X}}^r}.
\end{equation}

From \eqref{loc1} and \eqref{loc3}, the time derivative bound follows naturally:
\begin{align}
\label{loc4}
\notag \|\partial_t {\bf v}(t)\|_{L^q(\Omega)} 
&\leq \|\partial_t {\bf u}(t)\|_{\tilde{\mathbb{X}}^q} 
\leq c t^{-1} \|{\bf u}(t)\|_{\tilde{\mathbb{X}}^q} 
= c t^{-1} \Big( \|{\bf v}(t)\|_{L^q(\Omega)} + |\boldsymbol{\ell}(t)| + |\boldsymbol{\omega}(t)| \Big) \\
&\leq c(T) t^{-1 - \frac{3}{2}\left( \frac{1}{r} - \frac{1}{q} \right)} \|{\bf u}_0\|_{\tilde{\mathbb{X}}^r}.
\end{align}

Applying Lemma \ref{prop.ogawa} together with the estimates \eqref{loc3} and \eqref{loc4}, we obtain the bounds for the second derivatives and the pressure. Note that on the short-time interval $t \in (0, T]$, the singular term $t^{-1}$ dominates, yielding:
\begin{align}
\label{loc5}
\notag \|\nabla^2 {\bf v}(t)\|_{L^q(\Omega)} + \|\nabla \pi(t)\|_{L^q(\Omega)} 
&\leq c \|\partial_t {\bf v}(t)\|_{L^q(\Omega)} + c \|{\bf v}(t)\|_{L^q(\Omega_R)} \\
&\leq c(T) t^{- 1 - \frac{3}{2}\left( \frac{1}{r} - \frac{1}{q} \right)} \|{\bf u}_0\|_{\tilde{\mathbb{X}}^r}.
\end{align}

Finally, interpolating between \eqref{loc3} and \eqref{loc5} via the Gagliardo--Nirenberg inequality yields the exact decay for the gradient:
\begin{align}
\label{loc6}
\|\nabla {\bf v}(t)\|_{L^q(\Omega)} 
&\leq c \|\nabla^2 {\bf v}(t)\|_{L^q(\Omega)}^{1/2} \|{\bf v}(t)\|_{L^q(\Omega)}^{1/2} \notag \\
&\leq c(T) t^{- \frac{1}{2} - \frac{3}{2}\left( \frac{1}{r} - \frac{1}{q} \right)} \|{\bf u}_0\|_{\tilde{\mathbb{X}}^r}.
\end{align}

\subsection{$L^r$–$L^q$ Estimate for large time $t > 2$}
\label{larget}
%
%


Having established the local-in-time bounds on the bounded interval $(0, T]$, we now focus on the genuine asymptotic decay of the solution for large time $t > 2$.

 \subsubsection{\bf $L^r$-$L^2$ Estimates for the Velocity Field $\mathbf{u}$.}

We first establish the following  decay estimate:
\begin{lemm}
\label{L2L2}
There exists a constant $c > 0$ such that
\begin{equation}
\|\nabla \mathbf{v}(t)\|_{L^2(\Omega)} + |\boldsymbol{\ell}(t)| + |\boldsymbol{\omega}(t)| \leq c t^{-\frac{1}{2}} \|\mathbf{u}_0\|_{L^2(\mathbb{R}^3)}.
\end{equation}
\end{lemm}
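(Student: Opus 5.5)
The plan is a standard energy argument in the Hilbert space $\tilde{\mathbb X}^2$, using the self-adjointness of $\mathbb{A}_\alpha:=\mathbb{A}_{\alpha,2}$ and the Korn-type inequality of Lemma~\ref{korn_exterior}.

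First, I will take the $\tilde{\mathbb X}^2$ inner product of $\partial_t{\bf u}+\mathbb{A}_\alpha{\bf u}=0$ with ${\bf u}$. By Lemma~\ref{prop.pre2}, this gives the energy identity
\[
\frac12\frac{d}{dt}\|{\bf u}(t)\|^2_{\tilde{\mathbb X}^2}+E({\bf u}(t))=0, \qquad E({\bf u}):=2\mu\|\mathbb{D}({\bf v})\|^2_{L^2(\Omega)}+\alpha\|[{\bf v}-\boldsymbol{\ell}-\boldsymbol{\omega}\times x]_\tau\|^2_{L^2(\partial\Omega)}.
\]
Since $\|{\bf u}\|_{\tilde{\mathbb X}^2}^2=\|{\bf v}\|^2_{L^2(\Omega)}+m_0|\boldsymbol{\ell}|^2+(\mathbb{J}_0\boldsymbol{\omega})\cdot\boldsymbol{\omega}$ is exactly the pairing norm, there is no constant loss. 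Integrating in time yields $\|{\bf u}(t)\|^2+2\int_0^tE({\bf u}(s))\,ds=\|{\bf u}_0\|^2$.

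Second, I will show that $t\mapsto E({\bf u}(t))=\langle\mathbb{A}_\alpha{\bf u},{\bf u}\rangle$ is nonincreasing. I will test the equation with $\partial_t{\bf u}=-\mathbb{A}_\alpha{\bf u}$. By the symmetry in Lemma~\ref{prop.pre2},
\[
\frac{d}{dt}\langle\mathbb{A}_\alpha{\bf u},{\bf u}\rangle=2\langle\mathbb{A}_\alpha{\bf u},\partial_t{\bf u}\rangle=-2\|\mathbb{A}_\alpha{\bf u}\|^2\le0.
\]
This is rigorous for $t>0$ by the analyticity established in Theorem~\ref{analyticity.thm}, since ${\bf u}(t)\in\mathcal{D}(\mathbb{A}_\alpha^k)$ for every $k$. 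Monotonicity then gives
\[
tE({\bf u}(t))\le\int_0^tE({\bf u}(s))\,ds\le\tfrac12\|{\bf u}_0\|^2_{L^2(\mathbb{R}^3)}.
\]
An alternative is the spectral theorem: $\langle\mathbb{A}_\alpha e^{-t\mathbb{A}_\alpha}{\bf u}_0,e^{-t\mathbb{A}_\alpha}{\bf u}_0\rangle=\|\mathbb{A}_\alpha^{1/2}e^{-t\mathbb{A}_\alpha}{\bf u}_0\|^2\le (2et)^{-1}\|{\bf u}_0\|^2$, since $\sup_{s\ge0}se^{-2ts}=(2et)^{-1}$. I would present this version as the cleanest.

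Third, I will apply Lemma~\ref{korn_exterior} to ${\bf v}(t),\boldsymbol{\ell}(t),\boldsymbol{\omega}(t)$. Its hypotheses are satisfied because ${\bf v}(t)\in W^{1,2}(\Omega)$ and $({\bf v}-\boldsymbol{\ell}-\boldsymbol{\omega}\times x)\cdot{\bf n}=0$. This gives
\[
\|\nabla{\bf v}\|_{L^2(\Omega)}+|\boldsymbol{\ell}|+|\boldsymbol{\omega}|\le c\big(\|\mathbb{D}({\bf v})\|_{L^2(\Omega)}+\|[{\bf v}-\boldsymbol{\ell}-\boldsymbol{\omega}\times x]_\tau\|_{L^2(\partial\Omega)}\big)\le c\,E({\bf u})^{1/2},
\]
using $\mu,\alpha>0$. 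Combining with the previous step gives $ct^{-1/2}\|{\bf u}_0\|_{L^2(\mathbb{R}^3)}$, as claimed.

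The main obstacle is the Korn step: in an exterior domain, $\|\nabla{\bf v}\|_{L^2}$ must be controlled without an $L^2$ lower-order term, and $|\boldsymbol{\ell}|,|\boldsymbol{\omega}|$ must be controlled purely by the dissipation. This is exactly what Lemma~\ref{korn_exterior} provides, via the Shibata--Soga homogeneous estimates and the positive definiteness of $(\boldsymbol{\ell},\boldsymbol{\omega})\mapsto\int_{\partial\Omega}|\boldsymbol{\ell}+\boldsymbol{\omega}\times x|^2\,dS$. I therefore expect the remaining work to be only a check that ${\bf v}(t)$ lies in the homogeneous space where those estimates apply. This holds since ${\bf v}(t)\in W^{1,2}(\Omega)$ for $t>0$ by Lemma~\ref{strong.reg2}.
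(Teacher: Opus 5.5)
Your proposal is correct and follows essentially the same route as the paper. Both arguments bound the dissipation $\langle \mathbb{A}_{\alpha,2}\mathbf{u}(t),\mathbf{u}(t)\rangle$ by $c\,t^{-1}\|\mathbf{u}_0\|_{L^2(\mathbb{R}^3)}^2$, identify it with $2\mu\|\mathbb{D}(\mathbf{v})\|^2_{L^2(\Omega)}+\alpha\|[\mathbf{v}-\boldsymbol{\ell}-\boldsymbol{\omega}\times x]_\tau\|^2_{L^2(\partial\Omega)}$ via Lemma~\ref{prop.pre2}, and conclude with the Korn-type inequality of Lemma~\ref{korn_exterior}. The only difference is how the dissipation bound is obtained: the paper uses Cauchy--Schwarz and the analytic-semigroup estimate $\|\mathbb{A}_{\alpha,2}e^{-t\mathbb{A}_{\alpha,2}}\|\le c\,t^{-1}$, while you use energy monotonicity or the spectral theorem, which gives the explicit constants $1/2$ or $(2e)^{-1}$.
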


\begin{proof}
Let $\mathbf{u}(t) = e^{-t\mathbb{A}_{\alpha,2}} \mathbf{u}_0$. By the properties of the analytic semigroup, we observe that
\begin{align*}
\langle \mathbb{A}_{\alpha,2} \mathbf{u}, \mathbf{u} \rangle &\leq \|\mathbb{A}_{\alpha,2} \mathbf{u}\| \|\mathbf{u}\| \\
&= \|\mathbb{A}_{\alpha,2} e^{-t\mathbb{A}_{\alpha,2}} \mathbf{u}_0\| \|e^{-t\mathbb{A}_{\alpha,2}} \mathbf{u}_0\| \leq c t^{-1} \|\mathbf{u}_0\|_{L^2}^2.
\end{align*}
Recalling the identity from Lemma \ref{prop.pre2}:
\begin{equation*}
\langle \mathbb{A}_{\alpha,2} \mathbf{u}, \mathbf{u} \rangle = \mu \|\mathbb{D}(\mathbf{v})\|_{L^2(\Omega)}^2 + \alpha \|[\mathbf{v}-\boldsymbol{\ell}(t)-\boldsymbol{\omega}(t)\times x]_\tau\|_{L^2(\partial\Omega)}^2,
\end{equation*}
we deduce that
\begin{equation}
\label{analiticity_L2}
\|\mathbb{D}(\mathbf{v}(t))\|_{L^2(\Omega)} + \|[\mathbf{v}-\boldsymbol{\ell}(t)-\boldsymbol{\omega}(t)\times x]_\tau\|_{L^2(\partial\Omega)} \leq c t^{-\frac{1}{2}} \|\mathbf{u}_0\|_{L^2(\mathbb{R}^3)}.
\end{equation}
Finally, applying the Korn-type inequality (Lemma \ref{korn_exterior}) to \eqref{analiticity_L2}, we obtain the desired decay estimate in Lemma \ref{L2L2}.
\end{proof}

Combining Lemma \ref{L2L2} with standard interpolation properties leads to the following decay rate estimates for the velocity field in $L^q$ spaces.

\begin{coro}
\label{coro.LqL2}
Let $({\bf v},\boldsymbol{\ell}, \boldsymbol{\omega})$ and $({\bf v}_0, \boldsymbol{\ell}_0, \boldsymbol{\omega}_0)$ be the same as those defined in Lemma \ref{analiticity_L2}.
Then for $t>1$, we have 
$$
\|{\bf v}(t)\|_{L^q(\Omega)}\leq 
\begin{cases}
ct^{-\frac{3}{2}\left(\frac{1}{2}-\frac{1}{q}\right)}\|{\bf u}_0\|_{L^2({\mathbb R}^3)}, & 2\leq q\leq 6, \\
ct^{-\frac{1}{2}}\|{\bf u}_0\|_{L^2({\mathbb R}^3)}, & 6<q\leq \infty.
\end{cases}
$$
\end{coro}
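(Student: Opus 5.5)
The plan is to combine the $L^2$ bound for $\mathbf v(t)$ with the gradient decay of Lemma~\ref{L2L2} through Gagliardo--Nirenberg and Sobolev inequalities on the exterior domain $\Omega$. First, by the bounded analyticity in $\tilde{\mathbb X}^2$ (Theorem~\ref{analyticity.thm}, the case $q=2$, which in fact follows from self-adjointness and accretivity with constant $1$), we have $\|\mathbf v(t)\|_{L^2(\Omega)}\le \|\mathbf u(t)\|_{L^2(\mathbb R^3)}\le c\|\mathbf u_0\|_{L^2(\mathbb R^3)}$ for all $t>0$. Lemma~\ref{L2L2} gives $\|\nabla\mathbf v(t)\|_{L^2(\Omega)}\le ct^{-1/2}\|\mathbf u_0\|_{L^2}$.

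For $2\le q\le 6$, I would use the exterior-domain Gagliardo--Nirenberg inequality \cite{maremonti-crispo}
\[
\|\mathbf v\|_{L^q(\Omega)}\le c\|\nabla\mathbf v\|_{L^2(\Omega)}^{\theta}\|\mathbf v\|_{L^2(\Omega)}^{1-\theta},\qquad \theta=3\Big(\frac12-\frac1q\Big)\in[0,1].
\]
Substituting the two bounds gives the decay $t^{-\theta/2}=t^{-\frac32(\frac12-\frac1q)}$, which is the first case.

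The main obstacle is the range $6<q\le\infty$, where $\|\nabla\mathbf v\|_{L^2}$ alone cannot control $\mathbf v$ in $L^q$. Here I would first treat the endpoint $q=\infty$ and then interpolate: for $6<q<\infty$, $\|\mathbf v\|_{L^q}\le\|\mathbf v\|_{L^6}^{6/q}\|\mathbf v\|_{L^\infty}^{1-6/q}$. At $q=6$ the decay is already $t^{-1/2}$, so it suffices to show $\|\mathbf v(t)\|_{L^\infty}\le ct^{-1/2}\|\mathbf u_0\|_{L^2}$ for $t>1$. For this I would use $\|\mathbf v\|_{L^\infty(\Omega)}\le c\|\nabla\mathbf v\|_{L^2}^{1/2}\|\nabla^2\mathbf v\|_{L^2}^{1/2}$, valid for $\mathbf v$ vanishing at infinity (possibly with an additional lower-order term $\|\mathbf v\|_{L^6}$ on exterior domains). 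The second derivatives are then controlled by Lemma~\ref{prop.ogawa}(ii) with $r=2$, applied to $\mathbf v-\tilde{\mathbf v}_B$ as in Lemma~\ref{strong.reg2}, with $\mathcal f=-\partial_t\mathbf v+\mu\Delta\tilde{\mathbf v}_B$. This gives
\[
\|\nabla^2\mathbf v\|_{L^2}\le c\big(\|\partial_t\mathbf v\|_{L^2}+|\boldsymbol\ell|+|\boldsymbol\omega|+\|\nabla\mathbf v\|_{L^2(\Omega_R)}\big).
\]
Here $\|\partial_t\mathbf v\|_{L^2}\le ct^{-1}\|\mathbf u_0\|_{L^2}$ by analyticity, the local term is handled by the Poincaré-type bound, and $|\boldsymbol\ell|+|\boldsymbol\omega|\le ct^{-1/2}\|\mathbf u_0\|_{L^2}$ by Lemma~\ref{L2L2}. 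Altogether $\|\nabla^2\mathbf v(t)\|_{L^2}\le ct^{-1/2}\|\mathbf u_0\|_{L^2}$ for $t>1$.

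Combining, $\|\mathbf v\|_{L^\infty}\le c\,(t^{-1/2})^{1/2}(t^{-1/2})^{1/2}\|\mathbf u_0\|_{L^2}=ct^{-1/2}\|\mathbf u_0\|_{L^2}$, which gives the second case. The rate is limited by the rigid-body contribution to $\nabla^2\mathbf v$, which explains why the decay saturates at $t^{-1/2}$ and does not follow the heat rate for $q>6$.
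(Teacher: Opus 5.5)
Your proposal is correct and follows essentially the same route as the paper. For $2\le q\le 6$ you combine $L^2$-boundedness of the semigroup, Lemma~\ref{L2L2} and Gagliardo--Nirenberg with $\theta=3(\frac12-\frac1q)$. For $q>6$ you use the bound $\|\nabla^2{\bf v}(t)\|_{L^2(\Omega)}\le ct^{-\frac12}\|{\bf u}_0\|_{L^2}$ from Lemma~\ref{prop.ogawa}, applied to ${\bf v}-\tilde{\bf v}_B$, together with Gagliardo--Nirenberg between $\nabla{\bf v}$ and $\nabla^2{\bf v}$.

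The differences are only cosmetic. For each $q>6$ the paper applies $\|{\bf v}\|_{L^q}\le c\|\nabla^2{\bf v}\|_{L^2}^{1-\theta}\|\nabla{\bf v}\|_{L^2}^{\theta}$ with $\theta=3(\frac1q+\frac16)$ directly, instead of passing through the $L^\infty$ endpoint and H\"older interpolation with $L^6$. It also controls the local term by the Shibata--Soga bound $\|{\bf v}\|_{L^2(\Omega_R)}\le c\|\nabla{\bf v}\|_{L^2(\Omega)}$ rather than the Poincar\'e-type Lemma~\ref{poincare}.
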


\begin{proof}
According to Lemma \ref{prop.ogawa}, we have the following strong regularity estimate:
\begin{align*}
\|\nabla^2{\bf v}\|_{L^2(\Omega)}+\|\nabla \pi\|_{L^2(\Omega)}&\leq c\|\partial_t{\bf v}\|_{L^2(\Omega)}+c\|{\bf v}\|_{L^2(\Omega_R)}+c|\boldsymbol{\ell}|+c|\boldsymbol{\omega}|\\
&\leq ct^{-\frac{1}{2}}\|{\bf u}_0\|_{L^2({\mathbb R}^3)}, \quad t>1.
\end{align*}
Here, we used the estimate by Shibata and Soga \cite{shibata-soga} that $\|{\bf v}\|_{L^2(\Omega_R)}\leq c\|\nabla{\bf v}\|_{L^2(\Omega)}$.

By the interpolation property, we have
$$
\|{\bf v}\|_{L^q(\Omega)}\leq c\|\nabla {\bf v}\|_{L^2(\Omega)}^\theta \|{\bf v}\|_{L^2(\Omega)}^{1-\theta}, \quad \theta=3\left(\frac{1}{2}-\frac{1}{q}\right), \quad 2\leq q\leq 6.
$$
For $6<q\leq \infty$, we directly apply the Gagliardo--Nirenberg interpolation inequality to obtain
\begin{align*}
\|{\bf v}\|_{L^q(\Omega)} &\leq c\|\nabla^2{\bf v}\|_{L^2(\Omega)}^{1-\theta}\|\nabla {\bf v}\|_{L^2(\Omega)}^\theta, \quad \theta=3\left(\frac{1}{q}+\frac{1}{6}\right).
\end{align*}
Applying the bounds from Lemma \ref{L2L2} and the strong regularity estimate yields
$$
\|{\bf v}\|_{L^q(\Omega)} \leq c \big(t^{-\frac{1}{2}}\big)^{1-\theta} \big(t^{-\frac{1}{2}}\big)^\theta \|{\bf u}_0\|_{L^2({\mathbb R}^3)} = c t^{-\frac{1}{2}} \|{\bf u}_0\|_{L^2({\mathbb R}^3)}.
$$
This gives the desired decay rates and completes the proof.
\end{proof}

By a duality argument, we also conclude the following:

\begin{coro}
\label{coro.L2Lr}
Let $({\bf v},\boldsymbol{\ell}, \boldsymbol{\omega})$ and $({\bf v}_0, \boldsymbol{\ell}_0, \boldsymbol{\omega}_0)$ be the same as those defined in Corollary \ref{coro.LqL2}.
Then for $t>1$, we have
\begin{equation}
\label{L2Lr1}
\|{\bf v}(t)\|_{L^2(\Omega)}\leq 
\begin{cases}
ct^{-\frac{3}{2}\left(\frac{1}{r}-\frac{1}{2}\right)}\|{\bf u}_0\|_{L^r({\mathbb R}^3)}, & \frac{6}{5}\leq r\leq 2, \\
ct^{-\frac{1}{2}}\|{\bf u}_0\|_{L^r({\mathbb R}^3)}, & 1<r\leq \frac{6}{5}.
\end{cases}
\end{equation}
\end{coro}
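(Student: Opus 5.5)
The plan is to dualize Corollary \ref{coro.LqL2}, using the self-adjointness of $\mathbb{A}_{\alpha,2}$ (Theorem \ref{maximal.L2}) and the duality relation $(e^{-t\mathbb{A}_{\alpha,q}})^* = e^{-t\mathbb{A}_{\alpha,q'}}$. The latter follows from the resolvent identity $\langle (\lambda+\mathbb{A}_{\alpha,q})^{-1}{\bf f},\boldsymbol{\phi}\rangle=\langle {\bf f},(\lambda+\mathbb{A}_{\alpha,q'})^{-1}\boldsymbol{\phi}\rangle$ from Section \ref{section.analyticity.thm}, together with the Dunford integral representation of the bounded analytic semigroup. We also use that the semigroups for different exponents agree on $\tilde{\mathbb X}^r\cap\tilde{\mathbb X}^2$, which holds because their resolvents coincide there by Lemma \ref{lemma.uniqueness}.

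For ${\bf u}_0\in\tilde{\mathbb X}^r\cap\tilde{\mathbb X}^2$, we estimate $\|{\bf u}(t)\|_{L^2(\mathbb{R}^3)}$ by testing against $\boldsymbol{\phi}\in\tilde{\mathbb X}^2$; this suffices because ${\bf u}(t)\in\tilde{\mathbb X}^2$ and $\mathbb{P}_2$ is the orthogonal projection, so the supremum over $\tilde{\mathbb X}^2$ computes the $L^2$ norm. Then
\[
\langle {\bf u}(t),\boldsymbol{\phi}\rangle=\langle {\bf u}_0, e^{-t\mathbb{A}_{\alpha,r'}}\boldsymbol{\phi}\rangle \le \|{\bf u}_0\|_{L^r(\mathbb{R}^3)}\,\|e^{-t\mathbb{A}_{\alpha,2}}\boldsymbol{\phi}\|_{L^{r'}(\mathbb{R}^3)},
\]
where we identify $e^{-t\mathbb{A}_{\alpha,r'}}\boldsymbol{\phi}=e^{-t\mathbb{A}_{\alpha,2}}\boldsymbol{\phi}$. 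Write $e^{-t\mathbb{A}_{\alpha,2}}\boldsymbol{\phi}\simeq(\boldsymbol{\varphi}(t),{\bf g}(t),\boldsymbol{\theta}(t))$ and use the norm equivalence
\[
\|e^{-t\mathbb{A}_{\alpha,2}}\boldsymbol{\phi}\|_{L^{r'}(\mathbb{R}^3)}\approx\|\boldsymbol{\varphi}(t)\|_{L^{r'}(\Omega)}+|{\bf g}(t)|+|\boldsymbol{\theta}(t)|.
\]
The rigid part is bounded by $ct^{-1/2}\|\boldsymbol{\phi}\|_{L^2}$ from Lemma \ref{L2L2}. The fluid part is bounded by Corollary \ref{coro.LqL2} with $q=r'$. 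When $\frac65\le r\le2$ we have $2\le r'\le6$, which gives the rate $t^{-\frac32(\frac12-\frac1{r'})}=t^{-\frac32(\frac1r-\frac12)}$. When $1<r<\frac65$ we have $r'>6$, which gives $t^{-1/2}$.

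It remains to compare rates. In the first range, $\frac32(\frac1r-\frac12)\le\frac12$, so for $t>1$ the rigid term $t^{-1/2}$ is dominated by $t^{-\frac32(\frac1r-\frac12)}$. In the second range both terms decay like $t^{-1/2}$. Taking the supremum over $\|\boldsymbol{\phi}\|_{L^2}\le1$ gives \eqref{L2Lr1} for $\|{\bf u}(t)\|_{L^2}\ge\|{\bf v}(t)\|_{L^2(\Omega)}$. Density of $\tilde{\mathbb X}^r\cap\tilde{\mathbb X}^2$ in $\tilde{\mathbb X}^r$ then extends the bound to all ${\bf u}_0$.

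The main obstacle is rigorously justifying the adjoint identity and the consistency of the semigroups across exponents. I would handle it at the resolvent level, where Lemma \ref{prop.pre2} gives $\langle\mathbb{A}_{\alpha,q}{\bf u},\boldsymbol{\phi}\rangle=\langle{\bf u},\mathbb{A}_{\alpha,q'}\boldsymbol{\phi}\rangle$, and then pass to semigroups via the contour integral.
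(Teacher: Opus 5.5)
Your proposal is correct and is the same duality argument the paper invokes. You dualize the $L^2\to L^{r'}$ bound of Corollary~\ref{coro.LqL2}, and you correctly add the rigid components $|{\bf g}(t)|+|\boldsymbol{\theta}(t)|\le ct^{-1/2}\|\boldsymbol{\phi}\|_{L^2}$ from Lemma~\ref{L2L2}, which Corollary~\ref{coro.LqL2} alone does not control; your comparison of exponents in both ranges is also right.

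One simplification is available. The semigroup in Corollary~\ref{coro.LqL2} is the $L^2$ one, so $\langle e^{-t\mathbb{A}_{\alpha,2}}{\bf u}_0,\boldsymbol{\phi}\rangle=\langle {\bf u}_0,e^{-t\mathbb{A}_{\alpha,2}}\boldsymbol{\phi}\rangle$ follows directly from the self-adjointness of $\mathbb{A}_{\alpha,2}$. You can therefore drop the detour through $e^{-t\mathbb{A}_{\alpha,r'}}$, which is not even defined on all of $\tilde{\mathbb X}^2$ when $r'>2$, together with the cross-exponent resolvent consistency.
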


We now wish to extend the estimate $\eqref{L2Lr1}_1$ to the case $1 < r \leq 2$:

\begin{lemm}
\label{lemm.LqL2}
Let $1 < r \leq 2$. Then
\begin{equation}
\label{lemm.LqL20}
\|e^{-t{\mathbb A}_\alpha}{\bf u}_0\|_{\tilde{\mathbb X}^2} \leq ct^{-\frac{3}{2}\left(\frac{1}{r}-\frac{1}{2}\right)}\|{\bf u}_0\|_{\tilde{\mathbb X}^r}.
\end{equation}
\end{lemm}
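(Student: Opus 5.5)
Since $\tilde{\mathbb X}^r\cap\tilde{\mathbb X}^2$ is dense in $\tilde{\mathbb X}^r$ and both sides of \eqref{lemm.LqL20} are continuous there, we first reduce to ${\bf u}_0\in\tilde{\mathbb X}^r\cap\tilde{\mathbb X}^2$. By Corollary \ref{coro.L2Lr}, the bound already holds for $\frac65\le r\le 2$. For $1<r<\frac65$ it gives only $t^{-1/2}$, while the target rate $\frac32(\frac1r-\frac12)$ lies in $(\frac12,\frac34)$. The missing decay has to come from somewhere else, and we plan to obtain it with a Maremonti--Solonnikov type energy argument (splitting in Fourier space, or in low and high spectral parts of the self-adjoint $\mathbb A_{\alpha,2}$) instead of duality alone.

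\textbf{Step 1 (energy identity).} For ${\bf u}(t)=e^{-t\mathbb A_{\alpha,2}}{\bf u}_0$, Lemma \ref{prop.pre2} gives
\[
\frac12\frac{d}{dt}\|{\bf u}\|_{L^2}^2+2\mu\|\mathbb D({\bf v})\|_{L^2(\Omega)}^2+\alpha\|[{\bf v}-\boldsymbol\ell-\boldsymbol\omega\times x]_\tau\|_{L^2(\partial\Omega)}^2=0 .
\]
Lemma \ref{korn_exterior} then turns the dissipation into a bound from below by $c(\|\nabla{\bf v}\|_{L^2(\Omega)}^2+|\boldsymbol\ell|^2+|\boldsymbol\omega|^2)$.

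\textbf{Step 2 (frequency splitting).} We extend ${\bf v}$ to a field $\tilde{\bf u}$ on $\mathbb R^3$ by a solenoidal extension, for instance $\tilde{\bf v}_B$ inside $B$ as in Lemma \ref{strong.reg1}, so that $\|\nabla\tilde{\bf u}\|_{L^2(\mathbb R^3)}^2\lesssim \|\nabla{\bf v}\|^2_{L^2(\Omega)}+|\boldsymbol\ell|^2+|\boldsymbol\omega|^2$ and $\|{\bf u}\|_{L^2}\approx\|\tilde{\bf u}\|_{L^2}$. Plancherel then gives, for any $\rho(t)>0$,
\[
\|\nabla\tilde{\bf u}\|^2\ge\rho^2\|\tilde{\bf u}\|^2-\rho^2\int_{|\xi|<\rho}|\hat{\tilde{\bf u}}|^2d\xi .
\]
Inserting this into Step 1 yields $\frac{d}{dt}\|{\bf u}\|^2+c\rho^2\|{\bf u}\|^2\le c\rho^2\int_{|\xi|<\rho}|\hat{\tilde{\bf u}}(\xi,t)|^2d\xi$. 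We take $\rho^2=k/(1+t)$ with $k$ large.

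\textbf{Step 3 (low-frequency bound; main obstacle).} We need $|\hat{\tilde{\bf u}}(\xi,t)|\le c\|{\bf u}_0\|_{L^r}(1+|\xi|^{\beta}t^{\gamma})$ with $\beta,\gamma$ chosen so that $\int_{|\xi|<\rho}|\hat{\tilde{\bf u}}|^2\lesssim \rho^{3(2/r-1)}\|{\bf u}_0\|_{L^r}^2$ up to lower-order terms. By Hausdorff--Young this reduces to bounding $\|\tilde{\bf u}(t)\|_{L^{r}}$ uniformly in $t$, which Theorem \ref{analyticity.thm} provides for the semigroup on $\tilde{\mathbb X}^r$; the solenoidal extension is bounded in $L^r$ by the same cut-off construction. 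Hausdorff--Young gives $\|\hat{\tilde{\bf u}}\|_{L^{r'}}\le c\|{\bf u}_0\|_{L^r}$, and Hölder on the ball $\{|\xi|<\rho\}$ gives $\int_{|\xi|<\rho}|\hat{\tilde{\bf u}}|^2\le c\rho^{3(1-2/r')}\|{\bf u}_0\|_{L^r}^2=c\rho^{3(2/r-1)}\|{\bf u}_0\|_{L^r}^2$. We expect this to be the delicate point: we must check that the extension preserves the $L^r$ bound (it depends on $\boldsymbol\ell,\boldsymbol\omega$ and the trace, which are controlled by $\|{\bf u}\|_{\tilde{\mathbb X}^r}$), and that the Korn constants in Step 1 hold globally in time.

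\textbf{Step 4 (ODE).} Multiplying by $(1+t)^{ck}$ and integrating gives
\[
\|{\bf u}(t)\|^2\le (1+t)^{-ck}\|{\bf u}_0\|_{L^2}^2+c(1+t)^{-\frac32(\frac2r-1)}\|{\bf u}_0\|_{L^r}^2 .
\]
The first term is too strong for $L^r$ data alone. We remove it by the semigroup property: we apply the estimate from time $s=t/2$, and bound $\|{\bf u}(t/2)\|_{L^2}$ by the $L^r$–$L^2$ smoothing of Corollary \ref{coro.L2Lr}, which gives at worst $t^{-1/2}$ for $t>1$. With $k$ large the first term becomes $t^{-ck-1}\|{\bf u}_0\|_{L^r}^2$, which is negligible. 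This gives the claimed rate for $t\ge 2$. For $t\le2$ we use \eqref{loc3} with $q=2$, and together these prove \eqref{lemm.LqL20}.
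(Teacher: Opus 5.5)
Your route differs from the paper's, and it works once Step 2 is repaired.

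\textbf{Comparison with the paper.} The paper never leaves $\Omega$. It combines the energy identity and the Korn lower bound with two further ingredients. The first is the Gagliardo--Nirenberg (Nash-type) inequality $\|{\bf v}\|_{L^2(\Omega)}\le c\|\nabla{\bf v}\|_{L^2(\Omega)}^{\theta}\|{\bf v}\|_{L^r(\Omega)}^{1-\theta}$ with $\theta=\frac{3(2-r)}{6-r}$. The second is the uniform bound $\|{\bf u}(t)\|_{\tilde{\mathbb X}^r}\le c\|{\bf u}_0\|_{\tilde{\mathbb X}^r}$ from Theorem \ref{analyticity.thm}; the rigid part is handled by writing $|\boldsymbol\ell|+|\boldsymbol\omega|=(|\boldsymbol\ell|+|\boldsymbol\omega|)^{\theta}(|\boldsymbol\ell|+|\boldsymbol\omega|)^{1-\theta}$. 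Together these give the closed inequality $\mathcal E'+c\,\mathcal E^{1/\theta}\|{\bf u}_0\|_{\tilde{\mathbb X}^r}^{-2(1-\theta)/\theta}\le 0$, hence $\mathcal E(t)\le ct^{-3(\frac1r-\frac12)}\|{\bf u}_0\|_{\tilde{\mathbb X}^r}^2$ for all $t>0$ at once.

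Your Fourier-splitting scheme rests on the same ingredients, with Plancherel and Hausdorff--Young in place of Gagliardo--Nirenberg. It reaches the same rate, but it needs an extension to $\mathbb R^3$, a restart at $t/2$, and a separate short-time bound. The paper's argument is therefore the more economical one. The spectral-splitting variant you mention would be circular: a bound $\|E_{[0,\lambda]}{\bf u}_0\|\lesssim\lambda^{\frac32(\frac1r-\frac12)}\|{\bf u}_0\|_{\tilde{\mathbb X}^r}$ on the spectral resolution of $\mathbb A_{\alpha,2}$ is equivalent to the lemma itself.

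\textbf{The step that fails as written is Step 2.} Gluing ${\bf v}$ to $\tilde{\bf v}_B$ inside $B$ just reproduces ${\bf u}$, because $\tilde{\bf v}_B=\boldsymbol\ell+\boldsymbol\omega\times x$ where $\zeta=1$. Under the slip condition, ${\bf u}$ jumps across $\partial\Omega$ by $[{\bf v}-\boldsymbol\ell-\boldsymbol\omega\times x]_\tau$, which is exactly the quantity appearing in the dissipation. So $\tilde{\bf u}\notin{\bf W}^{1,2}(\mathbb R^3)$: its distributional gradient carries a surface measure on $\partial\Omega$. Consequently the needed bound $\int|\xi|^2|\hat{\tilde{\bf u}}|^2d\xi\lesssim\|\nabla{\bf v}\|_{L^2(\Omega)}^2+|\boldsymbol\ell|^2+|\boldsymbol\omega|^2$ is false. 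Moreover, $\|{\bf u}\|_{L^2}\approx\|\tilde{\bf u}\|_{L^2}$ cannot hold for any extension of ${\bf v}$ alone. For the ball, $(\boldsymbol\omega\times x)\chi_B\in\tilde{\mathbb X}^2$ has ${\bf v}=0$.

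\textbf{The repair.} Solenoidality plays no role in Plancherel, so it can be dropped.
\begin{itemize}
\item Let $\tilde{\bf v}$ be ${\bf v}$ extended by a Stein-type operator acting only near $\partial\Omega$ and bounded in both $L^r$ and $W^{1,2}$.
\item Then $\|\tilde{\bf v}\|_{L^r(\mathbb R^3)}\le c\|{\bf v}\|_{L^r(\Omega)}\le c\|{\bf u}_0\|_{\tilde{\mathbb X}^r}$.
\item Also $\|\nabla\tilde{\bf v}\|_{L^2(\mathbb R^3)}\le c(\|\nabla{\bf v}\|_{L^2(\Omega)}+\|{\bf v}\|_{L^2(\Omega_R)})\le c\|\nabla{\bf v}\|_{L^2(\Omega)}$, using the local bound quoted in the proof of Lemma \ref{korn_exterior}.
\item Keep the term $|\boldsymbol\ell|^2+|\boldsymbol\omega|^2$ from the Korn bound separately. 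It dominates $\rho^2(|\boldsymbol\ell|^2+|\boldsymbol\omega|^2)$ when $\rho\le 1$, so take $\rho^2=\min\{1,k/(1+\tau)\}$.
\end{itemize}
The dissipation is then at least $c\rho^2\mathcal E-c\rho^2\int_{|\xi|<\rho}|\hat{\tilde{\bf v}}|^2d\xi$, and Steps 3--4 go through.

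\textbf{Two further points in Step 4.}
\begin{itemize}
\item The weight must be in the elapsed time $\tau=s-t/2$, that is, apply the estimate to the datum ${\bf u}(t/2)$ via the semigroup property. With $k/(1+s)$ in absolute time, the integrating factor over $[t/2,t]$ gains only $2^{-ck}$.
\item Corollary \ref{coro.L2Lr} controls only ${\bf v}(t/2)$. For $|\boldsymbol\ell(t/2)|+|\boldsymbol\omega(t/2)|$ the uniform $\tilde{\mathbb X}^r$ bound suffices, since that term needs no decay.
\end{itemize}
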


\begin{proof}
By a standard density argument, we may assume without loss of generality that the initial data and the solution are sufficiently smooth to justify the following energy arguments. 

Taking the $L^2$ inner product of \eqref{e01} with ${\bf v}$ and utilizing the boundary conditions, we obtain the energy identity:
\begin{align}
 \label{gronwall}
 \frac{1}{2}\frac{d}{dt}\mathcal{E}(t) + 2\mu \|\mathbb{D}({\bf v}(t))\|_{L^2(\Omega)}^2 + \alpha \|[{\bf v}-\boldsymbol{\ell}-\boldsymbol{\omega}\times x]_\tau\|_{L^2(\partial \Omega)}^2 = 0,
\end{align}
where $\mathcal{E}(t) := \|{\bf v}(t)\|_{L^2(\Omega)}^2 + m|\boldsymbol{\ell}(t)|^2 + |{\mathbb J}_0 |\boldsymbol{\omega}(t)|^2$.

By the Korn type inequality in Lemma \ref{korn_exterior}, there exists a constant $c > 0$ such that
\[
c(\|\nabla {\bf v}\|_{L^2(\Omega)}+|\boldsymbol{\ell}|+|\boldsymbol{\omega}|)\leq \|\mathbb{D}({\bf v}(t))\|_{L^2(\Omega)}+ \alpha \|[{\bf v}-\boldsymbol{\ell}-\boldsymbol{\omega}\times x]_\tau\|_{L^2(\partial \Omega)}.
\]
By the Gagliardo-Nirenberg interpolation inequality in 3D, we have
\[
\|{\bf v}\|_{L^2(\Omega)} \leq c\|\nabla {\bf v}\|_{L^2(\Omega)}^\theta \|{\bf v}\|_{L^r(\Omega)}^{1-\theta}, \quad \text{where } \theta = \frac{3(2-r)}{6-r}, \quad 1 < r \leq 2.
\]
Since $\|{\bf v}\|_{L^r(\Omega)} \leq c\|{\bf u}_0\|_{\tilde{\mathbb X}^r}$ and the rigid body velocities satisfy $(|\boldsymbol{\ell}|+|\boldsymbol{\omega}|) \leq c\|{\bf u}_0\|_{\tilde{\mathbb X}^r}$, combined with the Korn-type inequality in Lemma \ref{korn_exterior}, we obtain:
\[
\mathcal{E}(t)^{\frac{1}{2}} \approx \|{\bf v}\|_{L^2(\Omega)} + |\boldsymbol{\ell}| + |\boldsymbol{\omega}| \leq c\Big(\|\mathbb{D}({\bf v}(t))\|_{L^2(\Omega)} + \alpha \|[{\bf v}-\boldsymbol{\ell}-\boldsymbol{\omega}\times x]_\tau\|_{L^2(\partial \Omega)}\Big)^\theta \|{\bf u}_0\|_{\tilde{\mathbb X}^r}^{1-\theta}.
\]
Squaring both sides, the total dissipation term is bounded from below by the energy:
\[
\|\mathbb{D}({\bf v}(t))\|_{L^2(\Omega)}^2 + \alpha \|[{\bf v}-\boldsymbol{\ell}-\boldsymbol{\omega}\times x]_\tau\|_{L^2(\partial \Omega)}^2 \geq c \mathcal{E}(t)^{\frac{1}{\theta}} \|{\bf u}_0\|_{\tilde{\mathbb X}^r}^{-\frac{2(1-\theta)}{\theta}}.
\]
Substituting this into the energy identity \eqref{gronwall}, we reduce it to the following Gronwall-type differential inequality for $\mathcal{E}(t)$:
\begin{equation}
\label{gronwall1}
\frac{d}{dt}\mathcal{E}(t) + c \mathcal{E}(t)^{\frac{1}{\theta}} \|{\bf u}_0\|_{\tilde{\mathbb X}^r}^{-\frac{2(1-\theta)}{\theta}} \leq 0.
\end{equation}
Solving this differential inequality (note that the exponent is $p = \frac{1}{\theta} > 1$), we obtain $\mathcal{E}(t) \leq C t^{-\frac{1}{p-1}} \|{\bf u}_0\|_{\tilde{\mathbb X}^r}^2$. Since
\[
\frac{1}{p-1} = \frac{\theta}{1-\theta} = \frac{3(2-r)}{6-r} \times \frac{6-r}{2r} = \frac{3(2-r)}{2r} = 3\left(\frac{1}{r}-\frac{1}{2}\right),
\]
we arrive at the desired decay rate for the energy $\mathcal{E}(t) \leq c t^{-3(\frac{1}{r}-\frac{1}{2})} \|{\bf u}_0\|_{\tilde{\mathbb X}^r}^2$. Taking the square root yields:
\begin{equation}
\label{final_decay}
\|{\bf v}(t)\|_{L^2(\Omega)} + |\boldsymbol{\ell}(t)| + |\boldsymbol{\omega}(t)| \leq ct^{-\frac{3}{2}\left(\frac{1}{r}-\frac{1}{2}\right)} \|{\bf u}_0\|_{\tilde{\mathbb X}^r}, \quad t > 1.
\end{equation}
%
%
%

\end{proof}

\subsubsection{\bf Estimates for \({\bf u}\) for other values of \(q\) and \(r\)}

Based on the estimate for \( 1< r \leq 2 \), we can derive estimates for other values of \( r \) and \( q \) as follows:

\begin{itemize}



\item  \textbf{Case 1. 
}

 Let \(1< r \leq 2 \leq q <\infty\). Then,  \(1 < r,q' \leq 2 \).
 From Lemma \ref{lemm.LqL2}, we have
\begin{align*}
\langle e^{-t{\mathbb A}_\alpha }{\bf u}_0, \boldsymbol{\phi}_0 \rangle &= \langle e^{-\frac{t}{2}{\mathbb A}_\alpha }{\bf u}_0, e^{-\frac{t}{2}{\mathbb A}_\alpha }\boldsymbol{\phi}_0 \rangle \leq \| e^{-\frac{t}{2}{\mathbb A}_\alpha }{\bf u}_0 \|_{L^2} \| e^{-\frac{t}{2}{\mathbb A}_\alpha }\boldsymbol{\phi}_0 \|_{L^2}\\
&\leq 
c t^{-\frac{3}{2}(\frac{1}{r}-\frac{1}{q})} \|{\bf u}_0\|_{L^{r}} \|\boldsymbol{\phi}_0\|_{L^{q'}}.
\end{align*}
Hence, by the duality argument, we obtain the estimate:
\begin{equation}
\label{lemm.LqL2_1}
\|e^{-t{\mathbb A}_\alpha }{\bf u}_0\|_{\tilde{\mathbb X}^q} \leq c t^{-\frac{3}{2}(\frac{1}{r}-\frac{1}{q})} \|{\bf u}_0\|_{\tilde{\mathbb X}^r} \quad\mbox{for } 1< r \leq 2 \leq q <\infty.
\end{equation}

\item  \textbf{Case 2.
}
 
 Let \(2 \leq  r \leq  q <\infty\). Then, \(1< q' \leq  r' \leq  2\). There exists \(\theta \in (0,1)\) such that \(\frac{1}{r'} = \frac{1-\theta}{q'} + \frac{\theta}{2}\). From the interpolation property and from Lemma \ref{lemm.LqL2}, we have
\begin{align*}
\langle e^{-t{\mathbb A}_\alpha }{\bf u}_0, \boldsymbol{\phi}_0 \rangle &= \langle {\bf u}_0, e^{-t{\mathbb A}_\alpha }\boldsymbol{\phi}_0 \rangle \leq \|{\bf u}_0\|_{L^r} \| e^{-\frac{t}{2}{\mathbb A}_\alpha }\boldsymbol{\phi}_0 \|_{L^{r'}}\\
&\leq \|{\bf u}_0\|_{L^{r}} \| e^{-\frac{t}{2}{\mathbb A}_\alpha }\boldsymbol{\phi}_0 \|_{L^{q'}}^{1-\theta} \| e^{-\frac{t}{2}{\mathbb A}_\alpha }\boldsymbol{\phi}_0 \|_{L^{2}}^{\theta} \\
&\leq c t^{-\frac{3}{2}(\frac{1}{r}-\frac{1}{q})} \|{\bf u}_0\|_{L^{r}} \|\boldsymbol{\phi}_0\|_{L^{q'}}.
\end{align*}
Hence, a duality argument leads to the estimate:
\begin{equation}
\label{lemm.LqL2_2}
\|e^{-t{\mathbb A}_\alpha}{\bf u}_0\|_{\tilde{\mathbb X}^q} \leq c t^{-\frac{3}{2}(\frac{1}{r}-\frac{1}{q})} \|{\bf u}_0\|_{\tilde{\mathbb X}^r}\mbox{ for } 2 \leq  r \leq q <\infty.
\end{equation}

\item  \textbf{Case 3.
}  

Let \( 1< r \leq  q \leq 2 \). Then, we have \( 2\leq  q' \leq  r' <\infty \).

From the estimate in \eqref{lemm.LqL2_2}, we obtain:
\[
\|e^{-t{\mathbb A}_\alpha }\boldsymbol{\phi}_0\|_{\tilde{\mathbb X}^{r'}} \leq ct^{-\frac{3}{2}\left(\frac{1}{q'} - \frac{1}{r'}\right)} \|\boldsymbol{\phi}_0\|_{\tilde{\mathbb X}^{q'}}.
\]
Hence, we can write:
\begin{align*}
\langle e^{-t{\mathbb A}_\alpha }{\bf u}_0, \boldsymbol{\phi}_0 \rangle& = \langle {\bf u}_0, e^{-t{\mathbb A}_\alpha }\boldsymbol{\phi}_0 \rangle \leq \|{\bf u}_0\|_{L^r} \|e^{-\frac{t}{2}{\mathbb A}_\alpha }\boldsymbol{\phi}_0\|_{L^{r'}}\\
& \leq ct^{-\frac{3}{2}\left(\frac{1}{r} - \frac{1}{q}\right)} \|{\bf u}_0\|_{L^r} \|\boldsymbol{\phi}_0\|_{L^{q'}}.
\end{align*}
%

The duality argument leads us to the following estimate:
\begin{equation}
\label{lemm.LqL2_3}
\|e^{-t{\mathbb A}_\alpha }{\bf u}_0\|_{\tilde{\mathbb X}^q} \leq ct^{-\frac{3}{2}\left(\frac{1}{r} - \frac{1}{q}\right)} \|{\bf u}_0\|_{\tilde{\mathbb X}^r}, \quad 1<  r \leq q \leq  2.
\end{equation}
%

\end{itemize}

%
%
%
%

%

By combining the estimates \eqref{lemm.LqL20}–\eqref{lemm.LqL2_3}, we obtain the first estimate stated in Theorem \ref{largetime.thm} for \( t > 2 \).

%


\begin{coro}
\label{coro.LqLr.larget}
Let $1 < r \leq q < \infty$ and $t > 1$. Then for ${\bf u}(t) := e^{-t{\mathbb A}_{\alpha,q}}{\bf u}_0$, the following decay estimates hold:
\begin{align}
|\boldsymbol{\ell}(t)| + |\boldsymbol{\omega}(t)| &\leq C t^{-\frac{3}{2}\left(\frac{1}{r}-\frac{1}{q}\right)} \|{\bf u}_0\|_{\tilde{\mathbb{X}}^r}, \label{loc1}\\
\|{\bf u}(t)\|_{L^q(\Omega)} &\leq C t^{-\frac{3}{2}\left(\frac{1}{r}-\frac{1}{q}\right)} \|{\bf u}_0\|_{\tilde{\mathbb{X}}^r}. \label{loc2}
\end{align}
\end{coro}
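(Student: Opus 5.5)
The plan is to read off both estimates from the $\tilde{\mathbb X}^r$–$\tilde{\mathbb X}^q$ semigroup bounds already proved for $t>1$, namely \eqref{lemm.LqL20}, \eqref{lemm.LqL2_1}, \eqref{lemm.LqL2_2} and \eqref{lemm.LqL2_3}. Together these give, for every $1<r\le q<\infty$ and $t>1$,
\[
\|{\bf u}(t)\|_{\tilde{\mathbb X}^q}\le C t^{-\frac32\left(\frac1r-\frac1q\right)}\|{\bf u}_0\|_{\tilde{\mathbb X}^r}.
\]
The only point to check is that these cases cover every pair: \eqref{lemm.LqL2_1} treats $r\le 2\le q$, \eqref{lemm.LqL2_2} treats $2\le r\le q$, and \eqref{lemm.LqL2_3} treats $r\le q\le 2$. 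If an estimate is obtained via $e^{-\frac t2\mathbb A_\alpha}$, the constant changes only by a factor $2^{\frac32(\frac1r-\frac1q)}$, which is absorbed into $C$. The restriction $t>1$ stays harmless, since any intermediate time $t/2>1/2$ is handled by boundedness of the semigroup together with the short-time bound \eqref{loc3}.

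Next we use the norm equivalence from Section~\ref{section.fluidstructure}: $\|{\bf u}\|_{\tilde{\mathbb X}^q}\approx\|{\bf v}\|_{L^q(\Omega)}+m_0^{1/q}|\boldsymbol\ell|+|\mathbb J_0|^{1/q}|\boldsymbol\omega|$ (see \cite[Lemma 1.1]{temam1}). Concretely, ${\bf u}(t)=\boldsymbol\ell(t)+\boldsymbol\omega(t)\times x$ on $B$, so
\[
\|{\bf u}(t)\|_{L^q(B)}\ge c\big(|\boldsymbol\ell(t)|+|\boldsymbol\omega(t)|\big).
\]
This holds because $(\boldsymbol\ell,\boldsymbol\omega)\mapsto\|\boldsymbol\ell+\boldsymbol\omega\times x\|_{L^q(B)}$ is a norm on $\mathbb R^6$: the map is injective, since a rigid field vanishing on an open set is zero, and all norms on $\mathbb R^6$ are equivalent. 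This gives the bound on $|\boldsymbol\ell(t)|+|\boldsymbol\omega(t)|$. The second estimate follows because $\|{\bf u}(t)\|_{L^q(\Omega)}=\|{\bf v}(t)\|_{L^q(\Omega)}\le\|{\bf u}(t)\|_{L^q(\mathbb R^3)}$.

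There is no real obstacle beyond the bookkeeping. The one subtle point is that the semigroups $e^{-t\mathbb A_{\alpha,r}}$ and $e^{-t\mathbb A_{\alpha,q}}$ must agree on $\tilde{\mathbb X}^r\cap\tilde{\mathbb X}^q$, so that ${\bf u}(t)$ is unambiguous. This consistency follows from uniqueness of the resolvent problem, Lemma~\ref{lemma.uniqueness}: the resolvents coincide on the intersection, and the semigroups are given by the same Dunford integral of these resolvents. We would note this and then conclude.
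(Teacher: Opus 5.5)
Your proposal is correct and follows the paper's route. The paper likewise combines \eqref{lemm.LqL20}--\eqref{lemm.LqL2_3} into a single $\tilde{\mathbb X}^r$--$\tilde{\mathbb X}^q$ decay bound for all $1<r\le q<\infty$, and then reads off $|\boldsymbol\ell(t)|+|\boldsymbol\omega(t)|$ and $\|{\bf v}(t)\|_{L^q(\Omega)}$ from the norm equivalence $\|{\bf u}\|_{\tilde{\mathbb X}^q}\approx\|{\bf v}\|_{L^q(\Omega)}+|\boldsymbol\ell|+|\boldsymbol\omega|$.

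Your remark on consistency of the semigroups across exponents addresses something the paper uses tacitly, but the justification needs one more step. Lemma~\ref{lemma.uniqueness} as stated requires a single integrability exponent, whereas the difference of the $L^r$ and $L^q$ resolvents a priori lies only in $\mathbf W^{2,r}(\Omega)+\mathbf W^{2,q}(\Omega)$. You need either a short regularity step showing the $L^r$ resolvent is in $\mathbf W^{2,q}(\Omega)$ when the data lie in $L^r\cap L^q$, or a version of the lemma that allows sums of Lebesgue spaces.
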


\begin{rem}
Combining the first estimate stated in Theorem \ref{largetime.thm} for  \( q=r \)
 and \( q=m \), we obtain the estimate:
\begin{align}
\label{LinftyU}
|\boldsymbol{\ell}(t)| + |\boldsymbol{\omega}(t)| \leq c(m) (1+t)^{-\frac{3}{2}\left(\frac{1}{r} - \frac{1}{m} \right)} \|{\bf u}_0\|_{\tilde{\mathbb X}^r}.
\end{align}
Here, $m$ can be taken to be arbitrarily large integer.

\end{rem}

\subsubsection{ \bf Estimate for $\partial_t {\bf u}$  }

Since
\[
\|\partial_t e^{-t{\mathbb A}_\alpha }{\bf u}_0\|_{L^q}
\leq ct^{-1}\|e^{-\frac{t}{2}{\mathbb A}_\alpha}{\bf u}_0\|_{L^q},
\]
we arrive at the following estimate, which corresponds to the second assertion of Theorem \ref{largetime.thm} for \( t > 2 \):
\begin{align}
\label{lemm.LqL2_12}
\|\partial_t {\bf u}\|_{\tilde{\mathbb X}^q}=\|\partial_t e^{-t{\mathbb A}_\alpha}{\bf u}_0\|_{\tilde{\mathbb X}^q}
\leq ct^{-1 - \frac{3}{2}\left(\frac{1}{r} - \frac{1}{q}\right)}\|{\bf u}_0\|_{\tilde{\mathbb X}^r}.
\end{align}

For ${\bf u} \simeq ({\bf v}, \boldsymbol{\ell}, \boldsymbol{\omega})$, we recall the norm equivalence $\|{\bf u}\|_{\tilde{\mathbb{X}}^r} \approx \|{\bf v}\|_{L^r(\Omega)} + |\boldsymbol{\ell}| + |\boldsymbol{\omega}|$. The same relation applies to the time derivative:

\begin{equation} \label{time.equiv}
\|\partial_t {\bf u}\|_{\tilde{\mathbb{X}}^q} \approx \|\partial_t {\bf v}\|_{L^q(\Omega)} + |\dot{\boldsymbol{\ell}}| + |\dot{\boldsymbol{\omega}}|.
\end{equation}
Note that this equivalence implies 
\[\|\partial_t {\bf v}\|_{L^q(\Omega)} + |\dot{\boldsymbol{\ell}}| + |\dot{\boldsymbol{\omega}}| \leq  ct^{-1 - \frac{3}{2}\left(\frac{1}{r} - \frac{1}{q}\right)}\|{\bf u}_0\|_{\tilde{\mathbb X}^r}.\]

\subsubsection {\bf   Estimate of $\nabla^2{\bf v}, \nabla \pi$ }



Let \({\bf v}_B := \boldsymbol{\ell}+ \boldsymbol{\omega}\times x \) and \( \tilde{\bf v}_B =-\frac{1}{2} \nabla \times (\zeta( x\times \boldsymbol{\ell} + \boldsymbol{\omega} |x|^2)) \) for a smooth cut-off function \( \zeta \) with \( \zeta = 1 \) near \( \partial \Omega \). 
Then, we have \( \text{div} \,\tilde{\bf v}_B = 0 \) in \( \Omega \) and \( \tilde{\bf v}_B = {\bf v}_B \) on \( \partial \Omega \), and $({\bf v}-\tilde{\bf v}_B, \pi)$ satisfies the equations:
\begin{align*}
\begin{cases}
&\begin{array}{l}
-\mu \Delta ({\bf v}-\tilde{\bf v}_B)+\nabla \pi=-\partial_t{\bf v}+\mu \Delta \tilde{\bf v}_B\\
\mbox{\rm div}({\bf v}-\tilde{\bf v}_B)=0
\end{array}\mbox{ in }\Omega,\\
&\begin{array}{l}
({\bf v}-\tilde{\bf v}_B)\cdot {\bf n}=0\\
2\mu[{\mathbb  D}({\bf v}-\tilde{\bf v}_B){\bf n}]_\tau+\alpha[{\bf v}-\tilde{\bf v}_B]_\tau={\bf 0}
\end{array}\mbox{ on }\partial \Omega.
\end{cases}
\end{align*}
Hence, applying  Lemma \ref{prop.ogawa}  to  $ ({\bf v}-\tilde{\bf v}_B, \pi)   $,
 first estimate  in Theorem \ref{largetime.thm} and \eqref{lemm.LqL2_12} yield the following bound:
\begin{align}
\label{time.final1}
\notag
\|D^2_x {\bf v}(t)\|_{L^q(\Omega)} + \|\nabla \pi(t)\|_{L^q} 
&\leq c\|\partial_t {\bf v}(t)\|_{L^q(\Omega)} + c\|{\bf v}(t)\|_{L^{m}(\Omega_{2R})}
+ c|\boldsymbol{\ell}(t)| + c|\boldsymbol{\omega}(t)| \\
\notag
&\leq c(m)\left( t^{-1 - \frac{3}{2}\left(\frac{1}{r} - \frac{1}{q}\right)} 
+ t^{-\frac{3}{2}\left(\frac{1}{r} - \frac{1}{m} \right)} \right)
\|{\bf u}_0\|_{\tilde{\mathbb X}^r}.
\end{align}
Here $m$ can be arbitraily large integer.

\subsubsection{\bf Estimate of  $\nabla {\bf  v}$}
\label{gradient.decay}

\noindent{\bf $\bullet$ Estimate near the boundary.}

By the interpolation theorem for bounded domains (see \cite{gagliardo}), we observe that
\begin{align*}
\|\nabla {\bf v}\|_{L^q(\Omega_{2R})} 
&\leq c \|\nabla^2 {\bf v}\|_{L^q(\Omega_{2R})} + c \|{\bf v}\|_{L^{m}(\Omega_R)} \\
&\leq c(m)\left( t^{-1 - \frac{3}{2}\left(\frac{1}{r} - \frac{1}{q}\right)} 
+ t^{-\frac{3}{2}\left(\frac{1}{r} - \frac{1}{m}\right)} \right)\|{\bf u}_0\|_{\tilde{\mathbb{X}}^r}, \quad t \geq 2.
\end{align*}
Here, $m$ can be an arbitrarily large integer.

\medskip
\noindent{\bf $\bullet$ Estimate at large distance.}

Assume that $\Omega^c \subset B_{\frac{R}{2}}$ and $\Omega \cap B_R^c = B_R^c$.  
Let $\zeta \in C_0^\infty({\mathbb R}^3)$ be a smooth cut-off function such that
\[
\zeta(x) = 
\begin{cases}
1, & \text{for } |x| \geq R, \\
0, & \text{for } |x| \leq \frac{R}{2}.
\end{cases}
\]

According to Lemma \ref{lem.bogovskii_cutoff}, there exists an auxiliary vector field ${\bf w}_1$ compactly supported in $B_R \setminus B_{R/2}$ satisfying 
\[
\operatorname{div} {\bf w}_1 = {\bf v} \cdot \nabla \zeta \quad \text{in } {\mathbb R}^3,
\]
and we obtain the following estimates:
\begin{align*} 
\|\nabla {\bf w}_1\|_{L^q(\Omega)}  &\leq c\|{\bf v}\|_{L^q(\Omega_R)},  \\
\|\Delta {\bf w}_1\|_{L^q(\Omega)}  &\leq c\|{\bf v}\|_{L^q(\Omega_R)} + c\|\nabla{\bf v}\|_{L^q(\Omega_R)},  \\
\|\partial_t {\bf w}_1\|_{L^q(\Omega)} &\leq c\|\partial_t {\bf v}\|_{L^q(\Omega_R)}.
\end{align*}

Hence, for $t > 2$, we obtain the following large-time estimates:
\begin{align} 
\label{grad1}
\|\nabla {\bf w}_1\|_{L^q(\Omega)}  
&\leq c\|{\bf v}\|_{L^{m}(\Omega_R)} \leq c(m) t^{-\frac{3}{2} \left( \frac{1}{r} - \frac{1}{m} \right)}\|{\bf u}_0\|_{L^r({\mathbb R}^3)}, \nonumber \\
\|\Delta {\bf w}_1\|_{L^q(\Omega)}  
&\leq c(m)\left( t^{-1 - \frac{3}{2}\left(\frac{1}{r} - \frac{1}{q}\right)} + t^{-\frac{3}{2}\left(\frac{1}{r} - \frac{1}{m}\right)} \right)\|{\bf u}_0\|_{L^r({\mathbb R}^3)}, \nonumber \\
\|\partial_t {\bf w}_1\|_{L^q(\Omega)} 
&\leq c t^{-1-\frac{3}{2} \left( \frac{1}{r} - \frac{1}{q} \right)}\|{\bf u}_0\|_{L^r({\mathbb R}^3)}.
\end{align}
 
Let ${\bf w} = \zeta {\bf v} - {\bf w}_1$ and $q = \zeta\pi$. Then, $({\bf w}, q)$ satisfies the following system in $\mathbb{R}^3$:
\[
\begin{cases}
\text{div} \, {\bf w} = 0 \quad \text{in } {\mathbb R}^3, \\
\partial_t {\bf w }-\mu \Delta {\bf w} + \nabla q = -2\mu \nabla {\bf v} \cdot \nabla \zeta - \mu {\bf v} \Delta \zeta + \pi \nabla \zeta - \partial_t {\bf w}_1 + \mu \Delta {\bf w}_1 := \tilde{\bf f} \quad \text{in } {\mathbb R}^3, \\
{\bf w}|_{t=0} = \zeta {\bf v}_0 - \nabla N * ({\bf v}_0 \cdot \nabla) \zeta := \tilde{\bf w}_0.
\end{cases}
\]

The solution ${\bf w}(t)$ to the system in $\mathbb{R}^3$ can be expressed using the heat semigroup and the Helmholtz projection operator $\mathbb{Q}$:
\[
{\bf w}(t) = e^{\mu t\Delta} \tilde{\bf w}_0 + \int_0^t e^{\mu (t-s)\Delta}\mathbb{Q} \tilde{\mathbf{f}}(s) \,ds,
\]
where the components of $\mathbb{Q}$ are given by the Riesz transforms as $\mathbb{Q}_{ij} = \delta_{ij} + R_i R_j$. The heat semigroup $e^{\mu t\Delta}$ is defined by convolution with the standard heat kernel $\Gamma(x,t) = (4\mu \pi t)^{-3/2} \exp\left( -|x|^2 / (4\mu t) \right)$.

Using the well-known smoothing estimate for the heat operator, we obtain:
\begin{equation}
\label{grad2}
\|\nabla e^{\mu t\Delta} \tilde{\bf w}_0\|_{L^q({\mathbb R}^3)}
 \leq c t^{-\frac{1}{2} - \frac{3}{2} \left( \frac{1}{r} - \frac{1}{q} \right)}
  \|{\bf u}_0\|_{\tilde{\mathbb X}^r}.
\end{equation}

Next, we estimate the $L^q(B_R^c)$ norm of the integral term by splitting it into two parts:
\begin{align*}
I &= \int_1^t \nabla e^{\mu(t-s)\Delta} \mathbb{Q}\tilde{\bf f}(s) \,ds, \\
II &= \int_0^1 \nabla e^{\mu(t-s)\Delta}\mathbb{Q}\tilde{\bf f}(s) \,ds.
\end{align*}

Note that $\tilde{\bf f}$ is compactly supported in $\Omega_R$, and we generally have the bound
\begin{align*}
\|\tilde{\bf f}(s)\|_{L^p} 
&\leq c \|\nabla {\bf v}(s)\|_{L^p(\Omega_R)} + c \|{\bf v}(s)\|_{L^p(\Omega_R)} + c \|\pi(s)\|_{L^p(\Omega_R)}.
\end{align*}
According to Lemma \ref{lem:pressure_accel_decomp}, the local pressure is bounded by the fractional Sobolev norm of the velocity gradient:
\begin{align}
\label{est:pressure_accel1}
\|\tilde{\bf f}(s)\|_{L^p} 
\leq c \|\nabla {\bf v}(s)\|_{W^{a,p}(\Omega_{2R})} + c \|{\bf v}(s)\|_{L^p(\Omega_R)}, \quad \text{for } \frac{1}{p} < a < 1.
\end{align}

\begin{itemize}
\item[(i)] \textbf{Estimate for large time $s \in [1, t]$ (Term $I$):}

To estimate the $L^q(B_R^c)$ norm of $I$, we follow the approach in Lemma 8.1 of \cite{maremonti-solonnikov0}. We observe that the kernel ${\mathbb Q}(t,x) = \nabla e^{\mu t \Delta}(\delta_{ij} + R_i R_j)$ satisfies the pointwise bound $\left| \nabla e^{\mu (t-s)\Delta}{\mathbb Q}\right| \leq c \left( |x - y|^2 +(t-s)\right)^{-2}$. 
For $|x| \geq 2R$ and $|y| \leq R$, we have $|x-y| \approx |x|$, which leads to:
$$|I(x,t)| \leq c \int_1^t \int_{\frac{R}{2} \leq |y| \leq R} \left( |x|^2 +(t-s)\right)^{-2} |\tilde{\bf f}(y,s)| \, dy \,ds.$$

For large time $s > 1$, we do not need the fractional exponent trick and can directly use the full $W^{1,q}$ bound. By interpolation, we have:
\begin{align*}
\|\tilde{\bf f}(s)\|_{L^q} 
&\leq c \|\nabla^2{\bf v}(s)\|_{L^q(\Omega_R)} + c \|{\bf v}(s)\|_{L^{m}(\Omega_R)} \\
&\leq c(m)\left( s^{-1 - \frac{3}{2}\left(\frac{1}{r} - \frac{1}{q}\right)} 
+ s^{-\frac{3}{2}\left(\frac{1}{r} - \frac{1}{m}\right)} \right)\|{\bf u}_0\|_{\tilde{\mathbb X}^r}, \quad s > 1.
\end{align*}

Taking the $L^q$-norm over $B_{2R}^c$ and evaluating the spatial integral via polar coordinates yields:
\[
\left( \int_{B_{2R}^c} \left( |x|^2 +(t-s)\right)^{-2q} dx \right)^{\frac{1}{q}} \leq c \left( R^2 +(t-s)\right)^{-\frac{1}{2} \left( 4 - \frac{3}{q} \right)}.
\]
Applying this to the time integral, we obtain
\begin{align} 
\label{grad3} 
\|I(t)\|_{L^q(B_{2R}^c)}  
&\leq \int_1^t \left( \int_{B_{2R}^c} \left( |x|^2 +(t-s)\right)^{-2q} dx \right)^{\frac{1}{q}} \|\tilde{\bf f}(s)\|_{L^1} \,ds \notag \\
&\leq c \int_1^t \left( R^2 +(t-s)\right)^{-\frac{1}{2} \left( 4 - \frac{3}{q} \right)} \|\tilde{\bf f}(s)\|_{L^q} \,ds \notag \\
&\leq c(m) \int_1^t \left( R^2 +(t-s)\right)^{-\frac{1}{2} \left( 4 - \frac{3}{q} \right)} \left( s^{-1 - \frac{3}{2}\left(\frac{1}{r} - \frac{1}{q}\right)}  +s^{-\frac{3}{2}\left(\frac{1}{r} - \frac{1}{m}\right)} \right) ds \, \|{\bf u}_0\|_{\tilde{\mathbb X}^r} \notag \\ 
&\leq c(m)\left( t^{-\frac{3}{4} - \frac{3}{2}\left(\frac{1}{r} - \frac{1}{q}\right)}  + t^{-\frac{3}{2}\left(\frac{1}{r} - \frac{1}{m}\right)} \right) \|{\bf u}_0\|_{\tilde{\mathbb{X}}^r}. 
\end{align}

\item[(ii)] \textbf{Estimate for short time $s \in [0, 1]$ (Term $II$):}

Next, we estimate the $L^q(B_R^c)$ norm of $II$. Here, we utilize the $L^r$--$L^q$ smoothing property of the heat operator. To prevent a non-integrable singularity at $s=0$, we rely on the fractional bound \eqref{est:pressure_accel1} with $p=r$. By applying the short-time estimates from Section \ref{smallt} and interpolating, we deduce:
\begin{align*}
\|\tilde{\bf f}(s)\|_{L^r}  
&\leq c \|\nabla {\bf v}(s)\|_{W^{a,r}(\Omega_{2R})} + c \|{\bf v}(s)\|_{L^r(\Omega_R)}, \quad \text{for } \frac{1}{r} < a < 1 \\
&\leq c\|\nabla^2{\bf v}(s)\|_{L^r(\Omega_{2R})}^{a}\|{\bf v}(s)\|_{L^r(\Omega_{2R})}^{1-a} + c \|{\bf v}(s)\|_{L^r(\Omega_R)} \\
&\leq c \big( s^{-1} \big)^a \big( s^{-\frac{1}{2}} \big)^{1-a} \|{\bf u}_0\|_{\tilde{\mathbb X}^r} + c s^{-\frac{1}{2}} \|{\bf u}_0\|_{\tilde{\mathbb X}^r} \\
&\leq c s^{-\frac{a+1}{2}} \|{\bf u}_0\|_{\tilde{\mathbb X}^r}.
\end{align*}
Since $a < 1$, the exponent satisfies $\frac{a+1}{2} < 1$, making the singularity integrable at $s=0$. Consequently, we obtain:
\begin{align}
\label{grad4}
\notag \|II(t)\|_{L^q(B_{2R}^c)}
&\leq \int_0^1 (t-s)^{-\frac{1}{2} - \frac{3}{2} \left( \frac{1}{r} - \frac{1}{q} \right)} \|\tilde{\bf f}(s)\|_{L^r} \,ds \\
\notag&\leq c \int_0^1 (t-s)^{-\frac{1}{2} - \frac{3}{2} \left( \frac{1}{r} - \frac{1}{q} \right)} s^{-\frac{a+1}{2}} \,ds \, \|{\bf u}_0\|_{\tilde{\mathbb X}^r} \\
&\leq ct^{-\frac{1}{2} - \frac{3}{2} \left( \frac{1}{r} - \frac{1}{q} \right)} \|{\bf u}_0\|_{\tilde{\mathbb X}^r}.
\end{align}
\end{itemize}

Combining the estimates \eqref{grad1}, \eqref{grad2}, \eqref{grad3}, and \eqref{grad4}, we obtain the final global estimate for $\|\nabla {\bf v}\|_{L^q(B_{2R}^c)}$:
\begin{align*}
\|\nabla {\bf v}\|_{L^q(B_{2R}^c)} 
\leq c(m)\left( t^{-\frac{1}{2} - \frac{3}{2}\left(\frac{1}{r} - \frac{1}{q}\right)} 
+ t^{-\frac{3}{2}\left(\frac{1}{r} - \frac{1}{m} \right)} \right) \|{\bf u}_0\|_{\tilde{\mathbb X}^r}, \quad t \geq 2.
\end{align*}


\section{Proof of Theorem \ref{maximal.thm}: Uniform-in-time maximal regularity for $1<q<\frac{3}{2}$}
\label{section.maximal.global.thm}

This section is devoted to completing the proof of Theorem \ref{maximal.thm}. Specifically, we establish the maximal $L^q$-regularity of the solution on the infinite time interval $(0, \infty)$ for the restricted range $1 < q < \frac{3}{2}$.

By a standard density argument, we may assume that \( {\bf v}_0 \in {\bf C}^\infty_{0,\sigma}(\bar{\Omega}) \), and that the initial data \( ({\bf v}_0, \boldsymbol{\ell}_0, \boldsymbol{\omega}_0) \) satisfy the compatibility conditions \eqref{n66}. Consequently, for any $T > 0$, we may assume that the solution is smooth, satisfying
\[
 ({\bf v}, \boldsymbol{\ell},  \boldsymbol{\omega}) \in {\bf C}^\infty(\bar{\Omega} \times (0,T)) \times {\bf C}^\infty(0,T) \times{\bf  C}^\infty(0,T),
\quad \pi \in C^\infty(\bar{\Omega} \times (0,T)).
\]

Let us denote ${\bf u} = ({\bf v}, \boldsymbol{\ell}, \boldsymbol{\omega})$. To derive the uniform-in-time estimates in the $L^q$ setting, we begin with the \textit{a priori} estimate \eqref{w8} established in Lemma~\ref{prop.theorem.strong.Lp}.  
It suffices to show that 
\begin{align}\label{local_Lq_integral}
\int_0^T \|{\bf v}(t)\|_{L^q(\Omega_{2R})}^q \, dt \leq c \|{\bf u}_0\|_{L^q}^q
\end{align}
for some constant $c > 0$ independent of $T$. This reduction is justified by the standard embedding for the initial data, which yields $\|{\bf u}_0\|_{L^q} \leq c \|{\bf u}_0\|_{{\mathbb B}^{2-\frac{2}{q}}_q}$.

Observe that for the initial data term ${\bf u}_1$, we have
\begin{align}\label{u1_local_1}
\int_0^1 \|{\bf u}_1(t)\|_{L^q(\Omega_{2R})}^q \, dt 
\leq c \int_0^1 \|{\bf u}_0\|_{L^q}^q \, dt 
\leq c\|{\bf u}_0\|_{L^q}^q.
\end{align}
For $t > 1$, applying the local embedding $L^m(\Omega_{2R}) \hookrightarrow L^q(\Omega_{2R})$ and the $L^q$-$L^m$ decay estimate of the semigroup, we deduce
\begin{align}\label{u1_local_2}
\int_1^T \|{\bf u}_1(t)\|_{L^q(\Omega_{2R})}^q \, dt 
&\leq c \int_1^T \|{\bf u}_1(t)\|_{L^m(\Omega_{2R})}^q \, dt \notag\\
&\leq c \int_1^T t^{-\frac{3q}{2}(\frac{1}{q}-\frac{1}{m})} \, dt \, \|{\bf u}_0\|_{L^q}^q \notag\\
&\leq c \|{\bf u}_0\|_{L^q}^q.
\end{align}
Here, the time integral over $(1, \infty)$ converges for any $1 < q < \infty$ by simply choosing an exponent $m > 3q$, which ensures $-\frac{3q}{2}(\frac{1}{q}-\frac{1}{m}) < -1$.

First, for the initial short-time interval $0 \leq t \leq 2$, the standard $L^q \to L^q$ boundedness of the semigroup yields
\[
\|{\bf u}_2(t)\|_{L^q(\Omega_{2R})} \leq c \int_0^t \|{\bf f}(s)\|_{L^q(\Omega)} \, ds.
\]
Applying Young's convolution inequality (or Hölder's inequality on the bounded interval $[0,2]$), we immediately obtain
\begin{align}\label{u2_short_time}
\int_0^2 \|{\bf u}_2(t)\|_{L^q(\Omega_{2R})}^q \, dt \leq c \int_0^2 \|{\bf f}(t)\|_{L^q(\Omega)}^q \, dt.
\end{align}

Now, let $2 \leq t \leq T$. We split the time integral into a local part and a tail part to isolate the singularity:
\begin{align*}
\|{\bf u}_2(t)\|_{L^q(\Omega_{2R})} 
&\leq \int_{t-1}^t \| e^{-(t-s){\mathbb A}_{\alpha,q}} {\mathbb P} {\bf f}(s) \|_{L^q(\Omega_{2R})} \, ds 
 + c \int_0^{t-1} \| e^{-(t-s){\mathbb A}_{\alpha,q}} {\mathbb P} {\bf f}(s) \|_{L^m(\Omega_{2R})} \, ds \notag\\
&\leq c \int_{t-1}^t \|{\bf f}(s)\|_{L^q(\Omega)} \, ds 
 + c \int_0^{t-1} (t-s)^{-\frac{3}{2}\left(\frac{1}{q}-\frac{1}{m}\right)} \|{\bf f}(s)\|_{L^q(\Omega)} \, ds \notag\\
&:= I(t) + II(t).
\end{align*}

For $I(t)$, applying Hölder's inequality on the interval of length $1$ yields 
$I(t)^q \leq c \int_{t-1}^t \|{\bf f}(s)\|_{L^q(\Omega)}^q \, ds$. 
Integrating over $(2, T)$ and using Fubini's theorem, we have
\begin{align}\label{u2_I_est}
\int_2^T I(t)^q \, dt 
\leq c \int_2^T \left( \int_{t-1}^t \|{\bf f}(s)\|_{L^q(\Omega)}^q \, ds \right) dt 
\leq c \int_0^T \|{\bf f}(s)\|_{L^q(\Omega)}^q \, ds.
\end{align}

For $II(t)$, we recognize it as a convolution of $\|{\bf f}(\cdot)\|_{L^q(\Omega)}$ with the kernel $K(t) = t^{-\frac{3}{2}(\frac{1}{q}-\frac{1}{m})} \mathbf{1}_{(1,\infty)}(t)$. 
By applying Young's convolution inequality, we arrive at:
\begin{align}\label{u2_II_est}
\left( \int_2^T II(t)^q \, dt \right)^{\frac{1}{q}} 
&\leq \|K\|_{L^1(1,\infty)} \left( \int_0^T \|{\bf f}(t)\|_{L^q(\Omega)}^q \, dt \right)^{\frac{1}{q}} \notag\\
&\leq c \left( \int_0^T \|{\bf f}(t)\|_{L^q(\Omega)}^q \, dt \right)^{\frac{1}{q}}.
\end{align}
Here, to ensure that the kernel $K$ is in $L^1(1,\infty)$, we require
\[
\frac{3}{2} \left( \frac{1}{q} - \frac{1}{m} \right) > 1 \quad \implies \quad \frac{1}{q} > \frac{2}{3} + \frac{1}{m}.
\]
Since $m \leq \infty$ can be chosen arbitrarily large, this strict inequality holds if and only if $q < \frac{3}{2}$. 

Combining \eqref{u2_short_time}, \eqref{u2_I_est}, and \eqref{u2_II_est}, we conclude the uniform-in-time estimate for ${\bf u}_2$:
\begin{align*}
\int_0^T \|{\bf u}_2(t)\|_{L^q(\Omega_{2R})}^q \, dt \leq c \int_0^T \|{\bf f}(t)\|_{L^q(\Omega)}^q \, dt.
\end{align*}

\appendix

\numberwithin{equation}{section}
\numberwithin{theo}{section}
\setcounter{equation}{0}
\section{  Proof of Lemma \ref{prop.ogawa}: Strong \(L^p\) Estimate}
\label{appendix.prop.ogawa}


The following propositions are classical results and will be used in the proof of Lemma~\ref{prop.ogawa}.
\begin{prop}[{See \cite[Theorem 2.2.1]{Sohr2001}, \cite[Theorem IV.4.1]{galdi}, and \cite[Lemma 2.1]{FarwigSohr1996}}]
\label{appen.lemm0}

Let $1 < r < \infty$. Suppose that ${\bf f }\in {\bf L}^r(\mathbb{R}^3)$, $g \in W^{1,r}(\mathbb{R}^3)$, and that $(\mathbcal{ u} , p)$ is a generalized solution of
\[
-\mu\Delta \mathbcal{ u}  + \nabla p = \mathbcal{f},\quad \text{div}\,\mathbcal{ u}  = { g} \quad \text{in } \mathbb{R}^3,
\]
satisfying $D^{2}\mathbcal{ u} , \nabla p \in {\bf L}^r(\mathbb{R}^3)$. Then, there exists a constant $c = c(r,\mu) > 0$ such that
\[
\|D^{2} \mathbcal{ u} \|_{L^r(\mathbb{R}^3)} + \|\nabla^{} p\|_{L^r(\mathbb{R}^3)} \leq c\left( \|\mathbcal{f}\|_{L^r(\mathbb{R}^3)} + \|\nabla g\|_{L^r(\mathbb{R}^3)} \right).
\]
\end{prop}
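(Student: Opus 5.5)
This is the classical whole-space Stokes estimate, and I would prove it by reducing it to Calderón--Zygmund theory for Fourier multipliers that are homogeneous of degree zero.

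First I remove the inhomogeneous divergence. Let $N$ be the Newton potential and set ${\bf w}:=\nabla N*g$, so that $\operatorname{div}{\bf w}=g$. Each entry of $D^2{\bf w}$ has the form $R_iR_j\partial_k g$ (Riesz transforms applied to $\nabla g$). By the Calderón--Zygmund theorem this gives $\|D^2{\bf w}\|_{L^r}\le c\|\nabla g\|_{L^r}$.

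Next I treat the pair $\tilde{\mathbcal{u}}:=\mathbcal{u}-{\bf w}$, $p$. It solves
\[
-\mu\Delta\tilde{\mathbcal{u}}+\nabla p=\mathbcal{f}+\mu\Delta{\bf w}=:\tilde{\mathbcal{f}},\qquad \operatorname{div}\tilde{\mathbcal{u}}=0,
\]
with $\|\tilde{\mathbcal{f}}\|_{L^r}\le\|\mathbcal{f}\|_{L^r}+c\|\nabla g\|_{L^r}$.
\begin{itemize}
\item For the pressure, take the divergence to get $\Delta p=\operatorname{div}\tilde{\mathbcal{f}}$. Then $\nabla p_*:=\nabla\Delta^{-1}\operatorname{div}\tilde{\mathbcal{f}}$ has entries $R_iR_j\tilde f_j$ and is bounded in $L^r$.
\item For the velocity, $-\mu\Delta\tilde{\mathbcal{u}}=\tilde{\mathbcal{f}}-\nabla p_*$, which is the Helmholtz projection of $\tilde{\mathbcal{f}}$. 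Hence the entries of $D^2\tilde{\mathbcal{u}}_*$ are $\mu^{-1}R_kR_l$ applied to this projection, again bounded in $L^r$.
\end{itemize}
Adding back ${\bf w}$ yields the claimed bound for the explicitly constructed solution $(\mathbcal{u}_*,p_*)$.

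The main obstacle is identifying the given generalized solution with the constructed one. The differences ${\bf z}:=D^2(\mathbcal{u}-\mathbcal{u}_*)$ and $\nabla(p-p_*)$ lie in $L^r$ and satisfy the homogeneous system. Taking the divergence shows $p-p_*$ is harmonic. Its gradient is harmonic, so it is a tempered distribution whose Fourier transform is supported at the origin, hence a polynomial; being in $L^r$ with $r<\infty$, it vanishes. Then every component of $\mathbcal{u}-\mathbcal{u}_*$ is harmonic, so each entry of ${\bf z}$ is harmonic and in $L^r$, hence zero by the same Liouville argument. The second derivatives and pressure gradients therefore coincide, and the estimate transfers to $(\mathbcal{u},p)$.

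The constant depends only on $r$ and $\mu$, through the $L^r$ norms of the Riesz transforms.
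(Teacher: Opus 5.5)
Your proof is correct, and it is essentially the standard argument behind the results the paper cites for this proposition (the paper gives no proof of its own): Calder\'on--Zygmund/Riesz-transform bounds for an explicitly constructed solution, combined with a Liouville argument for harmonic $L^r$ functions that identifies it with the given solution. One small point to tidy is that ${\bf w}=\nabla N*g$ need not converge when $r\ge 3$. You can bypass ${\bf w}$ and $\mathbcal{u}_*$ entirely: $\Delta p=\operatorname{div}\mathbcal{f}+\mu\Delta g$ gives $\nabla p=\mu\nabla g+\nabla\Delta^{-1}\operatorname{div}\mathbcal{f}$, then $\mu\Delta\mathbcal{u}=\nabla p-\mathbcal{f}$, so the bounds and your Liouville step can be run directly on $D^2\mathbcal{u}$ and $\nabla p$.
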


\begin{prop}[ {See \cite[Theorem IV.6.1]{galdi}, \cite[Lemma 3.2]{kozono-sohr}, and \cite[Theorem 1]{solonnikov}} ]

\label{appen.lemm1}

Let $1 < r < \infty$ and let $\mathcal{O}\subset \mathbb{R}^3$ be a bounded domain of class $C^{2}$ (or class $C^{1,1}$). Suppose that $\mathbcal{f} \in {\bf L}^r(\mathcal{O})$ and $g \in W^{1,r}(\mathcal{O})$ satisfy the compatibility condition $\int_{\mathcal{O}} g(x)\,dx = 0$. Let $(\mathbcal{ u} , p) \in \mathbf{W}^{2,r}(\mathcal{O}) \times W^{1,r}(\mathcal{O})$ be the generalized solution of$$\begin{cases}
-\mu\Delta \mathbcal{ u}  + \nabla p = \mathbcal{f}, \quad \mathrm{div}\,\mathbcal{ u}  = g & \text{in } \mathcal{O}, \\
\mathbcal{ u}  = 0 & \text{on } \partial \mathcal{O}.
\end{cases}$$Then, there exists a constant $c = c(r, \mu, \mathcal{O}) > 0$ such that$$\|\mathbcal{ u} \|_{W^{2,r}(\mathcal{O})} + \|p\|_{W^{1,r}(\mathcal{O})/\mathbb{R}} \leq c \left( \|\mathbcal{f}\|_{L^r(\mathcal{O})} + \|g\|_{W^{1,r}(\mathcal{O})} \right).$$

%
\end{prop}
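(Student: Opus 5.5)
Proposition~\ref{appen.lemm1} is the classical Cattabriga--Solonnikov $W^{2,r}$ estimate for the Dirichlet Stokes problem with prescribed divergence on a bounded $C^{1,1}$ domain. My plan is to reduce to the divergence-free case and then invoke the homogeneous a priori estimate.

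\textbf{Step 1: removing the divergence.} Since $\int_{\mathcal O} g\,dx=0$, Galdi's Bogovskii operator \cite[Lemma III.3.1]{galdi} gives ${\bf w}=\mathbb{B}g\in {\bf W}^{1,r}_0(\mathcal O)$ with $\operatorname{div}{\bf w}=g$. I will use its higher-order mapping property $\mathbb{B}: W^{1,r}_0(\mathcal O)\to {\bf W}^{2,r}_0(\mathcal O)$. In general $g$ has nonzero trace, so this does not apply directly. Instead I take the weak-Neumann lift ${\bf w}=\nabla\phi$, where
\[
\Delta\phi=g \ \text{in } \mathcal O,\qquad \partial_n\phi=0 \ \text{on } \partial\mathcal O,
\]
which is solvable by the compatibility condition. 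Elliptic regularity then gives $\|\phi\|_{W^{3,r}}\le c\|g\|_{W^{1,r}}$. This lift fixes only the normal trace, so I correct the tangential trace as follows. Choose a divergence-free ${\bf z}\in{\bf W}^{2,r}(\mathcal O)$ with trace ${\bf z}|_{\partial\mathcal O}=-[\nabla\phi]_\tau$; this is possible because the datum has zero flux and lies in $W^{2-1/r,r}(\partial\mathcal O)$. Set ${\bf w}=\nabla\phi+{\bf z}$. Then ${\bf w}$ vanishes on $\partial\mathcal O$, $\operatorname{div}{\bf w}=g$, and $\|{\bf w}\|_{W^{2,r}}\le c\|g\|_{W^{1,r}}$.

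\textbf{Step 2: the homogeneous problem.} Set $\tilde{\mathbcal u}=\mathbcal u-{\bf w}$. Then $-\mu\Delta\tilde{\mathbcal u}+\nabla p=\mathbcal f+\mu\Delta{\bf w}$, with $\operatorname{div}\tilde{\mathbcal u}=0$ and $\tilde{\mathbcal u}=0$ on $\partial\mathcal O$. I prove the a priori bound in three moves.
\begin{itemize}
\item[(a)] Localize with a finite partition of unity.
\item[(b)] In interior patches, apply Proposition~\ref{appen.lemm0}.
\item[(c)] In boundary patches, flatten $\partial\mathcal O$ by a $C^{1,1}$ change of variables and use Solonnikov's half-space estimate for the Dirichlet Stokes system (Agmon--Douglas--Nirenberg with Lopatinskii--Shapiro). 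The variable-coefficient perturbation is small on small patches.
\end{itemize}
The commutator terms from the cutoffs involve only $\tilde{\mathbcal u}$ in $W^{1,r}$ and $p$ in $L^r$. This yields
\[
\|\tilde{\mathbcal u}\|_{W^{2,r}}+\|p\|_{W^{1,r}/\mathbb R}\le c\bigl(\|\mathbcal f\|_{L^r}+\|g\|_{W^{1,r}}+\|\tilde{\mathbcal u}\|_{L^r}+\|p\|_{L^r/\mathbb R}\bigr).
\]

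\textbf{Step 3: removing the lower-order terms.} I argue by contradiction with Rellich compactness. A normalized sequence with vanishing data converges to a solution of the homogeneous Stokes problem on $\mathcal O$ with zero Dirichlet data. Testing with $\tilde{\mathbcal u}$ gives $\nabla\tilde{\mathbcal u}=0$, hence $\tilde{\mathbcal u}=0$ and $p$ is constant, which contradicts the normalization.

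I expect the \textbf{main obstacle} to be Step 2(c): verifying the complementing condition and controlling the pressure near the boundary when it is known only modulo constants. The plan there is to cite Solonnikov \cite{solonnikov} for the half-space model rather than redoing the Fourier analysis.
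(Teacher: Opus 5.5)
The paper gives no proof of this proposition; it cites Galdi (Theorem IV.6.1), Kozono--Sohr and Solonnikov. Your Steps 2--3 follow the classical skeleton behind those references, and like them you delegate the hard half-space estimate (your Step 2(c)) to Solonnikov. Two steps, however, do not hold as written.

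\textbf{Step 1 fails under the stated hypotheses, and it is unnecessary.} The bound $\|\phi\|_{W^{3,r}}\le c\|g\|_{W^{1,r}}$ is a $W^{3,r}$-regularity result for the Neumann problem. After flattening the boundary you must differentiate coefficients built from second derivatives of the chart, so this needs $\partial\mathcal O\in C^{2,1}$. It is not available in general for a $C^2$ or $C^{1,1}$ boundary. Without it, the trace datum $-\nabla\phi|_{\partial\mathcal O}$ for ${\bf z}$ is only known to lie in $W^{1-1/r,r}(\partial\mathcal O)$, which is too rough for a ${\bf W}^{2,r}$ extension.

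More to the point, removing the divergence buys nothing. The cut-offs $\psi$ in Step 2 give $\operatorname{div}(\psi\tilde{\mathbcal u})=\tilde{\mathbcal u}\cdot\nabla\psi\neq 0$. So the model estimates you invoke (Proposition~\ref{appen.lemm0} and the half-space theorem) must handle a nonzero divergence anyway. Simply keep $g$ in the localized problems. If you do want a lift on a $C^{1,1}$ domain, take ${\bf b}\in{\bf W}^{2,r}(\mathcal O)$ with ${\bf b}|_{\partial\mathcal O}=0$ and $\partial_n{\bf b}=g\,{\bf n}$. Then $g-\operatorname{div}{\bf b}\in W^{1,r}_0(\mathcal O)$ has mean zero, and you can correct it with the Bogovskii operator $W^{1,r}_0\to{\bf W}^{2,r}_0$.

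\textbf{The substantive gap is in Step 3.} Testing the limit equation with $\tilde{\mathbcal u}$ requires $\nabla\tilde{\mathbcal u}\in L^2(\mathcal O)$, or at least $\Delta\tilde{\mathbcal u}\cdot\tilde{\mathbcal u}\in L^1$. In three dimensions you only have $W^{2,r}(\mathcal O)\hookrightarrow W^{1,3r/(3-r)}(\mathcal O)$, and Sobolev embedding guarantees either requirement only when $r\ge 6/5$. So for $1<r<6/5$ you have not shown that a $W^{2,r}$ solution of the homogeneous Dirichlet problem vanishes. The a priori inequality of Step 2 cannot supply this, because applying it at a higher exponent presupposes the very regularity you need.

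The missing ingredient is a regularity or solvability statement. Two routes would work:
\begin{itemize}
\item Show that such a solution belongs to $W^{2,s}$ with $s=3r/(3-r)$, and iterate. This requires solving the localized whole-space and half-space model problems in $W^{2,r}\cap W^{2,s}$ simultaneously, including the perturbation argument on flattened patches, together with uniqueness in that class.
\item Argue by duality against $W^{2,r'}$ solutions of the Stokes problem with smooth data. Their existence again rests on the regularity theory for large exponents.
\end{itemize}

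A smaller point in Step 3: the contradiction is with the lower bound $\|\tilde{\mathbcal u}\|_{L^r}+\|p\|_{L^r/\mathbb R}\ge c^{-1}$ that Step 2 and Rellich's theorem give for the limit. It is not with the normalization itself, which weak limits need not preserve.
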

%

\begin{prop}[{See \cite[Theorem 3.11]{amrouche1}, \cite[Theorem 3.8]{AmroucheRajagopal}, and \cite[Theorem 4]{Acevedo2019}}]

\label{appen.lemm2}
Let $\mathcal{O}$ be a bounded domain in $\mathbb{R}^3$ of class $C^{1,1}$ (or $C^2$ for simplicity). Let $1 < p < \infty$ and assume the friction coefficient $\alpha \in C^{0,1}(\partial\mathcal{O})$ (or a constant $\alpha \geq 0$). Suppose $\mathbcal{f} \in {\bf L}^p(\mathcal{O})$ and $g \in W^{1,p}(\mathcal{O})$ satisfy the compatibility condition $\int_{\mathcal{O}} g(x)\,dx = 0$.
 If $(\mathbcal{ u} , p) \in \mathbf{W}^{2,p}(\mathcal{O}) \times W^{1,p}(\mathcal{O})$ is a solution to the Stokes problem with Navier-type boundary conditions:
%
\[
-\mu\Delta \mathbcal{ u}  + \nabla p =\mathbcal{f},\quad \mathrm{div}\, \mathbcal{ u}  = g \quad \text{in } \mathcal{O},
\]
\[
\mathbcal{ u}  \cdot {\bf n} = 0,\quad [2\mu {\mathbb D}(\mathbcal{ u} ){\bf n}]_\tau + \alpha \mathbcal{ u} _\tau = 0 \quad \text{on } \partial\mathcal{O}.
\]
Then, there exists a constant $c = c(r, \mu, \mathcal{O},\alpha) > 0$ such that
\[
\|D^2\mathbcal{ u}\|_{L^r(\mathcal{O})} + \|\nabla p\|_{L^r(\mathcal{O})} \leq c\left( \|\mathbcal{f}\|_{L^r(\mathcal{O})} + \|\nabla g\|_{L^r(\mathcal{O})} \right).
\]
\end{prop}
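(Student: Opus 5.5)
This proposition is the classical Navier-slip Stokes estimate in a bounded $C^{1,1}$ domain, quoted from the literature. The plan is to reduce it to the Dirichlet estimate of Proposition \ref{appen.lemm1} and to a half-space analysis via localization. In that sense we follow the same route as Amrouche--Seloula \cite{amrouche1}.

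\textbf{Step 1: weak solvability and $W^{1,r}$ bounds.} First settle the variational problem. For $r=2$, the bilinear form
\[
2\mu\int_{\mathcal O}\mathbb D(\mathbcal u):\mathbb D(\boldsymbol\varphi)\,dx+\int_{\partial\mathcal O}\alpha\,\mathbcal u_\tau\cdot\boldsymbol\varphi_\tau\,dS
\]
is coercive on $\{\mathbcal u\in W^{1,2}:\mathbcal u\cdot{\bf n}=0\}$, modulo rigid motions tangent to $\partial\mathcal O$ when $\alpha=0$. This follows from the Korn identity of Lemma \ref{korn}. The divergence datum $g$ is removed by a Bogovskii lift, which uses $\int g=0$, and Lax--Milgram gives existence. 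For general $r$, obtain the $W^{1,r}$ estimate by the $L^2$-duality/inf-sup argument.

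\textbf{Step 2: local $W^{2,r}$ estimates by flattening.} Take a partition of unity $\{\psi_j\}$. Interior pieces $\psi_j\mathbcal u$ solve a whole-space Stokes problem with lower-order commutator terms on the right. Proposition \ref{appen.lemm0} then gives $\|D^2(\psi_j\mathbcal u)\|_{L^r}+\|\nabla(\psi_jp)\|_{L^r}$ in terms of $\|\mathbcal f\|_{L^r}+\|\nabla g\|_{L^r}+\|\mathbcal u\|_{W^{1,r}}+\|p\|_{L^r}$. Boundary pieces are flattened by a $C^{1,1}$ diffeomorphism, producing a half-space Stokes problem with the perfect-slip conditions $u_3=0$, $\partial_3u_\tau=0$. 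Perturbation terms with small coefficients come from the curvature, the friction term $\alpha\mathbcal u_\tau$ (a lower-order trace term, bounded in $W^{1-1/r,r}(\partial\mathcal O)$ by $\|\mathbcal u\|_{W^{1,r}}$), and the metric.

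\textbf{Step 3: the half-space model.} Solve the half-space model problem explicitly by reflection. Extend $u_\tau$, $p$ and $f_\tau$ evenly in $x_3$, and extend $u_3$ and $f_3$ oddly. The reflected pair solves a whole-space Stokes system, and Proposition \ref{appen.lemm0} applies again. This step is the main obstacle, because the reflection must be compatible with the flattened, variable-coefficient slip condition. Inhomogeneous tangential data coming from the localization have to be lifted first by a $W^{2,r}$ field with prescribed normal derivative trace in $W^{1-1/r,r}$. Choosing small patches makes the coefficient perturbations small in operator norm, so they can be absorbed.

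\textbf{Step 4: closing the estimate.} Sum over patches and remove lower-order terms by interpolation,
\[
\|\mathbcal u\|_{W^{1,r}}\le\varepsilon\|D^2\mathbcal u\|_{L^r}+C_\varepsilon\|\mathbcal u\|_{L^r}.
\]
Normalize $p$ to have mean zero, so that $\|p\|_{L^r}\le c\|\nabla p\|_{W^{-1,r}}$, which is controlled through the equation. This gives
\[
\|D^2\mathbcal u\|_{L^r}+\|\nabla p\|_{L^r}\le c\big(\|\mathbcal f\|_{L^r}+\|\nabla g\|_{L^r}+\|\mathbcal u\|_{L^r}\big).
\]
Finally, remove the $\|\mathbcal u\|_{L^r}$ term by a compactness--uniqueness contradiction argument. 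Suppose there is a normalized sequence with vanishing data. Rellich's theorem gives a nonzero limit solving the homogeneous problem, which by the energy identity of Lemma \ref{korn} must be zero. For $\alpha=0$ this holds after projecting out the tangential rigid motions.

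Note that the estimate as stated controls only $D^2\mathbcal u$ and $\nabla p$. So if $\alpha\equiv0$ on an axisymmetric domain, the kernel of rigid rotations consists of affine fields with $D^2=0$ and causes no harm.
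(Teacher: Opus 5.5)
The paper gives no proof of this proposition; it is only quoted from the cited works. Your scheme therefore has to stand on its own. For a smooth boundary, Steps~2--4 are the standard way to get the estimate with an extra $\|{\bf u}\|_{L^r(\mathcal O)}$ on the right-hand side.

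\textbf{Main gap: the uniqueness step.} The problem is in the step that removes this extra term. After Rellich you only know that the limit lies in ${\bf W}^{2,r}(\mathcal O)$. For $1<r<\frac65$ this gives neither $\nabla{\bf u}\in L^2(\mathcal O)$ (since ${\bf W}^{2,r}\hookrightarrow{\bf W}^{1,3r/(3-r)}$ and $3r/(3-r)<2$) nor integrability of $\Delta{\bf u}\cdot{\bf u}$. So the energy identity $2\mu\|\mathbb D({\bf u})\|_{L^2}^2+\int_{\partial\mathcal O}\alpha|{\bf u}_\tau|^2\,dS=0$ cannot be invoked. (That identity is Green's formula, as in Lemma~\ref{prop.pre2}, not Lemma~\ref{korn}.) As a result, uniqueness of the limit is unproved in exactly the range where you need it. You must add one of the following:
\begin{enumerate}
\item[(a)] A regularity bootstrap for the homogeneous problem. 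Apply your local $W^{2,r}$ bound to tangential difference quotients of the localized, flattened solution, recover the normal derivatives from the equations, iterate, and then embed into ${\bf W}^{1,2}$.
\item[(b)] A duality argument. Pair ${\bf u}$ with ${\bf W}^{2,r'}$-solutions ($r'>2$) of the same symmetric problem for arbitrary data in $L^{r'}$.
\end{enumerate}
Option (b) needs existence plus regularity at the conjugate exponent. Your Step~1 only asserts this (``$W^{1,r}$ estimate by the $L^2$-duality/inf-sup argument''), yet it is precisely the nontrivial part of the $L^r$ theory.

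\textbf{Secondary issue: boundary regularity.} Flatten by $y_3=x_3-h(x')$. The tangential slip condition, including the friction term, only requires Lipschitz coefficients acting on $W^{1-\frac1r,r}$ traces, and you handle it correctly. The constraint ${\bf u}\cdot{\bf n}=0$, however, becomes the zeroth-order condition $w_3=\nabla'h\cdot w'$ on $\{y_3=0\}$. To reach your reflection model ($w_3=0$, $\partial_3 w_\tau=0$) you must lift this datum in ${\bf W}^{2,r}$. That means bounding $\nabla'h\cdot w'$ in $W^{2-\frac1r,r}(\mathbb R^2)$ by a small multiple of $\|w\|_{W^{2,r}}$ plus lower-order terms, or else using a pointwise rotation or Piola change of unknowns. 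Either way $\nabla'h$ has to be a multiplier on $W^{2-\frac1r,r}$. This requires roughly $h\in C^{2,1}$ (or $C^{2,\beta}$ with $\beta>1-\frac1r$), not merely $h\in C^{1,1}$. As written, your argument covers $C^{2,1}$ boundaries, which is enough for the $C^\infty$ boundary the paper actually uses, but not the stated $C^{1,1}$ case.
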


The following local estimate for the pressure is essential for our analysis. A key feature of this estimate is that it relies solely on the interior structure of the Stokes equations and the local integrability of the velocity gradient, making it independent of the specific boundary conditions prescribed on $\partial\Omega$.
\begin{prop}
\label{appen.lemm-1}
Let $\Omega$ be an exterior domain such that $\Omega^c \subset B_{R_0}$ for some $R_0 > 0$. As in Lemma \ref{prop.ogawa}, for $R > 3R_0$, let $\Omega_R = \Omega \cap B_R$.

Suppose that $(\mathbcal{u}, p)$ is a solution to \eqref{stokes} in $\Omega$ with $\nabla \mathbcal{u} \in {\bf L}^r(\Omega)$ and $p \in L^r_{\text{loc}}(\Omega)$ for $1 < r < \infty$. Assuming $\int_{\Omega_R} p \, dx = 0$, there exists a constant $c$, depending only on $r, R$, and the geometry of $B$, such that
$$
\|p\|_{L^r(\Omega_R)} \leq c \left( \|\mathbcal{f}\|_{L^r(\Omega_R)} + \|\nabla \mathbcal{u}\|_{L^r(\Omega_R)} \right).
$$
\end{prop}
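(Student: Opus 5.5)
We prove the local pressure bound of Proposition~\ref{appen.lemm-1} by a duality argument built on the Bogovskii operator on the bounded Lipschitz domain $\Omega_R$. Since $\int_{\Omega_R} p\,dx = 0$, the pressure lies in $L^r_0(\Omega_R)$, so we may write
\[
\|p\|_{L^r(\Omega_R)} \le c \sup\Big\{ \int_{\Omega_R} p\, h\,dx : h \in L^{r'}_0(\Omega_R),\ \|h\|_{L^{r'}(\Omega_R)} \le 1 \Big\}.
\]
Testing only against mean-zero $h$ loses nothing, because $p$ itself has zero mean.

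For each such $h$, Galdi's Lemma III.3.1 provides $\boldsymbol{\psi} = \mathbb{B}h \in \mathbf{W}^{1,r'}_0(\Omega_R)$ with
\[
\operatorname{div}\boldsymbol{\psi} = h, \qquad \|\nabla\boldsymbol{\psi}\|_{L^{r'}(\Omega_R)} \le c\|h\|_{L^{r'}(\Omega_R)}.
\]
By Poincar\'e's inequality we also have $\|\boldsymbol{\psi}\|_{L^{r'}(\Omega_R)} \le c\|h\|_{L^{r'}(\Omega_R)}$. Extending $\boldsymbol{\psi}$ by zero, it vanishes on $\partial\Omega$ and on $\partial B_R$, so no boundary terms arise. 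This is precisely why the Navier condition in \eqref{stokes} plays no role here.

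Next we use the weak form of $-\mu\Delta\mathbcal{u} + \nabla p = \mathbcal{f}$, obtained by integrating by parts against $\boldsymbol{\psi}$ (first for $\boldsymbol{\psi}\in C_0^\infty$, then by density in $\mathbf{W}^{1,r'}_0$):
\[
\int_{\Omega_R} p\,h\,dx = -\int_{\Omega_R} p\,\operatorname{div}\boldsymbol{\psi}\,dx \cdot(-1) = \mu\int_{\Omega_R}\nabla\mathbcal{u}:\nabla\boldsymbol{\psi}\,dx - \int_{\Omega_R}\mathbcal{f}\cdot\boldsymbol{\psi}\,dx,
\]
with the sign chosen consistently. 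H\"older's inequality then bounds the right-hand side by
\[
c\big(\|\nabla\mathbcal{u}\|_{L^r(\Omega_R)} + \|\mathbcal{f}\|_{L^r(\Omega_R)}\big)\|h\|_{L^{r'}(\Omega_R)}.
\]
Taking the supremum over $h$ yields the claim. The constant depends only on $r$, $R$ and the Bogovskii constant of $\Omega_R$, that is, on the geometry of $B$.

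The one delicate point is justifying the integration by parts when we only know $p\in L^r_{\rm loc}$ and $\nabla\mathbcal{u}\in L^r$. We handle it by noting that the equation holds in $\mathcal{D}'(\Omega)$ and that $\boldsymbol{\psi}$ can be approximated in $W^{1,r'}_0(\Omega_R)$ by $C_0^\infty(\Omega_R)$ fields. These approximants may be taken with divergence converging to $h$ in $L^{r'}$, since Bogovskii maps $C_0^\infty$ mean-zero data to $C_0^\infty$ fields. Both sides of the identity are continuous in that topology, so passing to the limit is legitimate.
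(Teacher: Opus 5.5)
Your proof is correct and follows the paper's argument. Both characterize $\|p\|_{L^r(\Omega_R)}$ by duality against mean-zero test functions, lift these with the Bogovski\u{\i} operator to fields $\boldsymbol{\psi}\in\mathbf{W}^{1,r'}_0(\Omega_R)$ vanishing on all of $\partial\Omega_R$ (which is why the Navier condition plays no role), test the momentum equation with $\boldsymbol{\psi}$ and apply H\"older. Your Poincar\'e bound on $\|\boldsymbol{\psi}\|_{L^{r'}(\Omega_R)}$, needed for the forcing term, and your density justification fill in details the paper leaves implicit. Like the paper, you tacitly use $p\in L^r(\Omega_R)$, i.e.\ integrability up to $\partial\Omega$, which the mean-zero hypothesis already presupposes.
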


\begin{proof}
Since this estimate is fundamental but its independence from boundary conditions is sometimes overlooked, we provide a brief justification for the reader's convenience.

For the bounded domain $\Omega_R = \Omega \cap B_R$, where $\Omega^c \subset B_R$, the $L^r$-norm of $p$ is characterized by the duality identity
$$\|p\|_{L^r(\Omega_R)} = \sup_{0 \neq \phi \in {C}^\infty_{0}(\Omega_R)} \frac{\langle p, \phi \rangle}{\|\phi\|_{L^{r'}(\Omega_R)}}.$$
Note that $\langle p, \phi \rangle = \langle p, \phi - \frac{1}{|\Omega_R|}\int_{\Omega_R}\phi \, dx\rangle$ since $\int_{\Omega_R} p \, dx = 0$.
For each such $\phi$, the Bogovski\u{\i} operator yields a vector field $\boldsymbol{\psi} \in \mathbf{W}^{1,r'}_0(\Omega_R)$ satisfying
$$\operatorname{div}\boldsymbol{\psi} = \phi - \frac{1}{|\Omega_R|}\int_{\Omega_R}\phi \, dx \quad \text{in } \Omega_R, \quad \boldsymbol{\psi}|_{\partial\Omega_R} = 0, \quad \|\nabla\boldsymbol{\psi}\|_{L^{r'}(\Omega_R)} \leq c\|\phi\|_{L^{r'}(\Omega_R)}.$$
By testing the Stokes equation with $\boldsymbol{\psi}$, the pressure $p$ is bounded by the local $L^r$-norms of $\mathbf{f}$ and $\nabla \mathbf{u}$. Because $\boldsymbol{\psi}$ vanishes on the entire boundary $\partial\Omega_R$, this argument depends only on the local structure of the operator and the geometry of $\Omega_R$. Consequently, the estimate remains valid regardless of the specific boundary conditions on $\partial\Omega$, following the standard approach in Galdi \cite[Lemma~IV.1.1]{galdi}.

\end{proof}
%

Using the four Propositions above, we obtain the following estimate.

\begin{lemm}
\label{prop.ogawa1}
Let $\Omega$ be the exterior domain and $R$ be the radius as in Proposition \ref{appen.lemm-1}.
Suppose that $\mathbcal{f} \in L^r(\Omega)$ and that $(\mathbcal{ u} , p)$ is a solution of \eqref{stokes} with $\mathbcal{ u}  \in {\bf L}^r_{\text{loc}}(\Omega)$. Then, $D^2 \mathbcal{ u} , \nabla p \in{\bf  L}^r(\Omega)$, and there exists a constant $c > 0$ such that
\begin{align}
\label{num.prop.ogawa}
\|D^2 \mathbcal{ u} \|_{L^r(\Omega)} + \|\nabla p\|_{L^r(\Omega)} + \|\nabla \mathbcal{ u} \|_{L^r(\Omega_R)} + \|p\|_{L^r(\Omega_R)/\mathbb{R}}  \\[1em]
\notag \qquad\qquad \leq C\left( \|\mathbcal{f}\|_{L^r(\Omega)} + \|\mathbcal{ u} \|_{L^r(\Omega_R)} \right).
\end{align}
Here,
\[
\|p\|_{L^r(\Omega_R)/\mathbb{R}} := \inf_{k \in \mathbb{R}} \|p + k\|_{L^r(\Omega_R)}.
\]
\end{lemm}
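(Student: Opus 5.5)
The estimate \eqref{num.prop.ogawa} will follow from a standard cut-off argument: split $(\mathbcal{u},p)$ into a far-field part handled by Proposition~\ref{appen.lemm0} and a near-boundary part handled by Proposition~\ref{appen.lemm2}, with Proposition~\ref{appen.lemm-1} controlling the local pressure. Since $\Omega^c\subset B_{R_0}$ and $R>3R_0$, take cut-offs $\chi_1,\chi_2\in C^\infty(\mathbb{R}^3)$ with $\chi_1=1$ on $B_{R/2}$, $\operatorname{supp}\chi_1\subset B_{3R/4}$, and $\chi_2=1-\chi_1$; all commutator terms are then supported in the annulus $A=B_{3R/4}\setminus B_{R/2}$, which stays away from $\partial\Omega$. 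We normalize $p$ by $\int_{\Omega_R}p\,dx=0$; this is allowed because only $\nabla p$ and $\|p\|_{L^r(\Omega_R)/\mathbb{R}}$ enter the statement.

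\textbf{Far field.} Set $\mathbcal{u}_2=\chi_2\mathbcal{u}$ and $p_2=\chi_2 p$, extended by zero into $B$. They satisfy, in $\mathbb{R}^3$,
\[
-\mu\Delta\mathbcal{u}_2+\nabla p_2=\chi_2\mathbcal{f}-2\mu\nabla\chi_2\cdot\nabla\mathbcal{u}-\mu\mathbcal{u}\Delta\chi_2+p\nabla\chi_2,
\qquad
\operatorname{div}\mathbcal{u}_2=\mathbcal{u}\cdot\nabla\chi_2 .
\]
Proposition~\ref{appen.lemm0} then bounds $\|D^2\mathbcal{u}_2\|_{L^r}+\|\nabla p_2\|_{L^r}$ by $\|\mathbcal{f}\|_{L^r}+\|\mathbcal{u}\|_{W^{1,r}(\Omega_R)}+\|p\|_{L^r(\Omega_R)}$. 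Before invoking it, I will justify $D^2\mathbcal{u}_2,\nabla p_2\in L^r$ (required by that proposition) via interior regularity and a local-to-global approximation argument. This is a technical point, but it is routine.

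\textbf{Near field.} Set $\mathbcal{u}_1=\chi_1\mathbcal{u}$ on the bounded $C^2$ domain $\mathcal{O}=\Omega_R$. On $\partial\Omega$ it satisfies the Navier conditions, and it vanishes near $\partial B_R$, so the Navier condition holds there as well. Its divergence is $g=\mathbcal{u}\cdot\nabla\chi_1$, and the compatibility condition $\int_{\mathcal{O}}g\,dx=0$ holds by the divergence theorem, since $\mathbcal{u}\cdot{\bf n}=0$ on $\partial\Omega$. Proposition~\ref{appen.lemm2} then gives the same type of bound for $D^2\mathbcal{u}_1$ and $\nabla(\chi_1p)$.

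\textbf{Closing the estimate.} Apply Proposition~\ref{appen.lemm-1} to get $\|p\|_{L^r(\Omega_R)}\le c(\|\mathbcal{f}\|_{L^r}+\|\nabla\mathbcal{u}\|_{L^r(\Omega_R)})$. For the gradient term, use the interpolation inequality on the bounded domain,
\[
\|\nabla\mathbcal{u}\|_{L^r(\Omega_R)}\le\varepsilon\|D^2\mathbcal{u}\|_{L^r(\Omega_R)}+C_\varepsilon\|\mathbcal{u}\|_{L^r(\Omega_R)} .
\]
Adding the near- and far-field bounds and absorbing the $\varepsilon$-term into the left side yields \eqref{num.prop.ogawa}.

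\textbf{Main obstacle.} The absorption step needs $D^2\mathbcal{u}$ to be finite a priori, which the hypotheses do not give directly. I would handle this as follows. Local $W^{2,r}$ regularity comes from the bounded-domain theory applied on each ball. Global finiteness comes from the fact that the right-hand side of the whole-space problem for $\mathbcal{u}_2$ lies in $L^r$, combined with uniqueness, modulo polynomials, of whole-space Stokes solutions under the decay condition $\mathbcal{u}\to0$ at infinity. That uniqueness argument is what closes the loop.
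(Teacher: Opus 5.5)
Your proposal is correct and follows essentially the paper's route: you split by cut-offs into a whole-space piece (Proposition~\ref{appen.lemm0}) and a Navier-slip piece on $\Omega_R$ (Proposition~\ref{appen.lemm2}), control the mean-normalized local pressure by Proposition~\ref{appen.lemm-1}, and absorb $\|\nabla \mathbcal{u}\|_{L^r(\Omega_R)}$ via the bounded-domain interpolation inequality. The differences are minor: the paper inserts an extra intermediate Dirichlet piece on $\Omega_R$ (Proposition~\ref{appen.lemm1}), which your two-piece splitting shows is unnecessary, and you explicitly address the a priori finiteness of the second derivatives needed to apply these propositions and to absorb, a point the paper's proof passes over.
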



\begin{proof}
Take a cut-off function $\zeta,\xi\in C^\infty_0({\mathbb R}^3)$ with
$\zeta=1$ on $B_{\frac{R}{2}}, $ $\zeta=0$ on $B_R^c$, $\xi=1$ on $B_{\frac{R}{3}}$ and $\xi=0$ on $B_{\frac{2R}{3}}$.
Observe that
$\mathbcal{ u} =\mathbcal{ u} \xi+\mathbcal{ u} \zeta (1-\xi)+\mathbcal{ u} (1-\zeta)(1-\xi):=\mathbcal{ u} _1+\mathbcal{ u} _2+\mathbcal{ u} _3$,
 and $p=p\xi+p\zeta (1-\xi)+p(1-\zeta)(1-\xi):=p_1+p_2+p_3$.
Then, $(\mathbcal{ u} _1,p_1)$ satisfies 
\begin{align}
\left\{\begin{array}{l}
-\mu\Delta\mathbcal{ u}_1+\nabla p_1=  \mathbcal{f}_1,   
\mbox{\rm div}\mathbcal{ u} _1=g_1,\mbox{ in }\Om_R,\\[1em]
\mathbcal{ u} _1\cdot {\bf  n}=0, 2\mu  [{\mathbb D}(\mathbcal{ u} _1){\bf n}]_\tau+\alpha [\mathbcal{ u} _1]_\tau=0\mbox{ on }\pa \Om_R,
\end{array}\right. 
\end{align}
$(\mathbcal{ u} _2,p_2)$ satisfies 
\begin{align}
\left\{\begin{array}{l}
-\mu\Delta \mathbcal{ u} _2+\nabla p_2= \mathbcal{f}_2,\
\mbox{\rm div}\mathbcal{ u} _2=g_2,\mbox{ in }\Om_R,\\
\mathbcal{ u} _2=0,\mbox{ on }\pa \Om_R,
\end{array}\right. 
\end{align}
and $(\mathbcal{ u} _3,p_3)$ satisfies \begin{align}
-\mu\Delta \mathbcal{ u} _3+\nabla p_3=\mathbcal{f}_3,  
\mbox{\rm div}\mathbcal{ u} _3=g_3,\mbox{ in }{\mathbb R}^3.
\end{align}
where $\mathbcal{f}_1,\mathbcal{f}_2,\mathbcal{f}_3$ are defined by 
\begin{align*}\mathbcal{f}_1&=\xi \mathbcal{f}-2\nabla \mathbcal{ u} \cdot \nabla \xi-\mathbcal{ u}  \Delta  \xi+(p+k)\nabla \xi,\\
\mathbcal{f}_2&=\zeta (1-\xi)\mathbcal{f}
-2\nabla \mathbcal{ u} \cdot \nabla(\zeta (1-\xi))-\mathbcal{ u}  \Delta (\zeta (1-\xi))+(p+k)\nabla(\zeta (1-\xi)),\\
\mathbcal{f}_3&=(1-\zeta) (1-\xi)\mathbcal{f}-2\nabla \mathbcal{ u} \cdot  \nabla \Big((1-\xi)(1-\zeta)\Big)-\mathbcal{ u}  \Delta \Big(  (1-\xi) (1-\zeta) \Big)
\\
&\qquad+(p+k)\nabla \Big(  (1-\xi)(1-\zeta)\Big).
\end{align*}
and $g_1,g_2, g_3$ are defined by 
\begin{align*}
g_1=\mathbcal{ u} \cdot \nabla \xi,\ 
g_2=\mathbcal{ u} \cdot \nabla(\zeta (1-\xi)),\ g_3=\mathbcal{ u} \cdot \nabla\Big(  (1-\xi)(1-\zeta)\Big).
\end{align*}
Here, $k$ is any constant.

According to Proposition \ref{appen.lemm0}-Proposition \ref{appen.lemm2} we have
\begin{align*}
&\|\nabla^2{\bf u}\|_{L^p(\Om_R)}+\|\nabla p\|_{L^p(\Om_R)}\\
&\leq c\|\mathbcal{f}\|_{L^p(\Om)} +c\|\nabla \mathbcal{ u} \|_{L^p(\Om_R)}+c\|\mathbcal{ u} \|_{L^p(\Om_R)}
+c\|p\|_{L^p(\Om_R)/{\mathbb R}}.
\end{align*}

If we choose $k=-\int_{\Omega_R}pdx$, then $\int_{\Omega_R}p+k dx=0$, hence, according to Lemma \ref{appen.lemm-1}, 
\[
\|p\|_{L^r(\Omega_R)/{\mathbb R}}\leq c\|\mathbcal{f}\|_{L^r(\Omega)}+c\|\nabla u\|_{L^r(\Omega_R)}.
\]

By the well-known Gagliardo–Nirenberg inequality in bounded domains (see \cite{gagliardo}), we have
\[
\|\nabla \mathbcal{ u} \|_{L^r(\Omega_R)} \leq c \|D^2{\bf  u}\|_{L^r(\Omega_R)}^{1/2} \|\mathbcal{ u} \|_{L^r(\Omega_R)}^{1/2} + \|\mathbcal{ u} \|_{L^r(\Omega_R)}.
\]
For any $\epsilon > 0$, there exists a constant $C_\epsilon > 0$ such that
\[
\|\nabla \mathbcal{ u} \|_{L^r(\Omega_R)} \leq \epsilon \|D^2 \mathbcal{ u} \|_{L^r(\Omega_R)} + C_\epsilon \|\mathbcal{ u} \|_{L^r(\Omega_R)}.
\]

Combining the estimates above and taking $\epsilon > 0$ sufficiently small, we obtain the inequality \eqref{num.prop.ogawa} stated in Lemma ~\ref{prop.ogawa1}.
\end{proof}

%
%
\begin{lemm}
\label{poincare}
Let $\Omega \subset \mathbb{R}^n$ be an exterior domain such that $\Omega^c \subset B_R$. For any $\mathbcal{u} \in C^\infty_0(\overline{\Omega})$ satisfying the boundary condition $\mathbcal{u} \cdot \mathbf{n} = 0$ on $\partial \Omega$, there exists a constant $C > 0$ such that
\[
\|\mathbcal{u}\|_{L^q(\Omega_R)} \leq C(R) \|\nabla \mathbcal{u}\|_{L^q(\Omega_R)}.
\]
\end{lemm}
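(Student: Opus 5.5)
We argue by contradiction, using compactness (a Peetre--Tartar type argument) on the bounded domain $\Omega_R$. Choose $R$ large enough that $\Omega^c \subset B_{R}$ with some margin. Then $\Omega_R = \Omega\cap B_R$ is a bounded, connected domain with Lipschitz (indeed smooth) boundary $\partial\Omega\cup\partial B_R$. Connectedness holds because $\Omega$ is connected and $B_R$ contains the compact set $\Omega^c$ in its interior. The constant $C(R)$ will be shown to exist for all $\mathcal{u}\in \mathbf{W}^{1,q}(\Omega_R)$ with $\mathcal{u}\cdot\mathbf{n}=0$ on $\partial\Omega$ in the trace sense. The stated version for $C^\infty_0(\overline\Omega)$ is then a special case.

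Suppose the inequality fails. Then there are $\mathcal{u}_k\in\mathbf{W}^{1,q}(\Omega_R)$ with $\mathcal{u}_k\cdot\mathbf{n}=0$ on $\partial\Omega$, $\|\mathcal{u}_k\|_{L^q(\Omega_R)}=1$ and $\|\nabla\mathcal{u}_k\|_{L^q(\Omega_R)}\to0$. The sequence is bounded in $\mathbf{W}^{1,q}(\Omega_R)$. By reflexivity and the Rellich--Kondrachov theorem we extract a subsequence converging weakly in $\mathbf{W}^{1,q}(\Omega_R)$ and strongly in $\mathbf{L}^q(\Omega_R)$ to some $\mathcal{u}$. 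Then $\|\mathcal{u}\|_{L^q(\Omega_R)}=1$. Weak lower semicontinuity gives $\nabla\mathcal{u}=0$, so by connectedness $\mathcal{u}\equiv\mathbf{c}$ for a constant vector $\mathbf{c}\in\mathbb{R}^3$. The trace operator $\mathbf{W}^{1,q}(\Omega_R)\to\mathbf{L}^q(\partial\Omega)$ is bounded and linear, hence weakly continuous. Therefore the constraint passes to the limit: $\mathbf{c}\cdot\mathbf{n}(x)=0$ for a.e. $x\in\partial\Omega$, and by continuity of $\mathbf{n}$, for every $x\in\partial\Omega$.

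The only genuinely geometric step is to show that this forces $\mathbf{c}=0$. Suppose $\mathbf{c}\neq0$ and set $\mathbf{e}=\mathbf{c}/|\mathbf{c}|$. The continuous function $x\mapsto x\cdot\mathbf{e}$ attains its maximum on the compact smooth surface $\partial\Omega$ at some point $x_*$. At $x_*$ the tangent plane is orthogonal to $\mathbf{e}$, so $\mathbf{n}(x_*)=\pm\mathbf{e}$ and $\mathbf{c}\cdot\mathbf{n}(x_*)=\pm|\mathbf{c}|\neq0$. This is a contradiction, so $\mathbf{c}=0$. That in turn contradicts $\|\mathcal{u}\|_{L^q(\Omega_R)}=1$, which proves the lemma.

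I expect no serious obstacle. The points needing care are the connectedness of $\Omega_R$, the continuity of the trace (where the regularity of $\partial\Omega$ is used), and the observation that the inequality is not trivial: only the normal component is prescribed, and only on the inner boundary. The constant $C(R)$ obtained this way is non-constructive, but it depends only on $q$, $R$ and the geometry of $B$, which is all the lemma asserts.
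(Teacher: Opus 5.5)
Your proof is correct and follows the same argument as the paper: contradiction, Rellich--Kondrachov compactness, passing $\mathbf{c}\cdot\mathbf{n}=0$ to the limit through the trace, and then finding a point of $\partial\Omega$ where $\mathbf{n}$ is parallel to $\mathbf{c}$. Your additions make explicit two steps the paper only asserts: you prove that $\Omega_R$ is connected, where the paper only says the limit is constant ``on each connected component'', and you produce the point with $\mathbf{n}\parallel\mathbf{c}$ by maximizing $x\cdot\mathbf{e}$ over $\partial\Omega$.
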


\begin{proof}
We proceed by contradiction. Suppose there exists a sequence $\{\mathbcal{u}_k\} \subset {\bf C}^\infty_0(\overline{\Omega}_R)$ such that
\[
\|\mathbcal{u}_k\|_{L^q(\Omega_R)} = 1, \quad \|\nabla \mathbcal{u}_k\|_{L^q(\Omega)} \leq \frac{1}{k}, \quad \text{and} \quad \mathbcal{u}_k \cdot \mathbf{n} = 0 \text{ on } \partial \Omega.
\]
By the Rellich-Kondrachov compactness theorem, there exists a subsequence (still denoted by $\{\mathbcal{u}_k\}$) and a function $\mathcal{u} \in W^{1,q}(\Omega_R)$ such that
\[
\mathbcal{u}_k \rightarrow \mathbcal{u} \text{ strongly in } L^q(\Omega_R) \quad \text{and} \quad \nabla \mathbcal{u}_k \rightharpoonup \nabla \mathbcal {u} \text{ weakly in } L^q(\Omega_R).
\]
By the lower semi-continuity of the norm, we have 
\[\|\nabla \mathbcal{u}\|_{L^q(\Omega_R)} \leq \liminf_{k \to \infty} \|\nabla \mathbcal{u}_k\|_{L^q(\Omega)} = 0,\]
 which implies that $\mathbcal{u}$ is a constant vector, say $\mathbcal{u} \equiv \mathbf{c}$, on each connected component of $\Omega_R$.
Furthermore, the strong convergence implies $\|\mathbcal{u}\|_{L^q(\Omega_R)} = 1$, hence $\mathbf{c} \neq \mathbf{0}$. 

By the trace theorem, the boundary condition $\mathbcal{u}_k \cdot \mathbf{n} = 0$ is preserved in the limit, yielding $\mathbf{c} \cdot \mathbf{n}(x) = 0$ for all $x \in \partial \Omega$. However, for a bounded closed surface $\partial \Omega$, there must exist at least one point $x_0 \in \partial \Omega$ where the unit normal vector $\mathbf{n}(x_0)$ is parallel to $\mathbf{c}$ (i.e., $\mathbf{n}(x_0) = \pm \mathbf{c}/|\mathbf{c}|$). At this point, we have $|\mathbf{c} \cdot \mathbf{n}(x_0)| = |\mathbf{c}|$, which implies $\mathbf{c} = \mathbf{0}$. This contradicts the fact that $\|\mathbcal{u}\|_{L^q(\Omega_R)} = 1$, completing the proof.
\end{proof}
\begin{rem}

Lemma \ref{poincare} implies that the local $L^q$-norm of ${\bf u}$  on the right-hand side of (ii) in Lemma \ref{prop.ogawa} can be controlled by the local $L^q$-norm of $\nabla {\bf u}$

%


\end{rem}

We are now ready to prove the estimate in Lemma \ref{prop.ogawa}.  
If $\mathbcal{ u}  \in L^r(\Omega)$ and q  $\in [r, \infty],$ then
$\|\mathbcal{ u} \|_{L^r(\Omega_R)} \leq C(R) \|\mathbcal{ u} \|_{L^q(\Omega_R)}.$
Applying this inequality to \eqref{num.prop.ogawa} in Lemma~\ref{prop.ogawa1} directly leads to estimate (ii) in Lemma~\ref{prop.ogawa}.

To complete the proof of Lemma \ref{prop.ogawa}, it remains to establish estimate (i) in the case $r < \frac{3}{2}$.  
For this purpose, we make use of the following uniqueness result:

\begin{theo}
\label{prop.unique}
Let $\Omega$ be an exterior domain in $\mathbb{R}^3$, and let $\mathbcal{ u} $ be a weak solution to the Stokes system
\begin{align}
\label{stokes11}
\left\{
\begin{array}{ll}
-\mu \Delta\mathbcal{ u} + \nabla p = 0 & \text{in } \Omega, \\
\mathrm{div}\, \mathbcal{ u}  = 0 & \text{in } \Omega, \\
\mathbcal{ u}  \cdot {\bf n} = 0, \quad  2\mu  [{\mathbb D}(\mathbcal{ u} ){\bf n}]_\tau+\alpha [\mathbcal{ u}  ]_\tau={\bf 0}& \text{on } \partial \Omega, \\
\mathbcal{ u}  \in L^r(\Omega) \text{ for some }1  \leq r < \infty,
\end{array}
\right.
\end{align}
with $D^2{\bf u} \in{\bf  L}^q(\Omega)$ for some $q \geq  1$.
Then, $\mathbcal{ u}  \equiv 0$ in $\Omega$.
\end{theo}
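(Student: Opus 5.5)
Our plan is to combine an energy identity for the Navier-slip Stokes problem with a decay argument at infinity that justifies all integrations by parts. First we upgrade the regularity. Interior and boundary regularity (Lemma \ref{prop.ogawa1} with $\mathcal f=0$, applied with exponent $q$ and then bootstrapped) give $\mathcal u\in C^\infty(\overline\Omega)$. Since $D^2\mathcal u,\nabla p\in L^q$ and $\mathcal u\in L^r$, Gagliardo--Nirenberg interpolation on the exterior region $B_R^c$ (where $\mathcal u$ extends to a solution of the whole-space Stokes system after a cut-off and Bogovski\u{\i} correction, as in Lemma \ref{lem.bogovskii_cutoff}) gives $\nabla\mathcal u\in L^s(B_R^c)$ for suitable $s$. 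Next, we represent the far-field part $\mathbcal w=\zeta\mathcal u-\mathbcal w_1$ through the Stokes fundamental solution: $-\mu\Delta\mathbcal w+\nabla(\zeta p)=\tilde{\mathbcal f}$ with $\tilde{\mathbcal f}$ compactly supported. Hence $\mathbcal w=\mathbb E*\tilde{\mathbcal f}+\text{(polynomial)}$, where the polynomial term is a Liouville-type remainder. The condition $\mathcal u\in L^r$ with $r<\infty$ forces this polynomial to vanish. The representation then yields the pointwise decay $\mathcal u=O(|x|^{-1})$, $\nabla\mathcal u=O(|x|^{-2})$, and $p=O(|x|^{-2})$ after normalizing the constant in $p$.

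Then we test the equation with $\mathcal u\,\psi_\rho$, where $\psi_\rho$ is a cut-off equal to $1$ on $B_\rho$ and supported in $B_{2\rho}$. Integrating by parts as in Lemma \ref{prop.pre2} (with $\boldsymbol\ell=\boldsymbol\omega=0$), and using $\mathcal u\cdot{\bf n}=0$ together with the slip condition, gives
\[
2\mu\int_\Omega|\mathbb D(\mathcal u)|^2\psi_\rho\,dx+\alpha\int_{\partial\Omega}|[\mathcal u]_\tau|^2dS=\int_\Omega\big(p\,\mathcal u-2\mu\mathbb D(\mathcal u)\mathcal u\big)\cdot\nabla\psi_\rho\,dx .
\]
By the decay rates, the right side is $O(\rho^{-3}\cdot\rho^{-1}\cdot\rho^{3})\to0$. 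Hence $\mathbb D(\mathcal u)=0$ in $\Omega$ and $[\mathcal u]_\tau=0$ on $\partial\Omega$. Consequently $\mathcal u$ is a rigid motion $\mathbf a+\mathbf b\times x$ on the connected set $\Omega$, and $\mathcal u\in L^r(\Omega)$ with $r<\infty$ forces $\mathbf a=\mathbf b=0$. (Alternatively, once $\nabla\mathcal u\in L^2$ is known, Lemma \ref{korn_exterior} gives the same conclusion.)

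The main obstacle is the second step: establishing enough decay of $(\mathcal u,p)$ at infinity to kill the boundary terms on $\partial B_{2\rho}$, given only the weak hypotheses $\mathcal u\in L^r$ and $D^2\mathcal u\in L^q$ with arbitrary $r,q$. We would handle it by proving that the remainder in the representation formula is a polynomial, since its Laplacian and divergence vanish and it is tempered. Membership of $\mathcal u$ in $L^r$, together with the decay of $\mathbb E*\tilde{\mathbcal f}$, then leaves only the zero polynomial. The pressure is fixed by the same representation, being harmonic up to a compactly supported source.
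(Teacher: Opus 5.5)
Your proposal is correct and follows essentially the same route as the paper. The paper takes the expansion $u=u_0+\mathbb{U}_0\cdot x+u^{(1)}$ with $D^m u^{(1)}=O(|x|^{-1-m})$ directly from Galdi's Theorem V.3.3, which your cut-off, fundamental-solution and Liouville argument re-derives; it then uses $u\in L^r$ to eliminate $u_0$ and $\mathbb{U}_0$, and applies the same energy identity on $\Omega_R$, where the flux through $\partial B_R$ is $O(R^{-1})$, exactly like your $\nabla\psi_\rho$ term. The differences are only cosmetic: you use a smooth cut-off instead of a sharp one, and you explicitly normalize the pressure constant and spell out the rigid-motion step that the paper leaves implicit.
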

\begin{proof}
According to Theorem V. 3.3  in \cite{galdi}, if  $\mathbcal{ u} $  satisfies $(\nabla\mathbcal{ u}, \nabla \boldsymbol{\varphi})=[\mathbcal{f},\boldsymbol{\varphi}]$ for all $\boldsymbol{\varphi}\in {\bf C}^\infty_{0,\sigma}(\Omega)$ with $D^2{\bf  u}\in L^q(\Omega)$ and compactly supported $\mathbcal{f}\in L^q(\Omega)$  for some $1<q<\infty,$ then
\[
\mathbcal{ u} (x)=\mathbcal{ u} _0+{\mathbb U}_0\cdot x+\mathbcal{ u} ^{(1)}(x),\  p(x)=p_0+p^{(1)}(x),\]
where
$\mbox{\rm trace}({\mathbb U}_0)=0, \mathbcal{ u} _0,p_0$  are constants and
$D^m \mathbcal{ u} ^{(1)}(x)=O(|x|^{-1-m}), \ D^m p^{(1)}(x)=O(|x|^{-2-m}), \ m\geq 0.
$
From the condition $\mathbcal{ u} \in L^r(\Omega)$ for some $r<\infty$,  we deduce that   $\mathbcal{ u} _0=0, {\mathbb U}_0=0$. Hence 
$D^m \mathbcal{ u} (x)=O(|x|^{-1-m})$.

Now multiply $\mathbcal{ u} $ to $\eqref{stokes}_1$ and integrate by parts over $\Omega_R$, then we deduce that
\begin{align*}
2\mu \int_{\Omega_R}|{\mathbb D}(\mathbcal{ u} )|^2 dx+\alpha \int_{\partial \Omega}|[\mathbcal{ u}]_\tau|^2 dS_x&=\int_{\partial \Omega_R} \mathbcal{ u} \cdot {\mathbb S}(\mathbcal{ u} ,p){\bf n} dS_x\\
&\leq c\int_{\partial \Omega_R}|x|^{-3}dx\rightarrow 0\mbox{ as }R\rightarrow \infty.
\end{align*}
This leads to the conclusion that $ \mathbcal{ u} \equiv {\bf 0}$.
\end{proof}

We claim that
\[
\|D^2 \mathbcal{ u} \|_{L^p(\Omega)} + \|\nabla p\|_{L^p(\Omega)} \leq c \|\mathbcal{f}\|_{L^p(\Omega)}, \ 1<p<\frac{3}{2}.
\]

We employ a contradiction argument, following the approach in the proof of Theorem \ref{maximal.L2} in \cite{giga-sohr}.
Assume, for contradiction, that the estimate fails. Then, there exists a sequence \( \{\mathbcal{f}_m\} \subset L^p(\Omega) \) with \( \|\mathbcal{f}_m\|_{L^p(\Omega)} \leq \frac{1}{m} \), and corresponding solutions \( (\mathbcal{ u} _m, p_m) \) to the system \eqref{stokes}, such that
\[
\|D^2\mathbcal{ u}_m\|_{L^p(\Omega)} + \|\nabla p_m\|_{L^p(\Omega)} = 1.
\]
This normalization is introduced in order to apply Lemma~\ref{prop.ogawa1}
and obtain a uniform lower bound for $\|\mathbcal{ u} _m\|_{L^p(\Omega_R)}$. Indeed,
Lemma ~\ref{prop.ogawa1} yields
\[
1 \le \frac{C}{m} + c\|\mathbcal{ u} _m\|_{L^p(\Omega_R)},
\]
so $\|\mathbcal{ u} _m\|_{L^p(\Omega_R)}$ cannot converge to zero. This nontrivial lower
bound will later lead to a contradiction with the compactness argument.

To apply compactness, we first note that the normalization $\|D^2 u_m\|_{L^p(\Omega)} = 1$ combined with the Sobolev embedding guarantees that the sequence $\{u_m\}$ is locally bounded in $L^{p^*}(\Omega_R)$, where 
\[
p^* = \frac{3p}{3 - 2p} \quad \text{if } p < \frac{3}{2}, \quad \text{and } p^* = \infty \quad \text{otherwise.}
\]
This provides the necessary local velocity bound for the compactness argument.

By compactness, there exists a subsequence (still indexed by \( m \)) and a limit pair \( (\mathbcal{ u}, p) \) such that
\begin{align*}
D^2 \mathbcal{ u} _m &\rightharpoonup D^2 \mathbcal{ u}  \quad \text{weakly in } L^p(\Omega), \\
\mathbcal{ u} _m &\to \mathbcal{ u}  \quad \text{strongly in } L^r(K), \quad r < p^{*}, \\
\nabla p_m &\rightharpoonup \nabla p \quad \text{weakly in } L^p(\Omega),
\end{align*}
for any compact \( K \subset \Omega \), where $ p^{*} = \frac{3p}{3 - 2p} $ if $p < \frac{3}{2}, $ and $p^{*} = \infty$ otherwise.

Passing to the limit, the limit pair \( (\mathbcal{ u} , p) \) satisfies the Stokes system \eqref{stokes11}. Although the data vanish in the limit, the solution remains nontrivial due to the preserved estimate
\[
1 \leq c \|\mathbcal{ u} \|_{L^p(\Omega_R)},
\]
which follows from the strong convergence \( \mathbcal{ u} _m \to \mathbcal{ u}  \) in \( L^r(K) \) for any compact \( K \subset \Omega \) and \( r < p^{*} \).

Meanwhile, by lower semi-continuity, we have
\[
\|D^2 \mathbcal{ u} \|_{L^p(\Omega)} + \|\nabla p\|_{L^p(\Omega)} \leq \liminf_{m \to \infty} \left( \|D^2 \mathbcal{ u} _m\|_{L^p(\Omega)} + \|\nabla p_m\|_{L^p(\Omega)} \right) = 1.
\]

If \( p < \frac{3}{2} \), the Sobolev embedding yields
\[
\nabla^2 \mathbcal{ u}  \in L^p(\Omega) \quad \Rightarrow \quad \mathbcal{ u}  \in L^{\frac{3p}{3 - 2p}}(\Omega),
\]
so \( \mathbcal{ u}  \in L^r(\Omega) \) for some \( r < \infty \). Then, by the uniqueness result in Theorem~\ref{prop.unique}, we conclude \( \mathbcal{ u}  \equiv 0 \), contradicting the estimate above.

Therefore, the original estimate must hold.

\section{Proof of Lemma \ref{lemma.uniqueness}}
\label{appendix.uniqueness}

Let $N$ be the fundamental solution of the Laplace equation, and let $E_\lambda$ be the fundamental solution of the operator $\lambda - \Delta$. The fundamental solution of the resolvent Stokes equations is represented by
\[
\mathbf{G}^i_\lambda := \nabla \times \nabla \times (N * E_\lambda \mathbf{e}^i) = E_\lambda \mathbf{e}^i + \nabla \left( \frac{\partial}{\partial x_i} N * E_\lambda \right), \quad
Q^i := -\partial_{x_i} N(x),
\]
which satisfies
\[
\lambda \mathbf{G}^i_\lambda - \Delta \mathbf{G}^i_\lambda + \nabla Q^i = \delta(x)\mathbf{e}^i, \quad \text{div } \mathbf{G}^i_\lambda = 0.
\]
(The construction is completely analogous to the parabolic case discussed in Chapter 4 of \cite{lady}. For explicitly constructed resolvent tensors, we also refer to \cite[Chapter VII]{galdi}.)

Let $\zeta_R$ be a cut-off function such that $\zeta_R = 1$ on $B_R$ and $\zeta_R = 0$ on $B_{2R}$. Let ${\bf v}=(v_1,v_2,v_3)$. Then, each component $v_i$, $i=1,2,3$ is represented by the following integral equation:
\begin{align}
\label{integral_stokes_resolvent}
\begin{split}
v_i(x)\zeta_R(x) &= -\int_{\partial \Omega} \mathbf{G}^i_\lambda(x-y) \cdot \mathbb{S}({\bf v},\pi)(y){\bf n} \\
&\qquad \qquad- {\bf v}(y) \cdot \mathbb{S}(\mathbf{G}^i_\lambda,Q^i)(x-y) {\bf n} \, dS_y \\
&\quad + \int_{\Omega} \mathbf{v}(y) \cdot \left[ (\nabla \zeta_R(y)) \partial_{y_i} N(x-y) - (\nabla \zeta_R(y)) \times \nabla \times (N(x-y)\mathbf{e}^i) \right] dy \\
&\quad + \int_{\Omega} \sum_{j=1}^2 {\bf v}(y) \cdot R^i_{j,\lambda}(x,y) \, dy \\
&= v^{(1)}_i + v^{(2)}_i + v^{(3)}_i,
\end{split}
\end{align}
where
\begin{align*}
R^i_{1,\lambda}(x,y) &= -2 \sum_{k=1}^3 \nabla \partial_{y_k} \zeta_R(y) \left[ \nabla_y \times (\partial_{y_k} N * E_\lambda(x-y) \mathbf{e}^i) \right] \\
&\qquad \qquad- \nabla \Delta \zeta_R(y) \times \left[ \nabla_y N * E_\lambda(x-y) \times \mathbf{e}^i \right], \\
R^i_{2,\lambda}(x,y) &= -2 (\nabla \zeta_R(y) \cdot \nabla_y) \mathbf{G}^i_\lambda(x-y) - \Delta \zeta_R(y) \mathbf{G}^i_\lambda(x-y).
\end{align*}

Direct computation shows the asymptotic behavior for the fundamental solutions in the resolvent case:
\[
|\nabla^k (N * E_\lambda)(x-y)| \leq c(\lambda) |x-y|^{-1-k}.
\]

Let $R \geq 2|x|$. Then, we estimate the second term:
\begin{align*}
|\nabla^2{v}^{(2)}_i(x)| &\leq c R^{-1} \int_{R \leq |y| \leq 2R} |{\bf v}(y)| |x-y|^{-4} \, dy \\
&\leq c \|{\bf v}\|_{{\bf L}^s(\Omega)} R^{-2-\frac{3}{s}} \to 0 \quad \text{as } R \to \infty,
\end{align*}
and similarly for the third term:
\begin{align*}
|\nabla^2v^{(3)}_i(x)| &\leq c(\lambda) \sum_{k=1}^3 R^{-3+k} \int_{R \leq |y| \leq 2R} |\nabla^2{\bf v}(y)| |x-y|^{-2-k} \, dy \\
&\leq c(\lambda) \|\nabla^2{\bf v}\|_{{\bf L}^q(\Omega)} R^{-\frac{3}{q}-2} \to 0 \quad \text{as } R \to \infty.
\end{align*}

Let ${\bf v}^{(j)}=(v^{(j)}_1,v^{(j)}_2,v^{(j)}_3)$ for $j=1,2,3$.
Since \( {\bf v} - {\bf v}^{(1)} = {\bf v}^{(2)} + {\bf v}^{(3)} \) and the left-hand side does not depend on \( R \), it follows that
\[
\nabla^2 {\bf v}^{(2)} + \nabla^2 {\bf v}^{(3)} \equiv 0.
\]
Therefore,
\[
{\bf v}(x) = {\bf a} + {\mathbb B} \cdot x + {\bf v}^{(1)}(x),
\]
where \({\bf a} \in \mathbb{C}^3 \) and \( {\mathbb B} \in \mathbb{C}^{3 \times 3} \) are constant coefficients.

The assumption \( {\bf v} \in {\bf L}^s(\Omega) \) for some \( s < \infty \) implies that both \({\bf a} = {\bf 0} \) and \( {\mathbb B} = {\bf 0} \).

This leads to the following representation for the pressure \( \pi \):
\[
\pi(x) = p_0 + \int_{\partial \Omega} {\bf Q}(x-y)\cdot {\mathbb S}({\bf v}(y),\pi(y)){\bf n} - {\bf v}(y)\cdot \nabla {\bf Q}(x-y) {\bf n}\, dS_y.
\]
Here, ${\bf Q}=(Q^i)_{i=1}^3$ and $p_0 \in \mathbb{C}$ is a constant.

Now, let $|x| \geq R$ with $\Omega^c \subset B_{R/2}$. Then, direct computation shows the spatial decay rates:
\begin{align*}
|\nabla^k {\bf v}^{(1)}(x)| &\leq c(\lambda) |x|^{-3-k} \left( \|\nabla^2 {\bf v}\|_{{\bf L}^q(\Omega)} + \|\nabla \pi\|_{{\bf L}^q(\Omega)} \right), \\
|\nabla^k (\pi(x) - p_0)| &\leq c |x|^{-1-k} \left( \|\nabla^2 {\bf v}\|_{{\bf L}^q(\Omega)} + \|\nabla \pi\|_{{\bf L}^q(\Omega)} \right).
\end{align*}

Note that the divergence-free condition $\text{div } \overline{\bf v} = 0$ and the boundary condition on $\partial \Omega$ imply $\int_{\partial B_R} {\bf n} \cdot \overline{\bf v} \, dS_x = \int_{\partial \Omega} {\bf n} \cdot \overline{\bf v} \, dS_x = 0$. Thus, we can replace $\pi$ with $\pi - p_0$ in the boundary integral over $\partial B_R$.
Multiplying the homogeneous resolvent system by $\overline{\bf v}$ (the complex conjugate of ${\bf v}$), integrating over $\Omega_R = \Omega \cap B_R$, and taking the real part yields:
\begin{align*}
&\text{Re}(\lambda) \left( \int_{\Omega_R} |{\bf v}|^2 dx + m|\boldsymbol{\ell}|^2 + |\mathbb{J}_0||\boldsymbol{\omega}|^2 \right) \\
&+ 2\mu \int_{\Omega_R} |\mathbb{D}({\bf v})|^2 dx
+ \alpha \int_{\partial \Omega} |[{\bf v} -\boldsymbol{ \ell} -\boldsymbol{ \omega }\times x]_\tau|^2 dS \\
&= \text{Re} \left( \int_{\partial B_R} (2\mu\mathbb{D}({\bf v}){\bf n} - (\pi - p_0){\bf n}) \cdot \overline{\bf v} \, dS_x \right) \\
&\leq c \int_{\partial B_R} |{\bf v}| (|\nabla {\bf v}| + |\pi - p_0|) \, dS_x \leq c R^{-1} \to 0 \quad \text{as } R \to \infty.
\end{align*}

Letting $R \to \infty$, for any $\lambda \in \mathbb{C}$ with $\text{Re}(\lambda) \geq 0$, we obtain the exact energy identity:
\begin{align*}
&\text{Re}(\lambda) \left( \int_{\Omega} |{\bf v}|^2 dx + m|\boldsymbol{\ell}|^2 + |\mathbb{J}_0||\boldsymbol{\omega}|^2 \right) \\
&+ 2\mu \int_{\Omega} |\mathbb{D}({\bf v})|^2 dx
+ \alpha \int_{\partial \Omega} |[{\bf v} -\boldsymbol{ \ell} -\boldsymbol{ \omega }\times x]_\tau|^2 dS = 0.
\end{align*}

This immediately implies that $\mathbb{D}({\bf v}) \equiv 0$, meaning ${\bf v}$ must be a rigid body motion. Since ${\bf v} \in {\bf L}^s(\Omega)$ in the exterior domain $\Omega$, it follows that ${\bf v} \equiv {\bf 0}$. The boundary conditions then yield $\boldsymbol{\ell} + \boldsymbol{\omega} \times x = {\bf 0}$ on $\partial \Omega$, which implies $\boldsymbol{\ell} = {\bf 0}$ and $\boldsymbol{\omega} = {\bf 0}$. Thus, we conclude that
\[
 ({\bf v}, \boldsymbol{\ell},   \boldsymbol{\omega}) \equiv ({\bf 0}, {\bf 0}, {\bf 0}).
\]

\bigskip
\noindent


\bibitem{temam} R. Temam, {Navier-Stokes Equations, Theory and Numerical Analysis,} Studies in Mathematics and Its Applications, Vol 2, Elsevier Science Publications B.V. 1984.

\bibitem{temam1} R. Temam, {\it  Problèmes mathématiques en plasticité}.
 Gauthier-Villars, Paris, 1983.

\bibitem{wang} C. Wang, {\it Strong solutions for the fluid-solid systems in a 2-D domain}, Asymptot. Anal.  89(2014),263-306.

\bibitem{wang-xin} Y. Wang and Z. Xin,
 {\it Analysis of the semigroup associated with the fluid-rigid body problem and local existence of strong solutions},
  J. Funct. Anal.  261(2011), 2587-2616.

 \bibitem{weidmann} J. Weidmann, {Linear operators in Hilbert spaces},  Springer-Verlag(1976).




\end{thebibliography}

\end{document}